\let\old@tocline\@tocline
\let\section@tocline\@tocline
\newcommand{\subsection@dotsep}{4.5}
\newcommand{\subsubsection@dotsep}{4.5}
     \leaders\hbox{$\m@th
        \mkern \subsection@dotsep mu\hbox{.}\mkern \subsection@dotsep mu$}\hfill
\let\subsection@tocline\@tocline
\let\@tocline\old@tocline
     \leaders\hbox{$\m@th
        \mkern \subsubsection@dotsep mu\hbox{.}\mkern \subsubsection@dotsep mu$}\hfill
\let\subsubsection@tocline\@tocline
\let\@tocline\old@tocline
\let\old@l@subsection\l@subsection
\let\old@l@subsubsection\l@subsubsection
\def\@tocwriteb#1#2#3{%
  \begingroup
    \@xp\def\csname #2@tocline\endcsname##1##2##3##4##5##6{%
      \ifnum##1>\c@tocdepth
      \else \sbox\z@{##5\let\indentlabel\@tochangmeasure##6}\fi}%
    \csname l@#2\endcsname{#1{\csname#2name\endcsname}{\@secnumber}{}}%
  \endgroup
  \addcontentsline{toc}{#2}%
    {\protect#1{\csname#2name\endcsname}{\@secnumber}{#3}}}%
\newlength{\@tocsectionindent}
\newlength{\@tocsubsectionindent}
\newlength{\@tocsubsubsectionindent}
\newlength{\@tocsectionnumwidth}
\newlength{\@tocsubsectionnumwidth}
\newlength{\@tocsubsubsectionnumwidth}
\newcommand{\settocsectionnumwidth}[1]{\setlength{\@tocsectionnumwidth}{#1}}
\newcommand{\settocsubsectionnumwidth}[1]{\setlength{\@tocsubsectionnumwidth}{#1}}
\newcommand{\settocsubsubsectionnumwidth}[1]{\setlength{\@tocsubsubsectionnumwidth}{#1}}
\newcommand{\settocsectionindent}[1]{\setlength{\@tocsectionindent}{#1}}
\newcommand{\settocsubsectionindent}[1]{\setlength{\@tocsubsectionindent}{#1}}
\newcommand{\settocsubsubsectionindent}[1]{\setlength{\@tocsubsubsectionindent}{#1}}
\renewcommand{\l@section}{\section@tocline{1}{\@tocsectionvskip}{\@tocsectionindent}{}{\@tocsectionformat}}%
\renewcommand{\l@subsection}{\subsection@tocline{2}{\@tocsubsectionvskip}{\@tocsubsectionindent}{}{\@tocsubsectionformat}}%
\renewcommand{\l@subsubsection}{\subsubsection@tocline{3}{\@tocsubsubsectionvskip}{\@tocsubsubsectionindent}{}{\@tocsubsubsectionformat}}%
\newcommand{\@tocsectionformat}{}
\newcommand{\@tocsubsectionformat}{}
\newcommand{\@tocsubsubsectionformat}{}
\def\csname toc@1format\endcsname{\@tocsectionformat}
\def\csname toc@2format\endcsname{\@tocsubsectionformat}
\def\csname toc@3format\endcsname{\@tocsubsubsectionformat}
\newcommand{\settocsectionformat}[1]{\renewcommand{\@tocsectionformat}{#1}}
\newcommand{\settocsubsectionformat}[1]{\renewcommand{\@tocsubsectionformat}{#1}}
\newcommand{\settocsubsubsectionformat}[1]{\renewcommand{\@tocsubsubsectionformat}{#1}}
\newlength{\@tocsectionvskip}
\newcommand{\settocsectionvskip}[1]{\setlength{\@tocsectionvskip}{#1}}
\newlength{\@tocsubsectionvskip}
\newcommand{\settocsubsectionvskip}[1]{\setlength{\@tocsubsectionvskip}{#1}}
\newlength{\@tocsubsubsectionvskip}
\newcommand{\settocsubsubsectionvskip}[1]{\setlength{\@tocsubsubsectionvskip}{#1}}
\patchcmd{\tocsection}{\indentlabel}{\makebox[\@tocsectionnumwidth][l]}{}{}
\patchcmd{\tocsubsection}{\indentlabel}{\makebox[\@tocsubsectionnumwidth][l]}{}{}
\patchcmd{\tocsubsubsection}{\indentlabel}{\makebox[\@tocsubsubsectionnumwidth][l]}{}{}
\newcommand{\@sectypepnumformat}{}
\renewcommand{\contentsline}[1]{%
  \expandafter\let\expandafter\@sectypepnumformat\csname @toc#1pnumformat\endcsname%
  \csname l@#1\endcsname}
\newcommand{\@tocsectionpnumformat}{}
\newcommand{\@tocsubsectionpnumformat}{}
\newcommand{\@tocsubsubsectionpnumformat}{}
\newcommand{\setsectionpnumformat}[1]{\renewcommand{\@tocsectionpnumformat}{#1}}
\newcommand{\setsubsectionpnumformat}[1]{\renewcommand{\@tocsubsectionpnumformat}{#1}}
\newcommand{\setsubsubsectionpnumformat}[1]{\renewcommand{\@tocsubsubsectionpnumformat}{#1}}
\renewcommand{\@tocpagenum}[1]{%
  \hfill {\mdseries\@sectypepnumformat #1}}
\let\oldappendix\appendix
\renewcommand{\appendix}{%
  \leavevmode\oldappendix%
  \addtocontents{toc}{%
    \protect\settowidth{\protect\@tocsectionnumwidth}{\protect\@tocsectionformat\sectionname\space}%
    \protect\addtolength{\protect\@tocsectionnumwidth}{2em}}%
}
\let\oldtableofcontents\tableofcontents
\renewcommand{\tableofcontents}{%
  \vspace*{-\linespacing}
  \oldtableofcontents}
\let\oldmarginpar\marginpar
\renewcommand\marginpar[1]{\-\oldmarginpar[\raggedleft\footnotesize #1]%
	{\raggedright\footnotesize #1}}
\theoremstyle{plain}
\newtheorem{thm}{Theorem}[section]
\newtheorem{lemma}[thm]{Lemma}
\newtheorem{prop}[thm]{Proposition}
\newtheorem{cor}[thm]{Corollary}
\newtheorem{conj}[thm]{Conjecture}
\theoremstyle{definition}
\newtheorem{definition}[thm]{Definition}
\newtheorem{remark}[thm]{Remark}
\newtheorem{ex}[thm]{Example}
\theoremstyle{remark}
\numberwithin{equation}{section}
\renewcommand{\S}{\mathbb{S}}
\renewcommand{\P}{\mathbb{P}}
\newcommand{\F}{\mathbb{F}}
\newcommand{\N}{\mathbb{N}}
\newcommand{\Z}{\mathbb{Z}}
\newcommand{\R}{\mathbb{R}}
\newcommand{\C}{\mathbb{C}}
\newcommand{\SA}{\mathcal{A}}
\newcommand{\SM}{\mathcal{M}}
\newcommand{\SB}{\mathcal{B}}
\newcommand{\La}{\Lambda}
\newcommand{\ww}{\mathfrak{w}}
\newcommand{\FT}{\Delta^2}
\newcommand{\Tr}{\mathrm{Tr}}
\newcommand{\diag}{\mathrm{diag}}
\newcommand{\bt}{\mathbf{t}}
\newcommand{\dd}{\partial}
\newcommand{\sse}{\subseteq}
\newcommand{\lr}{\longrightarrow}
\newcommand{\GL}{\operatorname{GL}}
\newcommand{\Aug}{\operatorname{Aug}}
\newcommand{\Spec}{\operatorname{Spec}}
\newcommand{\wt}{\widetilde}
\newcommand{\st}{\text{st}}
\newcounter{daggerfootnote}
\newcommand{\cB}{\mathcal{B}}
\newcommand{\cF}{\mathcal{F}}
\newcommand{\cS}{\mathcal{S}}
\newcommand{\Br}{\mathrm{Br}}
\newcommand{\Hom}{\mathrm{Hom}}
\newcommand{\ft}{\mathfrak{t}}
\newcommand{\Fl}{\mathcal{F}\ell} 
\DeclareMathOperator{\OBS}{OBS} 
\DeclareMathOperator{\dft}{def}
\DeclareMathOperator{\rot}{rot}
\newcommand{\we}[1]{\mathtt{wt}(#1)}
\newcommand{\be}{\mathbf{e}}
\newcommand{\dec}{decomposition}
\newcommand{\piece}{piece}
\begin{document}
	
\title{Algebraic Weaves and Braid Varieties}
\subjclass[2010]{Primary: 53D10. Secondary: 53D15, 57R17.}

\author[R. Casals]{Roger Casals}
\address{Dept. of Mathematics\\ University of California, Davis\\ One Shields Avenue, Davis, CA 95616}
\email{casals@math.ucdavis.edu}
\author[E. Gorsky]{Eugene Gorsky}
\address{Dept. of Mathematics\\ University of California, Davis\\ One Shields Avenue, Davis, CA 95616}
\email{egorskiy@math.ucdavis.edu}
\author[M. Gorsky]{Mikhail Gorsky}
\address{Department of Mathematics,
University of Vienna,
Oscar-Morgenstern Platz~1,
1090 Vienna,
Austria}
\email{mikhail.gorskii@univie.ac.at}
\author[J. Simental]{Jos\'e Simental}
\address{Instituto de Matem\'aticas\\ Universidad Nacional Aut\'onoma de M\'exico\\ Ciudad Universitaria, Mexico City, Mexico 04510}
\email{simental@im.unam.mx}
	
\maketitle

\vspace{-1cm}
\begin{abstract}
In this manuscript we study braid varieties, a class of affine algebraic varieties associated to positive braids. Several geometric constructions are presented, including certain torus actions on braid varieties and holomorphic symplectic structures on their respective quotients. We also develop a diagrammatic calculus for correspondences between braid varieties and  use these correspondences to obtain interesting \dec s of braid varieties and their quotients. It is shown that the maximal charts of these \dec s are exponential Darboux charts for the holomorphic symplectic structures, and we relate these charts to exact Lagrangian fillings of Legendrian links. 
\end{abstract}

\setcounter{tocdepth}{2}
\tableofcontents


\section{Introduction}\label{sec:intro}

This article studies braid varieties, a class of affine algebraic varieties associated to positive braids, and their relation to contact and symplectic geometry. First, the geometric properties of braid varieties are studied, including the construction of torus actions and holomorphic symplectic structures on their quotients. Then, we construct correspondences between these braid varieties by using certain moduli spaces associated to weaves, a class of labeled planar diagrams. These geometric correspondences are shown to induce valuable \dec s for braid varieties and their quotients, also unifying known constructions of P. Boalch and A. Mellit, in the case of character varieties, and M. Henry and D. Rutherford, in the case of augmentation varieties.

The diagrammatic calculus based on weaves, presented in Section \ref{sec: alg weaves}, allows for direct and explicit computations, and we provide new constructions of embedded exact Lagrangian fillings for Legendrian links through combinatorial methods. The main results of the article are Theorems \ref{thm:main1}, \ref{thm:main2} and \ref{thm:main3}, and several detailed examples are provided throughout the manuscript. In particular, we believe that  the construction of a holomorphic symplectic structure on augmentation varieties, developed in Section \ref{sec:symplecticform}   is of value for contact and symplectic geometry.


\subsection{Context} Legendrian links in contact 3--manifolds \cite{ArnoldSing,Bennequin83,Geiges} are central in contact and symplectic geometry. Legendrian fronts, immersed planar cuspidal curves, arise in topology, as Cerf diagrams \cite{Arnold80,BST15,Cerf70}, in differential equations, as Stokes data for irregular singularities \cite{Berry88,STZ,Stokes}, and in analysis, as wavefront sets \cite{Ho2,Ho1,KKP08}. In this article, we use that a positive braid $\beta$ gives rise to a Legendrian link $\La(\beta)\sse(\R^3,\xi_\st)$, cf.~\cite[Section 2.2]{CN} or \cite{CG,Geiges}.

Associated to a Legendrian link $\La\sse(\R^3,\xi_\st)$, there exist two geometrically defined moduli spaces: the moduli space of microlocal sheaves in $\R^2$ microlocally supported at $\La$, cf.~\cite{CW,GKS,KS82,KS90}, and the moduli space of exact Lagrangian fillings $L\sse(\R^4,\omega_\st)$, with boundary $\dd L=\La$, cf.~\cite{Arnold80,CG,CN,Geiges}. Note that the latter can be understood as the (geometric part of the) moduli space of objects of the Fukaya category of $(\R^4,\omega_\st)$ partially wrapped at $\La$. For the Legendrian links $\La(\beta)\sse(\R^3,\xi_\st)$, these moduli are algebraic stacks and, when appropriately decorated, smooth algebraic varieties. The present manuscript studies a collection of algebraic varieties associated to a positive braid $\beta$, including and generalizing these two moduli spaces, and new correspondences between them. These algebraic correspondences are often induced by geometric exact Lagrangian cobordisms between Legendrian links, and can in general be described with a diagrammatic calculus, as we will show, building on work of the first author with E. Zaslow \cite{CZ}.

In summary, we introduce the class of {\it braid varieties}, study torus actions and their quotients, construct correspondences and morphisms between them, and develop a diagrammatic calculus associated to these correspondences. As we establish these results, we prove several theorems of interest, including the fact that the augmentation variety associated to $\La(\beta)$ admits a holomorphic symplectic structure, and explain the relation between A. Mellit's \dec\, of character varieties \cite{Mellit} and the ruling stratification of the augmentation variety \cite{HR14,HR15}. Note that holomorphic symplectic structures play a central role in the study of moduli spaces of connections \cite{Boalch01,Boalch20}, and there ought to be a relation to their symplectic structures through understanding the moduli stack of objects in the $\mbox{Aug}_+$-category \cite{NRSSZ} as a wild character variety \cite{Boalch00,STWZ}. It should be noted that our diagrammatic calculus, which we refer to as {\it algebraic weaves}, provides a combinatorial and explicit approach to these \dec s. In addition, the \piece s are compatible with the holomorphic symplectic structure, the open toric charts admitting (exponential) holomorphic Darboux coordinates.


\subsection{Main Results} Let us define our main object of study, the braid matrices and braid varieties. In order to do this, let us fix $n > 0$. For each $i = 1, \dots, n-1$, we  consider the \emph{braid matrix}
$B_i(z)\in \GL(n,\C[z])$ defined by:
\[
(B_i(z))_{jk} := \begin{cases} 1 & j=k \text{ and } j\neq i,i+1 \\
1 & (j,k) = (i,i+1) \text{ or } (i+1,i) \\
z & j=k=i+1 \\
0 & \text{otherwise;}
\end{cases},\qquad\mbox{i.e.}\quad
B_i(z):=\left(\begin{matrix}
1 & \cdots  & & & \cdots & 0\\
\vdots & \ddots & & & & \vdots\\
0 & \cdots & 0 & 1 & \cdots & 0\\
0 & \cdots & 1 & z & \cdots & 0\\
\vdots &  & & &\ddots & \vdots\\
0 & \cdots & & & \cdots & 1\\
\end{matrix}\right).
\]

Note that the only the non-trivial $(2\times 2)$-block is at $i$th and $(i+1)$st rows. Braid matrices have appeared in a range of areas, starting with L. Euler's continuants \cite{Euler64}, G. Stokes' study of irregular singularities \cite{Stokes} (see P. Boalch's \cite{Boalch02,Boalch20}), M. Brou\'e and J. Michel's work on Deligne-Lusztig varieties \cite{BM}, P. Deligne's braid invariants \cite{Deligne}, and more recently in T. K\'alm\'an's study of the Legendrian Contact DGA \cite{Kalman} (see also \cite{CN}) and A. Mellit's results on the curious Lefschetz property for character varieties \cite{Mellit}, among others.

Let $\gamma$ be a positive $n$-braid word $[\gamma]\in \Br^+_n$, $\gamma=\sigma_{i_1}\cdots \sigma_{i_\ell}$. We consider the following matrix $B_{\gamma}(z_1,\ldots,z_{\ell})\in \GL(n,\C[z_1,\ldots,z_\ell])$, we which define to be the matrix product
$$
B_{\gamma}(z_1,\ldots,z_{\ell}):=B_{i_1}(z_1)\cdots B_{i_\ell}(z_{\ell}).
$$
Finally, if $\pi\in \GL(n,\C)$ is a permutation matrix we consider the {\em braid variety}
$$
X_0(\gamma;\pi):=\left\{(z_1,\ldots,z_\ell)\ :\ B_\gamma(z_1,\ldots,z_{\ell})\pi\ \text{is upper-triangular}\right\}\sse \C^{\ell}.
$$
Note that this is an affine algebraic variety, given by the vanishing of $\binom{n}{2}$ polynomial equations in the variables $z_1, \dots, z_{\ell}$.

From our definition above, it is simple to see that $X_0(\gamma;\pi)$ is isomorphic to $X_0(\gamma';\pi)$ if $[\gamma]=[\gamma']\in \Br_n$, i.e. if two positive words $\gamma,\gamma'$ represent the same $n$-braid, the resulting braid varieties are isomorphic, hence the name. In the course of the article, the permutation (matrix) $\pi$ will often be the identity $\pi=\mathrm{Id}=e\in S_n$ or the longest element $\pi=w_0=(n\ n-1\ \ldots\ 1)\in S_n$. Let $\Delta\in \Br^+_n$ be a positive braid lift of the permutation $w_0$, i.e. $\Delta$ will be a braid word for the half-twist. See Example \ref{ex: half twist} for our specific choice of positive braid word for $\Delta$.

The first result of the article establishes geometric properties of braid varieties, including the existence of a torus action and their relation to the Floer-theoretically defined augmentation varieties \cite{Bour,Che,NRSSZ}. It reads as follows.

\begin{thm}\label{thm:main1} Let $\gamma$ be a positive $n$-braid word $[\gamma]\in \Br^+_n$. Then the following statements hold:

\begin{itemize}
	\item[(i)] $X_0(\gamma\Delta;1)\simeq X_0(\gamma;w_0)\times \C^{\binom{n}{2}}$, and $X_0(\gamma;w_0)$ is non-empty if and only if the Demazure product of $\gamma$ equals $w_0$. In this case, $X_0(\gamma;w_0)$ is an irreducible complete intersection of dimension $\ell(\gamma)-\binom{n}{2}$, and $X_0(\gamma\Delta;1)$ is an irreducible complete intersection of dimension $\ell(\gamma)$.\\
	
\noindent	Suppose that there exists a positive $n$-braid word $\beta$ such that $\gamma=\beta\Delta$. Then:\\
	
	\item[(ii)] The braid variety $X_0(\beta\Delta;w_0)$, and thus $X_0(\beta\Delta^2;1)$, is smooth.\\
	
	\item[(iii)] There exists a free torus action on $X_0(\beta\Delta;w_0)$ such that the quotient algebraic variety $X_0(\beta\Delta;w_0)/T$ is smooth and holomorphic symplectic.\\
	
	\item[(iv)] There exists an isomorphism between $X_0(\beta\Delta;w_0)/T$ and an augmentation variety $\Aug(\beta)$ associated to the Legendrian link $\La(\beta)$. In particular, $\Aug(\beta)$ is a holomorphic symplectic $($smooth$)$ affine variety.

\item[(v)] The open Bott-Samelson variety $\OBS(\beta)$ associated to $\beta$ is isomorphic to the quotient
$$\OBS(\beta)\cong\left(\GL(n,\C)\times X_0(\beta;1)\right)/\SB,$$
where $\SB\sse \GL(n,\C)$ is the Borel subgroup of upper-triangular matrices. 
\end{itemize}
\end{thm}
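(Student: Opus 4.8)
The plan is to establish the assertions in the order (i), (ii), (iii)--(iv), and finally the Bott--Samelson identity. Throughout write $\SB\subseteq\GL(n,\C)$ for the upper-triangular Borel, $s_i$ for the permutation matrix of $(i\,\,i{+}1)$, and use the factorization $B_i(z)=s_i\,x_i(z)$ with $x_i(z)=\mathrm{Id}+zE_{i,i+1}\in\SB$. For the product decomposition in (i) I would use the specific half-twist word of Example \ref{ex: half twist}: telescoping the matrices $s_{i_k}$ to the left along this reduced word for $w_0$ gives $B_\Delta(w)=w_0\,U(w)$ with $U(w)\in\SB$ for all $w\in\C^{\binom n2}$. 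Then $B_{\gamma\Delta}(z,w)=B_\gamma(z)\,w_0\,U(w)$ is upper-triangular iff $B_\gamma(z)\,w_0$ is, which decouples $w$ as free coordinates and yields $X_0(\gamma\Delta;1)\cong X_0(\gamma;w_0)\times\C^{\binom n2}$. Next I record that $X_0(\gamma;w_0)$ is exactly the vanishing locus in $\C^{\ell}$ of the $\binom n2$ entries of $B_\gamma(z)$ strictly below the anti-diagonal, equivalently $B_\gamma(z)\in\SB w_0$. Since $B_i(z)\in\SB s_i\SB$, one has $B_\gamma(z)\in\overline{\SB\,\delta(\gamma)\,\SB}$ for all $z$, so the open cell is met only if the Demazure product $\delta(\gamma)=w_0$, giving the ``only if'' of non-emptiness; the ``if'' will follow from the fibration below. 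Finally, once $\dim X_0(\gamma;w_0)=\ell-\binom n2$ is established, the variety is cut in the smooth $\C^{\ell}$ by exactly $\binom n2$ equations of that codimension, hence is a complete intersection, and $\dim X_0(\gamma\Delta;1)=\ell(\gamma)$ follows from the product decomposition.

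The dimension, irreducibility and smoothness I would obtain from an inductive fibration that removes one crossing at a time. Peeling the last letter of $\gamma=\gamma^-\sigma_{i}$ and using $B_{i}(z_\ell)=s_{i}x_{i}(z_\ell)$, the forgetful projection dropping $z_\ell$ realizes $X_0(\gamma;w_0)$ as a Zariski-locally-trivial fibration over a braid variety of the shorter word $\gamma^-$ (with a modified terminal condition), whose fibre is $\bA^1$ or $\C^*$ according to whether the crossing raises the Demazure product. As $\delta(\gamma)=w_0$, iterating this down to a point expresses $X_0(\gamma;w_0)$ as an iterated $\bA^1/\C^*$-fibration; both fibre types are smooth and irreducible, so the total space is smooth and irreducible, and the dimension count gives $\ell-\binom n2$ (in particular non-emptiness). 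This proves (ii), since $\delta(\beta\Delta)=w_0$ holds automatically for any $\beta$; and $X_0(\beta\Delta^2;1)\cong X_0(\beta\Delta;w_0)\times\C^{\binom n2}$ is then smooth by the decomposition of (i). The point demanding care is the bookkeeping of the modified terminal condition and the resulting fibre type at each crossing, governed by the Bruhat-order combinatorics.

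For (iii) I would construct the torus $T$ as a diagonal gauge action: conjugation $B_\gamma(z)\mapsto D\,B_\gamma(z)\,D^{-1}$ by diagonal $D$ preserves the vanishing of the sub-anti-diagonal entries, and, after pushing $D$ through the permutation matrices $s_{i_k}$ and absorbing it into the $x_{i_k}$, rescales the coordinates $z_k$; this is the action of Section \ref{sec:symplecticform}. Freeness on $X_0(\beta\Delta;w_0)$ is where the terminal $\Delta$ is essential, being used to exclude nontrivial stabilizers. A free torus action on a smooth affine variety yields a smooth geometric quotient that is a principal $T$-bundle, and the holomorphic symplectic form is the one of Section \ref{sec:symplecticform}, which one checks is closed, non-degenerate and $T$-basic, hence descends to $X_0(\beta\Delta;w_0)/T$. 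For (iv) I would match two presentations: the Legendrian contact/augmentation differential of $\La(\beta)$ in the normal form of \cite{Kalman,CN} has generators whose relations are precisely the vanishing of the sub-anti-diagonal entries defining $X_0(\beta\Delta;w_0)$, with base-point gauge symmetry the torus $T$; a $T$-equivariant isomorphism of coordinate rings, with the marked-point conventions of Section \ref{sect:augmentations}, then identifies $X_0(\beta\Delta;w_0)/T$ with $\Aug(\beta)$, which therefore inherits smoothness and the holomorphic symplectic structure from (ii)--(iii).

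For the final identity $\OBS(\beta)\cong(\GL(n,\C)\times X_0(\beta;1))/\SB$ I would write the explicit map sending $(g,z)$ to the framed flag chain $\big(g\SB,\,gB_{i_1}(z_1)\SB,\dots,gB_\beta(z)\SB\big)$ built from the braid matrices; the condition $\pi=1$ makes the chain close up compatibly with the framing, right multiplication $g\mapsto gb$ by $b\in\SB$ merely reparametrizes the frame of the initial flag, so the map is $\SB$-invariant and descends, and one produces the inverse by reading the $z_k$ off the successive relative positions and recovering $g$ up to $\SB$ from a frame adapted to the initial flag. The routine parts are the factorization bookkeeping in (i) and the descent of the symplectic form in (iii); the genuine obstacles are the fibre-type analysis underlying smoothness in (ii), the freeness of $T$ in (iii) where the presence of $\Delta$ is indispensable, and above all the presentation-matching in (iv), where the Legendrian contact differential and its gauge torus must be aligned with the braid-variety equations and the action $T$.
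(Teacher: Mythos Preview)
Your product decomposition in (i), the Bruhat-closure argument for the ``only if'' of non-emptiness, the diagonal torus construction, the identification with $\Aug(\beta)$ via K\'alm\'an's presentation, and the framed-flag map for the Bott--Samelson statement all align with the paper's route (Theorem~\ref{thm: smooth}, Corollary~\ref{cor: non empty}, Section~\ref{sect:torus action}, Theorem~\ref{thm:aug vs braid}, Theorem~\ref{thm:OBS}).

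The substantive divergence is in how you obtain smoothness and irreducibility. The paper does \emph{not} peel letters into an iterated $\bA^1/\C^*$-fibration. For smoothness (ii) it observes directly (Theorem~\ref{thm: smooth}) that $B_{\beta\Delta}w_0=B_\beta L$ with $L$ lower uni-triangular, so the defining condition becomes ``$B_\beta^{-1}$ admits an $LU$ decomposition'', which is an open condition on $\C^{\ell(\beta)}$; this is a two-line argument that uses the terminal $\Delta$ in an essential way. For irreducibility, dimension, and the ``if'' direction of non-emptiness in (i), the paper instead develops the weave calculus and proves (Theorem~\ref{thm:stratify}, Corollary~\ref{cor: non empty}) that $X_0(\gamma;w_0)$ admits a stratification by simplifying weaves with a unique top-dimensional stratum $(\C^*)^{\ell(\gamma)-\binom n2}$. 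Your fibration idea is a legitimate alternative strategy known from later work, but as written it has a gap: after peeling $\sigma_i$ the ``modified terminal condition'' is not another braid variety $X_0(\gamma^-;\pi')$ for a single permutation $\pi'$, and the map you describe is a priori only a Deodhar-type \emph{stratification} (fibre type jumping with the Bruhat position of the truncated product) rather than a Zariski-locally-trivial fibration. You would need to explain why the base is smooth and the fibre type is constant, which is exactly the delicate point.

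Finally, your treatment of the holomorphic symplectic structure in (iii) as something ``one checks'' understates what the paper does. Section~\ref{sec:symplecticform} constructs the 2-form via the Maurer--Cartan cocycle, proves it has constant coefficients in each toric chart (Lemma~\ref{lem: form constant}), establishes non-degeneracy by an explicit surface-homology computation on Mellit's chart, and then propagates non-degeneracy everywhere using the codimension-2 covering by toric charts (Theorem~\ref{thm: codim 2}). None of these steps is routine, and the last one depends on the opening-of-crossings machinery of Section~\ref{sec: opening}.
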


In Theorem \ref{thm:main1}.(iii), the dimension of the torus $T$ does depend on the number of components in the closure of $\beta$, see Section \ref{sec:braid} for details. See Section \ref{sect:augmentations} for the details on marked points used to define the objects in \ref{thm:main1}.(iv). The different varieties and the torus action featured in Theorem \ref{thm:main1} are presented in the course of the article, and the proof of this theorem is obtained by gathering some the results we develop, such as Theorem \ref{thm:aug vs braid}, Theorem \ref{thm: smooth}, Theorem \ref{thm: symplectic} and Corollary \ref{cor: non empty}.  See also Section \ref{section:Dem_prod} for the definition of Demazure product, and note that the Demazure product of $\beta\Delta$ equals $w_0$ for any $\beta$.

Theorem \ref{thm:main1} discusses the absolute aspects of braid varieties. The study of such varieties also relies crucially on their relative geometry: morphisms between different such braid varieties and, more generally, correspondences, yield interesting (and useful) results. In order to study this relative setting, we develop the diagrammatic calculus of {\it weaves}, which we summarize as follows.

Let $\mathfrak{W}_n$ be the category defined as:

\begin{itemize}
	\item[-] {\bf Objects}: $\mbox{Ob}(\mathfrak{W}_n)$ are arbitrary positive braid words $\gamma=\sigma_{i_1}\cdots \sigma_{i_\ell}$, $[\gamma]\in Br^+_n$,
	\item[-] {\bf Morphisms}: $\mbox{Hom}_{\mathfrak{W}_n}(\gamma,\gamma')$ are compositions of the following six elementary moves, starting at $\gamma$ at the top and ending at $\gamma'$ at the bottom. The moves are
	$$\sigma_i\sigma_{i}\rightarrow \sigma_i,\qquad \sigma_i\sigma_{i+1}\sigma_i\leftrightarrow \sigma_{i+1}\sigma_i\sigma_{i+1},\qquad\sigma_i\sigma_j\rightarrow \sigma_{j}\sigma_i\ (|i-j|>1),\quad\mbox{ and }\quad\sigma_i\sigma_i\leftrightarrow 1.$$
	
\end{itemize}
We will declare some of the morphisms to be equivalent, see section \ref{sec:CombinatoricsWeaves}.

The morphisms in $\mathfrak{W}_n$ will be represented diagrammatically as certain planar graphs with edges decorated by simple transpositions $s_i$. (Namely, $s_i$ are the Coxeter projections of the Artin braid generators $\sigma_i$, $1\leq i\leq n$.) These planar graphs are referred to as {\it weaves}, following \cite[Section 2]{CZ}, and $\mathfrak{W}_n$ will be called the {\it category of weaves}. The elementary moves above, i.e. the building blocks for morphisms, can be drawn as follows:
\begin{center}
	\includegraphics[scale=0.3]{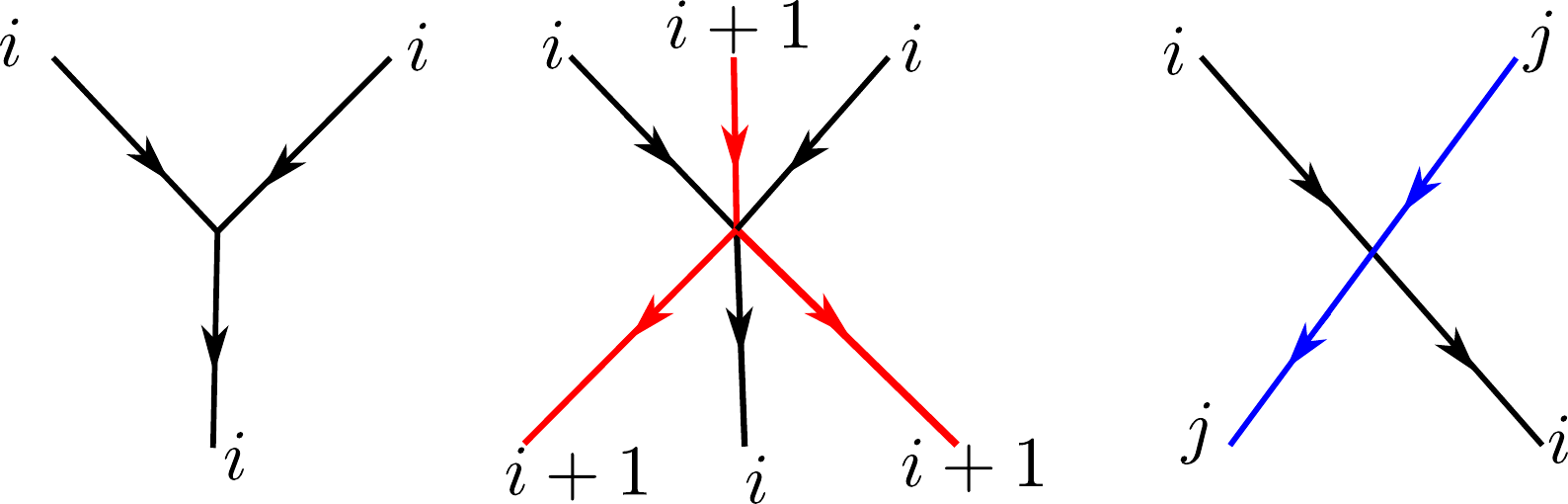}
	
	\includegraphics[scale=0.3]{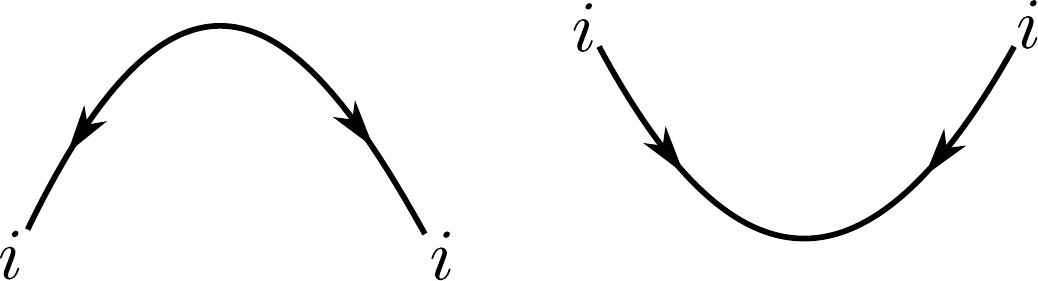}
\end{center}
There is also a dual 6-valent vertex corresponding to $\sigma_i\sigma_{i-1}\sigma_i\to \sigma_{i-1}\sigma_i\sigma_{i-1}$ which we do not draw here but is equally allowed.
An algebraic weave, obtained by vertically and horizontally concatenating the models above (plus additional decorations), represents a morphism from the braid word on the top to the braid word on the bottom. The composition of weaves
$$\mbox{Hom}_{\mathfrak{W}_n}(\gamma',\gamma'')\times\mbox{Hom}_{\mathfrak{W}_n}(\gamma,\gamma')\lr\mbox{Hom}_{\mathfrak{W}_n}(\gamma,\gamma'')$$
is given by vertical stacking of these weave diagrams. See Figure \ref{fig:IntroWeaveExample} for an instance of a morphism. 
\begin{center}
	\begin{figure}[h!]
		\centering
		\includegraphics[scale=0.8]{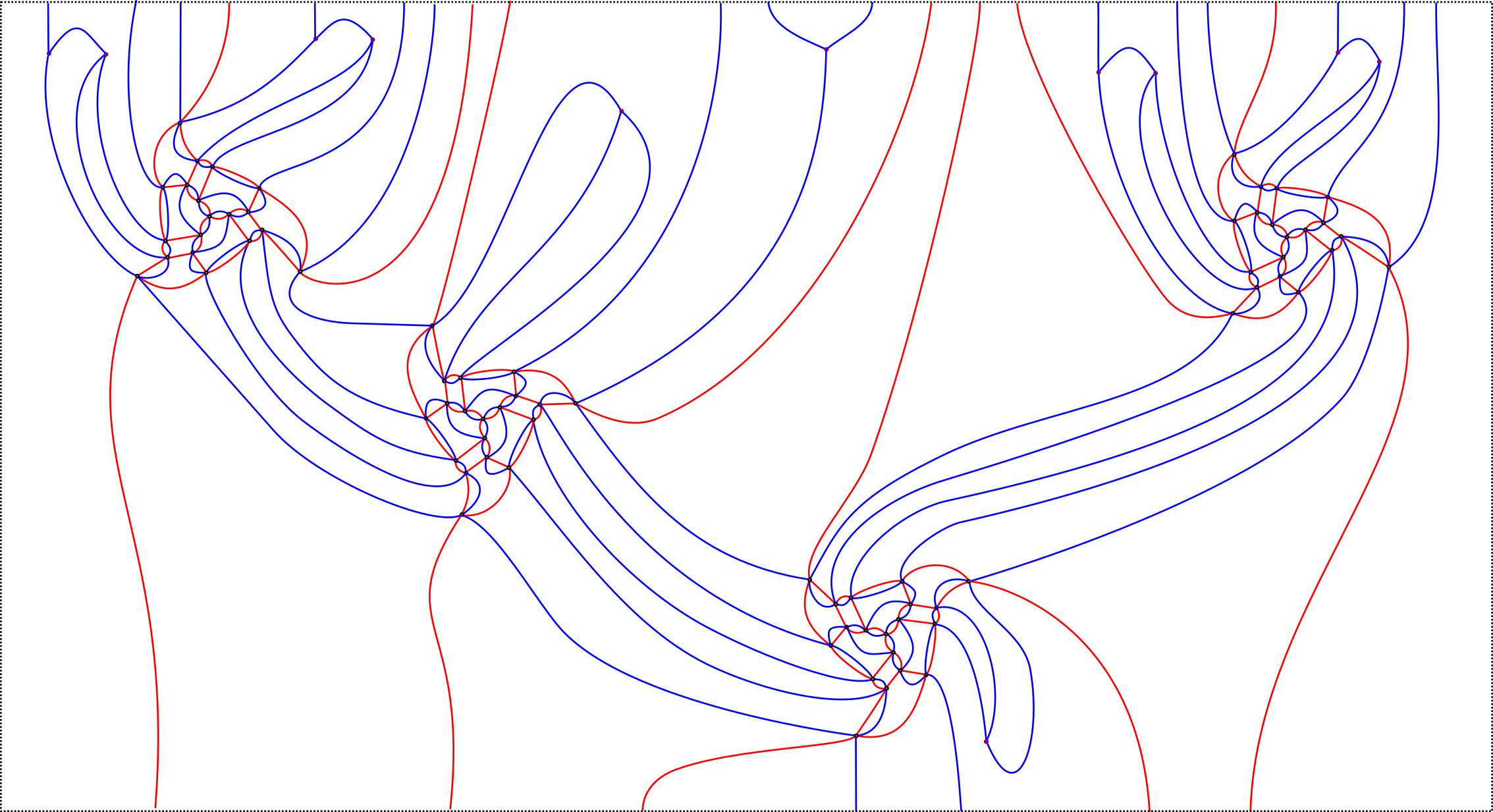}
		\caption{An algebraic weave in $\mbox{Hom}_{\mathfrak{W}_n}(\gamma,\gamma')$ between the two positive $3$-braids $\gamma=\sigma_1^3\sigma_2\sigma_1^3\sigma_1^2\sigma_1^3\sigma_2^3\sigma_1^3\sigma_2\sigma_1^3$, at the top, and $\gamma'=\sigma_2^3\sigma_1^2\sigma_2^2$, at the bottom. The color code is that \color{blue} blue \color{black} is labeled with the transposition $s_1$ and \color{red} red \color{black} is labeled with $s_2$. For readability we omit the (downward pointing) orientations. The upside-down trivalent vertices are defined using the usual trivalent vertices and cups, see Section \ref{sec: nonstandard 3v}.}
		\label{fig:IntroWeaveExample}
	\end{figure}
\end{center}

\begin{remark}
Note that this diagrammatic category is in part similar to the categories appearing in Soergel calculus \cite{EK,EW}, but differs in several key aspects. In particular, in the category of algebraic weaves there is no requirement that the two ways of getting from $\sigma_i\sigma_i\sigma_i$ to $\sigma_i$, via the moves $\sigma_i\sigma_i\rightarrow\sigma_i$, are equivalent:
\begin{center}
	\includegraphics[scale=0.3]{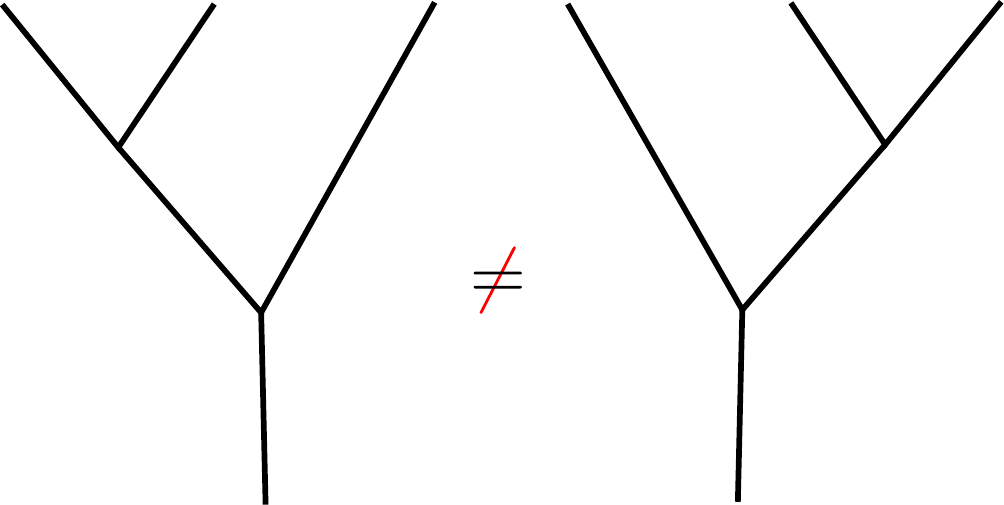}
\end{center}
The difference between these diagrams will be referred to as a {\it weave mutation}.
\end{remark}

Let $\mathfrak{C}$ be the category of algebraic varieties whose morphisms are correspondences. That is, a morphism $X \to Y$ consists of a pair of morphisms $X \leftarrow Z \rightarrow Y$, and composition corresponds to the fiber product. The second result in this manuscript shows that braid varieties and their correspondences provide a realization of the weave category $\mathfrak{W}_n$, as follows.

\begin{thm}\label{thm:main2}
	There exists a functor $\mathfrak{X}_0:\mathfrak{W}_n\lr\mathfrak{C}$ such that:
	\begin{itemize}
		\item [(a)] {\bf Objects}: For a positive braid word $\gamma\in\mbox{Ob}(\mathfrak{W}_n)$, the functor $\mathfrak{X}_0$ associates the braid variety $\mathfrak{X}_0(\gamma):=X_0(\beta;w_0)$.\\
		
		\item [(b)] {\bf Morphisms}: For a weave $\ww\in \Hom_{\mathfrak{W}_n}(\beta_2,\beta_1)$, the functor associates a correspondence $\mathfrak{X}_0(\ww)$ between 
		$X_0(\beta_2;w_0)$ and $X_0(\beta_1;w_0)$, such that correspondences $\mathfrak{X}_0(\ww)$ and $\mathfrak{X}_0(\ww')$ associated to equivalent weaves with no caps $\ww,\ww'$ are isomorphic. $($The algebraic variety $\mathfrak{X}_0(\ww)$ is in fact described as a certain moduli space governed by the weave $\ww$.$)$\\
		
		\item [(c)] {\bf Composition}: Let $\ww_1\in\Hom_{\mathfrak{W}_n}(\beta_1,\beta_0)$, $\ww_2\in\Hom_{\mathfrak{W}_n}(\beta_2,\beta_1)$, and consider their composition 
        $\ww = \ww_1 \circ \ww_2 \in \Hom_{\mathfrak{W}_n}(\beta_2,\beta_0)$,
        which is obtained by vertical concatenation of $\ww_2$, at the top, and $\ww_1$, at the bottom. Then the composition of weaves under $\mathfrak{X}_0$ corresponds to the diagram:
		\begin{center}
			\begin{tikzcd}
			& & \mathfrak{X}_0(\ww) \arrow{dl} \arrow{dr} & & \\
			& \mathfrak{X}_0(\ww_1) \arrow{dl} \arrow{dr}  & & \mathfrak{X}_0(\ww_2) \arrow{dl} \arrow{dr}  & \\
			X_0(\beta_0;w_0) &  & X_0(\beta_1;w_0) & & X_0(\beta_2;w_0),\\
			\end{tikzcd} 
		\end{center}
		where the middle square is Cartesian.\\
		
		\item [(d)] Let $\ww\in\Hom_{\mathfrak{W}_n}(\beta_2,\beta_1)$ be a weave with no caps, $a$ cups and $b$ trivalent vertices. Then the correspondence $\mathfrak{X}_0(\ww)$ defines an injective map
		$$
		\mathfrak{X}_0(\ww):\C^a\times (\C^*)^b\times X_0(\beta_1;w_0)\hookrightarrow X_0(\beta_2;w_0).
		$$
	\end{itemize}
Furthermore, the correspondences $\mathfrak{X}_0(\ww)$ are equivariant with respect to appropriate torus actions and, using Theorem \ref{thm:main1}.$($iv$)$, yield correspondences between augmentation varieties.
\end{thm}

The proof of Theorem \ref{thm:main2} occupies the majority of Section \ref{sec: alg weaves}, the equivariance statement being discussed in Subsection \ref{seq: equivatiant weaves}. The statements in Theorem \ref{thm:main2}.(a)-(c) are the algebraic analogues of the symplectic geometric results obtained in \cite{CZ}. Note that the algebraic variety $X_0(\Delta;w_0)$ is a point, and thus Theorem \ref{thm:main2} implies the following.

\begin{cor}
	\label{cor: strata intro}
	Let $\ww\in\Hom_{\mathfrak{W}_n}(\gamma,\Delta)$ be a weave with no caps, $a$ cups and $b$ trivalent vertices, $a,b\in\N$. Then the correspondence $\mathfrak{X}_0(\ww)$ yields an injective map 
	$$\mathfrak{X}_0(\ww):\C^a\times (\C^*)^b\hookrightarrow  X_0(\gamma;w_0),\ 2a+b=\ell(\gamma)-\binom{n}{2}.$$
\end{cor}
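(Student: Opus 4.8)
The plan is to obtain Corollary \ref{cor: strata intro} as the special case $\beta_1=\Delta$ of Theorem \ref{thm:main2}.(d), together with a short combinatorial count that produces the numerical identity $2a+b=\ell(\gamma)-\binom{n}{2}$.

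First I would apply Theorem \ref{thm:main2}.(d) with $\beta_0=\gamma$ and $\beta_1=\Delta$. Since $\Sigma\in\Hom_{\mathfrak{W}_n}(\gamma,\Delta)$ has no caps, $a$ cups and $b$ trivalent vertices, part (d) directly yields an injective map
$$\mathfrak{X}_0(\Sigma)\colon\ \C^a\times(\C^*)^b\times X_0(\Delta;w_0)\ \hookrightarrow\ X_0(\gamma;w_0).$$
Next I would use that $X_0(\Delta;w_0)$ is a single reduced point, as recorded just before the statement of the corollary; this is also consistent with Theorem \ref{thm:main1}.(i), since $\Delta$ is a positive braid lift of $w_0$, so its Demazure product equals $w_0$ and $X_0(\Delta;w_0)$ is a nonempty irreducible complete intersection of dimension $\ell(\Delta)-\binom{n}{2}=0$ (reducedness may be checked directly from the defining equations). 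Consequently $\C^a\times(\C^*)^b\times X_0(\Delta;w_0)\cong\C^a\times(\C^*)^b$, and the displayed map becomes the asserted embedding $\mathfrak{X}_0(\Sigma)\colon\C^a\times(\C^*)^b\hookrightarrow X_0(\gamma;w_0)$.

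To establish $2a+b=\ell(\gamma)-\binom{n}{2}$, I would track the braid-word length as $\Sigma$ is read from its top $\gamma$ to its bottom $\Delta$. Among the elementary moves generating morphisms in $\mathfrak{W}_n$, the braid moves $\sigma_i\sigma_{i+1}\sigma_i\leftrightarrow\sigma_{i+1}\sigma_i\sigma_{i+1}$ (and the dual $6$-valent move) and the commutations $\sigma_i\sigma_j\to\sigma_j\sigma_i$, $|i-j|>1$, preserve word length; each trivalent vertex $\sigma_i\sigma_i\to\sigma_i$ lowers it by $1$; each cup lowers it by $2$; and a cap would raise it by $2$. As $\Sigma$ has no caps, the total change is exactly $-(2a+b)$, whence $\ell(\gamma)-(2a+b)=\ell(\Delta)=\binom{n}{2}$, as claimed.

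The argument is a specialization followed by bookkeeping, so no real obstacle is expected; the only points to handle with care are the identification of $X_0(\Delta;w_0)$ with $\Spec\C$ (which should be verified against the specific positive word for $\Delta$ fixed in Example \ref{ex: half twist}) and the fact that the ``no caps'' hypothesis is exactly what makes word length monotone along $\Sigma$, so that the $2a$ from cups and the $b$ from trivalent vertices together account for the whole drop $\ell(\gamma)-\binom{n}{2}$.
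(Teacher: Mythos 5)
Your proof is correct and follows essentially the same route as the paper: the authors likewise specialize the general statement about simplifying weaves (Theorem \ref{thm:main2}.(d), proved via the correspondence $\SM(\Sigma,\pi)\cong\C^a\times(\C^*)^b\times X_0(\beta_1;\pi)$) to the case where the bottom braid is $\Delta$, invoke Example \ref{ex: braid varieties} for $X_0(\Delta;w_0)=\{\operatorname{pt}\}$, and obtain $2a+b=\ell(\gamma)-\binom{n}{2}$ by exactly the same length bookkeeping (cups drop the word length by $2$, trivalent vertices by $1$, braid moves preserve it).
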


Corollary \ref{cor: strata intro} provides a unifying framework for many known \dec s, including the ruling stratification in augmentation varieties \cite{Fuchs,HR14,HR15}, and the \dec \, by walks in character varieties \cite{Mellit}. First, we will see that any weave $\ww \in \Hom_{\mathfrak{W}_n}(\gamma, \Delta)$ with no cups or caps yields an open  (algebraic) torus $(\C^*)^{\ell(\gamma)-\binom{n}{2}}\subset X_0(\gamma;w_0)$. Fixing such a weave, we will see that its complement can be further decomposed with weaves (that will now include cups). The weaves with no caps or cups, that will be of primary importance in this article, will be referred to as {\it Demazure weaves}. Note that different Demazure weaves define (a priori) different \dec s of $X_0(\gamma; w_0)$. 

\begin{remark} The manuscript also includes a new construction of weaves, coming from a class of labeled triangulations. This construction, described in Section~\ref{sec: admissible}, uses Demazure products in a crucial manner and, together with results of \cite{CZ}, provides a systematic (and combinatorial) mechanism to construct embedded exact Lagrangian fillings for Legendrian links in $(\R^3,\xi_\st)$ which are obtained as closures of a positive braid $\beta$. Specifically, the points in the braid variety $X_0(\beta;w_0)$ correspond to fillings of the $(-1)$-closure of $\beta\Delta$, cf.~\cite[Section 2]{CN}.
\end{remark}

Finally, complementing Theorem \ref{thm:main1} and Theorem \ref{thm:main2}, we give a geometric interpretation to these toric charts associated to Demazure weaves $\ww\in\mbox{Hom}_{\mathfrak{W}_n}(\beta\Delta,\Delta)$, as follows.

First, we show in Section \ref{sec: opening} that these charts can be combinatorially obtained by opening the crossings of the positive braid $\beta$. Indeed, Section \ref{sec: opening} shows that there is an injective map 
$$
X_0(\beta'\Delta; w_0)\times \C^*\hookrightarrow X_0(\beta\Delta; w_0),
$$
if the positive braid word $\beta'$ is obtained from $\beta$ by removing exactly one crossing. Therefore, opening the crossings in $\beta$ one by one, in some order, yields a toric chart in $X_0(\beta\Delta; w_0)$. Different orders might yield identical or different toric charts. For instance, for a 2-strand braid $\beta=\sigma_1^n$, there are $n!$ possible orderings and one obtains a Catalan number $C_n$ of toric charts. In particular, in this correspondence, each toric chart is obtained by exactly one 312-pattern avoiding permutation.

\begin{definition}
Throughout the paper, if $\beta$ is a braid word of length $\ell$, we denote by $S_{\ell}$ the set of orderings on the crossings of $\beta$ (which is in bijection with the symmetric group in $\ell$ letters). 
\end{definition}

Regarding this relation, between toric charts and openings of crossings, we show the following.

\begin{thm}\label{thm:main3} Let $[\beta]\in Br^+_n$ be a positive braid and $\beta=\sigma_{i_1}\cdot\sigma_{i_2}\cdot\ldots\cdot \sigma_{i_l}$ a positive braid word. Consider an ordering $\rho\in S_{l}$ for the crossings of $\beta$. Then:

\begin{itemize}
\item[(i)] There exists a $($Demazure$)$ weave $\ww_\rho$ such that the sequence of crossing openings according to $\rho$ is realized by the weave $\ww_\rho$. Conversely, any Demazure weave $\ww$ is equivalent to opening crossings for some ordering $\rho\in S_l$, i.e. there exists $\rho\in S_l$ such that $\ww$ is equivalent to $\ww_\rho$.\\
	
\item[(ii)] Two toric charts $C_1,C_2\sse X_0(\beta\Delta;w_0)$ associated to different orderings of the crossings are represented by weaves $\ww_1,\ww_2$ such that $\ww_1,\ww_2$ are related by a sequence of mutations. In addition, the union of all such toric charts covers $X_0(\beta\Delta;w_0)$ up to codimension 2.
\end{itemize}
\end{thm}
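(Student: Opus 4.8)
\emph{Proof plan.} Recall from Theorem \ref{thm:main1}.(i)--(ii) that $X_0(\beta\Delta;w_0)$ is a smooth, irreducible affine variety of dimension $l=\ell(\beta)$, and that $X_0(\Delta;w_0)$ is a point. A Demazure weave $\Sigma\in\Hom_{\mathfrak{W}_n}(\beta\Delta,\Delta)$ has no cups and no caps, so by Corollary \ref{cor: strata intro} (with $a=0$) it has exactly $l$ trivalent vertices and the correspondence $\mathfrak{X}_0(\Sigma)$ is an injection $(\C^*)^{l}\hookrightarrow X_0(\beta\Delta;w_0)$; its image $C_\Sigma$, being $l$-dimensional in an irreducible $l$-fold, is dense, and the construction will exhibit it as a Zariski open toric chart. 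For part (i), I would construct $\Sigma_\rho$ by induction on $l$. Let $j=\rho^{-1}(1)$. By Section \ref{sec: opening}, opening the crossing $\sigma_{i_j}$ is realised by an explicit weave $\beta\Delta\to\beta'\Delta$ that has no cups or caps and a single trivalent vertex, where $\beta'$ is $\beta$ with its $j$-th letter deleted, and this weave intertwines the inclusion $X_0(\beta'\Delta;w_0)\times\C^*\hookrightarrow X_0(\beta\Delta;w_0)$ with the new $\C^*$-coordinate. Concatenating it on top of $\Sigma_{\rho'}$, where $\rho'\in S_{l-1}$ is the ordering of the crossings of $\beta'$ induced by $\rho$, gives a Demazure weave $\Sigma_\rho$ whose chart is, by construction, exactly the image of the iterated opening maps performed in the order $\rho$.

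For the converse in (i), given an arbitrary Demazure weave $\Sigma$ I would put its $l$ trivalent vertices at pairwise distinct heights---parallel vertices commute, so by the relations of $\mathfrak{W}_n$ this causes no ambiguity---and read off, by tracing the incoming strands back to the top boundary, which crossing of $\beta$ is absorbed at the successive trivalent vertices; this records an ordering $\rho_\Sigma\in S_l$. It then remains to prove that two Demazure weaves absorbing the crossings of $\beta$ in the same order are equal in $\mathfrak{W}_n$. This is a confluence argument: using the movie relations of Section \ref{sec: movies} one commutes the auxiliary $4$- and $6$-valent vertices of $\Sigma$ past one another until $\Sigma$ is brought into the standard shape $\Sigma_{\rho_\Sigma}$. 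Making this rewriting process terminating and confluent is, in my view, the technical heart of part (i).

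For part (ii), since $S_l$ is generated by adjacent transpositions it suffices to compare $\Sigma_\rho$ with $\Sigma_{\rho'}$ for $\rho'=\rho\cdot(k,k+1)$. By the inductive construction (and part (i)) the first $k-1$ openings yield a braid word, well defined up to $\mathfrak{W}_n$-equivalence, together with two crossings $c,c'$ to be opened next. If $c$ and $c'$ are non-interacting---their locations can be separated using only far-commutation and braid moves---then the weaves ``open $c$ then $c'$'' and ``open $c'$ then $c$'' are $\mathfrak{W}_n$-equivalent, so $\Sigma_\rho=\Sigma_{\rho'}$; if they interact, the two local weaves differ by a single weave mutation, a braid-move conjugate of the move $\sigma_i\sigma_i\sigma_i\to\sigma_i$ taken in its two ways. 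Composing these steps along any expression of $\rho^{-1}\rho'$ as a product of adjacent transpositions shows that $\Sigma_\rho$ and $\Sigma_{\rho'}$, and hence the charts $C_\rho$ and $C_{\rho'}$, are related by a sequence of mutations.

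Finally, for the covering statement, $X_0(\beta\Delta;w_0)$ is smooth, irreducible, of dimension $l$, and each $C_\rho\cong(\C^*)^l$ is dense open, so the generic point of any codimension-$1$ component $Z$ of $X_0(\beta\Delta;w_0)\setminus C_\rho$ must be a zero or a pole of one of the $l$ coordinate functions of $C_\rho$; call the (finitely many) prime divisors arising this way the \emph{walls} of $C_\rho$. Since $X_0(\beta\Delta;w_0)\setminus\bigcup_\rho C_\rho\subseteq\bigcap_\rho\bigl(X_0(\beta\Delta;w_0)\setminus C_\rho\bigr)$, it is enough to show that every wall $D$ of every chart meets some other chart $C_{\rho'}$ in a dense open subset: then no prime divisor can be disjoint from all the $C_\rho$, so the complement of $\bigcup_\rho C_\rho$ has codimension $\geq 2$. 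Given $D$, let $v$ be the trivalent vertex whose coordinate cuts it out; using part (i), reorder the openings so that $v$ is performed immediately before an interacting crossing, and pass to the mutated weave $\Sigma_{\rho'}$. One then checks, from the explicit opening coordinates and the moduli description of Theorem \ref{thm:main2}.(b), that all coordinates of $C_{\rho'}$ are regular and non-vanishing at the generic point of $D$, i.e. $D$ minus lower strata lies in $C_{\rho'}$; this can also be organised through the stratification of Theorem \ref{thm:stratify}. Running this over all walls of all charts yields the claim. The two places I expect to require genuine work are the confluence statement underlying (i) and this last verification that each wall-mutation is regular and dominant onto its divisor; granting those, the rest is bookkeeping with the weave pieces furnished by Section \ref{sec: opening}, Theorem \ref{thm:main2} and Theorem \ref{thm:main1}.
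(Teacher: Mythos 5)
Your outline correctly identifies the forward direction of (i) (iterate the opening weave of Lemma \ref{lem: open weave}) and the overall logical skeleton, but the two steps you yourself flag as ``the technical heart'' are exactly where the content of the theorem lives, and in both places the paper's argument is different from what you sketch. For the converse of (i), your plan is to read an ordering $\rho_\Sigma$ off a Demazure weave by tracing strands back to the top and then prove a global confluence statement for the rewriting system. Be careful: the assignment of crossings of $\beta_0$ to trivalent vertices is \emph{not} invariant under weave equivalence (Remark \ref{remark: injection_issue} — the two sides of the $1121$ five-cycle induce different injections, and different resolutions of $12121$ have different missing crossings), so the ordering you propose to read off is a property of the diagram, not of its equivalence class, and a confluence argument organized around it is delicate. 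The paper avoids this entirely: it peels off one trivalent vertex at a time (Theorem \ref{th: weave opening}), and the one-vertex case is handled by Theorem \ref{thm: one 3v}, whose proof is a concrete case analysis on deletion pairs (Lemma \ref{lem: deletion pairs}) showing that a single trivalent vertex commutes, up to the listed equivalences, past $4$- and $6$-valent vertices until it becomes an opening of a crossing of $\beta$. Without that lemma (or an equivalent), your part (i) is not complete.

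For (ii), your reduction to adjacent transpositions overclaims in one place — two interacting openings need not differ by a \emph{single} mutation; the paper's own remark after the theorem statement only asserts ``a sequence of mutations or an equivalence'' — and, more importantly, it presupposes the well-definedness issues above. The paper instead deduces mutation equivalence of \emph{arbitrary} Demazure weaves with reduced target from Theorem \ref{thm: ben}(b), i.e.\ Elias's diamond lemma, which subsumes your pairwise comparison and requires no choice of orderings. Finally, for the codimension-$2$ covering your wall argument is logically sound as stated but defers the essential verification (that every wall of every chart is generically contained in some other chart) — and that verification is nontrivial, since a divisor $\{z=0\}$ corresponds to a cup and need not obviously be dominated by a toric stratum of another ordering. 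The paper's Theorem \ref{thm: codim 2} sidesteps all of this with a two-line induction: the loci $U_1=\{z_1\neq 0\}$ and $U_2=\{z_2\neq0\}$ obtained by opening the first two crossings are each covered up to codimension $2$ by induction, and their joint complement $\{z_1=0\}\cap\{z_2=0\}$ already has codimension $2$. I would recommend replacing your wall analysis with this induction, and supplying the analogue of Theorem \ref{thm: one 3v} before relying on any ordering read off a weave.
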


The first item is proven in Lemma \ref{lem: open weave} and Theorem \ref{th: weave opening}, and the proof of the codimension-2 cover is established in Theorem \ref{thm: codim 2}. The mutation equivalence of any weaves yielding toric charts  follows from the more general Theorem \ref{thm: ben}, which states that, under technical conditions that are satisfied in the weaves pertaining to Theorem \ref{thm:main3}, any two Demazure weaves between the same two braid words are related by a sequence of equivalence moves and mutations. Note that Theorem \ref{thm: ben} is a translation of a result of B. Elias \cite{Elias} to our weaves framework.

\begin{remark} Note that both the openings of crossings and mutations can be described in terms of braid words. Indeed, consider a braid word $\sigma_iu\sigma_j$ with $\sigma_iu=u\sigma_j$, i.e. $(\sigma_i,\sigma_j)$ is a deletion pair in the notation of \cite{Hersh}. Then, we can consider two different weaves:
\begin{center}
	\begin{tikzcd}
	& \sigma_iu\sigma_j \arrow{dl} \arrow{dr}& \\
	\sigma_i\sigma_iu  \arrow{d}& & u\sigma_j\sigma_j\arrow{d}\\
	\sigma_iu \arrow{rr}& & u\sigma_j 
	\end{tikzcd}
\end{center}
In this diagram, the left weave $\sigma_iu\sigma_j\to u\sigma_j$
corresponds to the opening of a crossing $\sigma_i$, and the right weave 
$\sigma_iu\sigma_j\to u\sigma_j$
to opening a crossing $\sigma_i$. Theorem \ref{thm:main3} implies that the two weaves are always related by a sequence of equivalence moves and mutations. For example, for $i=j$ and $u=\sigma_i$ we get a mutation, while for $i=1,j=2$ and $u=\sigma_2\sigma_1$ we get an equivalence (see Section \ref{sec: 1212}):
\begin{center}
	\begin{tikzcd}
	& \sigma_1(\sigma_2\sigma_1)\sigma_2 \arrow{dl} \arrow{dr}& \\	\sigma_1\sigma_1(\sigma_2\sigma_1)  \arrow{d}& & (\sigma_2\sigma_1)\sigma_2\sigma_2\arrow{d}\\
	\sigma_1(\sigma_2\sigma_1) \arrow{rr}& & (\sigma_2\sigma_1)\sigma_2 
	\end{tikzcd}
\end{center}
\end{remark}

\subsection{Related developments}

In this section we comment on some recent results and developments which were completed after the first version of this paper was posted on arXiv.

We further studied certain classes braid varieties in \cite{CGGS2}. In particular, all positroid varieties \cite{KLS} in the Grassmannian $\mathrm{Gr}(k,n)$ were shown to be isomorphic to braid varieties for several different braids, both on $n$ and on $k$ strands. The paper \cite{CGGS2} also gives a precise relation between braid varieties, 
\emph{subword complexes} and brick polytopes  \cite{BS, CLS,Escobar,G1,G2, JLS, KM1, KM2, PS}. The faces of a subword complex for a braid word $\gamma$ correspond to all possible subwords of $\gamma$ such that the Demazure product of their complements equals $w_0$. Subword complexes were introduced by Knutson and Miller \cite{KM1, KM2} in the context of Gr\"obner geometry of Schubert polynomials. Knutson and Miller proved that subword complexes are homeomorphic to balls or spheres. Pilaud and Stump found polytopal realizations of spherical subword complexes and called them \emph{brick polytopes}.  Results of \cite{G1,G2,G3} describe the behavior of subword complexes under braid moves and moves $s_is_i\to s_i$ in  $\gamma$. Ceballos, Labb\'e and Stump \cite{CLS} proved that certain brick polytopes are generalized associahedra, thus relating subword complexes to the theory of cluster algebras. See also more recent works of Brodsky and Stump \cite{BS} and of Jahn, L\"owe and Stump \cite{JLS} further exploring this relation. Using the work of Escobar \cite{Escobar}, we also show in \cite{CGGS2} that a braid variety admits a smooth compactification by the so-called brick manifold. The combinatorics of the boundary divisor agree with the dual subword complex.

Finally, there was a recent increase of interest relating weaves, braid varieties and cluster algebras. In particular, in a joint work with I. Le and L. Shen \cite{CGGLSS}, we show that any braid variety admits a cluster structure. This result was also proven in \cite{GLSBS1,GLSBS2,SBS} by different methods, and we expect the two cluster structures to be closely related. The above results resolve a long-standing conjecture of Leclerc \cite{Leclerc} on the existence of cluster structure on open Richardson varieties. In particular, a cluster structure guarantees the existence of a collection of open tori which correspond to Demazure weaves as in Corollary \ref{cor: strata intro}. On such a torus, \cite{CGGLSS} defines a collection of cluster coordinates using the combinatorics of a weave. We refer to \cite{CGGLSS,GLSBS1,GLSBS2,SBS,SSW,SW} and references therein for all definitions and details.\\

\noindent{\bf Acknowledgements}: This paper started as a joint project with our colleague Monica Vazirani, and would not have happened without her; we thank her warmly. We also thank Ben Elias, Honghao Gao, Tamas K\'alm\'an, Anton Mellit, Lenny Ng, Minh-Tam Trinh and Daping Weng for useful discussions, and Johan Asplund for comments on an earlier version of the paper. E.~Gorsky would like to thank Yuri Chekanov for the lifetime influence. 
We would like to thank the anonymous referee for their very thorough reading of the paper and many helpful suggestions.

R.~Casals is supported by the NSF grant DMS-1841913, the NSF CAREER grant DMS-1942363, the Alfred P. Sloan Foundation and a UC Davis College of L\&S Dean's Fellowship. The work of E.~Gorsky was partially supported by the NSF grants DMS-1700814, DMS-1760329 and DMS-2302305. Parts of this work were done when M.~Gorsky's participated in the junior trimester program ``New Trends in Representation Theory'' at the Hausdorff Institute for Mathematics in Bonn. M.~Gorsky was partially supported by the French ANR grant CHARMS (ANR-19-CE40-0017) and received funding from the European Research Council (ERC) under the European Union’s Horizon 2020 research and innovation programme (grant agreement No. 101001159). J.S. was partially supported by CONAHCyT project CF-2023-G-106.
\hfill$\Box$


\section{Braid Varieties and Augmentation Varieties}\label{sec:braid}

In this section we introduce and start studying braid varieties. Part of Theorem \ref{thm:main1} is proven in this section, with the holomorphic symplectic structure being discussed in Section \ref{sec:symplecticform}. This section also discusses the torus actions on braid varieties and their quotients, which relate to augmentation varieties.

{\bf Notations for the braid group}. Let $n\in\N$. The braid group $\Br_n$ on $n$-strands is presented with $n-1$ generators $\sigma_i$, $i\in[1,n-1]$, and relations
\begin{equation}
\label{eq:braid}
\sigma_i\sigma_{i+1}\sigma_i=\sigma_{i+1}\sigma_{i}\sigma_{i+1},\,  \text{for} \, i = 1, \dots, n-2, \qquad \sigma_i\sigma_j=\sigma_j\sigma_i\ \text{for}\ |i-j|\ge 2, i,j\in[1,n-1].
\end{equation}
In this article, we mainly work with the positive braid monoid $\Br^+_n\sse \Br_n$ generated by the nonnegative powers of the generators $\sigma_i$, $i\in[1,n-1]$. By definition, a (positive) braid word is a product expression of non-negative powers of the generators $\sigma_i$ where no relations are being applied. For instance, the two braid words $\sigma_1\sigma_2\sigma_3\sigma_1$ and $\sigma_2\sigma_1\sigma_2\sigma_3$ are distinct as braid words and represent the same element $[\sigma_1\sigma_2\sigma_3\sigma_1]=[\sigma_2\sigma_1\sigma_2\sigma_3]\in\Br^+_4$.

The symmetric group $S_n$ is the Coxeter group associated to $\Br_n$: it is generated by the transpositions $s_i=(i\ i+1)$, subject to relations \eqref{eq:braid} above and the additional relation $s_i^2=1$, for all $i\in[1,n-1]$. By definition, a reduced expression for a permutation $w\in S_n$ is a minimal length expression for the element $w$ as a product of the generators $s_i$, $i\in[1,n-1]$; the length $\ell(w)$ is defined as the length of such reduced expression. It is well-known that any two reduced expressions are related by a sequence of braid moves \eqref{eq:braid}. Therefore, one can define a  positive braid lift of $w\in S_n$ to $\Br^+_n$ by choosing an arbitrary reduced expression and replacing each generator $s_i$ with the generator $\sigma_i$, for each $i\in[1,n-1]$. We will refer to such positive braid lifts as reduced braid words. To ease notation, we interchangeably use $\sigma_i, \, s_i,$ and sometimes simply $i$ for the braid group generators, $i\in[1,n-1]$.


\subsection{Braid matrices and braid varieties}\label{sec: braid varieties}
Braid varieties are affine algebraic varieties cut out by matrix equations. Their definition relies on the following notion.

\begin{definition}\label{def:braidmatrix}
Let $n\in\N$, $i\in[1,n-1]\in\N$ and $z$ a (complex) variable. The braid matrix $B_i(z)\in \GL(n,\C[z])$ is defined as
$$
(B_i(z))_{jk} := \begin{cases} 1 & j=k \text{ and } j\neq i,i+1 \\
1 & (j,k) = (i,i+1) \text{ or } (i+1,i) \\
z & j=k=i+1 \\
0 & \text{otherwise;}
\end{cases},\qquad\mbox{i.e.}\quad
B_i(z):=\left(\begin{matrix}
1 & \cdots  & & & \cdots & 0\\
\vdots & \ddots & & & & \vdots\\
0 & \cdots & 0 & 1 & \cdots & 0\\
0 & \cdots & 1 & z & \cdots & 0\\
\vdots &  & & &\ddots & \vdots\\
0 & \cdots & & & \cdots & 1\\
\end{matrix}\right).
$$
Given a positive braid word $\beta=\sigma_{i_1}\cdots\sigma_{i_r} \in \mbox{Br}_{n}^{+}$  and $z_{1}, \dots, z_{r}$ complex variables, we define the braid matrix $B_{\beta}(z_1,\ldots,z_r)\in\GL(n,\C[z_1,\ldots,z_r])$ to be the product
$$
B_{\beta}(z_1,\ldots,z_r)=B_{i_1}(z_1)\cdots B_{i_r}(z_r).
$$
\end{definition}

\noindent For instance, it follows from Definition \ref{def:braidmatrix} that $B_{\beta}(0,\dots, 0)$ is simply the permutation matrix associated to the Coxeter projection $\pi(\beta)\in S_n$. Thus, in a sense, braid matrices are deformations of permutation matrices. It is a simple computation to verify the two relations:
\begin{equation}\label{equation: reidemeister3 matrices}
B_{i}(z_1)B_{i+1}(z_2)B_{i}(z_3)=B_{i+1}(z_3)B_{i}(z_2-z_1z_3)B_{i+1}(z_1),\quad \forall i\in[1,n-2],
\end{equation}

\noindent and 

\begin{equation}\label{eqn: commutation rel}
B_{i}(z_{1})B_{j}(z_{2}) = B_{j}(z_{2})B_{i}(z_{1}) \, \text{for} \,  |i - j|\geq2.
\end{equation}

Here are a few useful examples. 

\begin{ex}\label{ex: half twist}
Let us first consider (a lift of) the Coxeter element $\sigma_1\sigma_2\cdots \sigma_{n-1} \in \Br^+_n$. Induction on $n$ shows that
\begin{equation}\label{eqn: braid matrix cox element}
B_{\sigma_1\sigma_2\cdots \sigma_{n-1}}(z_1, \dots, z_{n-1}) =  \left(\begin{matrix} 0 & 0 & \ldots & 0 & 1 \\ 1 & 0 & \ldots &0 & z_1 \\ 0 & 1 & \ldots &  0 & z_2  \\ \vdots & \vdots & \ddots & \vdots & \vdots \\  0 & 0 & \ldots & 1 & z_{n-1}\end{matrix}\right).
\end{equation}
Now we consider the positive braid word $\Delta = (\sigma_{1}\sigma_{2}\cdots\sigma_{n-1})(\sigma_{1}\cdots\sigma_{n-2})\cdots(\sigma_{1}\sigma_{2})\sigma_{1}$, which represents a half-twist. It follows from \eqref{eqn: braid matrix cox element} that its associated braid matrix is
\begin{equation}\label{eqn: braid matrix half twist}
B_{\Delta}\left(z_{1}, \dots, z_{\binom{n}{2}}\right) = \left(\begin{matrix} 0 & 0 & \ldots & 0 & 1\\  0 & 0 &  \ldots & 1 & z_{1} \\ 0 &  0 & \ldots & z_{n} & z_2 \\  \vdots & \vdots & \ddots & \vdots & \vdots \\ 1 & z_{\binom{n}{2}} & \ldots & z_{2n-3} & z_{n-1}
\end{matrix}\right)
\end{equation}

\noindent  Let $\Delta'\in \Br^+_n$ be \emph{any} positive braid lift of the longest element $w_{0}$ of $S_{n}$. It then follows from the braid relation \eqref{equation: reidemeister3 matrices} that
\begin{equation}\label{eqn: braid matrix general half twist}
B_{\Delta'}\left(z_{1}, \dots, z_{\binom{n}{2}}\right) = \left(\begin{matrix} 0 & 0 & \ldots & 0 & 1\\  0 & 0 &  \ldots & 1 & z_{2,n} \\ 0 &  0 & \ldots & z_{3,n-1} & z_{3,n} \\  \vdots & \vdots & \ddots & \vdots & \vdots \\ 1 & z_{n,2} & \ldots & z_{n,n-1} & z_{n,n}
\end{matrix}\right),
\end{equation}
\noindent where the $z_{i,j}$ are algebraically independent generators of $\C\left[z_{1}, \dots, z_{\binom{n}{2}}\right]$.
\end{ex}

\begin{lemma}
\label{lem: full twist}
Let $\Delta^{2}\in \Br^+_n$ represent the full-twist braid, i.e. the square of the positive braid lift of $w_0\in S_n$ to the braid group. Then its braid matrix can be decomposed as
$$
B_{\Delta^{2}}\left(z_{1}, \dots, z_{\binom{n}{2}}, w_{1}, \dots w_{\binom{n}{2}}\right) =LU =\left(\begin{matrix} 1 & 0 & \ldots & 0\\ c_{21} & 1 & \ldots & 0 \\
\vdots & \cdots & \ddots & 0\\
c_{n1} & \cdots & \cdots & 1\\
\end{matrix}\right)\left(\begin{matrix} 1 & u_{12} & \ldots & u_{1n}\\ 0 & 1 & \ldots & u_{2n} \\
0 & \cdots & \ddots & u_{n-1,n}\\
0 & \cdots & \cdots & 1\\
\end{matrix}\right),
$$
where $c_{ij} \in \C\left[z_{1}, \dots, z_{\binom{n}{2}}\right]$ and $u_{ij} \in \C\left[w_{1}, \dots, w_{\binom{n}{2}}\right]$ are algebraically independent generators.
\end{lemma}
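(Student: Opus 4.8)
The plan is to exploit the structure of $B_{\Delta^2}$ as a product $B_{\Delta}(z_1,\dots,z_{\binom n2})\cdot B_{\Delta'}(w_1,\dots,w_{\binom n2})$, where $\Delta$ is the specific half-twist word of Example \ref{ex: half twist} and $\Delta'$ is a second copy, written with the \emph{reversed} reduced word $\Delta' = \sigma_1(\sigma_2\sigma_1)\cdots(\sigma_{n-1}\cdots\sigma_1)$ so that the two anti-diagonal blocks of the factors line up conveniently. By Example \ref{ex: half twist} each of these factors is an anti-triangular matrix: $B_{\Delta}$ has $1$'s on the anti-diagonal with a lower-right triangle of algebraically independent entries $z_{i,j}$, and $B_{\Delta'}$ likewise with entries that are algebraically independent polynomials in the $w$'s. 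The first step is to record precisely these two normal forms and to note that the $z_{i,j}$ (resp.\ $w_{i,j}$) generate the polynomial ring $\C[z_1,\dots,z_{\binom n2}]$ (resp.\ $\C[w_1,\dots,w_{\binom n2}]$): this is already contained in Example \ref{ex: half twist}.

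Next I would multiply the two anti-triangular matrices and observe that the product $P := B_\Delta \cdot B_{\Delta'}$ has, along its main diagonal, entries which are $1$ plus a polynomial in the strictly-below-anti-diagonal $z$'s and $w$'s; more importantly $P$ is invertible with $P(0,\dots,0)$ equal to $w_0 \cdot w_0 = \mathrm{Id}$. The key algebraic input is then the \emph{existence and uniqueness of the $LU$ (Gauss) decomposition} for a matrix with invertible leading principal minors: since $P$ specializes to the identity at the origin, all leading principal minors of $P$ are polynomials with nonzero constant term, hence units in a suitable localization — but in fact one checks they are genuinely equal to $1$ up to… here is the point to be careful: the leading principal minors of $P$ need not be identically $1$, so I would instead argue that $LU$ decomposition exists over the localization of $\C[z,w]$ at the product of the leading principal minors, produce $L$ and $U$ there, and then verify by a direct degree/triangularity count that the entries $c_{ij}$ and $u_{ij}$ are in fact \emph{polynomials} (not just rational functions) — this follows because $B_{\Delta}$ is already ``lower-anti-triangular with unit anti-diagonal'' and $B_{\Delta'}$ ``upper-anti-triangular with unit anti-diagonal'', so conjugating the anti-diagonal permutation through turns the factorization $B_\Delta \cdot B_{\Delta'}$ directly into an honest $L\cdot U$ with polynomial entries, the anti-diagonal $1$'s becoming the diagonal $1$'s of $L$ and $U$. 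Concretely: write $w_0$ (the anti-diagonal matrix) and note $B_\Delta = L_0 \cdot w_0$ and $B_{\Delta'} = w_0 \cdot U_0$ for honest lower-unitriangular $L_0 \in \C[z]$ and upper-unitriangular $U_0 \in \C[w]$; then $B_{\Delta^2} = L_0 \, w_0^2 \, U_0 = L_0 U_0$ since $w_0^2 = \mathrm{Id}$, giving the decomposition outright with $L = L_0$, $U = U_0$.

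The final step is the algebraic independence of $\{c_{ij}\}_{i>j}$ and of $\{u_{ij}\}_{i<j}$. For the $c_{ij}$: the entries of $L_0 = B_\Delta \cdot w_0$ are, up to reindexing columns by the permutation $w_0$, exactly the entries of $B_\Delta$, so the $c_{ij}$ are precisely the $z_{i,j}$ of Example \ref{ex: half twist} up to relabeling, which are algebraically independent by that example; likewise the $u_{ij}$ are the $w_{i,j}$ up to relabeling. Thus algebraic independence is inherited directly, and since $\binom n2$ many algebraically independent elements in $\C[z_1,\dots,z_{\binom n2}]$ must generate the fraction field, we also recover that $\C[z] = \C[c_{ij}]$ and $\C[w] = \C[u_{ij}]$. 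I expect the only genuinely delicate point to be the bookkeeping in the second step — pinning down exactly which reduced word for the second $\Delta$ makes $B_{\Delta'} = w_0 U_0$ with $U_0$ upper-unitriangular, i.e.\ checking that the reversed half-twist word produces an anti-triangular matrix with the transpose-type triangle filled — but this is a direct consequence of applying the relation \eqref{equation: reidemeister3 matrices} repeatedly exactly as in Example \ref{ex: half twist}, just on the mirrored word. Everything else is formal.
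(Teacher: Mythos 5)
Your proposal is correct and its decisive step is exactly the paper's one-line argument: $B_\Delta = Lw_0 = w_0U$ by Example \ref{ex: half twist}, hence $B_{\Delta^2} = Lw_0\cdot w_0U = LU$, with algebraic independence of the $c_{ij}$ and $u_{ij}$ inherited from the $z_{i,j}$ there. The detour through localizations and leading principal minors, and the worry about choosing a reversed reduced word for the second half-twist, are both unnecessary — Example \ref{ex: half twist} applies to \emph{any} positive lift of $w_0$, and the same anti-triangular matrix factors both as $Lw_0$ (reverse columns) and as $w_0U$ (reverse rows) — but you correctly discard that scaffolding before concluding.
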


\begin{proof}
By Example \ref{ex: half twist}, $B_{\Delta}= L w_0=w_0U$. Hence $B_{\Delta^{2}}=B_{\Delta}B_{\Delta}=Lw_0w_0U=LU$.
\end{proof}

Let us now use braid matrices to define the central object of interest in this manuscript.
 
\begin{definition}
Let $\beta=\sigma_{i_1}\cdots\sigma_{i_r} \in \mbox{Br}_{n}^{+}$ be a positive braid word. The braid variety $X_{0}(\beta)\sse\C^r$ associated to $\beta$ is the affine closed subvariety given by
$$
X_{0}(\beta):=\left\{(z_1,\ldots,z_r) : B_{\beta}(z_1,\ldots,z_r)\mbox{ is upper-triangular}\right\}\sse\C^{r}.
$$
Let $\pi \in S_{n}$ be considered as a permutation matrix. We define the braid variety $X_{0}(\beta; \pi) \sse \C^{r}$ as
$$
X_{0}(\beta; \pi) :=\left\{(z_1, \ldots, z_r) : B_{\beta}(z_1,\ldots,z_r)\pi \mbox{ is upper-triangular}\right\}\sse\C^{r}.$$
It follows from the braid relation \eqref{equation: reidemeister3 matrices} that different presentations of the same braid $[\beta]\in \Br_n$ yield algebraically isomorphic braid varieties.
\end{definition}

Let us give some simple examples of braid varieties.

\begin{ex}\label{ex: braid varieties}
Consider the positive braid associated to the full twist $\beta = \FT$. Lemma \ref{lem: full twist} implies that $X_{0}(\FT)$ is given by the equations $c_{ij} = 0$, and thus the braid variety is the affine space $X_{0}(\FT) \cong \C^{\binom{n}{2}}$, with coordinates being $u_{ij}$. Similarly, Example \ref{ex: half twist} implies that the braid variety $X_{0}(\Delta; w_{0}) = \{\operatorname{pt}\}$ is a point.
\end{ex}

The computation in Example \ref{ex: half twist} shows that $X_{0}(\beta; w_{0})$ admits a closed embedding into $X_{0}(\beta\cdot\Delta)$:
\[
\iota: X_0(\beta; w_0) \to X_0(\beta\cdot\Delta), \qquad (z_1, \dots, z_{\ell}) \mapsto (z_{1}, \dots, z_{\ell}, 0, 0, \dots, 0)
\]
where there are $\binom{n}{2}$ zeroes in $(z_1, \dots, z_{\ell}, 0, \dots, 0)$. In general, if $\Pi \in \mbox{Br}^{+}_{n}$ is a positive lift of a permutation $\pi \in S_{n}$ then $X_{0}(\beta; \pi)$ embeds into $X_{0}(\beta\cdot\Pi)$. Let us now establish the general dimension and smoothness for braid varieties.

\begin{thm}
\label{thm: smooth}
Let $\beta\in \Br_n^+$ be a positive braid of length $\ell(\beta)$. Then, the braid varieties $X_{0}(\beta\cdot \Delta; w_0)$ and $X_{0}(\beta\cdot\FT)$ are smooth of dimension $\ell(\beta)$ and $\ell(\beta)+\binom{n}{2}$, respectively. In addition, $X_{0}(\beta\cdot\FT)\simeq X_{0}(\beta\cdot \Delta; w_0)\times \C^{\binom{n}{2}}$.
\end{thm}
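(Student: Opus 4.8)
The plan is to prove first the product decomposition $X_0(\beta\cdot\FT)\simeq X_0(\beta\cdot\Delta;w_0)\times\C^{\binom n2}$, which simultaneously reduces the smoothness and dimension count for $X_0(\beta\cdot\FT)$ to the corresponding statement for $X_0(\beta\cdot\Delta;w_0)$. The key input is Lemma \ref{lem: full twist}: writing $\FT=\Delta\cdot\Delta'$ where $\Delta'$ is a second positive lift of $w_0$ on a fresh set of variables $w_1,\dots,w_{\binom n2}$, we have $B_{\Delta'}(w_1,\dots,w_{\binom n2})=U$ for an upper-unitriangular matrix whose entries $u_{ij}$ are algebraically independent, by Example \ref{ex: half twist}. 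Hence for $\gamma:=\beta\cdot\Delta$ on variables $(z_1,\dots,z_\ell,z_1',\dots,z_{\binom n2}')$, the braid matrix of $\beta\cdot\FT$ factors as $B_\gamma(z,z')\cdot U(w)$, and this is upper-triangular if and only if $B_\gamma(z,z')$ is upper-triangular (multiplying a matrix on the right by an invertible upper-triangular matrix preserves the property of being upper-triangular, and conversely since $U$ is invertible). Thus $X_0(\beta\cdot\FT)=X_0(\beta\cdot\Delta)\times\C^{\binom n2}_{(w)}$ as affine varieties. It then remains to identify $X_0(\beta\cdot\Delta)$, with no permutation twist, with $X_0(\beta\cdot\Delta;w_0)\times\C^{\binom n2}$; but $\beta\cdot\Delta\cdot w_0^{-1}$ at the level of permutations is $\pi(\beta\cdot\Delta)w_0$, and more directly, the remark immediately preceding the theorem (``$X_0(\beta;w_0)$ admits a closed embedding into $X_0(\beta\cdot\Delta)$, the image given by $z_{ij}=0$'') together with Example \ref{ex: half twist} shows the $z'$-variables parametrizing $B_{\Delta}$ split off as a free $\C^{\binom n2}$ factor, giving $X_0(\beta\cdot\Delta)\simeq X_0(\beta\cdot\Delta;w_0)\times\C^{\binom n2}$. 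Combining, $X_0(\beta\cdot\FT)\simeq X_0(\beta\cdot\Delta;w_0)\times\C^{\binom n2}$.

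With the product decomposition in hand, it suffices to prove that $Y:=X_0(\beta\cdot\Delta;w_0)$ is smooth of dimension $\ell(\beta)$. The approach is induction on $\ell(\beta)$. The base case $\beta=\emptyset$ is Example \ref{ex: braid varieties}: $X_0(\Delta;w_0)$ is a single reduced point, of dimension $0=\ell(\emptyset)$. For the inductive step, write $\beta=\sigma_{i_1}\cdots\sigma_{i_\ell}$ and consider the effect of stripping off the first braid matrix $B_{i_1}(z_1)$. Concretely, the defining condition is that $B_{i_1}(z_1)\,M\,w_0$ be upper-triangular, where $M=B_{\sigma_{i_2}\cdots\sigma_{i_\ell}\Delta}(z_2,\dots)$. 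The matrix $B_{i_1}(z_1)$ differs from the identity only in the $2\times 2$ block in rows/columns $i_1,i_1{+}1$, equal to $\left(\begin{smallmatrix}0&1\\1&z_1\end{smallmatrix}\right)$. One can then show that the projection $Y\to X_0(\sigma_{i_2}\cdots\sigma_{i_\ell}\Delta;w_0)=:Y'$ forgetting $z_1$ realizes $Y$ as a Zariski-locally-trivial $\C$-bundle (an $\mathbb A^1$-fibration with a section), hence $Y$ is smooth of dimension $\dim Y'+1=\ell(\beta)$ by induction. The reason the fiber is an affine line: after conditioning on $(z_2,\dots)$ lying in $Y'$, the extra equations coming from the new row operation are either automatically satisfied or cut out $z_1$ as a rational — in fact polynomial, after clearing the nonvanishing pivot — function of the others, and the remaining constraints are vacuous; alternatively one argues via the row-reduction interpretation of braid matrices (each $B_i(z)$ acts as an elementary row operation) that was already used implicitly in Example \ref{ex: half twist}.

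The main obstacle I anticipate is the inductive step, specifically verifying cleanly that the fiber of $Y\to Y'$ is an affine line and that the fibration is locally trivial (equivalently, that no unexpected collapsing or jumping of fibers occurs). The subtlety is that stripping $B_{i_1}(z_1)$ changes which entries of the product must vanish, and one must check that exactly one equation genuinely involves $z_1$ and does so linearly with a unit coefficient (after restricting to $Y'$), while the others descend to the defining equations of $Y'$. The natural way to make this transparent is to set up the whole problem as a sequence of row reductions: reading the product $B_{i_1}(z_1)\cdots B_{i_\ell}(z_\ell)B_\Delta$ from the left, each braid matrix is an elementary operation, and requiring the total product to be ``$w_0$-upper-triangular'' becomes a triangular system in the $z_j$'s that can be solved from one end, exhibiting $Y$ directly as $\C^{\ell(\beta)}$-parametrized by a subset of the variables — which would in fact give smoothness and the dimension at once, and would also re-derive the complete-intersection statement. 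If that global triangular structure is not quite available for an arbitrary word, the fallback is the one-step fibration argument above, for which the only thing to check carefully is the unit-coefficient claim; this is a finite, explicit linear-algebra verification with the $2\times2$ block, not a genuine difficulty.
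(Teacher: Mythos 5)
There are two genuine problems, one in each half of your argument. In the product decomposition you drop a $w_0$: by Example \ref{ex: half twist} the matrix $B_{\Delta'}(w)$ is \emph{not} upper-unitriangular; it equals $w_0U(w)$ (equivalently $L(w)w_0$). Hence $B_{\beta\cdot\FT}=B_{\beta\cdot\Delta}(z,z')\,w_0\,U(w)$, and the upper-triangularity condition is that $B_{\beta\cdot\Delta}(z,z')w_0$ be upper-triangular — which is already the defining condition of $X_0(\beta\cdot\Delta;w_0)$, so the decomposition $X_0(\beta\cdot\FT)\simeq X_0(\beta\cdot\Delta;w_0)\times\C^{\binom{n}{2}}$ follows in one line (this is exactly the paper's argument via Lemma \ref{lem: full twist}). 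Your detour through the untwisted variety is not repairable: the claim $X_0(\beta\cdot\Delta)\simeq X_0(\beta\cdot\Delta;w_0)\times\C^{\binom{n}{2}}$ is dimensionally impossible (it would force the closed subvariety $X_0(\beta\cdot\Delta)\subseteq\C^{\ell+\binom{n}{2}}$ to be the whole affine space), and the remark you cite actually encodes the different identity $X_0(\beta\cdot\Delta)\simeq X_0(\beta;w_0)\times\C^{\binom{n}{2}}$, with $X_0(\beta;w_0)$ — not $X_0(\beta\cdot\Delta;w_0)$ — as the first factor.

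The inductive step for smoothness also fails. The projection $Y\to Y'$ "forgetting $z_1$" does not land in $Y'$: knowing that $B_{i_1}(z_1)Mw_0$ is upper-triangular says nothing about $Mw_0$ being upper-triangular. Already for $n=2$ and $\beta=\sigma_1$ one has $Y=X_0(\sigma_1^2;w_0)=\{1+z_1z_2=0\}\cong\C^*$ while $Y'=X_0(\sigma_1;w_0)=\{z_2=0\}$ is a point; the image of the forgetful map is $\C^*$, disjoint from $Y'$, and $Y$ is certainly not an $\mathbb{A}^1$-bundle over a point. What is true is the "opening a crossing" statement of Section \ref{sec: opening}: there is an open embedding $Y'\times\C^*\hookrightarrow Y$ onto the locus $\{z_1\neq0\}$ — note the fiber is $\C^*$, not $\C$, and it only covers an open set; the complement $\{z_1=0\}\cap Y$ is nonempty in general (e.g.\ $\beta=\sigma_1^2$, $n=2$, where it is a copy of $\C$) and is not governed by $Y'$, so the induction does not close. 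Your preferred "global triangular system" alternative would yield $Y\cong\C^{\ell(\beta)}$, which the same $\beta=\sigma_1$ example refutes. The paper's actual route avoids induction entirely: the defining condition is that $B_\beta(z)L(c)$ be upper-triangular for a lower-unitriangular $L(c)$ with free entries $c$, i.e.\ that $B_\beta(z)^{-1}$ admit an $LU$ decomposition; existence is an open condition on $z$ (nonvanishing of principal minors) and, when it holds, $L$ — hence $c$ — is uniquely determined, so projection to the $z$-coordinates identifies $X_0(\beta\cdot\Delta;w_0)$ with an open subset of $\C^{\ell(\beta)}$, giving smoothness and the dimension at once. I recommend adopting that argument.
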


\begin{proof}
The variety $X_0(\beta\cdot \FT)$ is defined by the condition that $B_{\beta\cdot \FT}$ is an upper triangular matrix. 
By Lemma \ref{lem: full twist}, we get 
$$
B_{\beta\cdot \FT} = B_{\beta}B_{\FT}=B_{\beta}LU=B_{\beta \cdot \Delta}w_0U.
$$
This is upper-triangular if and only if $B_{\beta \cdot \Delta}w_0$ is upper-triangular, which is precisely the condition defining $X_{0}(\beta\cdot \Delta; w_0)$. Therefore, $X_{0}(\beta\cdot \FT)\simeq X_{0}(\beta \cdot \Delta; w_0)\times \C^{\binom{n}{2}}$, with $\C^{\binom{n}{2}}$ being the coordinates on the upper unitriangular matrix $U$. Now, by \eqref{eqn: braid matrix half twist} we have that $B_{\Delta}w_0$ is a lower unitriangular matrix, and thus we can write 
\begin{equation}\label{eqn: beta delta dec}
B_{\beta \cdot \Delta}w_0 = B_{\beta}L
\end{equation}
where $L$ is lower unitriangular. If we have a point in $X(\beta \cdot \Delta; w_0)$ we  obtain $B_{\beta \cdot \Delta}w_0 = U'$, an upper triangular matrix. Together with \eqref{eqn: beta delta dec} we obtain
$$
B_{\beta}^{-1}=L(U')^{-1}.
$$
Note that the existence of an $LU$ decomposition $M=LU''$ is an open condition on $M$, namely the non-vanishing of principal minors; also, if an $LU$ decomposition exists, it is unique provided that $L$ has 1s on the diagonal. Therefore $X_{0}(\beta\cdot \Delta; w_{0})$ is isomorphic to an open subset in the affine space $\C^{\ell(\beta)}$. Hence, it is smooth of dimension $\dim X_{0}(\beta\cdot\Delta; w_0) = \ell(\beta)$, and $X_0(\beta\cdot \FT)$ is also smooth of 
dimension $\dim X_0(\beta\cdot \FT)=\ell(\beta)+\binom{n}{2}$, as required. 
\end{proof}

In the proof of the previous result we obtained that $X_0(\beta\cdot \Delta; w_0)$ is open in the affine space $\C^{\ell(\beta)}$. Since this will be used again later, let us state it as a separate result.

\begin{lemma}\label{lem:dbs open}
Let $\beta \in \Br_n^+$ be a positive braid of length $\ell(\beta)$. Then, the braid variety $X(\beta\cdot \Delta; w_0)$ is open in the affine space $\C^{\ell(\beta)}$, and it is given by the non-vanishing of the leading principal minors of the matrix $B^{-1}_{\beta}(z_1, \dots, z_{\ell})$.
\end{lemma}

Lemma \ref{lem:dbs open} implies that the braid variety $X(\beta\cdot \Delta; w_0)$ is isomorphic to the (half-decorated) double Bott-Samelson cell studied in \cite{SW}. Note that a similar smoothness result was proved in \cite[Theorem 2.30]{SW}. The braid varieties associated to 2-stranded braids $\beta\in \Br^+_2$ are smooth varieties whose equations closely relate to Euler's continuants \cite{Euler64}. 

\begin{ex}\label{ex:trefoil} 
Consider $\beta=\sigma_1^3\in Br^+_2$, the braid variety $X_{0}(\sigma_1^5) = X_{0}(\sigma_{1}^{3} \cdot \Delta^2)$ is defined by the equation:
$$
B(z_1)B(z_2)B(z_3)B(z_4)B(z_5)\mbox{ is upper-triangular}.
$$
This condition can written as (cf. Lemma \ref{lem: full twist})
$$
B(z_1)B(z_2)B(z_3)\left(\begin{matrix} 1 & 0\\ z_4 & 1\end{matrix}\right)\left(\begin{matrix} 1 & z_5\\ 0 & 1\end{matrix}\right)\mbox{ is upper-triangular},
$$
and equivalently
$$
B(z_1)B(z_2)B(z_3)\left(\begin{matrix} 1 & 0\\ z_4 & 1\end{matrix}\right)=\left(\begin{matrix} z_2+(z_2z_3+1)z_4 & z_2z_3+1\\ z_1z_2+(z_1+(z_1z_2+1)z_3)z_4+1 & z_1+(z_1z_2+1)z_3\end{matrix}\right)\mbox{ is upper-triangular}.
$$

Note that we have
$$\left(\begin{matrix} 1 & 0\\ z_4 & 1\end{matrix}\right)=\left(\begin{matrix} 0 & 1\\ 1 & z_4\end{matrix}\right)\left(\begin{matrix} 0 & 1\\ 1 & 0\end{matrix}\right)=\left(\begin{matrix} 0 & 1\\ 1 & z_4\end{matrix}\right)w_0,$$
and thus the condition above, in the coordinates $(z_1,z_2,z_3,z_4)\in\C^4$, is in fact the equation for $X_{0}(\sigma_1^3\cdot \Delta; w_{0})$. This implies that $X_{0}(\sigma_1^{3}\cdot \FT)$ is isomorphic to $X_{0}(\sigma_1^3\cdot \Delta; w_{0})$ times an affine line $\C=\Spec(\C[z_5])$. This is proven in general in Theorem \ref{thm: smooth}. It thus suffices to understand $X_{0}(\sigma_1^3\cdot \Delta; w_{0})$. For that, consider the equation above:
\begin{equation}
\label{eq:char trefoil}
X_{0}(\sigma_1^3\cdot \Delta; w_{0})=\{(z_1,z_2,z_3,z_4)\in\C^4:1+z_1z_2+z_4(z_1+z_3+z_1z_2z_3)=0\}\sse\C^4,
\end{equation}
which cuts out a hypersurface, and should be smooth according to Theorem \ref{thm: smooth}. Indeed, note that we must have $z_1+z_3+z_1z_2z_3\neq0$, otherwise the defining Equation \ref{eq:char trefoil} would imply $1+z_1z_2=0$, and in these constraints $z_1=z_1+z_3(z_1z_2+1)=z_1+z_3+z_1z_2z_3=0$. This is a contradiction; thus, $z_1+z_3+z_1z_2z_3\neq0$ in $X_{0}(\sigma_1^3\cdot \Delta; w_{0})$. In consequence, $X_{0}(\sigma_1^3\cdot \Delta; w_{0})$ is isomorphic to the open subset
$$X_{0}(\sigma_1^3\cdot \Delta; w_{0})=\{(z_1,z_2,z_3)\in\C^3:(z_1+z_3+z_1z_2z_3)\neq0\}\sse\C^3,$$
since the coordinate $z_4$ can be obtained uniquely from any points $(z_1,z_2,z_3)\in\C^3$ in this subset. This shows that $X_{0}(\sigma_1^3\cdot \Delta; w_{0})$ is smooth.

In fact, this provides a rather simple description for this braid variety: it is the open set foliated by the smooth hypersurfaces $(z_1+z_3+z_1z_2z_3)=a$, $a\in\C^*$. For a fixed $a\in\C^*$, the Stein deformation type of the affine surface $\{(z_1+z_3+z_1z_2z_3)=a\}$ is described in \cite[Section 4.1]{CM}.
\hfill$\Box$
\end{ex}

\begin{remark} In the case of positive braids associated to algebraic knots $K\sse\R^3$, the braid varieties can be similarly described symplectically using the arboreal skeleta constructed in \cite{Ca}. In general, following the lines of Example \ref{ex:trefoil}, the braid varieties for $(2,n)$-torus links can be similarly described in terms of affine hypersurfaces.
\end{remark}

Note that we can write
$$X_{0}(\sigma_1^3\cdot \Delta; w_{0})\cong\{(z_1,z_2,z_3,t): (z_1+z_3+z_1z_2z_3)t=1\}\sse\C^3\times\C^*_t,$$
and thus there exists a $\C^*$-action on $X_{0}(\sigma_1^3\cdot \Delta; w_{0})$ whose quotient yields the affine hypersurface $\{z_1+z_3+z_1z_2z_3=1\}\sse\C^3$. The feature of admitting certain (complex) torus actions with interesting quotients is a general property of braid varieties, as we will now see.



\subsection{Torus actions on braid varieties}\label{sect:torus action} Let $[\beta]\in \Br^+_n$ be a positive braid with a fixed positive braid word $\beta=\sigma_{i_1}\cdots\sigma_{i_r}$. Consider the Cartan subgroup $\mathbb{T}\cong(\C^*)^n\sse\GL(n,\C)$ of diagonal matrices, and its quotient $T$ by the subgroup of scalar invertible matrices. In this section we construct an algebraic $T$-action on the braid variety $X_{0}(\beta)$. First, we observe that
\begin{equation}
\label{eq: slide diagonal}
\displaystyle\left(\begin{matrix} t_1 & 0\\ 0 & t_2\\ \end{matrix}\right)\left(\begin{matrix} 0 & 1\\ 1 & z\end{matrix}\right)=\left(\begin{matrix}0 & 1\\
1 & \frac{t_2}{t_1}z\\ \end{matrix}\right)\left(\begin{matrix} t_2 & 0\\ 0 & t_1\\ \end{matrix}\right).
\end{equation}
Let $D_{\bt}=\diag(t_1,\ldots,t_n)\in\mathbb{T}$ be a diagonal matrix. In general, we have $D_{\bt}B_i(z)=B_i\left(\frac{t_{i+1}}{t_i}z\right)D_{s_i(\bt)}$, for $s_i$ the Coxeter projection of $\sigma_i$. Thus
\begin{equation}
\label{eq:slide diagonal 2}
D_{\bt}B_{i_1}(z_1)\cdots B_{i_r}(z_r)=B_{i_1}(c_1z_1)\cdots B_{i_r}(c_rz_r)D_{w(\bt)},
\end{equation}
where $r=\ell(\beta)$, $c_k=t_{w_k(i_{k}+1)}t^{-1}_{w_k(i_{k})}$, $w_k= s_{i_1}\cdots s_{i_{k-1}}$ and $w=w_{r+1}$ is the permutation corresponding to $\beta$. The torus actions we study are defined as follows.

\begin{definition}\label{def:torusaction} Let $\beta$ be a positive $n$-braid word of length $r=\ell(\beta)$. The 
action of the torus $\mathbb{T}\cong(\C^{*})^{n}$ on affine space $\C^{\ell(\beta)}$ is given by
$$
\bt.(z_1,\ldots,z_r):=(c_1z_1,\ldots,c_rz_r),\quad \bt\in\mathbb{T}, \quad(z_1,\ldots,z_r)\in\C^r,
$$
where $c_i$ are defined as above, $i\in[1,r]$. Note that this  $\mathbb{T}$-action preserves the braid variety $X_{0}(\beta)\sse\C^{r}$ thanks to relation \eqref{eq:slide diagonal 2}. Let $T := \mathbb{T}/\C^*_{\diag} \cong (\C^*)^{n-1}$, the quotient of $\mathbb{T}$ by the diagonal subtorus. By definition, the $T$-action $T\times X_{0}(\beta)\lr X_{0}(\beta)$ on the braid variety $X_{0}(\beta)$ is the quotient of the restriction of the above $\mathbb{T}$-action to $X_{0}(\beta)$ by the diagonal subtorus $\C^*_{\diag}$. Note that the $\mathbb{T}$-action descends to the $T$-action quotient since the diagonal subtorus $(t,\ldots,t)\sse\mathbb{T}$ acts trivially on $X_{0}(\beta)$.\hfill$\Box$
\end{definition}

\begin{ex}\label{ex:torus action}
    Let us consider the braid word $\beta = \sigma_1\sigma_2\sigma_2\sigma_1\sigma_2$. If $\bt = (t_1, t_2, t_3) \in (\C^{*})^{3}$ we have
    \[
    \bt.(z_1, z_2, z_3, z_4, z_5) = \left(\frac{t_2}{t_1}z_1, \frac{t_3}{t_1}z_2, \frac{t_1}{t_3}z_3, \frac{t_1}{t_2}z_2, \frac{t_3}{t_2}z_5.\right)
    \]
\end{ex}

\begin{remark}\label{rmk:z weights}
We can read the $t_i/t_j$ factor of each $z_k$-variable from the braid $\beta$ as follows. For the weight of $z_{k}$, consider the strands that are incident on the left to the $k$-th crossing of $\beta$ and follow them until the left border of $\beta$. If the strand incident from the bottom (resp.~the top) to the $k$-th crossing arrives at the $i$-th (resp.~$j$-th) level strand at the leftmost end, then the scalar factor for $z_k$ is $t_i/t_j$. For example, the next figure illustrates that for $z_3$ in Example \ref{ex:torus action} we have $\color{teal}{t_{1}}\color{black}{/}\color{red}{t_3}$.\hfill$\Box$
\begin{center}
    \includegraphics[scale=1]{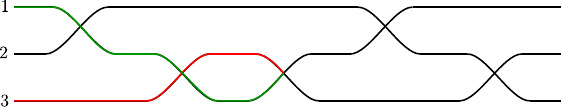}
\end{center}
\end{remark}

The torus action on $\C^r$ in Definition \ref{def:torusaction} depends on the choice of braid word $\beta$. Nevertheless, we have the following result. 

\begin{lemma}
Let $\beta,\beta'$ be two positive presentations of the same braid, i.e.~$[\beta] = [\beta']$. Then, the algebraic isomorphism $X_0(\beta) \cong X_0(\beta')$ defined by formulas \eqref{equation: reidemeister3 matrices} and \eqref{eqn: commutation rel} is $T$-equivariant.
\end{lemma}
\begin{proof}
Let us verify that applying the relation \eqref{equation: reidemeister3 matrices} defines a $T$-equivariant isomorphism. For this, it suffices to consider $n = 3$, $\beta = \sigma_1\sigma_2\sigma_1$ and $\beta' = \sigma_2\sigma_1\sigma_2$. The action of $T$ on $\C^3$ that yields the action on $X_0(\sigma_1\sigma_2\sigma_1)$ is given by:
\begin{equation}\label{eqn:torus action 1}
(t_1, t_2, t_3).(z_1, z_2, z_3) = \left(\frac{t_{2}}{t_{1}}z_1, \frac{t_3}{t_1}z_2, \frac{t_3}{t_2}z_3\right)
\end{equation}
while the $T$-action on $\C^3$ given the action on $X_0(\sigma_2\sigma_1\sigma_2)$ is given by:
\begin{equation}\label{eqn:torus action 2}
(t_1, t_2, t_3).(w_1, w_2, w_3) = \left(\frac{t_3}{t_2}w_1, \frac{t_3}{t_1}w_1, \frac{t_2}{t_1}w_3\right).
\end{equation}

Then \eqref{eqn:torus action 1} and \eqref{eqn:torus action 2} imply that the map $(z_1, z_2, z_3) \mapsto (z_3, z_2-z_1z_3, z_1)$ is $T$-equivariant. The verification that \eqref{eqn: commutation rel} also induces a $T$-equivariant isomorphism is similar. 
\end{proof}

We also have the following result.

\begin{lemma}\label{lem: torus decomposition}
The $T$-action preserves the product decomposition $$X_{0}(\beta\cdot\FT) \cong X_{0}(\beta\cdot \Delta; w_0)\times \C^{\binom{n}{2}}$$ established in Theorem \ref{thm: smooth}. 
\end{lemma}
\begin{proof}
This follows from uniqueness of the LU-decomposition, which is indeed unique if the lower triangular matrix has $1$'s on the diagonal.   
\end{proof}

Let $c(\beta)$ be the number of cycles in the cycle decomposition of the Coxeter projection $\pi(\beta)\in S_n$, i.e. the number of cycles of $\beta$ understood as a permutation. The braid $[\beta]\in \Br_n$ closes up (either through the rainbow or $(-1)$-framed closure, see Figure \ref{fig:rainbow_closure} and Section \ref{sect:augmentations} for more details) to a knot in $\R^3$ if and only if $c(\beta)=1$, and there are $(n-1)!$ such permutations $\pi(\beta)\in S_n$. For a braid associated to a knot, we have the following result.

\begin{lemma}\label{lem: free action}
Let $\beta$ be a positive braid word, $[\beta]\in \Br_n^+$, with $c(\beta)=1$. Then the action  
of $T\cong(\C^*)^{n-1}$ on the braid variety $X_{0}(\beta)$ is free.
\end{lemma}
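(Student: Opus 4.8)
The plan is to compute the stabilizer of an arbitrary point $p=(z_1,\ldots,z_r)\in X_0(\beta)$ under the ambient torus $\mathbb{T}\cong(\C^*)^n$ and show it is exactly the diagonal subtorus $\C^*_{\mathrm{diag}}$. Since, by Definition \ref{def:torusaction}, the $T$-action is the quotient of the $\mathbb{T}$-action by $\C^*_{\mathrm{diag}}$, this is precisely the assertion that the $T$-action on $X_0(\beta)$ is free.

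First I would unwind the definitions. By Definition \ref{def:torusaction}, an element $\bt=(t_1,\ldots,t_n)\in\mathbb{T}$ fixes $p$ if and only if $c_k z_k=z_k$ for every $k\in[1,r]$, with the $c_k$ as in \eqref{eq:slide diagonal 2}. Under this hypothesis each braid matrix factor satisfies $B_{i_k}(c_k z_k)=B_{i_k}(z_k)$ (either $z_k=0$ or $c_k=1$), hence $B_\beta(c_1 z_1,\ldots,c_r z_r)=B_\beta(p)$, and relation \eqref{eq:slide diagonal 2} collapses to
$$
D_{\bt}\,B_\beta(p)=B_\beta(p)\,D_{w(\bt)},\qquad w:=\pi(\beta).
$$
Now $B_\beta(p)$ is upper-triangular because $p\in X_0(\beta)$, and it is invertible since each $B_i(z)\in\GL(n,\C[z])$; so the identity may be rewritten as $D_{\bt}=B_\beta(p)\,D_{w(\bt)}\,B_\beta(p)^{-1}$, a conjugation of a diagonal matrix by an invertible upper-triangular matrix.

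The key point is then elementary linear algebra: if $U$ is invertible upper-triangular and $D$ is diagonal, then $UDU^{-1}$ is again upper-triangular and has the same diagonal as $D$ — in the expansion $(UDU^{-1})_{ii}=\sum_j U_{ij}D_{jj}(U^{-1})_{ji}$ only the term $j=i$ survives, as $U$ and $U^{-1}$ are upper-triangular, giving $(UDU^{-1})_{ii}=D_{ii}$. Comparing diagonal entries in $D_{\bt}=B_\beta(p)\,D_{w(\bt)}\,B_\beta(p)^{-1}$ therefore forces $D_{\bt}=D_{w(\bt)}$, i.e.\ the tuple $\bt$ is invariant under the permutation $w$; equivalently, $t_i$ is constant along each cycle of $\pi(\beta)$. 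Finally, the hypothesis $c(\beta)=1$ says $\pi(\beta)$ is a single $n$-cycle, so $\bt=(t,\ldots,t)\in\C^*_{\mathrm{diag}}$ and its class in $T$ is trivial. Since $p$ was arbitrary, the $T$-action on $X_0(\beta)$ is free.

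I do not anticipate a genuine obstacle: the argument is essentially formal once relation \eqref{eq:slide diagonal 2} is in hand. The only spots needing slight care are the bookkeeping that the condition $\bt.p=p$ makes the deformed braid matrix $B_\beta(c_1 z_1,\ldots,c_r z_r)$ literally equal to $B_\beta(p)$, and checking the convention for $D_{w(\bt)}$ so that $D_{\bt}=D_{w(\bt)}$ reads as ``$\bt$ is $\pi(\beta)$-invariant'' — both are immediate from the formulas preceding Definition \ref{def:torusaction}. (Note that conjugacy of $D_{\bt}$ and $D_{w(\bt)}$ alone, via characteristic polynomials, would be useless here since these two matrices automatically share eigenvalue multisets; the diagonal-preservation observation is what yields the sharp equality $D_{\bt}=D_{w(\bt)}$.)
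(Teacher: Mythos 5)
Your proposal is correct and follows essentially the same route as the paper: both reduce the fixed-point condition via relation \eqref{eq:slide diagonal 2} to the identity $D_{\bt}B_{\beta}(z)D_{w(\bt)}^{-1}=B_{\beta}(z)$ and then compare diagonal entries of the upper-triangular, invertible matrix $B_{\beta}(z)$ to force $t_i=t_{w(i)}$, whence $c(\beta)=1$ gives $\bt\in\C^*_{\mathrm{diag}}$. Your phrasing via ``conjugation by an invertible upper-triangular matrix preserves the diagonal'' is just a repackaging of the paper's direct comparison of the $(i,i)$ entries.
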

 
\begin{proof}
Let $(z_{1}, \dots, z_{r}) \in X_{0}(\beta)$ and assume $\bt.(z_{1}, \dots, z_{r}) = (z_{1}, \dots, z_{r})$ for some $\bt \in (\C^{*})^{n}$. In particular, we have that $B_{\beta}(z) = B_{\beta}(\bt. z)$. Thanks to Equality \eqref{eq:slide diagonal 2}, we have that $D_{\bt}B_{\beta}(z)D_{w(\bt)}^{-1} = B_{\beta}(z)$. Since $z \in X_{0}(\beta)$, the matrix $B_{\beta}(z)$ is upper triangular, and therefore its diagonal entries must be nonzero, as $\det(B_{\beta}(z)) = \pm 1$. From the equation
$$D_{\bt}B_{\beta}(z)D_{w(\bt)}^{-1} = B_{\beta}(z),$$
it follows that $t_{i}t_{w(i)}^{-1} = 1$ for every $i = 1, \dots, n$. Given that $c(\beta)=1$, we must have that $t_{i} = t_{j}$ for all $i, j$ and the result follows. 
\end{proof}

\begin{cor}
Let $\beta$ be a positive braid word, $[\beta]\in \Br_n^+$, with $c(\beta) = 1$. Then the action of $T \cong (\C^*)^{n-1}$ on $X_0(\beta\cdot \Delta; w_0)$ is free.
\end{cor}
\begin{proof}
Note that $c(\beta) = c(\beta\FT)$ and thus, by Lemma \ref{lem: free action}, the $T$-action on $X_0(\beta\FT)$ is free. The result now follows from Lemma \ref{lem: torus decomposition}.
\end{proof}

\begin{cor}
Let $\beta$ be a positive braid word, $[\beta]\in \Br_n^+$, with $c(\beta)=1$. Then the quotients of the braid varieties $X_{0}(\beta \cdot \FT)/T$ and $X_{0}(\beta \cdot \Delta; w_{0})/T$ are smooth and of dimension $\ell(\beta) + \binom{n}{2} - n +1$ and $\ell(\beta) - n + 1$, respectively.
\end{cor}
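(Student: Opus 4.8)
The plan is straightforward: this corollary combines the smoothness and dimension results of Theorem \ref{thm: smooth} with the freeness of the torus action from Lemma \ref{lem: free action}, using the standard fact that a free algebraic action of a torus on a smooth affine variety has a smooth geometric quotient of the expected dimension. First I would recall that by Theorem \ref{thm: smooth}, the braid varieties $X_0(\beta\cdot\Delta;w_0)$ and $X_0(\beta\cdot\FT)$ are smooth affine varieties of dimension $\ell(\beta\cdot\Delta)-\binom{n}{2}=\ell(\beta)$ and $\ell(\beta\cdot\FT)-\binom{n}{2}+\binom{n}{2}$... more precisely of dimension $\ell(\beta)$ and $\ell(\beta)+\binom{n}{2}$ respectively, with $X_0(\beta\cdot\FT)\simeq X_0(\beta\cdot\Delta;w_0)\times\C^{\binom{n}{2}}$. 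Since $\Delta\cdot\Delta$ is a positive braid word whose Demazure product is $w_0$, and likewise for $\beta\cdot\Delta$, the hypothesis $c(\beta)=1$ is exactly what makes the closure a knot, so Lemma \ref{lem: free action} applies verbatim to $X_0(\beta\cdot\Delta;w_0)$ and $X_0(\beta\cdot\FT)$ (note these sit inside $X_0$ of a longer braid whose Coxeter permutation still has a single cycle, since right-multiplication by $\Delta$ or $\Delta^2$ sends $w_0\cdot\pi(\beta)$ or $\pi(\beta)$ around, and one checks $c$ is unchanged in the relevant way — or one simply applies the Lemma to $\beta\cdot\Delta$ and $\beta\cdot\FT$ directly as braid words with $c(\beta\cdot\Delta)=c(\beta)=1$).

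Next I would invoke the general principle that if a torus $T\cong(\C^*)^{n-1}$ acts freely on a smooth affine variety $Y$, then the geometric quotient $Y/T$ exists as a smooth algebraic variety of dimension $\dim Y-(n-1)$: freeness of the action plus affineness guarantees (via Luna's slice theorem, or more elementarily since torus actions are linearly reductive) that the quotient map $Y\to Y/T$ is a principal $T$-bundle in the étale (indeed Zariski, by Hilbert 90 for tori) topology, hence $Y/T$ is smooth of the claimed dimension. Applying this with $Y=X_0(\beta\cdot\Delta;w_0)$ gives a smooth quotient of dimension $\ell(\beta)-(n-1)=\ell(\beta)-n+1$, and with $Y=X_0(\beta\cdot\FT)$ gives a smooth quotient of dimension $\ell(\beta)+\binom{n}{2}-n+1$, as asserted. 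Moreover, since the $T$-action respects the product decomposition $X_0(\beta\cdot\FT)\cong X_0(\beta\cdot\Delta;w_0)\times\C^{\binom{n}{2}}$ with $T$ acting trivially on the $\C^{\binom{n}{2}}$ factor (as noted right after Theorem \ref{thm: smooth}), one even gets $X_0(\beta\cdot\FT)/T\cong (X_0(\beta\cdot\Delta;w_0)/T)\times\C^{\binom{n}{2}}$, which reconfirms the dimension count.

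There is not really a hard step here — the corollary is essentially a bookkeeping consequence of what has already been established — but the one point that deserves care is the precise statement being invoked about free torus actions on affine varieties. One should either cite a standard reference (e.g. Mumford's GIT, or the Luna slice theorem) for the existence of a geometric quotient that is again a smooth affine variety when the action is free, or, if one wants to be self-contained, remark that for a free action of a diagonalizable group one can build the quotient locally and glue, the local triviality following from the fact that torsors under a torus over an affine base are Zariski-locally trivial. I would write the proof in one short paragraph: recall smoothness and dimension of $X_0(\beta\cdot\Delta;w_0)$ and $X_0(\beta\cdot\FT)$ from Theorem \ref{thm: smooth}; recall freeness of the $T$-action from Lemma \ref{lem: free action}; conclude by the quotient principle that $X_0(\beta\cdot\Delta;w_0)/T$ and $X_0(\beta\cdot\FT)/T$ are smooth of dimensions $\ell(\beta)-n+1$ and $\ell(\beta)+\binom{n}{2}-n+1$; and close by noting the compatibility with the $\C^{\binom{n}{2}}$-factor.
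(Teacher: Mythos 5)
Your proposal is correct and is essentially the paper's (implicit) argument: the corollary is stated with no proof precisely because it follows by combining Theorem \ref{thm: smooth} with Lemma \ref{lem: free action} and the standard fact that a free torus action on a smooth affine variety has a smooth quotient of the expected dimension. One small caution: your parenthetical alternative ``$c(\beta\cdot\Delta)=c(\beta)=1$'' is false in general (e.g.\ for $n=2$, $\beta=\sigma_1$ one has $\pi(\beta\Delta)=e$, which has two cycles), so the justification you should keep is the one you lead with, namely that $X_0(\beta\cdot\Delta;w_0)$ sits as a $T$-invariant subvariety of $X_0(\beta\cdot\FT)$, where $c(\beta\cdot\FT)=c(\beta)=1$ and the Lemma applies directly.
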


\begin{remark}
Similarly to \cite[Corollary 4.8]{GL} one can argue that for $c(\beta)=1$ we have  $$X_0(\beta)=(X_0(\beta)/T)\times T.$$ Indeed, fixing the diagonal entries of $B_{\beta}(z)$ provides the corresponding principal $T$-bundle over $X_0(\beta)/T$ with a section, hence this bundle is trivial.
\end{remark}

The hypothesis $c(\beta)=1$ in Lemma \ref{lem: free action} is needed, as the $T$-actions on the braid varieties will in general fail to be free. For instance, consider the 2-stranded braid $\beta=\sigma_1^4$ and its braid variety $$X_{0}(\beta)\cong\{(z_1,z_2,z_3,z_4)\in\C^4:z_1+z_3(1+z_1z_2)=0\}.$$
The 
$T$-action scales $z_1$ and $z_3$ by $t\in T\cong\C^*$, and scales $z_2$ and $z_4$ by $t^{-1}$. Hence, it has a fixed point $(z_1,z_2,z_3,z_4)=(0,0,0,0)\in X_{0}(\beta)$. The following remark explains how to proceed in the case that $c(\beta)\neq1$.

\begin{remark}\label{rmk: free subtori}
Consider a positive braid word $\beta$ such that $[\beta]\in \Br_n^+$ closes up to a link with $k$ connected components, i.e. $c(\beta)=k$. 
Let $w=w(\beta)$ be the permutation in $S_n$ corresponding to $\beta$. Let $C_1,\ldots,C_k$ be the disjoint cycles in $w$, and  $C_{j}=(a_{j, 1}\dots a_{j, \ell_{j}})$.
Now let $T_c \subseteq T$ be the $(n-k)$-dimensional torus given by the equations $t_{a_{1, \ell_{1}}} = t_{a_{2, \ell_{2}}} = \cdots = t_{a_{k, \ell_{k}}}$. Recall that $T=\mathbb{T}/\C^*$,
so we can  instead consider the torus $\widetilde{T_c}\subseteq \mathbb{T}$ given by the equations $t_{a_{1, \ell_{1}}} = t_{a_{2, \ell_{2}}} = \cdots = t_{a_{k, \ell_{k}}}=1$.
The projection $\widetilde{T_c}\to T_c$ is an isomorphism, and the actions of $T_c$, $\widetilde{T_c}$
 on the braid varieties coincide, so we will not distinguish between these tori.

 The same argument as in the proof of Lemma \ref{lem: free action} shows that $T_c$ acts freely on $X_{0}(\beta)$. Note that we obtain that the quotient braid variety $X_{0}(\beta\cdot \Delta; w_{0})/T_c$ is a smooth variety of dimension $\ell(\beta) - n + k$, and a similar result holds for the quotient $X_{0}(\beta\cdot\FT)/T_c$.

\end{remark}

This concludes the discussion on the torus  action on $X_0(\beta)$. The geometric structures discussed during the article, e.g. \dec s and holomorphic symplectic structures, are compatible with these torus actions, and will be studied for the braid varieties $X_0(\beta)$ and their quotients $X_0(\beta)/T$.


\subsection{Toric charts in braid varieties} 
\label{sec: opening}

In this subsection, we construct a codimension-$0$ toric chart $T_{\tau}\sse X_{0}(\beta \cdot \Delta;w_{0})$ associated to an (arbitrary) ordering $\tau\in S_{l(\beta)}$ of the crossings of the positive braid word $\beta$. For that, consider two $n$-braid words $$\beta=\sigma_{i_1}\sigma_{i_2}\cdot\ldots\cdot\sigma_{i_{k-1}}\sigma_{i_k}\sigma_{i_{k+1}}\cdot\ldots\cdot\sigma_{i_l},\quad  \beta'=\sigma_{i_1}\sigma_{i_2}\cdot\ldots\cdot\sigma_{i_{k-1}}\sigma_{i_{k+1}}\cdot\ldots\cdot\sigma_{i_l},$$
i.e. $\beta'$ is obtained from $\beta$ by removing the $k$th crossing $\sigma_{i_k}$. We will construct a rational map $X_{0}(\beta\cdot\FT) \dashrightarrow X_{0}(\beta'\cdot\FT) \times \C^{*}$ that identifies the latter variety with an explicit open set in $X_{0}(\beta\cdot\FT)$.

We start with the following lemma.

\begin{lemma}
\label{lem:slide triangular}
Let $L$ and $U$ be invertible lower- and upper-triangular matrices, respectively, and $i = 1, \dots, n-1$. Then there exist lower- and upper-triangular matrices $\widetilde{L}$ and $\widetilde{U}$ such that
$$
B_i(z)U=\widetilde{U}B_i\left(\frac{u_{i+1,i+1}z+u_{i,i+1}}{u_{i,i}}\right),\ LB_i(z)=B_i\left(\frac{l_{i+1,i+1}z+l_{i+1,i}}{l_{i,i}}\right)\widetilde{L}.
$$
Moreover, $\widetilde{u}_{i,i+1}=\widetilde{l}_{i+1,i}=0$ and $\widetilde{u}_{k,k} = u_{s_{i}(k), s_{i}(k)}$ for every $k$. 
\end{lemma}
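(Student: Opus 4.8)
The plan is to prove both identities by a direct matrix computation, exploiting the fact that $B_i(z)$ is the identity outside the $2\times 2$ block in rows and columns $i,i+1$. Since the interesting phenomena all happen in that block, the first step is to reduce to the $2\times 2$ case. Writing $U$ in block form with respect to the decomposition $\{1,\dots,i-1\}\sqcup\{i,i+1\}\sqcup\{i+2,\dots,n\}$, the matrix $U$ is block upper-triangular, and left-multiplying by $B_i(z)$ only mixes the middle two rows; so it suffices to understand
\[
\begin{pmatrix} 0 & 1 \\ 1 & z \end{pmatrix}\begin{pmatrix} u_{i,i} & u_{i,i+1} \\ 0 & u_{i+1,i+1}\end{pmatrix},
\]
together with the fact that the $B_i(z)$ also permutes rows $i$ and $i+1$ of the ``$\{i+2,\dots,n\}$-part'' of $U$, which is where the claim $\widetilde u_{k,k}=u_{s_i(k),s_i(k)}$ for $k\in\{i,i+1\}$ comes from (for $k\notin\{i,i+1\}$ it is trivial since nothing changes).

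For the first identity I would compute this $2\times 2$ product explicitly: it equals $\begin{pmatrix} 0 & u_{i+1,i+1} \\ u_{i,i} & u_{i,i+1}+zu_{i+1,i+1}\end{pmatrix}$, and then factor it as $\widetilde U_{\mathrm{block}}\cdot B_i(z')$ with $z' = (u_{i+1,i+1}z + u_{i,i+1})/u_{i,i}$ and $\widetilde U_{\mathrm{block}} = \begin{pmatrix} u_{i+1,i+1} & 0 \\ u_{i,i+1}+zu_{i+1,i+1} & u_{i,i}\end{pmatrix}$; one checks this is the unique such factorization with $B_i(z')$ having the prescribed form (using $u_{i,i}\ne 0$, which holds since $U$ is invertible). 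The point is that $\widetilde U_{\mathrm{block}}$ is lower-triangular \emph{in the middle block}, but once reassembled into the full $n\times n$ matrix $\widetilde U$ it is genuinely upper-triangular: the off-diagonal entry $\widetilde u_{i,i+1}=0$ by construction, the entries above the block in columns $i,i+1$ get multiplied/permuted but stay in the upper part, and the entries to the right of the block are just row-swapped, preserving upper-triangularity since those are full rows of zeros below the diagonal in that range. Reading off the diagonal: $\widetilde u_{i,i} = u_{i+1,i+1} = u_{s_i(i),s_i(i)}$ and $\widetilde u_{i+1,i+1} = u_{i,i} = u_{s_i(i+1),s_i(i+1)}$, giving the diagonal claim.

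The second identity $L B_i(z) = B_i(z'')\widetilde L$ with $z'' = (u_{i+1,i+1}z + l_{i+1,i})/l_{i,i}$ is proved the same way — note the typo-like appearance of $u_{i+1,i+1}$ in the statement, which should presumably read $l_{i+1,i+1}$; I would either follow the paper's convention or silently correct it — by transposing: $(LB_i(z))^{T} = B_i(z)^{T}L^{T} = w_0' B_i(z) w_0' L^T$ where conjugating $B_i$ by the block antidiagonal swap again gives a $B_i$, reducing to the first case applied to the upper-triangular matrix $L^T$. Alternatively just compute the $2\times 2$ product $\begin{pmatrix} l_{i,i} & 0 \\ l_{i+1,i} & l_{i+1,i+1}\end{pmatrix}\begin{pmatrix}0 & 1\\ 1 & z\end{pmatrix}$ directly and factor. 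The main (and really only) obstacle is bookkeeping: making sure the reassembly of the $2\times 2$ block computation into the full $n\times n$ matrix genuinely yields triangular matrices and tracking the diagonal entries through the row/column permutation $s_i$ correctly; the algebra itself is elementary.
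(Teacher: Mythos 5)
Your overall strategy --- reduce everything to the $2\times 2$ block in rows and columns $i,i+1$ and compute directly --- is the same as the paper's, which just writes out the entry-wise system of equations and solves it. However, the explicit block factorization you give is wrong, and as written that step fails. You claim
\[
\begin{pmatrix} 0 & u_{i+1,i+1} \\ u_{i,i} & u_{i,i+1}+zu_{i+1,i+1}\end{pmatrix}
= \begin{pmatrix} u_{i+1,i+1} & 0 \\ u_{i,i+1}+zu_{i+1,i+1} & u_{i,i}\end{pmatrix}
\begin{pmatrix} 0 & 1 \\ 1 & z'\end{pmatrix},
\qquad z'=\frac{u_{i+1,i+1}z+u_{i,i+1}}{u_{i,i}},
\]
but multiplying out the right-hand side gives $(2,2)$-entry $\bigl(u_{i,i+1}+zu_{i+1,i+1}\bigr)+u_{i,i}z' = 2\bigl(u_{i,i+1}+zu_{i+1,i+1}\bigr)$, not $u_{i,i+1}+zu_{i+1,i+1}$. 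Worse, if the middle block of $\widetilde U$ really had a nonzero $(2,1)$-entry, the reassembled $n\times n$ matrix would satisfy $\widetilde u_{i+1,i}\neq 0$, a nonzero entry \emph{below} the diagonal; so your remark that a lower-triangular middle block ``once reassembled \dots is genuinely upper-triangular'' cannot be right. The correct middle block of $\widetilde U$ is the diagonal matrix $\operatorname{diag}(u_{i+1,i+1},u_{i,i})$: imposing
\[
\begin{pmatrix} a & b \\ 0 & c\end{pmatrix}\begin{pmatrix} 0 & 1 \\ 1 & z'\end{pmatrix}
=\begin{pmatrix} b & a+bz' \\ c & cz'\end{pmatrix}
=\begin{pmatrix} 0 & u_{i+1,i+1} \\ u_{i,i} & u_{i,i+1}+zu_{i+1,i+1}\end{pmatrix}
\]
forces $b=0$, $a=u_{i+1,i+1}$, $c=u_{i,i}$ and $z'=(u_{i,i+1}+zu_{i+1,i+1})/u_{i,i}$, which is exactly the assertion $\widetilde u_{i,i+1}=0$ together with the stated diagonal entries $\widetilde u_{i,i}=u_{i+1,i+1}$, $\widetilde u_{i+1,i+1}=u_{i,i}$.

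With that correction the rest of your argument goes through and coincides with the paper's: the entries of $U$ above the block stay in the upper-triangular part (though note for $j>i+1$ one gets $\widetilde u_{i+1,j}=u_{i,j}+zu_{i+1,j}$, not merely a row swap), the diagonal claim reads off as you say, and the $L$ case follows by transposition or by the symmetric computation. Your observation that $u_{i+1,i+1}$ in the second displayed formula of the statement should read $l_{i+1,i+1}$ is correct.
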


\begin{proof}
We prove the statement for the upper-triangular matrix $U$, the case of $L$ is proven analogously. First, note that
\begin{equation}\label{eqn:BU}
(B_{i}(z)U)_{j,k} = \begin{cases} u_{j,k} & \text{if} \; j \not\in \{i, i+1\}, \\  u_{i+1, k} & \text{if} \; j = i, \\ u_{i,k} + zu_{i+1, k} & \text{if} \; j = i+1, \end{cases}
\end{equation}
and
\begin{equation}\label{eqn:UB}
(\widetilde{U}B_{i}(w))_{j,k} = \begin{cases} \widetilde{u}_{j,k} & \text{if} \; k \not\in \{i, i+1\}, \\ \widetilde{u}_{j, i+1} & \text{if} \; k = i, \\ \widetilde{u}_{j, i} + w\widetilde{u}_{j, i+1} & \text{if} \; k = i+1. \end{cases} 
\end{equation}
Now, assume that we know the matrix $U$ and $z$, and we want to solve for the entries of $\widetilde{U}$ and $w$ in such a way that $B_{i}(z)U = \widetilde{U}B_{i}(w)$. Note that \eqref{eqn:BU} and \eqref{eqn:UB} force $u_{j,k} = \widetilde{u}_{j,k}$ if $j, k \not\in \{i, i+1\}$. In particular, $\tilde{u}_{k,k} = u_{k,k}$ if $j \not= i, i+1$, and the matrix $\widetilde{U}$ is upper triangular except for, perhaps, the $i$ and $i+1$-st row and column. 

Setting $j = i = k$ in \eqref{eqn:UB} and \eqref{eqn:BU} we obtain $u_{i+1, i} = \widetilde{u}_{i, i+1}$. Since $U$ is upper triangular, this forces $\widetilde{u}_{i, i+1} = 0$. Now setting $j = i$ and $k = i+1$ gives $u_{i+1, i+1} = \widetilde{u}_{i,i} + w\widetilde{u}_{i, i+1}$, so $\widetilde{u}_{i,i} = u_{i+1, i+1}$. Similarly, setting $j = i+1, k = i$ we obtain $u_{i,i} + zu_{i+1, i} = \widetilde{u}_{i+1, i+1}$, so the upper triangularity of $U$ gives us $\widetilde{u}_{i+1, i+1} = u_{i,i}$. Note that at this point we have shown that $\widetilde{u}_{k,k} = u_{s_{i}k, s_{i}k}$ for every $k$.

If $k \not\in \{i, i+1\}$ then \eqref{eqn:BU} and \eqref{eqn:UB} give us
\[
\widetilde{u}_{i,k} = u_{i+1, k} \qquad \text{and} \qquad \widetilde{u}_{i+1, k} = u_{i,k} + zu_{i+1, k}.
\]
Similarly, if $j \not\in \{i, i+1\}$ we obtain (setting $k = i$) $\widetilde{u}_{j, i+1} = u_{j,i}$ that the we can use to solve for $\widetilde{u}_{j, i}$ in the equation  $u_{j, i+1} = \widetilde{u}_{j, i} + w\widetilde{u}_{j, i+1}$, that we obtain setting $k = i+1$. Note that at this point we have found all entries $\widetilde{u}_{k,j}$, except for $\widetilde{u}_{i+1, i}$, that we must show is $0$. This is where our choice of $w$ in the statement of the lemma comes into play. Indeed, setting $j = i+1 = k$ we obtain $u_{i,i+1} + zu_{i+1, i+1} = \widetilde{u}_{i+1, i} + w\widetilde{u}_{i+1, i+1}$. Thus, since we know that $\widetilde{u}_{i+1, i+1} = u_{i,i}$ we obtain 
\[
w = \frac{u_{i+1, i+1}z + u_{i,i+1}}{u_{i,i}} \Rightarrow \widetilde{u}_{i+1, i} = 0
\]
so the matrix $\widetilde{U}$ is upper triangular and the lemma is proved. 
\end{proof}

The key algebraic equality that incarnates opening a crossing $\sigma_i$ in a positive braid word, in terms of braid matrices, reads
\begin{equation}
\label{eq: factorization}
B_i(z)=U_i(z)D_i(z)L_i(z),
\end{equation}
where the variable $z \in \C^{*}$ associated to that crossing $\sigma_i$ is now assumed to be non-zero. In this equation, we have used the matrices
\begin{equation}\label{eqn: UDL}
U_i(z):=\left(\begin{matrix} 1 & z^{-1}\\ 0 & 1\end{matrix}\right),\quad D_i(z):=\left(\begin{matrix} -z^{-1} & 0\\ 0 & z\end{matrix}\right),\quad
L_i(z):=\left(\begin{matrix} 1 & 0\\ z^{-1} & 1\end{matrix}\right),
\end{equation}
understood as being the $(2\times 2)$-block matrices placed in $i$-th and $(i+1)$-st row and column. Let us now illustrate how the process of opening a crossing occurs at the level of general braid matrices, as follows. Consider the positive braid word $\beta=\beta_1 \sigma_i \beta_2$ and the braid word $\beta'=\beta_1\beta_2$ obtained by opening (i.e. removing) the explicit crossing $\sigma_i$ between $\beta_1,\beta_2$. In order to apply Equation \ref{eq: factorization} we must assume that the variable $z$ associated to the crossing $\sigma_i$ is non-vanishing, and we always do so. Then we write
$$
B_{\beta} = B_{\beta_1}(z_{1}, \dots, z_{r-1})B_i(z)B_{\beta_2}(z_{r+1}, \dots, z_{\ell})=B_{\beta_{1}}U_i(z)D_i(z)L_i(z)B_{\beta_{2}},
$$
and use both Equation \eqref{eq: slide diagonal} and Lemma \ref{lem:slide triangular} to slide the middle matrices to the sides, $U,D$ to the left and $L$ to the right. This results in a decomposition of the form
$$
B_{\beta}=U'D'B_{\beta_{1}}(z'_{1}, \dots, z'_{r-1})B_{\beta_{2}}(z'_{r+1}, \dots, z'_{\ell})L'=U'D'B_{\beta'}(z'_{1}, \dots, z'_{r-1}, z'_{r+1}, \dots, z'_{\ell})L'
$$

\noindent where $U'$, $L'$ and $D'$ are some explicit upper (lower) unitriangular and diagonal matrices, respectively, and $z'_{1}, \dots, z'_{r-1}, z'_{r+1}, \dots, z'_{\ell}$ are polynomial functions on $z_{1}, \dots, z_{r-1}, z_{r}^{\pm 1}, z_{r+1}, \dots, z_{\ell}$. Note that $B_{\beta}(z)L_{1}$ is upper-triangular for some lower-triangular matrix $L_{1}$ if and only if $B_{\beta'}(z')L'L_{1}$ is upper-triangular. These are the first ingredients for the construction of the rational map $$\Omega_{\sigma_{i}}: X_{0}(\beta\FT) \dashrightarrow X_{0}(\beta'\FT) \times \C^{*}.$$

For the second ingredient, we consider a point $(z_{1}, \dots, z_{\ell}, c_{ij}) \in X_{0}(\beta\cdot\FT)$. By Theorem \ref{thm: smooth}, this is equivalent to $B_{\beta}(z)L(c_{ij})$ being upper triangular. Now we open a crossing, so we assume $z_i\neq0$ is non-vanishing: using the decomposition above we obtained that $B_{\beta'}(z'_{1}, \dots, z'_{r-1})L'L(c_{ij})$ is upper triangular. Since $L'L(c_{ij})$ is lower triangular with $1$'s in the diagonal, we can write $L'L(c_{ij}) = L(c'_{ij})$, where $c'_{ij}$ are polynomial functions on $z_{r}^{-1}, z_{r+1}, \dots, z_{\ell}, c_{ij}$. These polynomial functions are the second ingredient. In summary, we obtain the following rational map.

\begin{definition}\label{def:openingcross}
Consider the positive braid word $\beta=\beta_1 \sigma_i \beta_2$ of length $\ell=l(\beta)$, $\beta'=\beta_1\beta_2$, and suppose that the complex variable $z_i$ associated to the (middle) crossing $\sigma_i$ is non-vanishing. By definition, the rational map $\Omega_{\sigma_{i}}$ associated to opening the crossing $\sigma_i$ is
$$
\Omega_{\sigma_{i}}: X_{0}(\beta\FT) \dashrightarrow X_{0}(\beta'\FT) \times \C^{*}, (z_{1}, \dots, z_{\ell}, c_{ij}) \mapsto (z'_{1}, \dots, z'_{r-1}, z'_{r+1}, \dots, z'_{\ell}, c'_{ij}, z_{r}^{-1}),
$$
where $z_i'\in\C[z_1,\ldots,z_{r-1},z_r^{\pm},z_{r+1},\ldots,z_{\ell}],c_{ij}'\in\C[z_r^{-1},z_{r+1},\ldots,z_\ell,c_{ij}]$ are the polynomial functions defined as above.
\end{definition}

In the same notation and hypothesis as above, we have the following result.

\begin{lemma}
\label{lem: open one crossing matrices}
The rational map
$$\Omega_{\sigma_{i}}: X_{0}(\beta\FT) \dashrightarrow X_{0}(\beta'\FT) \times \C^{*}$$
restricts to an isomorphism between the open locus $\{z_{r} \neq 0\}\sse X_{0}(\beta\cdot\FT)$ and $X_{0}(\beta'\cdot\FT) \times \C^{*}$.
\end{lemma}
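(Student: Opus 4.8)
The plan is to show that $\Omega_{\sigma_i}$ is a morphism on the open locus $\{z_r \neq 0\} \subseteq X_0(\beta\cdot\FT)$ and to construct an explicit inverse morphism, so that the two are mutually inverse isomorphisms of varieties. First I would observe that on $\{z_r \neq 0\}$ all the formulas defining $\Omega_{\sigma_i}$ — namely the $z'_j$, the $c'_{ij}$, and $z_r^{-1}$ — are regular functions (polynomials in $z_1, \dots, z_{r-1}, z_r^{\pm 1}, z_{r+1}, \dots, z_\ell, c_{ij}$), so $\Omega_{\sigma_i}$ is at least a morphism $\{z_r \neq 0\} \to \C^\ell \times \C^*$. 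The content is that its image lands in $X_0(\beta'\cdot\FT) \times \C^*$: this is exactly the computation preceding Definition \ref{def:openingcross}, where the factorization $B_i(z_r) = U_i(z_r)D_i(z_r)L_i(z_r)$ together with Lemma \ref{lem:slide triangular} and Equation \eqref{eq: slide diagonal} gives $B_\beta(z)L(c_{ij}) = U'D' B_{\beta'}(z') L(c'_{ij})$ with $U', D'$ upper-triangular/diagonal, so that $B_\beta(z)L(c_{ij})$ is upper-triangular if and only if $B_{\beta'}(z')L(c'_{ij})$ is; hence membership in $X_0(\beta\cdot\FT)$ forces membership of the image in $X_0(\beta'\cdot\FT)$.

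Next I would construct the inverse. Given a point $(z'_1, \dots, \widehat{z'_r}, \dots, z'_\ell, c'_{ij}, w) \in X_0(\beta'\cdot\FT) \times \C^*$, we want to recover the original coordinates. The key point is that each sliding step in Lemma \ref{lem:slide triangular} and in \eqref{eq: slide diagonal} is reversible: the maps $U \mapsto \widetilde U$, $L \mapsto \widetilde L$ there are invertible (they merely transpose two diagonal entries and adjust off-diagonal entries by explicit rational formulas, all of whose denominators are diagonal entries of $U$, $L$, hence nonzero for invertible triangular matrices), and the factorization $B_i(z) = U_i(z)D_i(z)L_i(z)$ is an isomorphism from $\{z \neq 0\}$ onto its image. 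So set $z_r := w^{-1} \in \C^*$, reconstruct $U_i(z_r), D_i(z_r), L_i(z_r)$, and run the sliding procedure backwards: slide $U', D'$ back to the right through $B_{\beta_1}$ and $L'$ back to the left through $B_{\beta_2}$ to recover $z_1, \dots, z_{r-1}, z_{r+1}, \dots, z_\ell$ and $c_{ij}$ as regular functions of $z'_*, c'_{ij}, w$. This produces a morphism $X_0(\beta'\cdot\FT) \times \C^* \to \{z_r \neq 0\} \subseteq X_0(\beta\cdot\FT)$ — one should check the image genuinely has $z_r = w^{-1} \neq 0$ (immediate) and satisfies the $\FT$-equations (by the same "if and only if" as above, read in the other direction) — and by construction it is a two-sided inverse to $\Omega_{\sigma_i}$.

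I expect the main obstacle to be bookkeeping rather than conceptual: one must carefully track how the diagonal matrix $D_i(z_r)$ and the uni-triangular matrices $U_i(z_r), L_i(z_r)$ propagate through the products $B_{\beta_1}$ and $B_{\beta_2}$, using \eqref{eq:slide diagonal 2} for the diagonal part and Lemma \ref{lem:slide triangular} iteratively for the triangular parts, and verify at each stage that no denominator vanishes — the denominators are diagonal entries $u_{k,k}$, $l_{k,k}$ of triangular matrices arising in the process, which are nonzero precisely because all matrices involved have determinant $\pm 1$ (as in the proof of Lemma \ref{lem: free action}) or are explicitly uni-triangular. A secondary point worth a sentence is that, since $\Omega_{\sigma_i}$ and its inverse are given by regular functions on both sides and compose to the identity, no separate check of bijectivity on points or of normality is needed: mutually inverse morphisms automatically give an isomorphism of the locally closed subvarieties in question. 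Finally, I would remark that composing the isomorphism of Theorem \ref{thm: smooth}, $X_0(\beta'\cdot\FT) \cong X_0(\beta'\cdot\Delta; w_0) \times \C^{\binom n2}$, with this result yields the advertised injective map $X_0(\beta'\cdot\Delta; w_0) \times \C^* \hookrightarrow X_0(\beta\cdot\FT)$, and hence, restricting, into $X_0(\beta\cdot\Delta; w_0)$.
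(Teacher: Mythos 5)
Your proposal is correct and follows essentially the same route as the paper's proof: the paper likewise argues that, given $z'_1,\dots,z'_{r-1},z'_{r+1},\dots,z'_\ell$ and $z_r\neq 0$, the sliding steps of Lemma \ref{lem:slide triangular} can be reversed to recover $z_1,\dots,z_\ell$, and that $c_{ij}$ is recovered from $L(c_{ij})=(L')^{-1}L(c'_{ij})$ once $L'$ is reconstructed. Your additional checks (regularity of the formulas, non-vanishing of denominators, the two-sided inverse remark) are correct elaborations of what the paper leaves implicit.
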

\begin{proof}
From the construction, see e.g. Lemma \ref{lem:slide triangular}, if we know $z'_{1}, \dots, z'_{r-1}$, $z'_{r+1}, \dots, z'_{\ell}$ and $z_{r}$ then we can reconstruct $z_{1}, \dots, z_{\ell}$, provided $z_{r} \neq 0$. It remains to show that, if we also know $c'_{ij}$ then we can reconstruct $c_{ij}$ as well. For that, we just notice that we can reconstruct $L'$, and we have the equation $L(c_{ij}) = (L')^{-1}L(c'_{ij})$.
\end{proof}

There are two fundamental properties of these rational maps $\Omega_{\sigma_{i}}$: they can be iterated, and they are compatible with the torus action. This leads to the following result. 

\begin{prop}\label{prop:open crossings}
Let $\beta$ be a positive $n$-braid word. For each ordering  $\tau\in S_{\ell(\beta)}$ of the crossings of $\beta$, there exists an open set $\widetilde{T_{\tau}}\sse X_{0}(\beta\cdot\FT)$ such that:
\begin{itemize}
	\item[(i)] $\widetilde{T_{\tau}}\cong(\C^{*})^{\ell(\beta)} \times X_{0}(\FT) = (\C^{*})^{\ell(\beta)} \times \C^{\binom{n}{2}}$.\\
	
	\item[(ii)] $\widetilde{T_{\tau}}$ is given by the non-vanishing of Laurent polynomials in $z_{r_{1}}, z'_{r_{2}}, z''_{r_{3}}, \dots, z^{(\ell - 1)}_{r_{\ell}}$; these latter variables can be taken as coordinates of the $(\C^{*})^{\ell(\beta)}$-factor.\\
	
	\item[(iii)] $\widetilde{T_{\tau}}$ is stable under the action of $(\C^{*})^{n-1}$ on $X_{0}(\beta \cdot \FT)$.
\end{itemize} 
\end{prop}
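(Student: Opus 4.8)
The plan is to prove the three items by induction on $\ell(\beta)$, using the single-crossing opening map $\Omega_{\sigma_i}$ from Definition \ref{def:openingcross} as the inductive step and the results already established about it. The base case $\ell(\beta)=0$ is Example \ref{ex: braid varieties}: $X_0(\FT)\cong\C^{\binom{n}{2}}$, and the empty ordering gives $T_{\tau(\beta)}=X_0(\FT)$, which trivially satisfies (i), (ii), (iii) (the torus is $(\C^*)^0$).

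\textbf{Inductive step.} Given an ordering $\tau(\beta)\in S_{\ell(\beta)}$, let $\sigma_{i}$ be the crossing that $\tau$ opens \emph{first}; after opening it we are left with the shorter braid word $\beta'$ obtained by deleting that crossing, and $\tau$ restricts to an ordering $\tau(\beta')\in S_{\ell(\beta')}$ of the remaining crossings. By the previous Lemma, $\Omega_{\sigma_i}$ restricts to an isomorphism between the open locus $\{z_r\neq 0\}\subseteq X_0(\beta\cdot\FT)$ and $X_0(\beta'\cdot\FT)\times\C^*$. By the inductive hypothesis there is an open set $T_{\tau(\beta')}\subseteq X_0(\beta'\cdot\FT)$ with $T_{\tau(\beta')}\cong(\C^*)^{\ell(\beta')}\times\C^{\binom{n}{2}}$, cut out inside $X_0(\beta'\cdot\FT)$ by the nonvanishing of Laurent polynomials in the successively-primed variables. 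Pulling $T_{\tau(\beta')}\times\C^*$ back through $\Omega_{\sigma_i}$ produces the desired open set
$$
T_{\tau(\beta)}:=\Omega_{\sigma_i}^{-1}\bigl(T_{\tau(\beta')}\times\C^*\bigr)\subseteq\{z_r\neq 0\}\subseteq X_0(\beta\cdot\FT),
$$
which is open in $X_0(\beta\cdot\FT)$ and isomorphic to $(\C^*)^{\ell(\beta')}\times\C^{\binom{n}{2}}\times\C^*\cong(\C^*)^{\ell(\beta)}\times\C^{\binom{n}{2}}$, giving (i). For (ii), the new coordinate is $z_r^{-1}$ (equivalently $z_r$), and the Laurent polynomials cutting out $T_{\tau(\beta')}$ are, by Definition \ref{def:openingcross}, Laurent polynomials in $z_r^{\pm}$ and the once-primed variables $z'_\bullet$; re-expressing everything in the variables $z_{r_1},z'_{r_2},\dots,z^{(\ell-1)}_{r_\ell}$ of the factors (with $z_{r_1}=z_r$) shows $T_{\tau(\beta)}$ is exactly the nonvanishing locus of Laurent polynomials in these variables, and these variables serve as the coordinates on the $(\C^*)^{\ell(\beta)}$-factor.

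\textbf{Equivariance.} For (iii) I would check that each $\Omega_{\sigma_i}$ intertwines the torus $\mathbb{T}$-actions on source and target, so that stability of $T_{\tau(\beta')}\times\C^*$ implies stability of $T_{\tau(\beta)}$. Concretely, the factorization $B_i(z)=U_i(z)D_i(z)L_i(z)$ together with relation \eqref{eq: slide diagonal} shows how a diagonal matrix $D_\bt$ slides past the opened crossing, rescaling $z_r$ by $t_{w_k(i+1)}t_{w_k(i)}^{-1}$ and inducing on the remaining variables $z'_\bullet$ the $\mathbb{T}$-action attached to $\beta'$; the locus $\{z_r\neq 0\}$ is manifestly $\mathbb{T}$-stable, and since the diagonal subtorus acts trivially, the induced $T\cong(\C^*)^{n-1}$-actions match under $\Omega_{\sigma_i}$. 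Hence $T_{\tau(\beta)}$, being the $\Omega_{\sigma_i}$-preimage of a $T$-stable set, is $T$-stable.

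\textbf{Main obstacle.} The genuinely delicate point is the bookkeeping in (ii): one must verify that after iterating the substitutions of Lemma \ref{lem:slide triangular} and relation \eqref{eq: slide diagonal}, the defining inequalities really are \emph{Laurent} (not merely rational) in the successively-primed variables, and that the full collection of primed variables $z_{r_1},z'_{r_2},\dots,z^{(\ell-1)}_{r_\ell}$ is an honest coordinate system on the $(\C^*)^{\ell(\beta)}$-factor rather than an over- or under-determined set; this amounts to tracking that at each opening the new variable $z^{(j)}_{r_{j+1}}$ is invertible on $T_{\tau(\beta)}$ and that the change of coordinates between successive stages is a Laurent-monomial-times-unipotent transformation. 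The torus-equivariance bookkeeping in (iii) is of the same flavor but lighter, since one only needs the coarse fact that $\Omega_{\sigma_i}$ commutes with the $\mathbb{T}$-actions up to the explicit rescaling recorded in \eqref{eq:slide diagonal 2}.
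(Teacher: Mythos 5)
Your strategy is correct and is essentially the paper's own: iterate the single-crossing opening $\Omega_{\sigma_i}$ of Definition \ref{def:openingcross}, with (i) and (ii) following from the isomorphism $\{z_r\neq 0\}\cong X_0(\beta'\cdot\FT)\times\C^*$ and the fact that the new variables and the defining inequalities are Laurent in $z_r^{\pm}$ by construction. The one place where your proposal undersells the work is exactly the step you label as ``lighter'': the paper regards (i)--(ii) as already established by the preceding discussion and treats (iii) as \emph{the only assertion requiring proof}. Your sentence ``inducing on the remaining variables $z'_\bullet$ the $\mathbb{T}$-action attached to $\beta'$'' is the crux, not a routine consequence of \eqref{eq: slide diagonal}: after the factorization $B_i(z)=U_i(z)D_i(z)L_i(z)$ one must slide a full upper-triangular matrix (not just a diagonal one) through the remaining braid matrices via Lemma \ref{lem:slide triangular}, and it is not automatic that the resulting $z'_j$ are homogeneous of the correct weights, since the permutations $w_k$ governing the weights change when the crossing is deleted. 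The paper handles this with the notion of $w$-admissible matrices (Definition \ref{def:admissible matrix}) and an induction on the braid length (Lemma \ref{lemma: moving admissible matrices}, showing $\we{z'_d}=\we{z_d}$ and that admissibility propagates as $U^d\mapsto U^{d+1}$), together with Lemma \ref{lemma: moving admissible diagonal} for the conjugation by $D_i(z_0)$ that corrects the weights across the deleted crossing. So your outline is sound, but to be a complete proof it must supply this admissibility induction (or an equivalent weight-tracking argument) rather than asserting that $\Omega_{\sigma_i}$ intertwines the actions.
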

\begin{proof} Parts (i) and (ii) follow from the discussion above, applied iteratively. Thus, the only assertion that needs a proof is the stability under the torus action in Part (iii). For that, we need to show that $z_{r_{1}}, z'_{r_{2}}, \dots, z^{(\ell -1)}_{r_{\ell}}$ are all homogeneous under the $(\C^{*})^{n-1}$-action. This is proven in Lemmas \ref{lemma: moving admissible matrices} and \ref{lemma: moving admissible diagonal} below (both lemmas are independent of the intervening material), and their corresponding analogues in the case of lower-triangular matrices.
\end{proof}

Proposition \ref{prop:open crossings} and the relation between the braid varieties $X_{0}(\beta \cdot \FT)$ and $X_{0}(\beta \cdot \Delta; w_{0})$, as established in Theorem \ref{thm: smooth}, imply the following result.

\begin{cor}\label{cor:open crossings} Let $\beta$ be a positive $n$-braid word. For each ordering  $\tau\in S_{\ell(\beta)}$ of the crossings of $\beta$, there exists an open set $T_{\tau}\sse X_{0}(\beta\cdot \Delta; w_{0})$ which is isomorphic to a torus $T_{\tau}\cong(\C^{*})^{\ell(\beta)}$ and stable under the action of $(\C^{*})^{n-1}$ on $X_{0}(\beta \cdot \Delta; w_{0})$.
\end{cor}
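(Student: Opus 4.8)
The plan is to deduce Corollary \ref{cor:open crossings} directly from Proposition \ref{prop:open crossings} together with Theorem \ref{thm: smooth}, which already provides the product decomposition $X_{0}(\beta\cdot\FT) \cong X_{0}(\beta\cdot\Delta; w_{0}) \times \C^{\binom{n}{2}}$. Concretely, Proposition \ref{prop:open crossings} gives an open set $T_{\tau(\beta)} \subseteq X_{0}(\beta\cdot\FT)$ with $T_{\tau(\beta)} \cong (\C^{*})^{\ell(\beta)} \times X_{0}(\FT) = (\C^{*})^{\ell(\beta)} \times \C^{\binom{n}{2}}$, stable under the torus action. First I would observe that the open set $T_{\tau(\beta)}$ respects the product decomposition of Theorem \ref{thm: smooth}: the opening construction only modifies the $z$-variables and the lower-triangular coordinates $c_{ij}$ of $L$, while the upper-triangular coordinates $u_{ij}$ of $U$ are untouched (recall $B_{\beta\cdot\FT} = B_{\beta\cdot\Delta}w_{0}U$ and the $\C^{\binom{n}{2}}$-factor is precisely the $u_{ij}$). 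Hence $T_{\tau(\beta)}$ is of the form $T'_{\tau(\beta)} \times \C^{\binom{n}{2}}$, where $T'_{\tau(\beta)} := T_{\tau(\beta)} \cap X_{0}(\beta\cdot\Delta; w_{0})$ is an open subset of $X_{0}(\beta\cdot\Delta; w_{0})$.

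Next I would identify $T'_{\tau(\beta)}$ explicitly. Comparing the two decompositions $T_{\tau(\beta)} \cong (\C^{*})^{\ell(\beta)} \times \C^{\binom{n}{2}}$ and $T_{\tau(\beta)} = T'_{\tau(\beta)} \times \C^{\binom{n}{2}}$, and using that the $(\C^{*})^{\ell(\beta)}$-factor in Proposition \ref{prop:open crossings}(ii) is coordinatized by the Laurent variables $z_{r_{1}}, z'_{r_{2}}, \ldots, z^{(\ell-1)}_{r_{\ell}}$ which are functions of the $z$- and $c$-coordinates only (not of the $u_{ij}$), one concludes that these same variables give an isomorphism $T'_{\tau(\beta)} \cong (\C^{*})^{\ell(\beta)}$. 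The openness of $T'_{\tau(\beta)}$ in $X_{0}(\beta\cdot\Delta; w_{0})$ follows because $T_{\tau(\beta)}$ is cut out by the nonvanishing of Laurent polynomials not involving $u_{ij}$ (Proposition \ref{prop:open crossings}(ii)), so the same polynomials cut out $T'_{\tau(\beta)}$ inside $X_{0}(\beta\cdot\Delta; w_{0})$.

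Finally, for the torus-equivariance statement, I would invoke that the $T$-action preserves the product decomposition $X_{0}(\beta\cdot\FT) \cong X_{0}(\beta\cdot\Delta; w_{0}) \times \C^{\binom{n}{2}}$ — as already noted in the paragraph following Definition \ref{def:torusaction} — and that by Proposition \ref{prop:open crossings}(iii) the set $T_{\tau(\beta)}$ is stable under the $(\C^{*})^{n-1}$-action. Restricting the stable set $T_{\tau(\beta)} = T'_{\tau(\beta)} \times \C^{\binom{n}{2}}$ to the first factor, and using that the action respects this factorization, shows $T'_{\tau(\beta)}$ is stable under the induced $(\C^{*})^{n-1}$-action on $X_{0}(\beta\cdot\Delta; w_{0})$. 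Renaming $T'_{\tau(\beta)}$ back to $T_{\tau(\beta)}$ gives the statement. I expect the only genuinely nontrivial point to be verifying cleanly that the opening construction does not disturb the upper-triangular $u_{ij}$-coordinates — i.e. that the $\C^{\binom{n}{2}}$-factor splits off uniformly through every step of the iterated crossing-opening — which is essentially bookkeeping in the decomposition $B_{\beta\cdot\FT} = U'D'B_{\beta'\cdot\FT}L'$ tracked against $B_{\beta\cdot\FT} = B_{\beta\cdot\Delta}w_{0}U$, but requires a moment of care; everything else is immediate from the cited results.
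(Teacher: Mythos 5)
Your proposal is correct and follows exactly the route the paper intends: the paper gives no written proof, simply declaring the corollary to follow "readily" from Proposition \ref{prop:open crossings} and the product decomposition of Theorem \ref{thm: smooth}, and your argument is precisely the fleshed-out version of that implication (the key bookkeeping point you flag — that the crossing-opening maps $\Omega_{\sigma_i}$ only transform the $z$- and $c_{ij}$-coordinates while the $u_{ij}$-factor is carried along untouched — is indeed the content of Definition \ref{def:openingcross} and the decomposition $B_{\beta\cdot\FT}=B_{\beta\cdot\Delta}w_0U$). Nothing further is needed.
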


The union of the toric charts $T_{\tau}$ in Corollary \ref{cor:open crossings}, as $\tau\in S_{\ell(\beta)}$ ranges through all the possible orderings, does not necessarily cover the entire variety $X_{0}(\beta \cdot \Delta; w_{0})$. Fortunately, we can show that it does cover it up to codimension 2. 

\begin{thm}
\label{thm: codim 2} Let $\beta$ be a positive braid word. The complement
$$X_{0}(\beta \cdot \Delta;w_{0})\setminus\left(\bigcup_{\tau\in S_{\ell(\beta)}}T_{\tau}\right)\sse X_{0}(\beta \cdot \Delta;w_{0})$$
has codimension at least $2$.
\end{thm}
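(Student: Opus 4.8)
\emph{Proof proposal.} The plan is to argue by induction on $\ell(\beta)$, with the combinatorics of opening a single crossing furnishing the inductive step. First, by Theorem \ref{thm: smooth} we have $X_{0}(\beta\cdot\FT)\cong X_{0}(\beta\cdot\Delta;w_{0})\times\C^{\binom{n}{2}}$, and this isomorphism identifies each toric chart $T_{\tau(\beta)}\sse X_{0}(\beta\cdot\Delta;w_{0})$ of Corollary \ref{cor:open crossings} with the corresponding chart $T_{\tau(\beta)}\sse X_{0}(\beta\cdot\FT)$ of Proposition \ref{prop:open crossings} times the $\C^{\binom{n}{2}}$-factor. Hence it suffices to show that the complement $Z_{\beta}:=X_{0}(\beta\cdot\FT)\setminus\bigcup_{\tau}T_{\tau(\beta)}$ has codimension at least $2$. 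The base case $\ell(\beta)=0$ is immediate: there is a single (empty) ordering whose chart is all of $X_{0}(\FT)$, so $Z_{\beta}=\es$.

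For the inductive step, the essential point is a structural description of $Z_{\beta}$. By the iterative construction of Proposition \ref{prop:open crossings} (together with Definition \ref{def:openingcross}), the orderings $\tau$ of the crossings of $\beta$ that open the $k$-th crossing $\sigma_{i_{k}}$ first are in bijection with the orderings $\tau'$ of the braid word $\beta^{(k)}$ obtained by deleting $\sigma_{i_{k}}$ from $\beta$, and under the isomorphism $\{z_{k}\neq0\}\cong X_{0}(\beta^{(k)}\cdot\FT)\times\C^{*}$ furnished by (the restriction of) $\Omega_{\sigma_{i_{k}}}$ one has $T_{\tau(\beta)}=\Omega_{\sigma_{i_{k}}}^{-1}\big(T_{\tau'(\beta^{(k)})}\times\C^{*}\big)$. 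Since every ordering of $\beta$ opens some crossing first, while any point with $z_{1}=\cdots=z_{\ell(\beta)}=0$ lies in no chart whatsoever (no crossing can be opened first), we obtain
$$Z_{\beta}=\Big(\bigcup_{k=1}^{\ell(\beta)}\Omega_{\sigma_{i_{k}}}^{-1}\big(Z_{\beta^{(k)}}\times\C^{*}\big)\Big)\ \cup\ \big(X_{0}(\beta\cdot\FT)\cap\{z_{1}=\cdots=z_{\ell(\beta)}=0\}\big),$$
where $Z_{\beta^{(k)}}:=X_{0}(\beta^{(k)}\cdot\FT)\setminus\bigcup_{\tau'}T_{\tau'(\beta^{(k)})}$.

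It remains to bound each piece, using that $X_{0}(\beta\cdot\FT)$ is irreducible of dimension $\ell(\beta)+\binom{n}{2}$ (Theorem \ref{thm: smooth} exhibits it as an open subvariety of affine space, in the coordinates $z_{1},\dots,z_{\ell(\beta)}$ together with the entries of the upper-triangular factor $U$). For each $k$, the inductive hypothesis gives $\op{codim}Z_{\beta^{(k)}}\ge2$, hence $\op{codim}(Z_{\beta^{(k)}}\times\C^{*})\ge2$; since $\{z_{k}\neq0\}$ is a dense open subset of $X_{0}(\beta\cdot\FT)$ isomorphic via $\Omega_{\sigma_{i_{k}}}$ to $X_{0}(\beta^{(k)}\cdot\FT)\times\C^{*}$, the closure of $\Omega_{\sigma_{i_{k}}}^{-1}(Z_{\beta^{(k)}}\times\C^{*})$ has codimension at least $2$ in $X_{0}(\beta\cdot\FT)$. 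For the last piece, on the locus $z_{1}=\cdots=z_{\ell(\beta)}=0$ the braid matrix $B_{\beta}$ specializes to the permutation matrix $\pi(\beta)$; writing $B_{\FT}=LU$ as in Lemma \ref{lem: full twist}, this locus is cut out inside $X_{0}(\beta\cdot\FT)$ by the condition that $\pi(\beta)LU$ be upper-triangular, and an elementary comparison of below-diagonal entries forces $\pi(\beta)=\mathrm{id}$ and $L=\mathrm{Id}$. Thus the locus is empty unless $\beta$ is a pure braid, in which case it is the intersection of $X_{0}(\beta\cdot\FT)$ with the coordinate subspace $\{z_{1}=\cdots=z_{\ell(\beta)}=0\}$, hence of codimension $\ell(\beta)$; and if $\pi(\beta)=\mathrm{id}$ with $\ell(\beta)\ge1$ then in fact $\ell(\beta)\ge2$, since a single generator $\sigma_{i}$ has $\pi(\sigma_{i})=s_{i}\neq\mathrm{id}$. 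A finite union of closed subsets of codimension at least $2$ has codimension at least $2$, so $Z_{\beta}$ does, completing the induction. I expect the delicate part to be the structural identity for $Z_{\beta}$ — namely verifying that the iterated opening construction of Proposition \ref{prop:open crossings} genuinely realizes every chart of $\beta$ as the $\Omega_{\sigma_{i_{k}}}$-preimage of a chart of $\beta^{(k)}$, which requires keeping track of how crossings are labelled and how those labels persist through successive openings — together with the (easy but fussy) linear-algebra computation pinning down exactly when $\pi(\beta)LU$ is upper-triangular.
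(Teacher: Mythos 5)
Your proof is correct and takes essentially the same route as the paper: induction on $\ell(\beta)$ via the opening-of-crossings isomorphisms $\{z_k\neq0\}\cong X_0(\beta^{(k)}\cdot\FT)\times\C^*$ (the only caveat is that your displayed ``structural identity'' for $Z_\beta$ is really just the containment $\subseteq$, since a point of $\Omega_{\sigma_{i_k}}^{-1}(Z_{\beta^{(k)}}\times\C^*)$ may still lie in a chart that opens a different crossing first, but that containment is all you use). The paper's version is shorter because it opens only the \emph{first two} crossings, so the uncovered locus sits inside $\{z_1=z_2=0\}$, which already has codimension $2$ because $X_0(\beta\cdot\FT)$ is open in affine space in the coordinates $z_1,\dots,z_{\ell(\beta)},u_{ij}$; this bypasses your (correct but unnecessary) permutation-matrix analysis of the locus $\{z_1=\cdots=z_{\ell(\beta)}=0\}$.
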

\begin{proof}
Let us prove this by induction on the length $\ell(\beta)\in\N$. The base case, $\ell(\beta) = 0$ holds, as $X_{0}(\Delta; w_{0}) = \{\operatorname{pt}\}$, see Example \ref{ex: braid varieties}. Note that for the case $\ell(\beta) = 1$, $\beta = \sigma_{i}$ for some $i\in[1,n-1]$, and $X_{0}(\beta \Delta; w_{0})$ is defined by the condition that $B_{i}(z)^{-1}$ admits an $LU$-decomposition. (See the proof of Theorem \ref{thm: smooth}.) Note that  $B_{i}(z)^{-1}$ is the identity everywhere except in the $i$ and $i+1$-st row and columns, where it is 
\[
\left(\begin{matrix} -z & 1 \\ 1 & 0\end{matrix}\right).
\]
So that $B_{i}(z)^{-1}$ admitting an $LU$-decomposition is equivalent to the non-vanishing $z \neq 0$ (which is obviously equivalent to the nonvanishing of the principal minors of $B_{i}(z)^{-1}$). Thus $X_{0}(\beta\cdot\Delta; w_{0}) = \C^*$; thus the statement also holds in this case.

For the induction step, we assume the statement to be true for length $\ell\in\N$ and suppose that $\ell(\beta) = \ell + 1$. Let $U_{1} := \{z_{1} \neq 0\}$ and $U_{2} := \{z_{2} \neq 0\}$ and let $\beta', \beta''$ be the braids we obtain by opening the first and second crossings of $\beta$, respectively. In particular, $U_{1} = X_{0}(\beta' \cdot \Delta; w_{0}) \times \C^{*}$ and $U_{2} = X_{0}(\beta'' \cdot \Delta; w_{0}) \times \C^{*}$. By the induction assumption, $U_{1}$ and $U_{2}$ can be covered up to codimension 2 by opening crossings in the positive braids $\beta', \beta''$, respectively. Moreover, the complement of $U_{1} \cup U_{2}$ is $\{z_{1} = 0\} \cap \{z_{2} = 0\}$. By Lemma \ref{lem:dbs open}, $X_0(\beta\Delta;w_0)$ is an open subset in the affine space where $z_i$ are coordinates, so $\{z_{1} = 0\} \cap \{z_{2} = 0\}\cap X_0(\beta\cdot\Delta; w_0)$ is either empty or has codimension $2$ in $X_0(\beta\cdot\Delta; w_0)$ and the required result follows. 
\end{proof}

The toric charts $T_{\tau}\sse X_{0}(\beta\cdot \Delta; w_{0})$ used in Corollary \ref{cor:open crossings} and Theorem \ref{thm: codim 2} are constructed in Proposition \ref{prop:open crossings}, whose proof we now complete.


\subsection{Proof of Proposition \ref{prop:open crossings}} Let us state and prove Lemma \ref{lemma: moving admissible matrices} and Lemma \ref{lemma: moving admissible diagonal}, which will conclude the proof of Proposition \ref{prop:open crossings}. For that, we will need to establish some notation and conventions regarding actions of tori on $\C$-algebras.

Let $R$ be a $\C$-algebra, and assume that a torus $T$ acts on $R$ by algebra automorphisms. We will assume that this action is rational, that is, each element $r \in R$ is contained in a finite-dimensional $T$-stable subspace of $R$. This guarantees that, as a vector space, $R$ is the direct sum of its $T$-weight spaces. Equivalently, put more succinctly, $R$ is graded (as a $\C$-algebra) by the character lattice $\mathfrak{X}(T)$ of $T$.

Now, given $n > 0$, we denote by $M_{n}(R)$ the algebra of $n \times n$-matrices with coefficients in $R$. Note that the action of $T$ on $R$ extends to an action on $M_{n}(R)$. Indeed, if $\bt \in T$ and $U = (u_{jk}) \in M_n(R)$, defining $\bt.U$ via
\[
(\bt.U)_{jk} = \bt.u_{jk}
\]
defines an action of $T$ on $M_{n}(R)$ and it respects the multiplication on $M_{n}(R)$. 

Our preferred torus will be, as in the previous sections, $T = (\C^*)^n/\C^*$, the quotient of $(\C^*)^n$ by its diagonal torus. Its character lattice is $\mathfrak{X}(T) = \{(a_1, \dots, a_n) \in \Z^n \mid \sum a_{i} = 0\}$. For $i\in[1,n]$, we denote by $\be_i$ the vector $(0, \dots, 0, 1, 0, \dots, 0) \in \Z^n$, where the $1$ is on the $i$-th position, so that the differences $\be_i - \be_j$ belong to $\mathfrak{X}(T)$. 

\begin{definition}\label{def:admissible matrix}
Let $T$ be the torus $(\C^{*})^{n-1} = (\C^{*})^{n}/\C^*$, $w \in S_n$ a permutation, and assume that $T$ acts rationally and by algebra automorphisms on a $\C$-algebra $R$. By definition, a matrix $U = (u_{a,k}) \in M_n(R)$ is said to be \emph{$w$-admissible} if $u_{a,k} \in R$ is homogeneous of weight
$$
\we{u_{a,k}} = \be_{w(a)} - \be_{w(k)}
$$
\noindent for every $a, k \leq n$.
\end{definition}

\begin{remark}\label{rmk:admissible} (Characterization of $w$-admissibility)
Consider the torus $\mathbb{T} = (\C^*)^n$. Under the assumptions of Definition \ref{def:admissible matrix}, $\mathbb{T}$ acts on $R$ via the projection $\mathbb{T} \to T$. Thus, as explained in the discussion above, it also acts on $M_{n}(R)$. Independently, since $R$ is a $\C$-algebra, the torus $\mathbb{T}$ embeds into $M_n(R)$ as the torus of diagonal matrices with entries in $\C^*$. As above, for $\bt = (t_1, \dots, t_{n}) \in (\C^{*})^{n}$, let us denote by $D_{\bt}$ the diagonal matrix $\diag(t_1, \dots, t_n)$. Then, $U$ is $w$-admissible if and only if $\bt.U = D_{w(\bt)}UD_{w(\bt)}^{-1}$.

\end{remark}

Before we proceed with Lemma \ref{lemma: moving admissible matrices}, here is an example of a $w$-admissible matrix.

\begin{ex}
Let $w \in S_{n}$ be any permutation and assume $z_{0}$ is an invertible element of weight $\we{z_{0}} = \be_{w(i+1)} - \be_{w(i)}$. Then the matrix $U_{i}(z_{0}) = Id + z_{0}^{-1}E_{i, i+1}$ is $w$-admissible.
\end{ex}

Consider a $w$-admissible matrix $U\in M_n(R)$ and $z$ an element of weight $\we{z} = \be_{w(a)} - \be_{w(b)}+ \be_{w(m)} - \be_{w(k)}$ for some $a, b, m, k \in[1,n]$. Then the element $u_{a,k} + zu_{b, m}$ is homogeneous. The salient property of admissible matrices, which motivates their definition, is that they allow us to construct homogeneous elements for the torus action, as the following result shows.

\begin{lemma}\label{lemma: moving admissible matrices}
Let $w \in S_{n}$ be a permutation, $U^{0}$ be an invertible upper-triangular $w$-admissible matrix, and $\beta = \sigma_{i_{\ell}}\cdots \sigma_{i_{1}}$ a positive braid word. Consider algebraically independent variables $z_{\ell}, \dots, z_{1}$ with weights
$$
\we{z_{k}} = -\be_{w_{k-1}(i_{k} + 1)} + \be_{w_{k-1}(i_{k})}.
$$
\noindent where $w_{d} = ws_{i_{1}}\cdots s_{i_{d}}$, and inductively define (see Lemma \ref{lem:slide triangular}) the upper triangular matrices $U^{1}, \dots U^{\ell}$ and elements $z_{\ell}', \dots, z_{1}' \in R[z_{\ell}, \dots, z_{1}]$ by the equation
$$
B_{i_{d+1}}(z_{d+1})U^{d} = U^{d+1}B_{i_{d+1}}(z'_{d+1}),
$$
Then the following two facts hold:

\begin{itemize}
\item[(a)] The elements $z'_{1}, \dots, z'_{\ell+1}$ are all homogeneous with respect to the torus action and, moreover, $\we{z'_{d}} = \we{z_{d}}$ for every $d = 1, \dots, \ell$.\\

\item[(b)] For every $d = 0, \dots, \ell$, the matrix $U^{d}$ is invertible, upper triangular, $w_{d}$-admissible and has entries in the polynomial ring $R[z_{d-1}, \dots, z_{1}]$.
\end{itemize}
\end{lemma}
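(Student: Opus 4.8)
The plan is to prove (a) and (b) simultaneously by induction on $d$, since the two statements feed into each other: the admissibility of $U^d$ is exactly what is needed to see that the formula for $z'_{d+1}$ produces a homogeneous element of the correct weight, and conversely knowing the weight of $z'_{d+1}$ is what allows us to check that $U^{d+1}$ is again admissible. The base case $d=0$ is the hypothesis that $U^0$ is invertible, upper-triangular, $w_0 = w$-admissible, with entries in $R$ (the empty polynomial extension).

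For the inductive step, suppose $U^d$ is invertible, upper-triangular, $w_d$-admissible with entries in $R[z_{d}, \dots, z_1]$ (careful with indexing: after processing crossings $i_1, \dots, i_d$ the new variables that have been introduced are $z_1, \dots, z_d$, though in the matrix only $z_1,\dots,z_{d}$ should appear — I will follow the indexing conventions fixed by the statement and by Lemma \ref{lem:slide triangular}). Apply Lemma \ref{lem:slide triangular} with $i = i_{d+1}$, $z = z_{d+1}$, and $U = U^d$: this produces the upper-triangular matrix $U^{d+1}$ and the element
$$
z'_{d+1} = \frac{u^d_{i_{d+1}+1,\, i_{d+1}+1}\, z_{d+1} + u^d_{i_{d+1},\, i_{d+1}+1}}{u^d_{i_{d+1},\, i_{d+1}}},
$$
and moreover the lemma tells us $\wt(U^{d+1})$ has vanishing $(i_{d+1}, i_{d+1}+1)$ entry and $\widetilde u_{k,k} = u^d_{s_{i_{d+1}}(k), s_{i_{d+1}}(k)}$. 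Now the $w_d$-admissibility of $U^d$ gives $\wt(u^d_{a,k}) = \be_{w_d(a)} - \be_{w_d(k)}$; plugging in and using the prescribed weight $\wt(z_{d+1}) = -\be_{w_d(i_{d+1}+1)} + \be_{w_d(i_{d+1})}$, one checks that both terms in the numerator of $z'_{d+1}$ have weight $\be_{w_d(i_{d+1})} - \be_{w_d(i_{d+1}+1)} + \wt(u^d_{i_{d+1}, i_{d+1}})$... more precisely: the first term has weight $\wt(z_{d+1}) + \be_{w_d(i_{d+1}+1)} - \be_{w_d(i_{d+1}+1)}$ after accounting for $u^d_{i_{d+1}+1,i_{d+1}+1}$ being weight $0$ minus the diagonal cancellation — I will do this bookkeeping carefully, but the upshot is that $z'_{d+1}$ is homogeneous of weight $\wt(z_{d+1})$, giving (a).

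For (b) at stage $d+1$: invertibility of $U^{d+1}$ follows from the matrix equation $B_{i_{d+1}}(z_{d+1}) U^d = U^{d+1} B_{i_{d+1}}(z'_{d+1})$ since $B_{i_{d+1}}$ is invertible (determinant $\pm z_{d+1}$, and one must note the construction allows inverting — or rather, $U^{d+1} = B_{i_{d+1}}(z_{d+1}) U^d B_{i_{d+1}}(z'_{d+1})^{-1}$, and this is legitimate as a formal identity over the fraction field, with the conclusion that $U^{d+1}$ is upper-triangular coming from Lemma \ref{lem:slide triangular} directly). The entries lie in $R[z_d, \dots, z_1, z_{d+1}] = R[z_{d+1}, \dots, z_1]$ because Lemma \ref{lem:slide triangular}'s explicit solution expresses $\widetilde u_{j,k}$ as polynomials in the $u^d_{j,k}$ and $z_{d+1}$ (the only denominators, $u^d_{i,i}$, are absorbed into $z'_{d+1}$ rather than $U^{d+1}$ — I should double-check this against the precise form of the lemma, but the diagonal entries of $U^{d+1}$ are literally permuted diagonal entries of $U^d$, and the off-diagonal entries are the $\widetilde u_{j,i}, \widetilde u_{i,j}, \widetilde u_{i+1,j}$ given by polynomial formulas). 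Finally, $w_{d+1}$-admissibility: using $w_{d+1} = w_d s_{i_{d+1}}$, the relation $\widetilde u_{k,k} = u^d_{s_{i_{d+1}}(k), s_{i_{d+1}}(k)}$ gives the diagonal weights $\be_{w_d(s_{i_{d+1}}(k))} - \be_{w_d(s_{i_{d+1}}(k))} = 0 = \be_{w_{d+1}(k)} - \be_{w_{d+1}(k)}$ trivially, and more substantively one checks the off-diagonal entries: $\wt(\widetilde u_{j,k})$ should equal $\be_{w_{d+1}(j)} - \be_{w_{d+1}(k)} = \be_{w_d(s_{i_{d+1}}(j))} - \be_{w_d(s_{i_{d+1}}(k))}$, which matches because e.g. $\widetilde u_{i,j} = u^d_{i+1,j}$ has weight $\be_{w_d(i+1)} - \be_{w_d(j)} = \be_{w_d(s_{i_{d+1}}(i))} - \be_{w_d(s_{i_{d+1}}(j))}$ (here $j \neq i, i+1$ so $s_{i_{d+1}}$ fixes it), and similarly for the others, with the cases $j < i$ handled by the equations $\widetilde u_{j,i} = u^d_{j,i+1}$ etc. This case-check is the only genuinely fiddly part; the cleaner route is to use the characterization in Remark \ref{rmk:admissible}: $U^d$ admissible means $\bt.U^d = \bt^{w_d} U^d (\bt^{w_d})^{-1}$, and since $B_i(c z) = D_{\bt}^{-1} B_i(z) D_{s_i(\bt)}$-type relations (as in equation \eqref{eq:slide diagonal 2}) conjugate braid matrices compatibly, applying $\bt.(-)$ to the defining equation $B_{i_{d+1}}(z_{d+1})U^d = U^{d+1}B_{i_{d+1}}(z'_{d+1})$ and using homogeneity of $z_{d+1}, z'_{d+1}$ transports the admissibility from $U^d$ to $U^{d+1}$ by uniqueness of the triangular factorization in Lemma \ref{lem:slide triangular}. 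I expect this last point — cleanly deducing $w_{d+1}$-admissibility of $U^{d+1}$, ideally via the Remark \ref{rmk:admissible} characterization rather than by brute-force entry chasing, together with getting all the index shifts $w_d = w s_{i_1}\cdots s_{i_d}$ versus $w_{d-1}$ exactly right — to be the main obstacle, everything else being a direct unwinding of Lemma \ref{lem:slide triangular}.
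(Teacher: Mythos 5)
Your proposal is correct and follows essentially the same route as the paper: induct on $d$, use the $w_d$-admissibility of $U^d$ (in particular that its diagonal entries have weight zero) to see that $z'_{d+1} = (u^d_{i+1,i+1}z_{d+1}+u^d_{i,i+1})/u^d_{i,i}$ is homogeneous of weight $\we{z_{d+1}}$, and then transfer admissibility to $U^{d+1}=B_{i}(z_{d+1})U^dB_{i}^{-1}(z'_{d+1})$ via the conjugation characterization of Remark \ref{rmk:admissible} together with \eqref{eq: slide diagonal} rather than by entry-chasing --- which is exactly the paper's computation. (One trivial slip: $\det B_i(z)=-1$, not $\pm z_{d+1}$, but invertibility of $U^{d+1}$ holds either way.)
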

\begin{proof}
A computation shows that the matrices $U^{0}, \dots, U^{\ell}$ are invertible and upper triangular. In order to prove the remaining claims, we induct on the length $\ell$, with the base $\ell = 0$ holding by assumption.

For the inductive step, suppose that the statement holds for positive braids of length $\ell$, and consider a positive braid $\beta = \sigma_{i_{\ell + 1}}\sigma_{i_{\ell}}\cdots \sigma_{i_{1}}$ of length $\ell + 1$. Note that the matrices $U^{0}, U^{1}, \dots, U^{\ell}$ and the elements $z'_{1}, \dots, z'_{\ell}$ coincide with those for the braid $\sigma_{i_{\ell}}\cdots \sigma_{i_{1}}$, so we only need to show that the element $z'_{\ell + 1}$ is homogeneous of the same weight as $z_{\ell + 1}$, and that the matrix $U^{\ell + 1}$ is $ws_{i_{1}}\cdots s_{i_{\ell+1}}$-admissible. To ease the notation, we will write $i := i_{\ell + 1}$. 

By the comment preceding the lemma, each of the entries of the matrix $B_{i}(z_{\ell + 1})U^{\ell}$ is homogeneous. The $(i+1, i+1)$-entry of this matrix is  $u^{\ell}_{i+1, i+1}z_{\ell + 1} + u^{\ell}_{i, i+1}$. Dividing by $u^{\ell}_{i,i}$  we obtain that

$$
z'_{\ell + 1} = \frac{u^{\ell}_{i+1, i+1}z_{\ell+1} + u^{\ell}_{i, i+1}}{u^{\ell}_{i,i}}
$$

\noindent is homogeneous. Since the diagonal entries of $U^{\ell}$ have weight $0$ and every entry of $U^{\ell}$ is algebraically independent with $z_{\ell+1}$, we obtain that $z'_{\ell+1}$ is homogeneous of the same weight as $z_{\ell+1}$. Moreover, using again the $w_{\ell}$-admissibility assumption for $U^{\ell}$ we have that every entry of the matrix $U^{\ell}B^{-1}(z'_{\ell+1})$ is homogeneous.

Now $U^{\ell + 1} = B_{i}(z_{\ell + 1})U^{\ell}B^{-1}_{i}(z'_{\ell + 1})$. We check that this matrix is $w_{\ell+1} = w_{\ell}s_{i}$-admissible. Indeed, computing $D_{w_{\ell+1}(\bt)}U^{\ell+1}D_{w_{\ell+1}(\bt)}^{-1}$ we have

$$
\begin{array}{rl}
D_{w_{\ell+1}(\bt)}U^{\ell+1}D_{w_{\ell+1}(\bt)}^{-1} = & (\bt. B_{i}(z_{\ell+1}))D_{w(\bt)}U^{\ell}D_{w(\bt)}^{-1} (\bt.B_{i}^{-1}(z'_{\ell+1})) \\
= & (\bt. B_{i}(z_{\ell+1}))(\bt. U^{\ell})(\bt.B_{i}^{-1}(z'_{\ell+1})) \\
 = & \bt.(U^{\ell+1})
\end{array}
$$

\noindent where the first equality follows from \eqref{eq: slide diagonal}. This concludes the proof thanks to Remark \ref{rmk:admissible}. 
\end{proof}

The proof of the following result is similar to that of Lemma \ref{lemma: moving admissible matrices} and left to the reader.

\begin{lemma}\label{lemma: moving admissible diagonal} 
Let $U\in M_n(R)$ be a $w$-admissible  upper-triangular matrix and $z_0\in R$ homogeneous and invertible with weight $\we{z_0} = -\be_{w(i+1)} + \be_{w(i)}$. Then the matrix $U' = D_{i}(z_0)UD_{i}^{-1}(z_0)$ is $ws_{i}$-admissible.
\end{lemma}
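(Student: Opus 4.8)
The plan is to verify the defining condition through the multiplicative characterization of admissibility recorded in Remark \ref{rmk:admissible}, rather than by comparing weights entry by entry. Recall that a matrix $V$ is $v$-admissible precisely when $\bt.V = \bt^{v} V (\bt^{v})^{-1}$, where $\bt.(-)$ denotes the component-wise (algebra) action of the torus $T$ and $\bt^{v} = \diag(t_{v(1)},\dots,t_{v(n)})$. So it suffices to show that $\bt.U' = \bt^{ws_i} U' (\bt^{ws_i})^{-1}$, and I would obtain this by distributing the algebra action across the product $U' = D_i(z_0) U D_i^{-1}(z_0)$ and inserting canceling copies of $(\bt^{w})^{-1}\bt^{w}$.

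The key input is a conjugation identity for the diagonal factor. Since the only nontrivial entries of $D_i(z_0)$ are $-z_0^{-1}$ in position $(i,i)$ and $z_0$ in position $(i+1,i+1)$, and these have weights $\be_{w(i+1)} - \be_{w(i)}$ and $\be_{w(i)} - \be_{w(i+1)}$ respectively by the hypothesis $\we{z_0} = -\be_{w(i+1)}+\be_{w(i)}$, a direct diagonal multiplication shows
$$
\bt.D_i(z_0) = \bt^{ws_i}\, D_i(z_0)\, (\bt^{w})^{-1}.
$$
Here the swap of the $i$ and $i+1$ slots between $\bt^{ws_i}$ and $\bt^{w}$ is exactly what reproduces the component-wise scaling of the two nontrivial entries; this bookkeeping, matching $ws_i$ against $w$ in the two transposed positions, is the one step to carry out with care. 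Taking inverses gives $\bt.D_i^{-1}(z_0) = \bt^{w}\, D_i^{-1}(z_0)\, (\bt^{ws_i})^{-1}$.

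Combining these with the $w$-admissibility of $U$, namely $\bt.U = \bt^{w} U (\bt^{w})^{-1}$, and using that the component-wise action is multiplicative, I would compute
$$
\bt.U' = (\bt.D_i(z_0))(\bt.U)(\bt.D_i^{-1}(z_0)) = \bt^{ws_i} D_i(z_0)\,U\,D_i^{-1}(z_0)\,(\bt^{ws_i})^{-1} = \bt^{ws_i} U' (\bt^{ws_i})^{-1},
$$
where all the intermediate $\bt^{w}$ factors cancel in pairs. By the characterization of Remark \ref{rmk:admissible} this is precisely the statement that $U'$ is $ws_i$-admissible; that $U'$ remains upper-triangular is automatic, since conjugation by a diagonal matrix preserves upper-triangularity. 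The argument is entirely parallel to Lemma \ref{lemma: moving admissible matrices}, with the diagonal conjugation identity playing the role that the slide relation \eqref{eq: slide diagonal} for $B_i$ played there. No genuine obstacle arises; the only care needed is the correct identification of the transposed torus entries noted above, which is where the precise weight of $z_0$ is used.
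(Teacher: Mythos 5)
Your proof is correct, and it is exactly the argument the paper intends: the paper leaves this lemma to the reader as being ``similar to Lemma \ref{lemma: moving admissible matrices}'', whose admissibility step is precisely your insert-and-cancel conjugation computation via the characterization $\bt.V = \bt^{v}V(\bt^{v})^{-1}$ of Remark \ref{rmk:admissible}. Your diagonal identity $\bt.D_i(z_0) = \bt^{ws_i}D_i(z_0)(\bt^{w})^{-1}$ plays the role that the slide relation \eqref{eq: slide diagonal} for $B_i$ plays there, and it checks out entry by entry given the stated weight of $z_0$.
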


This concludes the necessary ingredients for Proposition \ref{prop:open crossings}, and thus completes our argument for Corollary \ref{cor:open crossings} and Theorem \ref{thm: codim 2}. The following three subsections relate the results and constructions of Subsections \ref{sec: braid varieties}, \ref{sect:torus action} and \ref{sec: opening} to character varieties, through the work of P. Boalch, A. Mellit \cite{Boalch01,Boalch02,Boalch20,Mellit} and others, augmentation varieties, as featured in \cite{CN,Kalman05,Kalman}, and open Bott-Samelson varieties, according to \cite{SW,STZ}.


\subsection{Mellit's chart and sequences of crossings}
\label{sec:mellit chart}
In this subsection we recast a construction from \cite{Mellit} in the light of braid varieties, in particular defining a certain toric chart in $X_0(\beta\Delta; w_0)$, which we refer to as the \emph{Mellit chart}. The main result of the subsection is that the Mellit chart can be obtained by our opening-crossing procedure from Subsection \ref{sec: opening} above. In order to connect to \cite{Mellit}, we need the following preliminary discussion.

Let $w\in S_n$ be a permutation and $C_{w}=\SB w\SB\sse \GL(n,\C)$ the Bruhat cell corresponding to $w$, where $\SB\sse\GL(n,\C)$ is the Borel subgroup of upper-triangular matrices. Recall that the product of any two matrices in $C_u$ and $C_v$ belongs to $C_{uv}$ if $\ell(uv)=\ell(u)+\ell(v)$. Consequently, for any reduced expression $u=s_{i_1}\cdots s_{i_\ell}$, the associated braid matrix $B_u(z_1,\ldots,z_{\ell})$ belongs to the Bruhat cell $C_{u}$. Recall that we interchangeably use the notation $s_i$ and $\sigma_i$ for the Artin generators of the braid group, which is particularly well-suited when comparing to the notation used in \cite{Mellit}.

\begin{prop}
\label{prop:exchange}
Let $u=s_{i_1}\cdots s_{i_\ell}$ be a reduced expression and suppose that $\ell(us_i)=\ell(u)-1$. Then there exists $k\in\N$ such that:
\begin{itemize}
\item[(a)] The matrix $B_u(z_1,\ldots,z_{\ell})B_i(z)$ belongs to the Bruhat cell $C_{u}$ if and only if $z_{k}\neq 0$,\\

\item[(b)] In case $z_k\neq 0$, we can uniquely write $B_u(z_1,\ldots,z_{\ell})B_i(z)=UB_{u}(z'_1,\ldots,z'_{\ell})$
for a certain upper-triangular matrix $U$.
\end{itemize}
\end{prop}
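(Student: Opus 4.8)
The plan is to follow $B_u(z_1,\ldots,z_\ell)B_i(z)$ through a reduction of the braid word $s_{i_1}\cdots s_{i_\ell}s_i$, isolating a single division by one of the variables. First I would invoke the Exchange Condition for $S_n$: since $\ell(us_i)<\ell(u)$ and $u=s_{i_1}\cdots s_{i_\ell}$ is reduced, there is a (unique) index $k$ with $us_i=s_{i_1}\cdots\widehat{s_{i_k}}\cdots s_{i_\ell}$, this word being reduced; this is the $k$ in the statement. Cancelling the prefix $s_{i_1}\cdots s_{i_{k-1}}$ gives $s_{i_k}\,v\,s_i=v$ with $v:=s_{i_{k+1}}\cdots s_{i_\ell}$, i.e.\ $vs_i=s_{i_k}v$, and both $s_{i_{k+1}}\cdots s_{i_\ell}s_i$ and $s_{i_k}s_{i_{k+1}}\cdots s_{i_\ell}$ are reduced words (each of length $\ell(v)+1$), hence — by the fact recalled above that two reduced words are related by braid moves — connected using only the relations \eqref{eq:braid}. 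Translating these moves into the matrix identities \eqref{equation: reidemeister3 matrices} and the commutations $B_a(x)B_b(y)=B_b(y)B_a(x)$ for $|a-b|\ge 2$, none of which creates a triangular factor, I obtain a polynomial substitution such that
\[
B_{i_{k+1}}(z_{k+1})\cdots B_{i_\ell}(z_\ell)B_i(z)=B_{i_k}(y_k)B_{i_{k+1}}(y_{k+1})\cdots B_{i_\ell}(y_\ell),
\]
so that
\[
B_u(z_1,\ldots,z_\ell)B_i(z)=B_{i_1}(z_1)\cdots B_{i_{k-1}}(z_{k-1})\,\big(B_{i_k}(z_k)B_{i_k}(y_k)\big)\,B_{i_{k+1}}(y_{k+1})\cdots B_{i_\ell}(y_\ell).
\]
Everything then hinges on the two adjacent factors $B_{i_k}(z_k)B_{i_k}(y_k)$.

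Next I would split on $z_k$. If $z_k=0$, then $B_{i_k}(0)B_{i_k}(y_k)=\mathrm{Id}+y_kE_{i_k,i_k+1}$ is upper unitriangular; sliding it leftwards past $B_{i_{k-1}}(z_{k-1}),\ldots,B_{i_1}(z_1)$ with Lemma \ref{lem:slide triangular} yields $B_u(z)B_i(z)=U'\cdot B_{i_1}(z''_1)\cdots B_{i_{k-1}}(z''_{k-1})B_{i_{k+1}}(y_{k+1})\cdots B_{i_\ell}(y_\ell)$. The braid word occurring here is the reduced word $s_{i_1}\cdots\widehat{s_{i_k}}\cdots s_{i_\ell}$ for $us_i$, so this matrix lies in $\SB\cdot C_{us_i}=C_{us_i}$, which is disjoint from $C_u$. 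If $z_k\ne 0$, then the factorization \eqref{eq: factorization}, $B_{i_k}(z_k)=U_{i_k}(z_k)D_{i_k}(z_k)L_{i_k}(z_k)$, together with $L_{i_k}(z_k)B_{i_k}(y_k)=B_{i_k}(z_k^{-1}+y_k)$, gives $B_{i_k}(z_k)B_{i_k}(y_k)=U_{i_k}(z_k)D_{i_k}(z_k)\,B_{i_k}(z_k^{-1}+y_k)$ with $U_{i_k}(z_k)D_{i_k}(z_k)$ upper-triangular, invertible, with diagonal entries $-z_k^{-1},z_k$ and $1$'s. Sliding this factor leftwards past $B_{i_{k-1}}(z_{k-1}),\ldots,B_{i_1}(z_1)$ via Lemma \ref{lem:slide triangular} produces
\[
B_u(z_1,\ldots,z_\ell)B_i(z)=U\cdot B_{i_1}(z'_1)\cdots B_{i_\ell}(z'_\ell)=U\cdot B_u(z'_1,\ldots,z'_\ell),
\]
with $U$ upper-triangular; and $B_u(z')\in C_u$, so this lies in $\SB\cdot C_u=C_u$. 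The two cases together give part (a) and the existence part of (b).

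For the uniqueness in (b), I would observe that the whole construction is reversible: the moves \eqref{equation: reidemeister3 matrices} and the commutations are polynomial automorphisms in the variables, the identity $B_{i_k}(z_k)B_{i_k}(y_k)=U_{i_k}(z_k)D_{i_k}(z_k)B_{i_k}(z_k^{-1}+y_k)$ recovers $z_k$ (from the diagonal of the triangular factor) and then $y_k$, and the slides of Lemma \ref{lem:slide triangular} are invertible; hence $(z'_1,\ldots,z'_\ell)$ and $U$ are determined by $B_u(z)B_i(z)$. Alternatively, one cites the injectivity of $z'\mapsto B_u(z')$ for a reduced word $u$, which realizes $\{B_u(z')\}$ as a set of representatives of $\SB\backslash C_u$.

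The step I expect to need the most care is the bookkeeping of the two sliding procedures. One must check that at every application of Lemma \ref{lem:slide triangular} the diagonal entry by which one divides is nonzero — which is exactly what the ``moreover'' clause of that lemma guarantees, since the slide only permutes the diagonal of the triangular factor, and the initial diagonal entries (the $-z_k^{-1},z_k,1,\ldots$ of the nontrivial case, all $1$ in the other) are nonzero. One also has to be sure that the passage from $s_{i_{k+1}}\cdots s_{i_\ell}s_i$ to $s_{i_k}s_{i_{k+1}}\cdots s_{i_\ell}$ uses only braid and commutation moves and no cancellation $\sigma_j\sigma_j\to 1$ — which is why it matters that both are \emph{reduced} words, the only genuinely non-invertible, triangular-factor-producing step being thereby confined to the single pair $B_{i_k}(z_k)B_{i_k}(y_k)$.
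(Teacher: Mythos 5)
Your proof is correct and follows essentially the same route as the paper's: the exchange condition isolates the index $k$, braid moves (valid because both $s_{i_{k+1}}\cdots s_{i_\ell}s_i$ and $s_{i_k}\cdots s_{i_\ell}$ are reduced) push $B_i(z)$ left to create the adjacent pair $B_{i_k}(z_k)B_{i_k}(y_k)$, and the dichotomy $z_k=0$ versus $z_k\neq0$ yields respectively an upper-triangular factor landing in $C_{us_i}$ or the factorization $UB_{i_k}(z_k^{-1}+y_k)$ landing in $C_u$. Your treatment is in fact slightly more complete than the paper's, which leaves the uniqueness in (b) and the bookkeeping of the triangular slides implicit.
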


\begin{proof}
Since $\ell(us_i)=\ell(u)-1$, there exists $k\in\N$ such that $us_i=s_{i_1}\cdots\widehat{s_{i_k}}\cdots s_{i_\ell}$ (this is known as {\em exchange property} for the Coxeter group $S_n$). That is, we can write $u=u_1s_{i_k}u_2$ such that $s_{i_k}u_2=u_2s_{i}$, and thus $us_i=u_1s_{i_k}u_2s_i=u_1s_{i_{k}}s_{i_{k}}u_2=u_1u_2$. This implies the following equation for the braid matrices:
$$
B_u(z_1,\ldots,z_{\ell})B_{i}(z)=B_{u_1}(z_1,\ldots,z_{k-1})B_{i_k}(z_k)B_{i_k}(z')B_{u_2}(z'_{k+1},\ldots,z'_{\ell}),
$$
where $z', z'_{k+1},\ldots,z'_{\ell}$ are some functions of $z,z_{k+1},\ldots,z_{\ell}$. If $z_{k}\neq 0$, then we can further write $B_{i_k}(z_k)B_{i_k}(z')=UB_{i_k}(z'')$, so
$$
B_u(z_1,\ldots,z_{\ell})B_{i}(z)=\widetilde{U}B_{u_1}(z'_1,\ldots,z'_{k-1})B_{i_k}(z'')B_{u_2}(z'_{k+1},\ldots,z'_{\ell})=
\widetilde{U}B_u(z'_1,\ldots,z'_{k-1},z'',z'_{k+1},\ldots,z'_{\ell})
$$
and the result is in the Bruhat cell $C_{u}$. If instead $z_{k}=0$, then $B_{i_k}(z_k)B_{i_k}(z')$ is upper-triangular, and $B_u(z_1,\ldots,z_{\ell})B_{i}(z)$ is in the Bruhat cell
$C_{u_1u_2}$, which is disjoint from $C_{u}$.
\end{proof}

\begin{ex}
Consider $\beta_1=s_1s_2s_1s_1$ and $\beta_2=s_1s_2s_1s_2$. Then the braid matrix $B_{\beta_1}(z_1,z_2,z_3,z_4)$ is in the Bruhat cell $C_{s_1s_2s_1}$ if and only if $z_3\neq 0$. In contrast, the braid matrix $B_{\beta_2}(z_1,z_2,z_3,z_4)$ 
is in the Bruhat cell $C_{s_1s_2s_1}$ if and only if $z_1\neq 0$. In both cases, we have a reduced expression $u=s_1s_2s_1$ and a simple reflection $s_1$, resp. $s_2$, satisfying the assumption of Proposition \ref{prop:exchange}.
\end{ex} 

\begin{remark}
\label{rmk: find k}
The index $k\in\N$ from Proposition \ref{prop:exchange} is unique and can be described geometrically, as follows. Draw a braid diagram for $u$, labeling the strands $1$ to $n$ on the right. Since $\ell(us_i)=\ell(u)-1$, the $i$-th and $(i+1)$-st strands intersect somewhere in the diagram for $u$. Given that $u$ is reduced, they intersect exactly once. The index $k$ corresponds to this intersection point.\hfill$\Box$
\end{remark}

Let us now compare our construction to \cite{Mellit}, with $\beta\Delta=s_{i_1}\cdots s_{i_{\ell+\binom{n}{2}}}$ a positive braid. In \cite[Section 5.4]{Mellit}, a sequence of permutations $p_0=1,p_1,\ldots,p_{\ell+\binom{n}{2}}$ is defined according to the following rules:
\begin{itemize}
\item[(a)] If $\ell(p_{k-1}s_{i_k})=\ell(p_{k-1})+1$ then $p_{k}=p_{k-1}s_{i_k}$,
\item[(b)] If $\ell(p_{k-1}s_{i_k})=\ell(p_{k-1})-1$ then $p_{k}=p_{k-1}$. 
\end{itemize}
In the terminology of ibid., this sequence is a {\it walk which never goes down}.

\begin{remark}
The permutations $1=p_0, p_1, \ldots, p_{\ell + \binom{n}{2}}$ may be described in terms of the \emph{Demazure product}, cf.~ Section \ref{section:Dem_prod}. Indeed, using the notation of that section it follows that $p_{j} = \delta(\sigma_{i_1}\cdots \sigma_{i_j})$.
\end{remark}

Let us now describe the toric chart used in \cite{Mellit}.

\begin{definition}[Mellit Chart]\label{def:Mellitchart}
Let $\beta$ be a positive $n$-braid word, the Mellit chart $\mathfrak{M}\sse X_0(\beta\Delta,w_0)$ is defined as the locus of $z_1,\ldots,z_s$ such that
\begin{equation}
\label{eq: mellit chart}
B_{i_1}(z_1)\cdots B_{i_{s}}(z_s)\in C_{p_s}\ \mathrm{for\ all}\ s\le \ell+\binom{n}{2}.
\end{equation}
Note that $\mathfrak{M}\sse X_0(\beta\Delta,w_0)$ is codimension-0 and Zariski open in $X_0(\beta\Delta,w_0)$.
\end{definition}

\begin{remark}
    By definition, the Mellit chart $\mathfrak{M}$ is closely related to the maximal piece in the Deodhar decomposition \cite{Deodhar}.
\end{remark}

At this stage, our Corollary \ref{cor:open crossings} provides many toric charts $T_{\tau}$ for $X_0(\beta\Delta,w_0)$, (surjectively) indexed by orderings $\tau\in S_{\ell(\beta)}$ of the crossings. The toric chart $\mathfrak{M}$ introduced in Definition \ref{def:Mellitchart} is also a subset of $X_0(\beta\Delta,w_0)$, and it is thus natural to ask whether $\mathfrak{M}$ is of the form $T_{\tau}$ and, if so, for which ordering $\tau$ this is the case. This is answered in our next result (and its proof).

\begin{thm}\label{thm:mellit chart} Let $\beta$ be a positive braid word. Then there exists an ordering $\tau(\beta)\in S_{\ell(\beta)}$ of the crossings such that $T_{\tau(\beta)}\sse X_0(\beta\Delta,w_0)$ coincides with the Mellit chart $\mathfrak{M}\sse X_0(\beta\Delta,w_0)$.
\end{thm}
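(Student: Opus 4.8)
The plan is to match Mellit's walk-defining rules (a)--(b) against the iterated opening-crossing procedure from Subsection~\ref{sec: opening}, and read off the ordering $\tau(\beta)$ directly from the combinatorics of the walk. First I would observe that the sequence of permutations $p_0 = 1, p_1, \dots, p_{\ell+\binom{n}{2}}$ partitions the crossings of $\beta\Delta$ into two types: the \emph{up-steps}, where $\ell(p_{k-1}s_{i_k}) = \ell(p_{k-1})+1$, and the \emph{down-steps}, where $\ell(p_{k-1}s_{i_k}) = \ell(p_{k-1})-1$. Since the Demazure product of $\beta\Delta$ is $w_0$, there are exactly $\binom{n}{2}$ up-steps and exactly $\ell(\beta)$ down-steps; the up-steps assemble into a reduced word for $w_0$, while the down-steps are precisely the crossings we will open. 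The candidate ordering $\tau(\beta)$ is: open the down-crossings of $\beta\Delta$ in order of \emph{decreasing} position (i.e.\ right to left), skipping the up-crossings entirely --- equivalently, process the word $\beta\Delta$ from right to left, and each time a letter $s_{i_k}$ is a down-step relative to the partial Demazure product of everything to its right, open that crossing.

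The key step is to show by induction on $\ell(\beta)$ that this opening procedure produces exactly the locus $\mathfrak{M}$. Here I would use Proposition~\ref{prop:exchange}: when we encounter a down-step letter $s_{i_k}$ with the accumulated reduced word $u = p_{k-1}$ to its left (equivalently $s_i$ to its right after processing), the matrix product $B_u(\cdots)B_i(z)$ lies in $C_u$ iff the variable $z_\mu$ indexed by the exchange-property position $\mu = \mu(k)$ of Remark~\ref{rmk: find k} is nonzero. But that position $\mu$ is precisely the location of the \emph{previous} appearance of the strand-pair $(i,i+1)$ crossing in $u$ --- and tracing through the walk, this is exactly the crossing variable that must be nonvanishing for the condition \eqref{eq: mellit chart} at step $k$ to hold. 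Opening that crossing via $\Omega_{\sigma_{i_\mu}}$ removes it, substitutes $B_{i_k}(z_k)B_{i_k}(z') = U B_{i_k}(z'')$, and by Proposition~\ref{prop:exchange}(b) leaves us with a shorter braid word whose associated variety carries exactly the Mellit-chart conditions for the remaining steps. The inductive hypothesis then applies to the shortened braid.

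The main obstacle I anticipate is bookkeeping: one must verify that the down-step letters of $\beta\Delta$, processed right-to-left, really do correspond under the iterated $\Omega_\sigma$ maps to crossings of $\beta$ (not of the $\Delta$ part) in a way compatible with the definition of $T_{\tau(\beta)}$ in Proposition~\ref{prop:open crossings}, and that the up-step letters reassemble to give the $X_0(\Delta; w_0) = \{\mathrm{pt}\}$ factor. In particular, one needs that each down-step of the walk pairs off with an up-step at an earlier (leftward) position having the same simple-reflection label on the relevant strands --- this is essentially the statement that a walk which never goes down, ending at $w_0$, can be reduced to $w_0$ by cancelling adjacent down-steps against their exchange partners, which follows from the exchange/deletion property of Coxeter groups. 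Once the correspondence of crossings is set up, identifying the Laurent-polynomial nonvanishing conditions on both sides is a matter of comparing Proposition~\ref{prop:exchange}(a) with the conditions cut out in Definition~\ref{def:Mellitchart}, and the torus-equivariance and the identification of the $(\mathbb{C}^*)^{\ell(\beta)}$-coordinates come for free from Corollary~\ref{cor:open crossings}. I would also record, as the proof naturally yields it, the explicit description of which ordering $\tau(\beta)$ arises, since the statement asserts existence but the argument is constructive.
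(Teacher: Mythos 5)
Your second paragraph has the right mechanism --- locate a down-step of Mellit's walk, apply Proposition \ref{prop:exchange} to see that the corresponding Bruhat-cell condition is the nonvanishing of the variable at the exchange-partner position $\mu$, open \emph{that} crossing, and induct on $\ell(\beta)$ --- and this is exactly the paper's argument. The problem is that the ordering you actually commit to in the first paragraph contradicts this and is wrong. First, the crossings that get opened are not the down-step letters but their exchange partners: at the first down-step $j$ one writes the reduced prefix as $p_{j-1}=u_1 s_{i_k}u_2$ with $s_{i_k}u_2=u_2s_{i_j}$, and the trivalent vertex removes the crossing at position $k<j$, with nonvanishing condition on $z_k$, not $z_j$. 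In particular ``the down-steps are precisely the crossings we will open'' is false: down-steps can lie in the $\Delta$-part (which is never opened), while the opened crossings all lie in $\beta$ precisely because $\Delta$ is reduced. Second, the down-steps must be processed left to right, since the Mellit chart is cut out by conditions on \emph{prefixes} $B_{i_1}(z_1)\cdots B_{i_s}(z_s)\in C_{p_s}$; your right-to-left pass governed by suffix Demazure products computes something else.

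This is not just a bookkeeping slip: the resulting chart is genuinely different. Take $n=2$, $\beta=\sigma_1^2$, so $\beta\Delta=\sigma_1^3$. The Mellit chart is $\{z_1\neq 0,\ 1+z_1z_2\neq 0\}$, which is the chart for the opening order $(1,2)$ (the first down-step $j=2$ has exchange partner $k=1$). Your prescription opens crossing $2$ first and then crossing $1$, giving $\{z_2\neq 0,\ 1+z_1z_2\neq 0\}$; the point $(z_1,z_2,z_3)=(0,1,0)$ lies in $X_0(\sigma_1^3;w_0)=\{z_1+z_3(1+z_1z_2)=0\}$ and in your chart but not in $\mathfrak{M}$. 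The repair is to promote your second paragraph to the whole proof: take the smallest $j$ with $p_{j-1}=p_j$ (so that $p_{j-1}$ is reduced and Proposition \ref{prop:exchange} applies verbatim), open the crossing at the position $k$ it produces, note $k\le\ell(\beta)$ by Remark \ref{rmk: find k} together with reducedness of $\Delta$, and induct on $\ell(\beta)$; no advance pairing of all down-steps with up-steps, and no right-to-left pass, is needed.
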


\begin{proof}
The ordering $\tau(\beta)$ in which we open the crossings is as follows. First, we find the smallest  $j$ such that $p_{j-1}=p_j$. This means that 
$p_{j-1}=s_{i_1}\cdots s_{i_{j-1}}$ is a reduced word and  $\ell(p_{j-1}s_{i_j})=\ell(p_{j-1})-1$. The condition \eqref{eq: mellit chart}
holds automatically for $s<j$, and  for $s=j$ we can apply Proposition \ref{prop:exchange}: there exists some $k<j$ such that 
$B_{i_1}(z_1)\cdots B_{i_{j}}(z_j)\in C_{p_j}$ if and only if $z_k\neq 0$. 

It follows from Remark \ref{rmk: find k} that the crossing with index $k$ is in the braid $\beta$, and never in $\Delta$.
We can open this crossing and obtain a new braid $\beta' \Delta$.  By Proposition \ref{prop:exchange}, a point in $X_0(\beta\Delta,w_0)$ is in the Mellit chart if and only if the corresponding point in $X_0(\beta'\Delta,w_0)$ is in the respective chart. This process can be continued iteratively. Eventually, all crossings in $\beta$ will be exhausted, and we reach a reduced expression $\Delta$, which satisfies the defining inclusion \eqref{eq: mellit chart} automatically. 
\end{proof}

\begin{ex}
Consider the positive 3-braid $\beta=\sigma_1\sigma_2\sigma_1$, and thus $\beta \Delta=\sigma_1\sigma_2\sigma_1\sigma_1\sigma_2\sigma_1$. By opening the third crossing from the left $\sigma_1$, we reach the braid word $\sigma_1\sigma_2\sigma_1\sigma_2\sigma_1$. Then we open the first (leftmost) $\sigma_1$ crossing and obtain $\sigma_2 \sigma_1 \sigma_2 \sigma_1$. Finally, opening again the first (leftmost) crossing $\sigma_2$ in the resulting braid (which corresponds to the second crossing in the original braid) we reach the positive braid word $\Delta=\sigma_1\sigma_2\sigma_1$. This sequence of crossings $\tau(\beta)$ yields a toric chart $T_{\tau(\beta)}\sse X_0(\beta\Delta,w_0)$ which coincides with the Mellit chart $\mathfrak{M}\sse X_0(\beta\Delta,w_0)$.
\end{ex}

This concludes our discussion on the Mellit chart and the relation between our Corollary \ref{cor:open crossings} and \cite[Section 5]{Mellit}. Let us shift our focus towards augmentation varieties, a class of algebraic varieties which are central to the study of Legendrian links in contact 3-manifolds.


\subsection{Augmentation varieties as quotient braid varieties}\label{sect:augmentations} In this subsection, we establish a connection between braid varieties and augmentation varieties. The latter are a class of varieties that feature saliently in the study of Floer-theoretic invariants associated to Legendrian links $\La\sse(\R^3,\xi_\st)$. The reader is referred to \cite{Geiges} for the basics of 3-dimensional contact topology, \cite{EtnyreNg} for a survey on Floer-theoretic invariants of Legendrian knots, and \cite{CN,Che,Kalman05,Kalman} for further details.\\

\begin{center}
	\begin{figure}[h!]
		\centering
		\includegraphics[scale=0.6]{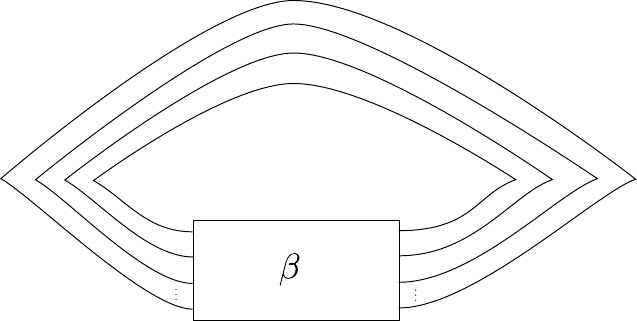}
		\caption{The front projection known as the rainbow closure of $\beta$.}
		\label{fig:rainbow_closure}
	\end{figure}
\end{center}

Let $\beta \in \mathrm{Br}^{+}_{n}$ be a positive braid word and $\Lambda(\beta)\sse(\R^3,\xi_\st)$ the Legendrian link associated to the rainbow framed closure of the braid $\beta$. This is the front diagram for $\Lambda(\beta)$ depicted in Figure \ref{fig:rainbow_closure}, cf.~ \cite{CG,CN}. Let us also choose a collection of marked points $\ft \subseteq \beta$ on the Legendrian link $\La_\beta$, see~e.g. \cite{NgRutherford,NRSSZ}. In our case, the two choices for marked points that we use are:
\begin{itemize}
    \item[(1)] A choice of {\it one} marked point per strand of the braid $\beta$, this collection of marked points will be denoted by $\ft_{s}$.\\

    \item[(2)] A choice of {\it one} marked point per component of the Legendrian link $\Lambda(\beta)\sse(\R^3,\xi_\st)$, this collection will be denoted by $\ft_{c}$.
\end{itemize}

By convention, we place all marked points to the right of all crossings in $\beta$ and before the right cusps. Though not essential, this convention will be useful in simplifying some statements. Note also that $\ft_{c}$ technically depends on a choice of strand per component of $\Lambda(\beta)$, but for the sake of readability we prefer to not include this into our notation. Figure \ref{fig:marked points} depicts two instances of such placing of marked points. 

\begin{center}
	\begin{figure}[h!]
		\centering
		\includegraphics[scale=0.35]{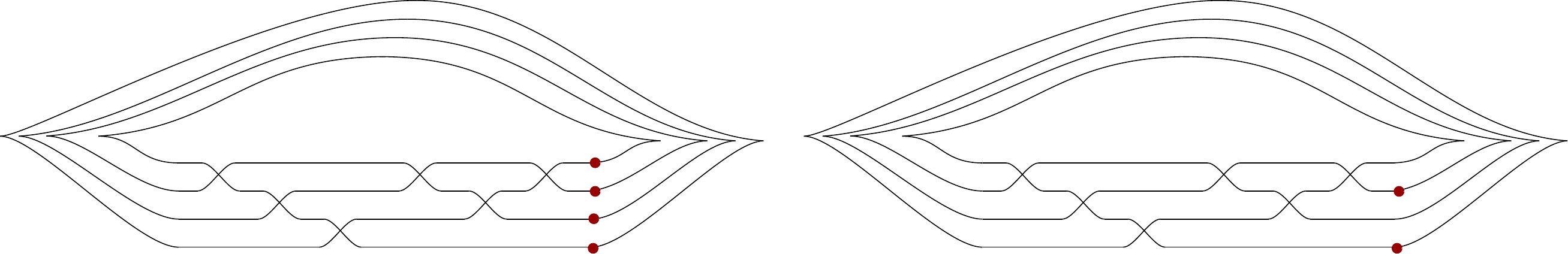}
		\caption{The front ($xz$) projection of the rainbow closure of the braid word $\beta =\sigma_1\sigma_2\sigma_3\sigma_1\sigma_2\sigma_1$, with one marked point per strand (left) and one marked point per component (right). This is a braid word for the half-twist $\Delta_{4}$. Note that the Legendrian condition implies that all crossings are overcrossings. Note also that the marked points are located to the right of all crossings of $\beta$.}
		\label{fig:marked points}
	\end{figure}
\end{center}

Let $\SA(\beta, \ft)$ be the commutative Legendrian Contact DGA of the Legendrian link $\La(\beta)\sse(\R^3,\xi_\st)$ endowed with a set of marked points $\mathfrak{t}\sse\La(\beta)$. The stable tame isomorphism type, and thus the quasi-isomorphism type, of this differential graded algebra (DGA) is an invariant of the Legendrian link $\La(\beta)\sse(\R^3,\xi_\st)$ with marked points $\ft$ up to Legendrian isotopy. It was defined by Y. Chekanov \cite{Che} over $\Z_2$-coefficients and latter lifted to $\Z$-coefficients and marked points \cite{NgRutherford,NRSSZ}, see \cite{EtnyreNg} for a survey. The differential of $\SA(\beta, \ft)$ is given by a count of (pseudo)holomorphic strips whose asymptotics are governed by the Legendrian link $\La\sse(\R^3,\xi_\st)$. In the case of a rainbow closure $\La=\La(\beta)$, the differential is always given by polynomials in the generators, an explicit formula is given in \cite[Section 5]{CN}. In this manuscript, the \emph{augmentation variety} $\Aug(\beta, \ft)$ associated to $(\beta, \ft)$ is defined to be $\Aug(\beta, \ft):=\mbox{Spec }H^0(\SA(\beta, \ft))$, the affine variety associated to the $0$th homology of this DGA. This is an affine algebraic variety defined over $\Z$. The fact that $\SA(\beta, \ft)$ is non-negatively graded implies that the set of $R$-points of $\Aug(\beta, \ft)$ can be identified with $\mbox{Hom}_{\mbox{dg}}(\SA(\beta, \ft),R)$, the set of dg-algebra morphisms from $\SA(\beta, \ft)$ to $R$, where $R$ is taken to be a dg-algebra concentrated in degree 0 and with trivial differential.\\

In the case of Legendrian links $\La\sse(\R^3,\xi_\st)$ associated to positive braids, $\La\simeq\La(\beta)$, augmentation varieties $\Aug(\beta, \ft)$ are closely related to braid varieties. This will follow from the work of T. K\'alm\'an \cite{Kalman}, cf.~also \cite[Section 5]{CN}, as we will now explain.

\begin{thm}\label{thm:aug vs braid}
Let $\beta$ be a positive braid word, $[\beta] \in \mathrm{Br}^{+}_{n}$. The following two statements hold:
\begin{itemize}
\item[(i)] There exists an algebraic isomorphism $\Aug(\beta, \ft_{s}) \cong X_{0}(\beta\cdot\Delta; w_{0})$.\\

\item[(ii)] Let $T_c \subseteq (\C^{*})^{n}$ be the algebraic torus determined by $t_{w^{-1}(i)} = 1$ if the $i$th strand of the braid $\beta$ has a marked point in $\mathfrak{t}_c$ (compare with Remark \ref{rmk: free subtori}). Then there exists an algebraic isomorphism
$$
\Aug(\beta, \ft_{c}) \cong X_{0}(\beta\cdot\Delta;w_{0})/T_c.
$$
\end{itemize}
\end{thm}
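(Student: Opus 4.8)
The plan is to reduce everything to the Chekanov--Eliashberg DGA presentation of K\'alm\'an \cite{Kalman} and then match it, variable by variable, with the defining equations of the braid variety $X_0(\beta\cdot\Delta;w_0)$. For part (i), I would first recall that for a positive braid $\beta=\sigma_{i_1}\cdots\sigma_{i_\ell}$, the DGA $\SA(\beta,\ft_s)$ has one degree-$0$ generator $a_k$ for each crossing $\sigma_{i_k}$ (together with the invertible generators coming from the marked points $\ft_s$), and the differential counts admissible disks. K\'alm\'an's key observation is that this differential is encoded by the braid matrix: assembling the crossing generators into the matrix $B_\beta(a_1,\dots,a_\ell)$ and comparing with the matrix product over the half-twist $\Delta$ (whose strands must ``return'' to $w_0$), the condition that an augmentation $\varepsilon\colon\SA(\beta,\ft_s)\to\C$ kills the differential becomes exactly the condition that $B_\beta(\varepsilon(a_1),\dots,\varepsilon(a_\ell))$, after the $\Delta$-twist, is upper-triangular. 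More precisely, using Example \ref{ex: half twist} to replace $\Delta$ by its braid matrix $B_{\Delta'}$ and the closed-up structure of $\La(\beta)$, the augmentation equations are identified with $B_{\beta\Delta}(z)\,w_0$ upper-triangular after eliminating the $\binom{n}{2}$ free coordinates $z_{ij}$ coming from $\Delta$; with the marked points in $\ft_s$ normalized to $1$, this is precisely the defining system of $X_0(\beta\cdot\Delta;w_0)$. So (i) amounts to carefully writing down K\'alm\'an's presentation, checking the correspondence of generators, and observing that the augmentation relations coincide with the braid-variety equations. The one point demanding care is the bookkeeping of the marked points and the conventions (placing points to the right of all crossings, as stipulated in Section \ref{sect:augmentations}), which is what makes $\ft_s$ contribute exactly the $n$ invertible variables that get set to $1$.

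For part (ii), the difference between $\ft_s$ and $\ft_c$ is that $\ft_s$ has one marked point per \emph{strand} while $\ft_c$ has one marked point per \emph{component} of $\La(\beta)$, i.e. per cycle of the permutation $w=\pi(\beta)$. Passing from $\SA(\beta,\ft_s)$ to $\SA(\beta,\ft_c)$ amounts to deleting all but one marked point in each cycle; in the DGA this turns the deleted marked-point generators from units that are augmented to $1$ into genuine invertible generators $t_j^{\pm 1}$ that the augmentation may send to any element of $\C^*$. Concretely, an augmentation of $\SA(\beta,\ft_c)$ is the same as an augmentation of $\SA(\beta,\ft_s)$ together with a choice of nonzero scalars for the extra marked-point variables — but these scalars are \emph{not} free, because the DGA relations (the ``stabilization'' relations relating a removed marked point to a retained one along its component) force the product of marked-point values around each cycle to be constrained. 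This is exactly the data of a point of the torus $T\subseteq(\C^*)^n$ defined in the statement by $t_{w^{-1}(i)}=1$ when strand $i$ carries a marked point of $\ft_c$: the remaining $T$-coordinates record the freedom of re-scaling. The torus $T$-action on $X_0(\beta)$ from Definition \ref{def:torusaction} is precisely the action that conjugates $B_\beta$ by diagonal matrices (Equation \eqref{eq:slide diagonal 2}), and this is exactly the reparametrization ambiguity introduced by dropping marked points. Hence $\Aug(\beta,\ft_c)$ fibers over $\Aug(\beta,\ft_s)\cong X_0(\beta\cdot\Delta;w_0)$ with the orbits of this $T$-action as fibers, giving the isomorphism $\Aug(\beta,\ft_c)\cong X_0(\beta\cdot\Delta;w_0)/T$. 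I would make this rigorous by: (1) writing the explicit generators-and-relations presentation of $\SA(\beta,\ft_c)$ à la \cite{NgRutherford,NRSSZ}, (2) exhibiting a surjective algebra map $\SA(\beta,\ft_c)\to\SA(\beta,\ft_s)$ realizing ``set the removed points to $1$'', and (3) checking on $\C$-points that the extra freedom is captured exactly by a free $T$-action, using Lemma \ref{lem: free action} and Remark \ref{rmk: free subtori} to see that $T\cong(\C^*)^{n-k}$ (with $k=c(\beta)$ the number of components) acts freely so that the quotient is a genuine smooth variety.

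The main obstacle I expect is purely one of translating conventions: the Legendrian Contact DGA in the literature (\cite{Che,Kalman,NgRutherford,NRSSZ}) is set up with signs, gradings, and a choice of Reeb-chord ordering, and the braid-matrix formalism here suppresses all of that. Matching K\'alm\'an's $a_k$-variables with the $z_k$-variables requires tracking which crossing contributes which matrix entry and verifying that the disk count in the DGA differential reproduces exactly the polynomial entries of $B_\beta$ — this is essentially the content of \cite{Kalman} but needs to be transported to the positive-braid, half-twist-closure setup used here, and the marked-point conventions have to be lined up so that the ``degree-$0$ part with trivial differential'' target $\C$ gives the affine variety rather than a stack. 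Once the dictionary is pinned down, (i) is a direct comparison of equations and (ii) follows formally by the quotient argument above; the symplectic/smoothness consequences stated in Theorem \ref{thm:main1}.(iv) are then immediate from Theorem \ref{thm: smooth} and Theorem \ref{thm: symplectic}.
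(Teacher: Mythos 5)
Your overall strategy --- reduce to K\'alm\'an's matrix characterization of the augmentation variety and then compare with the defining equations of $X_0(\beta\cdot\Delta;w_0)$ --- is exactly the route the paper takes, and your part (i) is essentially the paper's argument. One small imprecision there: the $n$ marked-point variables of $\ft_s$ are not ``normalized to $1$''; they are free units that get uniquely \emph{solved for} by the prescribed-diagonal condition (the diagonal entries of an upper-triangular matrix of determinant $\pm1$ are units), which is why the $(\C^*)^n$-factor disappears and only the condition that $B_{\beta\Delta}(z)w_0$ be upper triangular remains.

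Part (ii), however, contains a genuine directional error. You assert that passing from $\ft_s$ to $\ft_c$ ``turns the deleted marked-point generators from units that are augmented to $1$ into genuine invertible generators,'' that an augmentation of $\SA(\beta,\ft_c)$ is an augmentation of $\SA(\beta,\ft_s)$ \emph{plus} a choice of extra scalars, and that $\Aug(\beta,\ft_c)$ fibers over $\Aug(\beta,\ft_s)$ with $T$-orbits as fibers. This is backwards, and dimensionally inconsistent with the isomorphism you are proving: since $T\cong(\C^*)^{n-k}$, your fibration would give $\dim\Aug(\beta,\ft_c)=\ell(\beta)+(n-k)$, whereas $X_0(\beta\cdot\Delta;w_0)/T$ has dimension $\ell(\beta)-(n-k)$. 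The collection $\ft_s$ has \emph{more} marked points than $\ft_c$, so $\SA(\beta,\ft_s)$ has more invertible generators and $\Aug(\beta,\ft_s)$ is the larger variety. In K\'alm\'an's characterization \eqref{eq:kalman}, removing the marked point from strand $i$ forces $t_i=1$, so the prescribed-diagonal condition becomes a genuine equation on the $(z,c)$-variables for each unmarked strand. Hence $\Aug(\beta,\ft_c)$ is the \emph{closed} subvariety of $X_0(\beta\cdot\Delta;w_0)$ cut out by prescribing the $n-k$ diagonal entries of $B_{\beta\Delta}w_0$ on the unmarked strands; because the $T$-action is free (Remark \ref{rmk: free subtori}) and rescales exactly those entries, this subvariety is a global slice for the action, and it is this slice --- not a fibration in the opposite direction --- that yields $\Aug(\beta,\ft_c)\cong X_0(\beta\cdot\Delta;w_0)/T$. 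Your ingredients (identifying the $T$-action with diagonal rescaling via \eqref{eq:slide diagonal 2}, and freeness of $T$) are the right ones; the logic connecting them must be reversed.
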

\begin{proof}
Let us use the following characterization by T. K\'alm\'an \cite{Kalman05,Kalman} (see also \cite{CN}): if $\beta$ is a positive braid word and $i_{1}, \dots, i_{s}$ are strands that carry a marked point (to the right of every crossing) then the augmentation variety is the affine subvariety of $\C^{\ell(\beta) + \binom{n}{2}}\times (\C^{*})^{s}$ given by the equation

\begin{equation}\label{eq:kalman}
B_{\beta}(z)\left(\begin{matrix} 1 & 0 & \cdots & 0 \\ c_{21} & 1 & \cdots & 0 \\ \vdots & \vdots & \ddots & \vdots \\ c_{n1} & c_{n2} & \cdots & 1\end{matrix}\right)\diag(t_{1}, \dots, t_{n}) \; \; \text{is upper triangular with a prescribed diagonal}
\end{equation}

\noindent where the notation follows the convention that in $\diag(t_{1}, \dots, t_{n})$ we have $t_{i} = 1$ if $i \not= i_{1}, \dots, i_{s}$. For the choice of marked points $\mathfrak{t}_s$, this reduces to $B_{\beta}(z)B_{\Delta}(u)w_{0}$ being upper triangular, which is precisely the definition of $X_{0}(\beta\cdot\Delta; w_{0})$.  This establishes the statement in $(i)$. For the choice of marked points $\mathfrak{t}_c$, as in $(ii)$, Equation \eqref{eq:kalman} reduces to $B_{\beta}(z)B_{\Delta}(u)w_{0}$ being upper triangular with a prescribed diagonal outside of the strands carrying marked points. Since the action of $T_c$ on $X_{0}(\beta\cdot \Delta; w_{0})$ is free (see Remark \ref{rmk: free subtori}) the quotient map $X_{0}(\beta \cdot \Delta; w_{0}) \to X_{0}(\beta\cdot \Delta; w_{0})/T_c$ is a principal $T_c$-bundle. In consequence, $X_{0}(\beta\cdot \Delta; w_{0})/T_c$ is equivalent to the closed subvariety of $X_{0}(\beta\cdot \Delta; w_{0})$ given by prescribing the diagonal elements in $B_{\beta\cdot\Delta}w_{0}$ at entries corresponding to strands not carrying marked points. 
\end{proof}

In contact geometry, opening a crossing from $\beta=\beta_1\sigma_i\beta_2$ to $\beta'=\beta_1\beta_2$ can be realized by an embedded exact Lagrangian cobordism $L_i\sse(\R^3\times\R_t,d(e^t\alpha))$ in the symplectization of $(\R^3,\xi_\st)$, with $\dd L_i=\dd_-L_i\cup\dd_+L_i$ and $\dd_-L_i=\La(\beta')$ and $\dd_+L_i=\La(\beta)$ \cite{ArnoldSing,BST15} (this is correct in the case that the positive braid has a half-twist remaining \cite{CN,EHK}, which will always be the case in our context). It follows from the Floer-theoretic functoriality proven in \cite{EHK,YuPan} that such a Lagrangian cobordism induces an algebraic regular map $\Phi_{L_i}:\Aug(\beta',\mathfrak{t})\lr\Aug(\beta,\mathfrak{t})$ between augmentation varieties. It follows from \cite{CN,EHK} that the ($\Z$-lifted) Floer-theoretical map $\Phi_{L_i}$ agrees with the (quotient of the) map $\Omega_{\sigma_{i}}$ we constructed in Subsection \ref{sec: opening}.

The toric charts we constructed in Corollary \ref{cor:open crossings}, using Proposition \ref{prop:open crossings}, can now be used to give an open cover of the augmentation varieties in Theorem \ref{thm:aug vs braid}, up to codimension $2$, as follows.

\begin{cor}
Let $\beta$ be a positive braid word, $[\beta]\in \Br_n^+$, with $c(\beta)=k$. Geometrically, the Legendrian link has $\La(\beta)$ has $k$ connected components. For each ordering $\tau\in S_{\ell(\beta)}$ of the crossings of $\beta$ there exist codimension-0 toric charts $T^c_{\tau}\sse \Aug(\beta, \ft_{c})$ and $T^s_{\tau}\sse \Aug(\beta, \ft_{s})$, with $T^c_{\tau}\cong(\C^{*})^{\ell(\beta) - n + k}$ and $T^s_{\tau}\cong(\C^{*})^{\ell(\beta)}$ such that the complements
$$\Aug(\beta, \ft_{c})\setminus\left(\bigcup_{\tau\in S_{\ell(\beta)}}T^c_{\tau}\right)\sse \Aug(\beta, \ft_{c}),\qquad \Aug(\beta, \ft_{s})\setminus\left(\bigcup_{\tau\in S_{\ell(\beta)}}T^s_{\tau}\right)\sse \Aug(\beta, \ft_{s})$$
both have codimension at least 2.
\end{cor}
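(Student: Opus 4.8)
The plan is to transport the results already established for braid varieties across the isomorphisms of Theorem \ref{thm:aug vs braid}, and to show that the toric charts constructed by opening crossings are precisely the ones that correspond to the charts $T_{\tau(\beta)}$ of Corollary \ref{cor:open crossings} under those isomorphisms. First I would recall that Theorem \ref{thm:aug vs braid}.(i) gives $\Aug(\beta,\ft_s)\cong X_0(\beta\cdot\Delta;w_0)$, so the statement for $\ft_s$ is a direct translation of Corollary \ref{cor:open crossings} together with Theorem \ref{thm: codim 2}: define $T^s_{\tau(\beta)}$ to be the image of $T_{\tau(\beta)}$ under this isomorphism, note it is isomorphic to $(\C^*)^{\ell(\beta)}$, and the codimension-$2$ statement for the complement is exactly Theorem \ref{thm: codim 2} transported along an isomorphism of varieties.

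For the $\ft_c$ case, Theorem \ref{thm:aug vs braid}.(ii) gives $\Aug(\beta,\ft_c)\cong X_0(\beta\cdot\Delta;w_0)/T'$, where $T'\cong(\C^*)^{n-k}$ is the subtorus acting freely as in Remark \ref{rmk: free subtori}; here $c(\beta)=k$. By Proposition \ref{prop:open crossings}.(iii), each chart $T_{\tau(\beta)}\sse X_0(\beta\cdot\Delta;w_0)$ is stable under the full torus $(\C^*)^{n-1}$-action, hence in particular under the free $T'$-action, so $T_{\tau(\beta)}/T'$ makes sense and is an open subset of $X_0(\beta\cdot\Delta;w_0)/T'\cong\Aug(\beta,\ft_c)$. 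Define $T^c_{\tau(\beta)}$ to be this quotient. Since $T_{\tau(\beta)}\cong(\C^*)^{\ell(\beta)}$ with coordinates the opened-crossing variables $z_{r_1},z'_{r_2},\dots$, which are homogeneous for the torus action, the quotient by the free $(n-k)$-dimensional subtorus $T'$ is again a torus, of dimension $\ell(\beta)-(n-k)=\ell(\beta)-n+k$, giving $T^c_{\tau(\beta)}\cong(\C^*)^{\ell(\beta)-n+k}$; one should check that $T'$ acts on $T_{\tau(\beta)}$ through a subtorus of the coordinate torus $(\C^*)^{\ell(\beta)}$ so that the quotient is genuinely a split torus and not merely a torus torsor, which follows because all the relevant coordinate functions are monomials in the $t_i$'s times the coordinates, i.e. the action is linear in the torus coordinates.

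Finally, for the complement being codimension $\ge 2$ in the $\ft_c$ case: the quotient map $q\colon X_0(\beta\cdot\Delta;w_0)\to X_0(\beta\cdot\Delta;w_0)/T'$ is a principal $T'$-bundle (as noted in the proof of Theorem \ref{thm:aug vs braid}.(ii)), hence faithfully flat, smooth, of relative dimension $n-k$. Codimension is preserved under such maps: if $Z\sse X_0(\beta\cdot\Delta;w_0)$ is the complement of $\bigcup_\tau T_{\tau(\beta)}$, then $Z$ is $T'$-stable (each $T_{\tau(\beta)}$ is) and $q(Z)$ is exactly the complement of $\bigcup_\tau T^c_{\tau(\beta)}$ in the quotient; since $q$ restricted to $Z$ is again a principal $T'$-bundle onto $q(Z)$, one has $\operatorname{codim}(q(Z))=\operatorname{codim}(Z)\ge 2$ by Theorem \ref{thm: codim 2}. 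I expect the only mildly delicate point to be this last bookkeeping — verifying that the $T'$-stable complement descends to exactly the complement of the quotient charts and that codimension is preserved — but this is routine given that $q$ is a smooth surjection with equidimensional fibers; no genuine obstacle arises, as everything reduces to facts already proved in the excerpt.
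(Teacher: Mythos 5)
Your proposal is correct and follows essentially the same route as the paper: the $\ft_s$ case is transported directly along the isomorphism of Theorem \ref{thm:aug vs braid}.(i) from Corollary \ref{cor:open crossings} and Theorem \ref{thm: codim 2}, while the $\ft_c$ case uses the $T$-stability of the charts (Proposition \ref{prop:open crossings}.(iii)) to descend them through the free quotient by $T'$. The paper states this in one line; your additional checks (that the quotient of the coordinate torus by the monomially-acting subtorus is again a split torus of dimension $\ell(\beta)-n+k$, and that codimension is preserved under the principal $T'$-bundle) are exactly the bookkeeping the paper leaves implicit, and they are carried out correctly.
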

\begin{proof}
In view of Theorem \ref{thm:aug vs braid}, only the statement for $\Aug(\beta, \ft_{c})$ remains unproven. It follows from the $T$-stability of the toric charts on the braid variety $X_{0}(\beta\cdot \Delta; w_{0})$, cf. Corollary \ref{cor:open crossings}.
\end{proof}


\subsection{Open Bott-Samelson varieties}\label{sec: obs}
This section is not required for the rest of the manuscript: it is provided here for contextual completeness with respect to the articles \cite{CG,GSW,SW,STWZ}. The purpose of this section is to relate the braid variety $X_{0}(\beta)$ to the (diagonal) open Bott-Samelson variety $\OBS(\beta)$ associated to the braid $\beta$. This is achieved in Theorem \ref{thm:OBS} below, after a brief reminder on Bott-Samelson varieties.

Consider $G := \operatorname{GL}(n,\C)$, $\cB \subseteq G$ the Borel subgroup of upper-triangular matrices and the flag variety $\Fl := G/\cB$. The projective variety $\Fl$ is the moduli space of complete flags of subspaces in $\C^n$: an element $\cF \in \Fl$ is a flag $\cF = (F_1 \subseteq \cdots \subseteq F_n)$ where $\dim F_{i} = i$. 
Given a flag $\cF\in \Fl$, we can choose a basis $(v_{1}, \dots, v_{n})$ of $\C^n$ such that $F_{j} = \langle v_{1}, \dots, v_{j}\rangle $ for $ j = 1, \dots, n$; we denote by $V_{\cF} \in G$ the matrix whose columns are the vectors $v_{i}$ expressed in the standard basis. Conversely, given a matrix $V \in G$, we can consider a flag $\cF^{V} = (F_{1} \subseteq \cdots \subseteq F_{n})$ where $F_{j}$ is the span of the first $j$ columns of the matrix $V$. In this correspondence, two flags are equal $\cF^{V} = \cF^{V'}$ if and only if their matrices $V,V'$ are related by an upper triangular matrix, i.e. $V = V'U$ for some $U \in \cB$. 

By definition, two flags $\cF,\cF'\in\Fl$ are in relative position $s_{i}$, $i\in[1,n-1]$, if $F_{j} = F'_{j}$ for $j \neq i$ and $F_{i} \neq F'_{i}$. In terms of their matrices, the flags $\cF^{V},\cF^{V'}$ are in relative position $s_{i}$ if and only if there exist upper-triangular matrices $A_{1}$ and $A_{2}$ such that $V' = VA_{1}s_{i}A_{2}$, where $s_{i}$ is understood as a permutation matrix. 

\begin{remark}\label{rmk: rel position}
Since the permutation matrix $s_{i} = B_{i}(0)$ is a braid matrix with the variable set to zero, it follows from Lemma \ref{lem:slide triangular} that the flags $\cF^{V}$ and $\cF^{V'}$ are in relative position $s_{i}$ if and only if there exist an upper-triangular matrix $U$ and $z \in \C$ such that $V' = VUB_{i}(z)$.
\end{remark}

Building on the articles \cite{BM,Deligne}, and the subsequent developments \cite{CG,SW,STWZ,STZ}, we introduce the two algebraic varieties $\OBS(\beta)$ and $\OBS'(\beta)$ as follows.

\begin{definition}\label{def:OBS}
Let $\beta = \sigma_{i_{1}}\cdots \sigma_{i_{\ell}}$ be a positive braid word.

\begin{itemize}
	\item[(i)]  The open Bott-Samelson variety $\OBS(\beta) \subseteq \Fl^{\ell+1}$ associated to $\beta$ is the moduli space of $(\ell+1)$-tuples of flags $(\cF_{0}, \dots, \cF_{\ell})$ such that consecutive flags $\cF_{k-1},\cF_{k}$ are in relative position $s_{i_{k}}$, for each $k\in[1,\ell]$.\\
	
	\item[(ii)] The {\it diagonal} open Bott-Samelson variety $\OBS'(\beta) \subseteq \OBS(\beta)$ is the closed subvariety defined by the additional condition that $\cF_{0} = \cF_{\ell}$.
\end{itemize}
\end{definition}

The diagonal open Bott-Samelson variety $\OBS'(\beta)$ will be related to the braid variety, as we now explain. First, let us construct a map $\pi: G \times X_{0}(\beta) \to \OBS'(\beta)$ as follows. Consider a point $(z_{1}, \dots, z_{\ell}) \in X_{0}(\beta)$ and a matrix $V\in G$, and define $V_{k} := VB_{i_{1}}(z_{1})\cdots B_{i_{k}}(z_{k}) \in G$. The map $\pi$ is then defined by:
$$
\pi: G \times X_{0}(\beta) \lr \OBS'(\beta),\quad \pi(V, z_{1}, \dots, z_{\ell}):= (\cF^{V}, \cF^{V_{1}}, \dots, \cF^{V_{\ell}}).
$$

\noindent It follows from Remark \ref{rmk: rel position} that $\pi(V, z_{1}, \dots, z_{\ell}) \in \OBS(\beta)$ and since $V_{\ell} = VB_{\beta}(z_{1}, \dots, z_{\ell})$, and $(z_{1}, \dots, z_{\ell}) \in X_{0}(\beta)$, we actually have that $\pi(V, z_{1}, \dots, z_{\ell}) \in \OBS'(\beta)$. Thus, the image of $\pi$ belongs to $\OBS'(\beta)\sse\OBS(\beta)$, as written above. This map is, in general, not an isomorphism. Nevertheless, we will now construct a right $\cB$-action on the product $G \times X_{0}(\beta)$, and $\pi$ will descend to an isomorphism on the quotient.

Indeed, consider an upper-triangular matrix $U = U^{0} \in \cB$ and define $z_{1}', \dots, z_{\ell}'$ and $U^{1}, \dots, U^{\ell} \in \cB$ inductively via the equation 
\begin{equation}\label{eqn: Us}
B_{i_{\ell - k}}(z_{\ell - k})U^{k} = U^{k+1}B_{i_{\ell - k}}(z'_{\ell - k}).
\end{equation}
It follows from the equation $U^{\ell}B_{\beta}(z_{1}, \dots, z_{\ell}) = B_{\beta}(z_{1}', \dots, z_{\ell}')U^{0}$ that $(z_{1}, \dots, z_{\ell}) \in X_{0}(\beta)$ if and only if $(z_{1}', \dots, z_{\ell}') \in X_{0}(\beta)$. For each $(V, z_{1}, \dots, z_{\ell}) \in G \times X_{0}(\beta)$ and upper-triangular matrix $U = U^{0} \in \cB$, we define its (right) action by:
$$
(V, z_{1}, \dots, z_{\ell})\cdot U := (VU^{\ell}, z'_{1}, \dots, z'_{\ell}).
$$ 
The usefulness of this right action is manifest in the main result of this  
subsection which reads as follows.

\begin{thm}
\label{thm:OBS} Let $\beta$ be a positive braid word, $G=\GL(n,\C)$ and $\cB\sse G$ the Borel subgroup of upper-triangular matrices. Then
\begin{itemize}
\item[(i)] The right $\cB$-action on $G \times X_{0}(\beta)$ defined above is free.\\

\item[(ii)] The map $\pi: G \times X_{0}(\beta) \lr \OBS'(\beta)$  induces an isomorphism $(G \times X_{0}(\beta))/\cB \cong \OBS'(\beta)$.
\end{itemize}
\end{thm}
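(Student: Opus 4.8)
The plan is to verify the two statements essentially by unwinding the definitions, keeping careful track of the $\cB$-action.

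For part (i), I would first check that the formula $(V, z_1, \dots, z_\ell)\cdot U := (VU^\ell, z'_1, \dots, z'_\ell)$ is indeed a right group action. This requires that the $z'_k$ and $U^k$ produced by \eqref{eqn: Us} depend functorially on $U$: if $U = U_1 U_2$ then the associated upper-triangular matrices and shifted variables compose correctly. This is a direct induction on $\ell$, moving the $B_{i}$'s past the product $U_1U_2$ one factor at a time via Lemma \ref{lem:slide triangular}. Freeness is then easy: if $(VU^\ell, z'_1, \dots, z'_\ell) = (V, z_1, \dots, z_\ell)$ then $U^\ell$ fixes the invertible matrix $V$ by left multiplication, so $U^\ell = \mathrm{Id}$, whence reading \eqref{eqn: Us} backwards forces $U^{\ell-1} = \cdots = U^0 = U = \mathrm{Id}$.

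For part (ii), I would first observe that $\pi$ is $\cB$-invariant: replacing $(V, z_1, \dots, z_\ell)$ by $(V, z_1, \dots, z_\ell)\cdot U$ produces the matrices $V_k' = VU^\ell B_{i_1}(z'_1)\cdots B_{i_k}(z'_k)$, and iterating \eqref{eqn: Us} gives $V_k' = V B_{i_1}(z_1)\cdots B_{i_k}(z_k) U^{\ell - k} = V_k U^{\ell-k}$ with $U^{\ell-k} \in \cB$, so $\cF^{V_k'} = \cF^{V_k}$ for all $k$; hence $\pi$ descends to $\bar\pi\colon (G\times X_0(\beta))/\cB \to \OBS'(\beta)$. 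Next I would produce an inverse. Given a tuple $(\cF_0, \dots, \cF_\ell) \in \OBS'(\beta)$, choose a matrix $V$ with $\cF^V = \cF_0$; by Remark \ref{rmk: rel position}, since $\cF_{k-1}$ and $\cF_k$ are in relative position $s_{i_k}$, there exist (unique, once the matrix representative $V_{k-1}$ is fixed) upper-triangular $U$ and scalar $z_k$ with a matrix representative $V_k = V_{k-1} U B_{i_k}(z_k)$ of $\cF_k$; one shows the $z_k$ can be chosen so that, on the nose, $V_k = V_{k-1}B_{i_k}(z_k)$ by absorbing the $U$'s using Lemma \ref{lem:slide triangular}, which amounts to a normalization of the matrix representatives. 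The condition $\cF_0 = \cF_\ell$ forces $V_\ell = V B_\beta(z_1, \dots, z_\ell)$ to represent the same flag as $V$, i.e. $VB_\beta(z) = VU$ for some $U\in\cB$, so $B_\beta(z)$ is upper-triangular and $(z_1, \dots, z_\ell) \in X_0(\beta)$. This gives a well-defined map $\OBS'(\beta) \to (G\times X_0(\beta))/\cB$ after checking that the ambiguity in the choices of $V$ and of the matrix representatives $V_k$ is precisely absorbed by the $\cB$-action defined above; the two maps are then mutually inverse by construction, and both are morphisms of algebraic varieties since the choices can be made algebraically on a cover.

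The main obstacle I anticipate is the bookkeeping in the inverse construction: namely, pinning down exactly in what sense the matrix representatives $V_k$ can be "normalized" so that $V_k = V_{k-1} B_{i_k}(z_k)$, and verifying that the residual freedom matches the right $\cB$-action orbit-for-orbit rather than up to some larger or smaller group. Concretely, one must show that two lifts of the same point of $\OBS'(\beta)$ differ by the action of a \emph{single} $U \in \cB$ acting as in part (i) — and conversely that distinct $\cB$-orbits in $G \times X_0(\beta)$ map to distinct tuples of flags. Once the compatibility in part (i) between the abstract action and the slide relations \eqref{eqn: Us} is nailed down, this should fall out, but it is the step where it is easiest to be sloppy.
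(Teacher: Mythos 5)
Your proposal is correct and follows essentially the same route as the paper: $\cB$-invariance via the telescoping identity $V_k' = V_k U^{\ell-k}$ from \eqref{eqn: Us}, surjectivity by normalizing flag representatives with Lemma \ref{lem:slide triangular} and using $\cF_0=\cF_\ell$ to land in $X_0(\beta)$, and injectivity by reconstructing the chain $U^\ell, U^{\ell-1},\dots,U^0$ from the flag equalities so that two preimages differ by a single element of $\cB$. The only cosmetic difference is your freeness argument (downward induction from $U^\ell=\mathrm{Id}$ rather than the paper's observation that $U,U^1,\dots,U^\ell$ are pairwise conjugate), which is an equally valid one-line variant.
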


\begin{proof}
Let us first prove the freeness of the right $\cB$-action. Indeed, suppose that there exists a fixed point, i.e. there exist $U \in \cB$ and $(V, z_{1}, \dots, z_{\ell}) \in G \times X_{0}(\beta)$ such that
$$(V, z_{1}, \dots, z_{\ell})\cdot U = (V, z_{1}, \dots, z_{\ell}).$$
Since $z'_{j} = z_{j}$ for every $j\in[1,\ell]$, it follows from Equation \eqref{eqn: Us} that the matrices $U, U^{1}, \dots, U^{\ell}$ are pairwise conjugate. In particular, the initial upper-triangular matrix $U$ is conjugate to $U^{\ell}$. Nevertheless, the condition $VU^{\ell} = V$ implies that $U^{\ell} =\mbox{Id}$, and it follows that $U = \mbox{Id}$. The action is thus free.

Second, let us show that the map $\pi$ is surjective, onto the diagonal open Bott-Samelson variety $\OBS'(\beta)$. For that, consider a point $(\cF_{0}, \dots, \cF_{\ell}) \in \OBS'(\beta)$ and let $V \in G$ be any matrix such that $\cF^{V} = \cF_{0}$. Thanks to Remark \ref{rmk: rel position}, we have that there exist upper-triangular matrices $U^{1}, \dots, U^{\ell}$ and $z_{1}, \dots, z_{\ell} \in \C$  such that $$\cF_{k} = \cF^{VU^{1}B_{i_{1}}(z_{1})\cdots U^{k}B_{i_{k}}(z_{k})} \; \text{for all} \; k = 1, \dots, \ell.$$ 

Now, use Lemma \ref{lem:slide triangular} to slide all the upper triangular matrices $U^{2}, \dots, U^{\ell}$ to the left; this yields upper-triangular matrices $\widehat{U}^{1} = U^{1}, \widehat{U}^{2}, \dots, \widehat{U}^{\ell}$ and $\hat{z}_{1}, \dots, \hat{z}_{\ell}$ with the property that, for every $k$:
$$
V\widehat{U}^{1}\cdots \widehat{U}^{\ell} B_{i_{1}}(\hat{z}_{1})\cdots B_{i_{k}}(\hat{z}_{k}) = VU^{1}B_{i_{1}}(z_{1})\cdots U^{k}B_{i_{k}}(z_{k})\widehat{U},
$$
and 
$$
\widehat{U} B_{i_{k+1}}(\hat{z}_{k+1})\cdots B_{i_{\ell}}(\hat{z}_{\ell})=U^{k+1}B_{i_{k+1}}(z_{k+1})\cdots U^{\ell}B_{i_{\ell}}(z_{\ell}),
$$
\noindent where $\widehat{U}$ is an upper-triangular matrix depending on $k$. 
This implies that $\pi(V\widehat{U}^{1}\cdots \widehat{U}^{\ell}, \hat{z}_{1}, \dots, \hat{z}_{\ell}) = (\cF_{0}, \dots, \cF_{\ell})$. It remains to show that $(\hat{z}_{1}, \dots, \hat{z}_{\ell}) \in X_{0}(\beta)$, that is, the matrix $B_{\beta}(\hat{z}_{1}, \dots, \hat{z}_{\ell})$ is upper-triangular. Since $\cF_{0} = \cF_{\ell}$, the matrices $V$ and $V\widehat{U}^{1}\cdots \widehat{U}^{\ell}B_{\ell}(\hat{z}_{1}, \dots, \hat{z}_{\ell})$ differ by an upper-triangular matrix. Since $\widehat{U}^{1},\ldots,\widehat{U}^{\ell}$ are upper-triangular, the result follows. Thus, $\pi$ is surjective.

Third, let us prove that the map $\pi$ is $\cB$-invariant. We need to check that for every $k$ the matrices $VB_{i_{1}}(z_{1})\cdots B_{i_{k}}(z_{k})$ and $VU^{\ell}B_{i_{1}}(z'_{1})\cdots B_{i_{k}}(z'_{k})$ differ by an upper-triangular matrix. It follows from Equation \eqref{eqn: Us} that this matrix is precisely $U^{\ell - k}$, which is upper-triangular. This proves $\cB$-invariance. 

Finally, we must show that if $\pi(V, z_{1}, \dots, z_{\ell}) = \pi(V', z'_{1}, \dots, z'_{\ell})$ then there exists an upper triangular matrix $U$ such that $(V', z'_{1}, \dots, z'_{\ell}) = (V, z_{1}, \dots, z_{\ell})\cdot U$. For that, note that $\cF^{V} = \cF^{V'}$ implies that there exists an upper-triangular matrix, say $U^{\ell}$, such that $V' = VU^{\ell}$. Since $\cF^{VB_{i_{1}}(z_{1})} = \cF^{V'B'_{i_{1}}(z'_{1})}$, there also exists an upper-triangular matrix, say $U^{\ell - 1}$, such that $V'B'_{i_{1}}(z'_{1}) = VB_{i_{1}}(z_{1})U^{\ell - 1}$. In consequence, we obtain the equality $VU^{\ell}B_{i_{1}}(z'_{1}) = VB_{i_{1}}(z_{1})U^{\ell-1}$, and thus $U^{\ell}B_{i_{1}}(z'_{1}) = B_{i_{1}}(z_{1})U^{\ell - 1}$. Note that this is precisely Equation \eqref{eqn: Us}. We iterate this procedure until we find $U^{0}$, which is the required upper-triangular matrix. This concludes the proof of the statement.
\end{proof}

\begin{remark}
By \cite[Section 6.4]{CLSBW} or \cite[Theorem 3.9]{STWZ}, for certain $\beta$, the variety $\OBS'(\beta)$ is closely related to a suitable positroid variety \cite{KLS}, see also \cite{CGGS2}. See \cite{GL}, where the topology of positroid varieties is studied in detail.
\end{remark}

\begin{remark}
As a side note, the homotopy types of the varieties $X_0(\beta)$ and $\OBS'(\beta)$ appear to be related to the spectra constructed in \cite{Kitchloo}. This remains to be explored.
\end{remark}

This concludes our discussion relating braid varieties to open Bott-Samelson varieties. Let us now move to the construction of a holomorphic symplectic structure on braid varieties $X_0(\beta)$.


\section{Holomorphic Symplectic Structure}\label{sec:symplecticform}

This section constructs holomorphic symplectic structures on the quotients $X_0(\beta\Delta;w_0)/T$ of braid varieties, establishing the remainder of Theorem \ref{thm:main1}.(iii). In particular, Theorem \ref{thm:aug vs braid} will imply that the augmentation variety associated to a Legendrian link $\La(\beta)$, $\beta$ a positive braid word and a certain choice of marked points, is holomorphic symplectic. In addition, the toric charts we built in Corollary \ref{cor:open crossings} will actually be exponential pre-Darboux charts (cf.~Example \ref{ex:2form_torus} below) for a closed holomorphic 2-form on $X_0(\beta\Delta;w_0)$, and they will project via the torus quotient to exponential Darboux charts for this holomorphic symplectic structure.
The construction we present draws from the literature on character varieties, where the holomorphic symplectic structures on character varieties have a central role, starting with the Atiyah-Bott-Goldman structures \cite{AB,Goldman84} and continuing with, e.g., the work of P. Boalch and L. Jeffrey \cite{Boalch00,Boalch01,Boalch02,Jeffrey,Mellit}.

\begin{ex}\label{ex:2form_torus}
Let $T=(\C^*)^{n}$ be an algebraic torus with coordinates $(x_1,\ldots,x_{n})\in(\C^*)^{n}$. In general, a holomorphic 2-form $\omega\in\Omega_T^2$ on $T$ is an expression of the form
\[
\omega=\sum_{i<j}f_{ij}(x_1,\ldots,x_{n})dx_idx_j,
\]
where $f_{ij}(x_1,\ldots,x_{n}):T\lr\C$ are holomorphic functions. For instance,
\[
\omega=(x_1x_2)^{-1}dx_1dx_2
\]
is a holomorphic 2-form on $(\C^*)^2$. In general, a set of coordinates $(x_1,\ldots,x_{n})\in(\C^*)^{n}$ are said to be exponential pre-Darboux coordinates for a holomorphic 2-form $\omega\in\Omega_T^2$ if $\omega$ is expressed in this set of coordinates as
$$\omega=\sum_{i<j}C_{ij}\cdot (x_ix_j)^{-1}dx_idx_j,$$
where $C_{ij}\in\C$ are all constant functions. If one defines $X_i:=\log(x_i)$, so that we can formally write $dX_i=d\log(x_i)=x_i^{-1}dx_i$, then exponential pre-Darboux coordinates are such that
$$\omega=\sum_{i<j}C_{ij}\cdot dX_idX_j,$$
i.e.~the coefficients are constant with respect to the expressions $\{dX_1,\ldots,dX_{n}\}$. The 2-form that we now construct
in Subsection~\ref{subsec:closed-2-form}
will be endowed with a set of exponential pre-Darboux coordinates.
\end{ex}

\begin{remark}
We use the term {\it pre-Darboux}, instead of {\it Darboux}, because $\omega$ might not a priori be symplectic and the constants might not define the standard symplectic basis. If $\omega$ is symplectic and $\{X_i = \log(x_i)\}$  are chosen as the standard symplectic basis, then exponential pre-Darboux coordinates coincide with the usual exponential Darboux coordinates.
\end{remark}


\subsection{Construction of a 2-form} 
\label{subsec:closed-2-form}
First, let us review the construction of a 2-form on the braid variety $X_{0}(\beta)$ according to \cite{Jeffrey,Mellit}. For that, let $\theta:=f^{-1}df$ and $\theta^{R}:=df f^{-1}$ denote respectively the left- and right-invariant algebraic 1-forms on the (complex) Lie group $G=\GL(n,\C)$; these 1-forms are valued in the Lie algebra $\mathfrak{g}=\mathfrak{gl}(n)$, and $\theta$ is referred to as the Maurer-Cartan form. 
We have the following facts (see e.g. \cite[Section 4]{Jeffrey},\cite[Section 3]{Mellit}):
\begin{itemize}
\item[(a)] The 3-form $\Omega:=\frac{1}{6}\Tr(\theta\wedge[\theta,\theta])$ is closed and represents a nontrivial class in $H^3(G;\C)\simeq \C$.\\ 

\item[(b)] There is a 2-form $(f|g):=\Tr(\pi_1^*\theta\wedge \pi_2^*\theta^R)=\Tr(f^{-1}df\wedge dg g^{-1})$ on $G\times G$ satisfying the following two ``cocycle conditions":
\begin{equation}
\label{eq: cocycle 1}
d(f|g)=\pi_1^*\Omega-m^*\Omega+\pi_2^*\Omega,
\end{equation}
\begin{equation}
\label{eq: cocycle 2}
(g|h)-(fg|h)+(f|gh)-(f|g)=0,
\end{equation}
where $\pi_1,\pi_2,m:G\times G\to G$ are the two projections and the Lie group multiplication map. 
\end{itemize}

\begin{definition}
Let $X$ be an arbitrary algebraic variety. A map $f: X\to G$ is said to be $\Omega$-trivial if $f^*\Omega=d\omega$ for some 2-form $\omega$ on $X$.
\end{definition}

Suppose that two maps $f:X\to G$ and $g:Y\to G$ are $\Omega$-trivial, then the product
$$
f\cdot g: X\times Y\xrightarrow{f\times g}G\times G\xrightarrow{m} G
$$
is also $\Omega$-trivial. Indeed, if $f^*\Omega=d\omega_X$ and $g^*\Omega=d\omega_Y$ then \eqref{eq: cocycle 1} implies
$$
(f\cdot g)^*\Omega=d(\omega_X+\omega_Y-(f|g)). 
$$
By iterating this construction, we obtain the following result.
\begin{prop}
\label{prop: omega trivial}
Suppose that $f_i:X_i\to G, i\in[1,r]$ are $\Omega$-trivial maps with $f_i^*\Omega=d\omega_i$ and consider the form on $X_1\times \cdots \times X_r$ given by:
\begin{equation}\label{eqn:form 1}
\omega:=\sum \omega_{X_i}-(f_1|f_2)-(f_1f_2|f_3)-\ldots-(f_1\cdots f_{r-1}|f_{r}).
\end{equation}
Then
$$
(f_1\cdots f_r)^*\Omega=d\omega.
$$
\end{prop}

Let us abbreviate:
\begin{equation}\label{eq: def form}
(f_1|f_2|\cdots|f_r) := (f_1|f_2) + (f_1f_2|f_3) + \ldots + (f_1\cdots f_{r-1}|f_r)
\end{equation}
so that \eqref{eqn:form 1} can be more succinctly written as
\begin{equation}\label{eqn:form 2}
\omega = \sum\omega_{X_i} - (f_1|f_2|\cdots|f_r).
\end{equation}
The condition \eqref{eq: cocycle 2} implies that this operation defines an associative convolution $(f_1|f_2|\cdots|f_r)$ on collections of $\Omega$-trivial maps. The following identity  will be useful for us. 
\begin{lemma}
For $f_{i}: X_{i} \to G$, $i = 1, \dots, r$ we have:
\begin{equation}
\label{eq: form merge}
(f_1|\cdots |f_r)=(f_1|\cdots|f_{j}f_{j+1}|\cdots |f_r)+(f_{j}|f_{j+1}).
\end{equation}
\end{lemma}
\begin{proof}
    This follows from \eqref{eq: cocycle 2}. Let us set $f := f_1\cdots f_{j-1}$. It follows from the definition \eqref{eq: def form} that \eqref{eq: form merge} is equivalent to
    \[
    (f|f_{j}) + (ff_{j}|f_{j+1}) = (f|f_{j}f_{j+1}) + (f_{j}|f_{j+1}).
    \]
    This latter identity is a consequence of \eqref{eq: cocycle 2}. 
\end{proof}

\begin{ex}
Suppose that $D_1,\ldots,D_r$ are diagonal matrices. Then $D_i$ and $dD_i$ all commute with each other, and one can prove by induction that
\begin{equation}
\label{eq: form diagonals}
(D_1|\cdots|D_r)=\sum_{i<j}\Tr(d\log D_i\wedge d\log D_j).
\end{equation}
Indeed, if $r=2$ then 
$$
(D_1|D_2)=\Tr(D_1^{-1}dD_1\wedge dD_2 \cdot D_2^{-1})=\Tr(d\log D_1\wedge d\log D_2).
$$
For the step of the induction, we write
$$
(D_1|\cdots|D_{r+1})=(D_1|\cdots|D_r)+(D_1\cdots D_r|D_{r+1})=(D_1|\cdots|D_r)+\Tr\left(d\log(D_1\cdots D_r)\wedge d\log D_{r+1}\right)=
$$
$$
(D_1|\cdots|D_r)+\sum_{i=1}^{r}\Tr(d\log D_i\wedge d\log D_{r+1}). 
$$
\end{ex}

Having summarized the necessary ingredients, let us apply this construction to braid varieties as follows. We can regard the braid matrices $B_i(z)$ as functions $B_i:\C\to G$ where $z$ is the coordinate on $\C$. 
The first key fact is that the maps $B_i:\C\to G$ given by the braid matrices are $\Omega$-trivial, since $B_i^*(\Omega)$ is a 3-form on $\C$ which must vanish.

Similarly, for a braid $\beta=\sigma_{i_1}\cdots \sigma_{i_r}$, we can regard the braid matrix 
$$
B_{\beta}(z_1,\ldots,z_r)=B_{i_1}(z_1)\cdots B_{i_r}(z_r)
$$
as a function $B_{\beta}:\C^r\to G.$
Let us define the following 2-form on $\C^r$:
\begin{multline}
\label{eq: def form beta}
\omega_{\beta}:=(B_{i_1}(z_1)|\cdots|B_{i_r}(z_r))=\\
(B_{i_1}(z_1)|B_{i_2}(z_2))+(B_{i_1}(z_1)B_{i_2}(z_2)|B_{i_3}(z_3))+\ldots+(B_{i_1}(z_1)\cdots B_{i_{r-1}}(z_{r-1})|B_{i_r}(z_{r})).
\end{multline}
Here we keep track of the arguments of different $B_{i_j}$ for the reader's convenience. By Proposition \ref{prop: omega trivial}, we conclude that the map $B_{\beta}:\C^{r}\to G$ is $\Omega$-trivial with primitive $-\omega_{\beta}$. By applying \eqref{eq: cocycle 2} repeatedly, we get the identity
\begin{equation}
\label{eq: omega product}
\omega_{\beta_1\beta_2}=\omega_{\beta_1}+(B_{\beta_1}|B_{\beta_2})+\omega_{\beta_2}.
\end{equation}
For reduced words, this form vanishes.
\begin{lemma}[\cite{Mellit}, Proposition 5.1.5]\label{lem: vanishing form reduced}
Let $\beta \in \Br^+_{n}$ be a reduced positive braid word. Then the 2-form $\omega_{\beta}$ vanishes on $\C^{\ell(\beta)}$.
\end{lemma}

The following example will prove useful.

\begin{ex}
\label{ex: form full twist}
Let $\Delta\in \Br_n^+$ be the positive braid $($word$)$ associated to the half-twist; then by Lemma \ref{lem: vanishing form reduced} we have that the 2-form $\omega_{\Delta}=0$ vanishes on $\C^{\binom{n}{2}}$. Following Lemma \ref{lem: full twist}, we can write
$$
B_{\Delta^2}=B_{\Delta}(c)B_{\Delta}(u)=Lw_0\cdot w_0U=LU,
$$
where two copies of $B_{\Delta}$ depend on two sets of independent variables $c_{ij}$ and $u_{ij}$. The 2-form on $\C^{2\binom{n}{2}}$ associated to $\Delta^2$ then reads:
$$
\omega_{\Delta^2}=\omega_{\Delta}(c)+(B_{\Delta}(c)|B_{\Delta}(u))+\omega_{\Delta}(u)=(B_{\Delta}(c)|B_{\Delta}(u))=
$$
$$
(Lw_0|w_0U)=(L|w_0|w_0|U)=(L|w_0w_0|U)=(L|U).
$$
Here the second equation follows from \eqref{eq: omega product}, and in the second line we use that $w_0$ is constant and $(w_0|f)=(f|w_0)=0$ for any $f$. 
\end{ex}

\begin{lemma}
The restriction of the 2-form $\omega_{\beta}$ to the braid variety $X_{0}(\beta)$ is closed.
\end{lemma}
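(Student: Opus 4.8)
The plan is to use the fact that, by the construction reviewed just before the statement, $\omega_\beta$ is a global primitive of $B_\beta^*\Omega$ on $\C^r$, and then to observe that along the braid variety the map $B_\beta$ takes values in the Borel subgroup $\cB\sse\GL(n,\C)$ of invertible upper-triangular matrices, on which the Cartan $3$-form $\Omega$ vanishes identically.

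First I would record the identity $d\omega_\beta=B_\beta^*\Omega$ on all of $\C^r$. This is exactly the $\Omega$-triviality of $B_\beta$ established above: each braid matrix $B_i(z)\colon\C\to G$ is $\Omega$-trivial with vanishing primitive, and iterating the cocycle relation \eqref{eq: cocycle 1} through the factorization $B_\beta=B_{i_1}(z_1)\cdots B_{i_r}(z_r)$ produces the primitive $\omega_\beta$ of \eqref{eq: def form}. In particular $d\omega_\beta$ is pulled back from the $3$-form $\Omega$ on $G$.

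Second I would verify that $\Omega|_{\cB}=0$ as a differential form, and not merely in cohomology. On $\cB$ the Maurer-Cartan form $\theta=f^{-1}df$ is valued in the Lie algebra $\mathfrak{b}$ of upper-triangular matrices, so this amounts to the linear-algebra fact that $\Tr(X[Y,Z])=0$ for all $X,Y,Z\in\mathfrak{b}$: the commutator $[Y,Z]=YZ-ZY$ of two upper-triangular matrices is strictly upper-triangular, whence $X[Y,Z]$ is strictly upper-triangular and therefore traceless. Consequently $\Omega=\Tr(\theta\wedge[\theta,\theta])$ restricts to $0$ on $\cB$.

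Finally, let $\iota\colon X_0(\beta)\hookrightarrow\C^r$ be the inclusion. Each $B_i(z)$ lies in $\GL(n,\C)$, so $B_\beta(z_1,\dots,z_r)\in\GL(n,\C)$ for every point of $\C^r$; and on $X_0(\beta)$ the matrix $B_\beta$ is by definition upper-triangular, so $B_\beta\circ\iota$ factors through $\cB$. Combining the two previous steps,
$$
d(\iota^*\omega_\beta)=\iota^*(B_\beta^*\Omega)=(B_\beta\circ\iota)^*\bigl(\Omega|_{\cB}\bigr)=0 ,
$$
which is the desired closedness. The argument is essentially formal once one has $d\omega_\beta=B_\beta^*\Omega$; the only genuinely computational point is the vanishing of $\Omega$ on $\cB$, and since that reduces to the elementary trace identity above, I do not expect any real obstacle.
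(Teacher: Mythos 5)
Your proof is correct and follows the same route as the paper: both arguments rest on the identity $d\omega_\beta=B_\beta^*\Omega$ together with the vanishing of $\Omega$ on the upper-triangular subgroup. The only difference is that you spell out the trace identity $\Tr(X[Y,Z])=0$ for $X,Y,Z\in\mathfrak{b}$, which the paper simply asserts.
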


\begin{proof}
Note that the map $B_{\beta}:X_{0}(\beta)\lr G$  lands in the subgroup of upper-triangular matrices, and the restriction of the 3-form $\Omega$ to the space of upper-triangular matrices vanishes. Therefore, since $d$ commutes with pull-back, we have
$$
d\omega_{\beta}=-B_{\beta}^*\Omega=0,
$$
i.e. $\omega_{\beta}$ is a closed 2-form.
\end{proof}

Consider now the toric charts $T_{\tau}\sse X_{0}(\beta\Delta; w_0) \subseteq X_{0}(\beta\cdot\FT)$ constructed in Corollary \ref{cor:open crossings} and, in particular, the restriction $\omega|_{T_{\tau}}$. Recall the matrices $U_i, D_i, L_i$ defined in \eqref{eqn: UDL}. By Proposition \ref{prop:open crossings} (ii) and Corollary \ref{cor:open crossings}, the coordinates on the torus $T_{\tau}$ are given by the coordinates associated to the $D_{i}$-matrices that appear while opening crossings according to $\tau$. 

\begin{lemma}
\label{lem: form constant}
Let $\beta$ be a positive braid word and $\tau\in S_{\ell(\beta)}$. The restriction of the 2-form $\omega_{\beta\cdot \Delta^2}$ to the toric chart $T_{\tau}\sse X_0(\beta\Delta; w_0) \sse X_{0}(\beta\cdot\FT)$ has constant coefficients in the canonical (exponential) coordinates associated to the $D_i$ matrices.
\end{lemma}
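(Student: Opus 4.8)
The plan is to combine the factorization $B_i(z)=U_i(z)D_i(z)L_i(z)$ of \eqref{eq: factorization} with the convolution/cocycle formalism for $\omega_{\beta\FT}$. First I would use the crossing-opening procedure of Subsection \ref{sec: opening}, together with the sliding moves of Lemma \ref{lem:slide triangular} and \eqref{eq:slide diagonal 2}, to put the braid matrix into a normal form on the chart: after opening all $\ell(\beta)$ crossings and moving every diagonal and upper-unipotent factor to the left and every lower-unipotent factor to the right, the upper-triangularity constraint cutting out $X_0$ forces the accumulated lower-unipotent factor to be trivial (exactly as in the proof of Corollary \ref{cor:open crossings} and Theorem \ref{thm: codim 2}), so that
$$B_{\beta\FT}\big|_{T_{\tau(\beta)}}=N_1N_2\cdots N_m,$$
where every $N_a$ is upper-triangular and is either one of the diagonal matrices $D_{i}(\zeta_a)$ produced by an opened crossing or an upper-unipotent matrix, and where the diagonal entries of each $N_a$ (hence of each partial product $N_1\cdots N_b$) are $\pm$-monomials in the toric coordinates $\zeta_1,\dots,\zeta_{\ell(\beta)}$. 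Iterated use of the merge identity \eqref{eq: form merge}, together with the vanishing of the primitives of the elementary factors, then gives $\omega_{\beta\FT}\big|_{T_{\tau(\beta)}}=\sum_{b=2}^{m}\big(N_1\cdots N_{b-1}\,\big|\,N_b\big)$.

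Next I would evaluate each summand. Write $\theta_b:=(N_1\cdots N_{b-1})^{-1}d(N_1\cdots N_{b-1})$; since $N_1\cdots N_{b-1}$ is upper-triangular this is an upper-triangular $1$-form, and its diagonal entries are $d\log$ of the diagonal entries of $N_1\cdots N_{b-1}$, hence constant-coefficient combinations of the logarithmic forms $\tfrac{d\zeta_a}{\zeta_a}$. Using $(f|g)=\Tr(f^{-1}df\wedge dgg^{-1})$ one has $\big(N_1\cdots N_{b-1}\,|\,N_b\big)=\Tr(\theta_b\wedge dN_bN_b^{-1})$. If $N_b$ is upper-unipotent, then $dN_bN_b^{-1}$ is strictly upper-triangular, so $\Tr(\theta_b\wedge dN_bN_b^{-1})$ only pairs against the strictly-lower part of $\theta_b$, which vanishes, and the term is $0$. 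If $N_b=D_i(\zeta_b)$, then $dN_bN_b^{-1}=\tfrac{d\zeta_b}{\zeta_b}(E_{i+1,i+1}-E_{i,i})$, so the term equals $\big((\theta_b)_{i+1,i+1}-(\theta_b)_{i,i}\big)\wedge\tfrac{d\zeta_b}{\zeta_b}$, which by the previous remark is a constant-coefficient $2$-form in the forms $\tfrac{d\zeta_a}{\zeta_a}$. Summing over $b$ shows $\omega_{\beta\FT}\big|_{T_{\tau(\beta)}}$ is a constant-coefficient $2$-form in the logarithmic coordinates attached to the $D_i$-factors, which is the assertion.

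The main obstacle is the first step: establishing the normal form above with all factors upper-triangular and with diagonal entries monomial in the toric coordinates, and checking that no correction term is lost in passing from the convolution of the original braid word to that of the normal form. Both points are bookkeeping on top of results already in place. The triviality of the accumulated lower-triangular factor is precisely the mechanism behind Corollary \ref{cor:open crossings}; the monomial form of the diagonal entries is governed by the admissibility machinery of Lemma \ref{lemma: moving admissible matrices} and Lemma \ref{lemma: moving admissible diagonal} (the weights of the toric coordinates $\zeta_a$ are exactly the weights of the nontrivial diagonal entries of the $D_i$-factors); and the correction terms created when one slides a factor past a braid matrix each involve only the single variable of the factor being moved, or pair a strictly-upper $1$-form with a diagonal (resp. another strictly-upper) $1$-form, so they either vanish identically or are already of the required shape. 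An alternative, more hands-on route is induction on $\ell(\beta)$ via the opening maps $\Omega_{\sigma_i}$: one computes directly the discrepancy $\omega_{\beta\FT}-\Omega_{\sigma_i}^*\,\mathrm{pr}_1^*\omega_{\beta'\FT}$ on the locus $\{z_i\neq 0\}$ and shows it equals $\tfrac{d\zeta_i}{\zeta_i}\wedge\alpha_i$ with $\alpha_i$ a constant-coefficient combination of the remaining logarithmic forms; the inductive step then reduces again to the same monomiality statement.
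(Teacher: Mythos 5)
Your proposal follows essentially the same route as the paper's proof: open each crossing via the factorization $B_i(z)=U_i(z)D_i(z)L_i(z)$, slide the triangular factors to the sides using Lemma \ref{lem:slide triangular}, check that all correction terms from the merge identity \eqref{eq: form merge} vanish, and reduce $\omega_{\beta\cdot\FT}$ on the chart to a convolution of diagonal factors, i.e.\ $\sum_{i<j}\Tr(d\log D_i\wedge d\log D_j)$; your term-by-term evaluation of $(N_1\cdots N_{b-1}\,|\,N_b)$ is just a reorganization of the paper's ``consolidate and set $\widetilde{L}=1$'' step. The one place where your sketch of the correction-term analysis is imprecise is the term $(\widetilde{U}\,|\,B_i(z'))$ arising when an upper-unipotent factor is slid left past a braid matrix: here $dB_i(z')B_i(z')^{-1}$ is strictly \emph{lower}-triangular, so the vanishing is not a consequence of the triangularity pattern you describe but of the specific normalization $\widetilde{u}_{i,i+1}=0$ guaranteed by Lemma \ref{lem:slide triangular} -- a fact you already have at your disposal, so this is a gap in exposition rather than in substance.
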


Here {\it constant} coefficients is to be understood in the sense of Example \ref{ex:2form_torus}, i.e.~Lemma \ref{lem: form constant} states that the coordinates associated to the $D_i$ are exponential pre-Darboux coordinates for $\omega_{\beta\cdot \Delta^2}$.

\begin{proof}
By Lemma \ref{lem: full twist}, we can write 
$$
B_{\beta\cdot \Delta^2}=B_{\beta}B_{\Delta^2}=B_{i_1}(z_1)\cdots B_{i_r}(z_r)LU.
$$
By Example \ref{ex: form full twist}, we can also write
$$
\omega_{\beta\cdot \Delta^2}=\omega_{\beta}+(B_{\beta}|B_{\Delta^2})+\omega_{\Delta^2}=
$$
$$
\omega_{\beta}+(B_{\beta}|LU)+(L|U)=
(B_{i_1}(z_1)|\cdots |B_{i_r}(z_r)|L|U).
$$
Next, we need to understand the behavior of the 2-form under opening the crossings according to $\tau$, as this determines the construction of the toric chart $T_{\tau}$. Note that on the variety $X_0(\beta\Delta; w_0) \subseteq X_0(\beta\Delta^{2})$ we have $U = I$, the identity matrix. We break this computation in several steps.

1) By using the decomposition in equation \eqref{eq: factorization} and \eqref{eq: form merge}, we can write
$$
( \cdots|B_{i_s}(z_s)|\cdots )=
$$
$$
( \cdots|U_{i_s}(z_s)|D_{i_s}(z_s)|L_{i_s}(z_s)|\cdots )-
(U_{i_s}(z_s)|D_{i_s}(z_s)|L_{i_s}(z_s)).
$$
Note that $(U_{i_s}(z_s)|D_{i_s}(z_s)|L_{i_s}(z_s))$ is a $2$-form on a $1$-dimensional space (with coordinate $z_{s}$) and therefore vanishes, so
$$
( \cdots|B_{i_s}(z_s)|\cdots )=
( \cdots|U_{i_s}(z_s)|D_{i_s}(z_s)|L_{i_s}(z_s)|\cdots ).
$$
2) Next, we would like to move upper-triangular matrices to the left and lower-triangular matrices to the right as in Lemma \ref{lem:slide triangular}. Assume that $U$ is an upper unitriangular matrix (so $dU$ is strictly upper triangular) then 
$$
(\cdots |B_i(z)|U|\cdots)=(\cdots |B_i(z)U|\cdots )+(B_i(z)|U)=
$$
$$
(\cdots |\widetilde{U}B_i(z')|\cdots )+(B_i(z)|U)=
(\cdots |\widetilde{U}|B_i(z')|\cdots )+(B_i(z)|U)-(\widetilde{U}|B_i(z')).
$$
The terms $(B_i(z)|U),(\widetilde{U}|B_i(z'))$ in fact vanish. Indeed, observe that 
$$
B_i^{-1}(z)dB_i(z)=\left(\begin{matrix} -z & 1\\ 1 & 0\\ \end{matrix}\right)\left(\begin{matrix} 0 & 0\\ 0 & dz\\ \end{matrix}\right)=\left(\begin{matrix} 0 & dz\\ 0 & 0\\ \end{matrix}\right),
$$
while $dU\cdot U^{-1}$ is strictly upper triangular, so 
$$
(B_i(z)|U)=\Tr\left(B_i^{-1}(z)dB_i(z) \wedge dU\cdot U^{-1}\right)=0.
$$
Similarly, 
$$
dB_i(z')\cdot B_i(z')^{-1}=\left(\begin{matrix} 0 & 0\\ 0 & dz'\\ \end{matrix}\right)\left(\begin{matrix} -z' & 1\\ 1 & 0\\ \end{matrix}\right)=\left(\begin{matrix} 0 & 0\\ dz'& 0\\ \end{matrix}\right),
$$
so that 
$$
(\widetilde{U}|B_i(z')) = 
\Tr\left(\widetilde{U}^{-1}d\widetilde{U} \wedge dB_i(z')\cdot B_i(z')^{-1}\right)=
(\widetilde{U}^{-1}d\widetilde{U})_{i, i+1}dz'.$$ 
On the other hand, by Lemma \ref{lem:slide triangular} we get $\widetilde{U}_{i+1,i+1} = 1$ and $\widetilde{U}_{i,i+1}=0$, hence
$d\widetilde{U}_{i+1,i+1} = d\widetilde{U}_{i, i+1} = 0$.
Therefore
$$
(\widetilde{U}^{-1}d\widetilde{U})_{i, i+1} = \sum_{k}(\widetilde{U}^{-1})_{i,k}d\widetilde{U}_{k, i+1}=(\widetilde{U}^{-1})_{i,i}d\widetilde{U}_{i, i+1}+(\widetilde{U}^{-1})_{i,i+1}d\widetilde{U}_{i+1, i+1}=0,
$$
and $(\widetilde{U}|B_i(z'))=0$.
We conclude that 
$$
(\cdots |B_i(z)|U|\cdots )=(\cdots |\widetilde{U}|B_i(z')|\cdots ),
$$
and similarly $(\cdots |D_i(z)|U|\cdots )=(\cdots |\widetilde{U}|D_i(z)|\cdots ).$
The conclusion from this computation is that the 2-form $\omega_{\beta\cdot \Delta^2}$ does not change as we move $U$ to the left. Similarly, it does not change as we move lower-triangular matrices to the right.

3) After opening all crossings, we are left with several upper unitriangular matrices, followed by several diagonal matrices and by several lower unitriangular matrices. 

Let $U$ be an upper unitriangular matrix and $U'$ an upper-triangular matrix, then $dU$ is strictly upper-triangular and $dU'$ is upper-triangular. Therefore $U^{-1}dU$ is strictly upper-triangular and $dU'(U')^{-1}$ is  upper-triangular, hence
\begin{equation}
\label{eq: form upper triangular}
(U|U')=\Tr\left(U^{-1}dU \wedge dU'(U')^{-1}\right)=0.
\end{equation}
Similarly, $(L|L') = 0$ for two lower unitriangular matrices $L,L'$. 

By \eqref{eq: form merge} this means that we can use \eqref{eq: form upper triangular} to consolidate all upper and all lower unitriangular matrices and write 
$$
\omega_{\beta\cdot \Delta^2}=(\widetilde{U}|D_{i_1}|\cdots|D_{i_r}|\widetilde{L}|I).
$$
Since $\widetilde{U}D_{i_1}\cdots D_{i_r}\widetilde{L}$ is upper-triangular, we get $\widetilde{L}=I$. On the other hand, 
by \eqref{eq: form upper triangular} we get
$$
(\widetilde{U}|D_{i_1}\cdots D_{i_r}|I)=(\widetilde{U}|D_{i_1}\cdots D_{i_r})+(\widetilde{U}D_{i_1}\cdots D_{i_r}|I)=0,
$$
Thus, by \eqref{eq: form diagonals} we get
$$
\omega_{\beta\cdot \Delta^2}=(\widetilde{U}|D_{i_1}|\cdots|D_{i_r}|I)=(D_{i_1}|\cdots |D_{i_r})=
\sum_{s<t} \Tr(d\log(D_{i_s})\wedge d\log(D_{i_t})).
$$
By direct computation, using the notation in (\ref{eqn: UDL}) for $D_i$, we have
$$d\log D_i(x)=d\log\left(\begin{matrix} -x^{-1} & 0\\ 0 & x\end{matrix}\right)=\left(\begin{matrix} -x^{-1}dx & 0\\ 0 & x^{-1}dx\end{matrix}\right),$$
for some variable $x$. Therefore, each summand $\Tr(d\log(D_{i_s}(x_s))\wedge d\log(D_{i_t}(x_t)))$ as above is of the form $C_{st}\cdot d\log(x_s)d\log(x_t)$ with $C_{st}$ a constant, for some coordinates $x_s,x_t$ on the torus $T_{\tau}$.
Therefore, $\{x_1,\ldots,x_r\}$ are exponential pre-Darboux coordinates for $\omega_{\beta\cdot \Delta^2}$.
\end{proof}

\begin{cor}
\label{cor: form on cochar}
The form $\omega_{\beta\cdot \Delta^2}$ induces a skew-symmetric bilinear form on the cocharacter lattice of the torus chart $T_{\tau}$.
\end{cor}

As emphasized above, the proof of Lemma \ref{lem: form constant} actually shows that the entries of the $D_i$ matrices associated to (opening the crossings for) $\beta$ are exponential pre-Darboux. We have now discussed closedness of the 2-form $\omega_{\beta\cdot \Delta^2}$ and its expression in the toric charts $T_{\tau(\beta)}$. In order to show that $\omega_{\beta\cdot \Delta^2}$ induces an holomorphic symplectic structure, as stated in Theorem \ref{thm:main1}.(iii), it suffices to show non-degeneracy, which we now address.


\subsection{Non-degeneracy of $\omega_{\beta\cdot \Delta^2}$}\label{sec:nondegenerate form} Let us recall the torus $T_c$ from Remark \ref{rmk: free subtori} that acts freely on the variety $X_0(\beta\cdot\Delta^{2})$. The following result is key for this section: it relates the action of the torus $T_c$ to the $2$-form $\omega_{\beta\cdot\Delta^{2}}$.

\begin{lemma}[\cite{Mellit}, Proposition 5.3.3]\label{lem: equivariant form}
The form $\omega_{\beta\cdot\Delta^2}$ is $T_c$-invariant. Thus, it descends to a $2$-form  $\omega_{\beta\cdot\Delta^{2}/T_c}$ on the quotient $X_0(\beta\Delta; w_0)/T_c$.
\end{lemma}

In this subsection, we will show that $\omega_{\beta\cdot\Delta^{2}/T_c}$ is nondegenerate, and thus holomorphic symplectic, on the space $X_0(\beta\Delta;w_0)/T_c$.  In order to do this, let us consider the Mellit chart $\mathfrak{M}$, as constructed in Theorem \ref{thm:mellit chart}.
By Remark \ref{rmk: free subtori} and Corollary \ref{cor:open crossings}, the torus $T_c$ acts freely on  $\mathfrak{M}$, 
and we will consider restrictions of $\omega_{\beta\cdot\Delta^{2}}$ on $\mathfrak{M}$ and $\omega_{\beta\cdot\Delta^{2}/T_c}$ on $\mathfrak{M}/T_c$ respectively. 

We will first show that the restriction  of $\omega_{\beta\cdot\Delta^2/T_c}$
to $\mathfrak{M}/T_c$ is non-degenerate, and thus (holomorphic) symplectic. Then we prove, in Theorem \ref{thm: symplectic}, that $\omega_{\beta\cdot\Delta^2/T_c}$ induces the holomorphic symplectic structure according to Theorem \ref{thm:main1}.(iii).

Following \cite[Section 6]{Mellit}, we can construct a topological avatar for the torus $\mathfrak{M}$, as follows. Consider a labeled marked surface $(\cS,A,B)$, i.e. an oriented surface $\cS$ with boundary $\partial \cS$ and two sets of points $A:= \{1, 2, \dots, n\}, B:= \{1', 2', \dots, n'\} \subseteq \partial\cS$ such that:

\begin{itemize}
\item[-] Each connected component of $\cS$ has a boundary component. 
\item[-] Each boundary component intersects both $A$ and $B$.
\item[-] The elements of $A$ and $B$ in each boundary component alternate.
\end{itemize}

Let us denote the two Abelian groups $\Lambda := H_{1}(\cS, A)$ and $\Lambda' := H_{1}(\cS, B)$. Since $A$ and $B$ are alternating, there is a perfect pairing $\cdot: \Lambda \otimes \Lambda' \lr \Z$. There is also a map $\rot: \Lambda \lr \Lambda'$, that is induced from the map that, up to homotopy, rotates the boundary components clockwise. This induces a bilinear form $\wt{\omega}_{\cS}$ on the first homology $\Lambda$, given by $\wt{\omega}_{\cS}(\gamma, \gamma') = \gamma \cdot \rot(\gamma')$, and we also consider its anti-symmetrization $\omega_{\cS}$. 

By Corollary \ref{cor: form on cochar}, $\omega_{\beta\cdot \Delta^2}$ induces a form on the cocharacter lattice of $\mathfrak{M}$.
In order to prove the symplecticity of $\omega_{\beta\cdot \Delta^2}$ stated in Theorem \ref{thm:main1}, we use the following result.

\begin{lemma} \emph{(\cite[Section 6.5]{Mellit})} 
There exists a marked surface $(\cS, A, B)$ such that $\Lambda$ is identified with the cocharacter lattice of $\mathfrak{M}$, and the form induced by $\omega_{\beta\cdot \Delta^2}$ on the cocharacter lattice of $\mathfrak{M}$ is identified (up to a nonzero constant factor) with $\omega_{\cS}$.
\end{lemma}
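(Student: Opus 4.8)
The goal is to construct a labeled marked surface $(\cS, A, B)$ whose relative homology $\Lambda = H_1(\cS,A)$ matches the co-character lattice of the Mellit chart $\mathfrak{M}$, with the antisymmetrized rotation form $\omega$ matching $\omega_{\mathfrak M}$. The whole construction should be read off directly from the Demazure weave (equivalently, from the ordering $\tau(\beta)$) producing $\mathfrak M$ in Theorem \ref{thm:mellit chart}, so I would start from that combinatorial data. Recall from Corollary \ref{cor:open crossings} that $\mathfrak M \cong (\C^*)^{\ell(\beta)}$, with one coordinate $z_{r_s}^{(s-1)}$ (the variable of the opened crossing at step $s$) for each of the $\ell(\beta)$ crossings of $\beta$; its co-character lattice is therefore a free $\Z$-module of rank $\ell(\beta)$ with a distinguished basis indexed by the crossings.

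\textbf{Step 1: Build the surface from the braid word.} First I would associate to the positive braid word $\beta$ a ribbon/thickened braid: take $n$ horizontal strips (one per strand of $\beta$), and at each crossing glue the two relevant strips along a small square, as in the usual fattening of a braid diagram into a surface with corners. Smoothing corners produces an oriented surface $\cS$ with boundary. The marked points: put $A = \{1,\dots,n\}$ at the far right of the $n$ strand-endpoints and $B = \{1',\dots,n'\}$ at the far left, interleaving them around each boundary component so the alternation condition holds (here one uses $c(\beta)=k$ boundary components after the identifications coming from the closure, matching the $2(k-1)$-dimensional factor $S$ appearing in Theorem \ref{thm:main4}). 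The three bullet conditions in the definition of a labeled marked surface should then be immediate from the construction.

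\textbf{Step 2: Identify $H_1(\cS,A)$ with the co-character lattice.} A relative $1$-cycle in $(\cS,A)$ can be represented by an arc or loop built from the strips; a Mayer–Vietoris / CW-chain count using the strips-and-squares decomposition shows $\op{rank} H_1(\cS,A) = (\text{number of crossings}) = \ell(\beta)$, with a natural basis consisting of one relative cycle per crossing square. This is the map to the co-character lattice of $(\C^*)^{\ell(\beta)}$: the $s$-th basis element corresponds to the cocharacter scaling the coordinate $z_{r_s}^{(s-1)}$. Checking that this is an isomorphism of lattices (not merely ranks agree) is where I would have to be careful — I would match it against Mellit's own description in \cite[Section 6]{Mellit} of the torus $\mathfrak M$ via walks, which is exactly the combinatorial translation of the crossing-opening order $\tau(\beta)$.

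\textbf{Step 3: Match the forms.} The rotation map $\rot\colon \Lambda \to \Lambda'$ is the homotopy that pushes boundary components clockwise; combinatorially it replaces the endpoint $a$-marking of each arc by the adjacent $b$-marking on the same boundary component, which on the braid picture is precisely the "shift past the next crossing on that strand" operation. Meanwhile, by Lemma \ref{lem: form constant} we have the explicit formula $\omega_{\mathfrak M} = \sum_{i<j}\Tr(d\log D_i\wedge d\log D_j)$ in the canonical $D_i$-coordinates, i.e. a sum of standard symplectic $2$-forms whose coefficient matrix records, for each ordered pair of crossings $(i,j)$ with $i$ before $j$ in $\tau(\beta)$, whether the corresponding strips interact. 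I would compare this matrix entry-by-entry with the matrix of $\wt\omega(\gamma,\gamma') = \gamma\cdot\rot(\gamma')$ using the perfect pairing $\Lambda\otimes\Lambda'\to\Z$ (which on basis cycles is the algebraic intersection number of arcs in the ribbon). Antisymmetrizing both sides gives $\omega = \omega_{\mathfrak M}$.

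\textbf{Main obstacle.} The genuinely nontrivial point is Step 3: verifying that the intersection-plus-rotation pairing on the ribbon surface reproduces \emph{exactly} the constant coefficients coming out of the $D_i$-sliding computation in Lemma \ref{lem: form constant}, including signs and the bookkeeping of which crossings lie on a common strand. This is essentially a careful sign-chase that the braid-diagram intersection pattern agrees with the order-dependent matrix $\{i<j \text{ in } \tau(\beta)\}$. Since the statement is attributed to Mellit (the lemma is cited as \cite{Mellit}), I would invoke \cite[Section 6]{Mellit} for the surface and the identification of its homology with the torus $\mathfrak M$, and limit the new work to translating Mellit's walk-coordinates into our crossing-opening coordinates via Theorem \ref{thm:mellit chart}, then checking compatibility of the two descriptions of the $2$-form through the explicit formula of Lemma \ref{lem: form constant}.
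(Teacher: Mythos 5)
The paper offers no proof of this lemma at all: it is quoted verbatim from \cite[Section 6]{Mellit}, with the only added remark being that $\cS$ is homeomorphic to the spectral curve of the weave. Your proposal ultimately does the same thing — the substantive content (your Step 3, matching the rotation pairing with the constant-coefficient form of Lemma \ref{lem: form constant}) is explicitly deferred to Mellit — and your sketch of the surface as the ribbon/Bennequin surface of the braid word is consistent with the paper's identification of $\cS$ with the spectral curve, so this is essentially the same (citation-based) approach.
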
 
Note that the surface $\cS$ is homeomorphic to the spectral curve constructed in \cite{CZ}. 
Now, we need two more properties of $(\cS, A, B)$, which follow from the construction in \cite[Section 6.5]{Mellit}. Recall from Remark \ref{rmk: free subtori} that $\pi$ is the permutation corresponding to $\beta$ with disjoint cycles $C_1,\ldots,C_k$, and  $C_{j}=(a_{j, 1}\dots a_{j, \ell_{j}})$.

\begin{itemize}
\item[-] The connected components of $\partial\cS$ correspond to the cycles of $\pi$, i.e. to the components of the closure of the braid $\beta$.
\item[-] Let $C_j$ be the connected component of $\partial\cS$ corresponding to the cycle $(a_{j,1}\dots a_{j,\ell_j})$. Then, the elements of $A = \{1, \dots, n\}$ appearing in $C$ are precisely $a_{j,1}\dots a_{j,\ell_j}$, and they appear in the same order as in the cycle.
\end{itemize}

We will now decompose $\Lambda = H_{1}(\cS, A)$, as follows. First, we have the exact sequence in relative homology
$$
0 \to H_{1}(\cS) \to H_{1}(\cS, A)\buildrel \partial \over \to H_{0}(A) = \Z^{A} \to H_{0}(\cS) \to 0,
$$
\noindent where the image of $\partial$ is spanned by elements of the form $a - b$, where $a, b \in A$ belong to the same connected component of $\cS$.  For each such $a, b$, we choose a path from $a$ to $b$ in $\cS$, and we let $K$ be the span of the classes of these paths in homology. This gives a splitting
$$
H_{1}(\cS, A) = H_{1}(\cS) \oplus K
$$
We construct a basis of $K$ as follows. For simplicity, we will assume that $\cS$ is connected, the general case follows similarly. For each connected component $C_{j}$ of $\partial\cS$, we take the path from $a_{j, i}$ to $a_{j, i+1}$ following $C_{j}$, $j\in[1,\ell_{j} - 1]$. We also take a path $\gamma_{j}$ from $a_{j, \ell_{j}}$ to $a_{j+1, 1}$, $j\in[1,k-1]$. Then we obtain the basis of $K$, see Figure \ref{fig:surface}:
$$K = \Z\{a_{j,i}a_{j,i+1}, \gamma_{j'} \mid j\in[1,k], i\in[1,\ell_{j} - 1], j'\in[1,k-1]\}.$$

\begin{center}
	\begin{figure}[h!]
		\centering
		\includegraphics[scale=0.7]{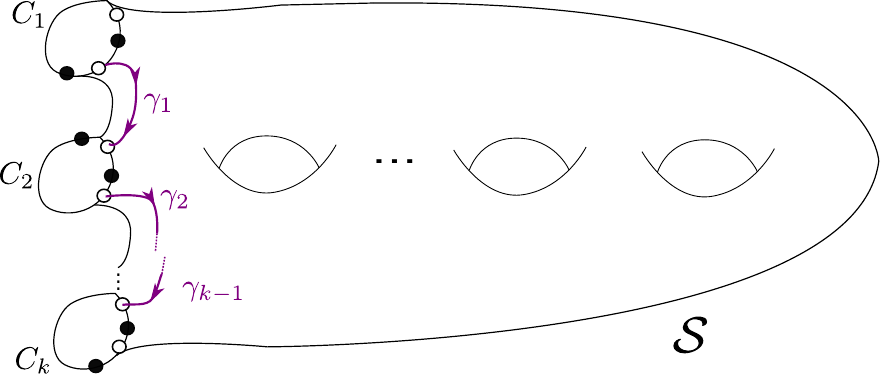}
		\caption{The surface $\cS$, with the marked points in the boundary. Points of $A$ are colored white, and points of $B$ are colored black. For the sake of readability we do not label the paths along the boundary for two consecutive points of $A$.}
		\label{fig:surface}
	\end{figure}
\end{center}

We can further split $H_{1}(\cS)$ as follows. We let $\bar{\cS}$ be the surface obtained from $\cS$ by attaching disks along the boundary components. We have an exact sequence
$$
0 \to H_{2}(\bar{\cS}) \to H_{1}(\partial \cS) \to H_{1}(\cS) \to H_{1}(\bar{\cS}) \to 0
$$

so that $H_{1}(\cS) = H_{1}(\bar{\cS}) \oplus (H_{1}(\partial \cS)/H_{2}(\bar{\cS}))$. Note that a spanning set for $H_{1}(\partial\cS)/H_{2}(\bar{\cS})$ is given by $C_{i} - C_{j}$, where $C_{i}$ and $C_{j}$ are boundary components of the same connected component of $\cS$. Since we are assuming $\cS$ is connected, a basis is given by $C_{i} - C_{i+1}, i\in[1,k-1]$. Moreover, since the elements in $H_{1}(\cS)$ are $\rot$-invariant, the form $\omega_{\cS}$ on $H_{1}(\cS)$ is given by the intersection form. This implies that $\omega_{\cS}|_{H_{1}(\bar{\cS})}$ is the intersection form on $\bar{\cS}$, and therefore is non-degenerate. In addition, $\omega_{\cS}(H_{1}(\bar{\cS}), H_{1}(\partial \cS)/H_{2}(\bar{\cS})) = 0$ and $\omega_{\cS}(H_{1}(\bar{\cS}), K) = 0$. Thus, using the decomposition
$$
\Lambda=H_{1}(\cS, A) = H_{1}(\bar{\cS}) \oplus (H_{1}(\partial \cS)/H_{2}(\bar{\cS})) \oplus K,
$$
the form $\omega_{\cS}$ has the following form
$$
\omega_{\cS} = \left(\begin{matrix} \omega_{\cS}|_{H_{1}(\bar{\cS})} & 0 & 0 \\ 0 & 0 & * \\ 0 & * & * \end{matrix} \right).
$$

We will not find the remaining terms * for $\omega_{\cS}$, we will only do so after passing to the quotient by the action of a torus, as this is all that suffices. We have a map $\psi: \Z^{A} \to H_{1}(\cS, A)$ that to each point $a \in A$ associates the path that follows the boundary component containing $a$ from $a$ to $\rot^{2}(a)$. In other words, it sends $a_{j, i}$ to a path $a_{j,i}a_{j,i+1}$, where $a_{j, \ell_{j} + 1} = a_{j, 1}$. 

 The torus  $T_c$ is the fixed torus for the action of the element $\sigma = (a_{1,\ell_{1}}a_{2,\ell_{2}}\cdots a_{k, \ell_{k}})$. According to \cite{Mellit}, to find the cocharacter lattice of $\mathfrak{M}/T_c$ we need to mod out by the image of $\psi$ on $\sigma$-invariant elements of $A$. Thus, the cocharacter lattice of $\mathfrak{M}/T_c$ can be identified with
$$
\overline{\Lambda}=H_{1}(\bar{\cS}) \oplus (H_{1}(\partial\cS)/H_{2}(\bar{\cS})) \oplus \Z\{\gamma_{1}, \dots, \gamma_{k-1}\}.
$$
There is a natural projection $q:\Lambda\to \overline{\Lambda}$, and the form $\omega_{\cS}$ descends to a form $\omega_{\cS,\overline{\Lambda}}$ on $\overline{\Lambda}$. It agrees with the form $\omega_{\mathfrak{M}/T_c}$ induced by $\omega_{\beta\cdot \Delta^2}$ on the cocharacter lattice of $\mathfrak{M}/T_c$.

In addition, note that we can identify $q(C_{i}) = a_{i, \ell_{i}}a_{i,1}$. Thus, $q(C_{i} - C_{i+1}) = a_{i, \ell_{i}}a_{i,1} - a_{i+1, \ell_{i+1}}a_{i+1, 1}$. Note also that $\omega_{\cS}(\gamma_{i}, \gamma_{j}) = 0$ as $\gamma_{i}\cdot \rot(\gamma_{j}) = 0$ for every $i \neq j$. Moreover, $\gamma_{i}\cdot \rot(a_{i, \ell_{i}}a_{i,1} - a_{i+1, \ell_{i+1}}a_{i+1, 1}) = 0$ while $(a_{i, \ell_{i}}a_{i,1} - a_{i+1, \ell_{i+1}}a_{i+1, 1})\cdot \gamma_{i} = 2$. Thus, $\omega_{\cS,\overline{\Lambda}}(\gamma_{i}, a_{i, \ell_{i}}a_{i,1} - a_{i+1, \ell_{i+1}}a_{i+1, 1}) = 2$. Similarly, we can see that $\omega_{\cS,\overline{\Lambda}}(\gamma_{i}, a_{i-1, \ell_{i-1}}a_{i-1,1} - a_{i, \ell_{i}}a_{i, 1}) = 1$ and $\omega_{\cS,\overline{\Lambda}}(\gamma_{i}, a_{i+1, \ell_{i+1}}a_{i+1,1} - a_{i+2, \ell_{i+2}}a_{i+2, 1}) = -1$.  It follows that the form $\omega_{\cS,\overline{\Lambda}}$ is given by the following matrix:
$$
\omega_{\cS,\overline{\Lambda}} = \left(\begin{matrix} \omega_{\cS}|_{H_{1}(\bar{\cS})} & 0 & 0 \\ 0 & 0 & -P \\ 0 & P & 0 \end{matrix} \right),
$$
\noindent where $P$ is the $(k-1) \times (k-1)$-matrix
$$
P = \left(\begin{matrix} 2 & -1 & 0 & \cdots & 0 \\ -1 & 2 & -1 & \cdots &  0 \\ 0 & -1 & 2  & \cdots &  0 \\  \vdots & \vdots & \vdots & \ddots & \vdots \\ 0 & 0 & 0 & \cdots & 2 \end{matrix}\right).
$$

\noindent This implies that the form $\omega_{\mathfrak{M}/T_c}$ on the cocharacter lattice of $\mathfrak{M}/T_c$
is non-degenerate, therefore the restriction  of $\omega_{\beta\cdot\Delta^2/T_c}$
to $\mathfrak{M}/T_c$
is non-degenerate as well. Thus the chart ${\mathfrak{M}/T_c}$ is (holomorphic) symplectic. Let us now use the above discussion, and this result for the Mellit chart, to conclude Theorem \ref{thm:main1}.(iii).
 
\begin{thm}
\label{thm: symplectic}
Let $\beta \in \mathrm{Br}^{+}_{n}$ be a positive braid (word). Then, the 2-form  $\omega_{\beta\cdot\FT}$ induces a 2-form on the augmentation variety $\Aug(\beta, \ft_{c})$ that has maximal rank at every point. Thus, the augmentation variety of any positive braid is holomorphic symplectic.
\end{thm}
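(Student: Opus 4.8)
By Theorem~\ref{thm:aug vs braid}.(ii), the augmentation variety $\Aug(\beta,\ft_{c})$ is the quotient $X_{0}(\beta\Delta;w_{0})/T$, which is smooth and irreducible (it is the quotient of the smooth irreducible variety $X_{0}(\beta\Delta;w_{0})$, cf. Theorem~\ref{thm: smooth} and Theorem~\ref{thm:main1}, by a free connected torus action) of dimension $2m:=\ell(\beta)-n+k$, where $k=c(\beta)$; note $2m$ is even, since $\pi(\beta)$ is a product of $n-k$ transpositions and hence $\ell(\beta)\equiv n-k\pmod 2$. The plan is to descend the closed $2$-form $\omega_{\beta\cdot\FT}$ of Section~\ref{sec:symplecticform} to this quotient and then prove it is non-degenerate at \emph{every} point. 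Closedness of $\omega_{\beta\cdot\FT}$ on $X_{0}(\beta\cdot\FT)$, and hence on $X_{0}(\beta\Delta;w_{0})=\{z_{ij}=0\}\subseteq X_{0}(\beta\cdot\FT)$, is already established, as is the non-degeneracy of the descended form on the Mellit chart $\mathfrak{M}/T_{\pi}$ via the surface model $(\cS,A,B)$ in Subsection~\ref{sec:nondegenerate form}.

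First I would carry out the descent. Using the $T$-equivariance of the braid-matrix construction --- the $T$-action being realized by conjugation $B_{\beta}\mapsto D_{\bt}B_{\beta}D_{w(\bt)}^{-1}$, see \eqref{eq:slide diagonal 2} --- together with the invariance of the trace $2$-form $(f|g)=\Tr(f^{-1}df\wedge dg\,g^{-1})$ under simultaneous conjugation, one checks that $\omega_{\beta\cdot\FT}$ is $T$-invariant and that its contraction with each vector field generating the $T$-action vanishes along $X_{0}(\beta\Delta;w_{0})$; equivalently, in the toric coordinates of Lemma~\ref{lem: form constant}, where $\omega_{\beta\cdot\FT}=\sum_{i<j}\Tr(d\log D_{i}\wedge d\log D_{j})$ has constant coefficients, the radical of the form at each point of a toric chart is exactly the tangent space to the $T$-orbit. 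Consequently $\omega_{\beta\cdot\FT}$ descends to a closed algebraic $2$-form $\bar\omega$ on $X_{0}(\beta\Delta;w_{0})/T\cong\Aug(\beta,\ft_{c})$. Verifying that the radical is \emph{precisely} the orbit directions and not larger is the step I expect to be the main obstacle; once it is in place the rest is formal.

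It then remains to show $\bar\omega$ is non-degenerate everywhere. Consider $\bar\omega^{\wedge m}\in H^{0}(\Aug(\beta,\ft_{c}),K)$, a section of the canonical bundle whose vanishing locus $Z$ is exactly the degeneracy locus of $\bar\omega$. Since $\bar\omega$ is non-degenerate on the dense open Mellit chart, $\bar\omega^{\wedge m}\not\equiv 0$, so $Z$ is either empty or pure of codimension $1$. On the other hand, by Theorem~\ref{thm:mellit chart} the Mellit chart is one of the toric charts $T_{\tau(\beta)}/T_{\pi}$, and by Lemma~\ref{lem: form constant} the form $\bar\omega$ restricted to \emph{any} toric chart $T_{\tau(\beta)}/T_{\pi}$ has constant coefficients in the logarithmic coordinates; hence on each such chart $\bar\omega^{\wedge m}$ is either identically zero or nowhere zero. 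Any two such charts are dense open in the irreducible variety $\Aug(\beta,\ft_{c})$ and therefore intersect; since $\bar\omega^{\wedge m}$ is nonzero on $\mathfrak{M}/T_{\pi}$, it is nonzero on the overlap with any $T_{\tau(\beta)}/T_{\pi}$, hence nowhere zero on that chart. Thus $\bar\omega^{\wedge m}$ is nowhere zero on $\bigcup_{\tau(\beta)}T_{\tau(\beta)}/T_{\pi}$, so $Z$ is contained in the complement $\Aug(\beta,\ft_{c})\setminus\bigcup_{\tau(\beta)}T_{\tau(\beta)}/T_{\pi}$, which has codimension at least $2$ by the augmentation-variety version of Theorem~\ref{thm: codim 2} (which follows from the $T$-stability of the toric charts, cf. Remark~\ref{rmk: free subtori}).

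A nonempty divisor cannot be contained in a closed subset of codimension $\ge 2$, so $Z=\varnothing$. Therefore $\bar\omega$ is non-degenerate at every point of $\Aug(\beta,\ft_{c})$, i.e. it has maximal rank $2m$ everywhere; being also closed, $\bar\omega$ is a holomorphic symplectic form, and the augmentation variety of any positive braid is holomorphic symplectic.
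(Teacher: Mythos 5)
Your proposal is correct and follows essentially the same route as the paper's proof: descend the $T$-equivariant form to $X_0(\beta\Delta;w_0)/T_\pi$, use non-degeneracy on the Mellit chart together with the constant-coefficient expression of Lemma \ref{lem: form constant} to get non-degeneracy on every toric chart, and then invoke the codimension-$2$ covering of Theorem \ref{thm: codim 2} plus the fact that the degeneracy locus of a closed $2$-form (the vanishing of $\bar\omega^{\wedge m}$, i.e.\ of the determinant) is empty or a divisor. Your treatment of the descent step and the parity of $\ell(\beta)-n+k$ is somewhat more explicit than the paper's, but the argument is the same; note only that for the descent one merely needs $T$-invariance and $\iota_X\bar\omega=0$ along orbit directions, so the ``radical equals exactly the orbit directions'' worry you flag is not an obstacle to descent but is precisely the non-degeneracy established afterwards.
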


\begin{proof}
By Theorem \ref{thm:aug vs braid}, the augmentation variety $\Aug(\beta, \ft_{c})$ can be identified with $X_{0}(\beta\Delta; w_{0})/T_c$. The coefficients of the form $\omega_{\beta\cdot \FT}$ are regular functions on $X_0(\beta\Delta;w_0)$ and by Lemma \ref{lem: equivariant form} the form is $T_c$-invariant. Thus, we have an induced closed $2$-form $ \omega_{\beta\cdot\Delta^{2}/T_c}$ on the augmentation variety, and it is non-degenerate if and only if its determinant does not vanish anywhere. Let us first prove that it is non-degenerate on all toric charts. Thanks to the discussion above on the Mellit chart, the form $\omega_{\beta\cdot\Delta^{2}/T_c}$ is non-degenerate on the (quotient) toric chart $\mathfrak{M}/T_c$; 
By Lemma \ref{lem: form constant}, the 2-form 
$\omega_{\beta\cdot\Delta^{2}/T_c}$ has constant coefficients in canonical coordinates in any other chart $\mathfrak{M}'/T_c$ 
obtained from an ordering of the crossings, and, by the above, it is non-degenerate on the intersection with $\mathfrak{M}/T_c$. Thus, the 2-form $\omega_{\beta\cdot\Delta^{2}/T_c}$ is non-degenerate on the entire (other) chart $\mathfrak{M}'/T_c$. Finally, by Theorem \ref{thm: codim 2}, these toric charts cover $\Aug(\beta, \ft_{c})$ up to codimension 2. Hence, the determinant of $\omega_{\beta\cdot\Delta^{2}/T_c}$ is non-zero outside of a codimension 2 locus and hence it is non-zero everywhere. 
\end{proof}

This concludes the proof of Theorem \ref{thm:main1} and establishes that the augmentation variety associated to a positive braid is holomorphic symplectic. The following is an explicit example to help illustrate the computations and arguments above.

\begin{ex}
Consider the case $n=2,\beta=\sigma^2$, so that $X(\beta\Delta; w_0)=X(\sigma^3; w_0)$. A direct computation, similar to Example \ref{ex:trefoil}, shows that 
$$
X(\sigma^3;w_0)=\{(z_1,z_2,z_3):z_1+z_3+z_1z_2z_3=0\}\subset \C^3.
$$
As in Example \ref{ex:trefoil}, we can rewrite the defining equation as $z_1+z_3(1+z_1z_2)=0$ and observe that $1+z_1z_2\neq0$. Indeed, $1+z_1z_2=0$ and $z_1+z_3(1+z_1z_2)=0$ imply $z_1=0$, which would imply $1+z_1z_2=1$, a contradiction with $1+z_1z_2=0$. Since $1+z_1z_2\neq 0$, we can write
\[
z_3=-\frac{z_1}{1+z_1z_2},
\]
and thus
$
X(\sigma^3; w_0)\cong \{(z_1,z_2):1+z_1z_2\neq 0\}\subset \C^2.
$
The torus $T_c$ is trivial in this case. Let us compute the form $\omega_{\beta\Delta^2}$ on $X(\sigma^3; w_0)/T_c=X(\sigma^3; w_0)$. For the matrix
\[
M=B(z_1)B(z_2)=\left(
\begin{matrix}
1 & z_2\\
z_1 & 1+z_1z_2\\
\end{matrix}
\right),
\]
we compute
\[
M^{-1}dM=
\left(
\begin{matrix}
1+z_1z_2 & -z_2\\
-z_1 & 1\\
\end{matrix}
\right)
\left(
\begin{matrix}
0 & dz_2\\
dz_1 & z_1dz_2+z_2dz_1\\
\end{matrix}
\right)=
\left(
\begin{matrix}
-z_2dz_1 & dz_2-z_2^2dz_1\\
dz_1 & z_2dz_1\\
\end{matrix}.
\right)
\]
The two-form $\omega_{\beta\Delta^2}$ can now be computed as
\[
\omega_{\beta\Delta^2}=(B(z_1)|B(z_2))+(B(z_1)B(z_2)|B(z_3))=
(B(z_1)|B(z_2))+(M|B(z_3))=
\]
\[
=\Tr\left[\left(
\begin{matrix}
0 & dz_1\\
0 & 0\\
\end{matrix}
\right)\left(
\begin{matrix}
0 & 0\\
dz_2 & 0\\
\end{matrix}
\right)\right]+
\Tr\left[\left(
\begin{matrix}
-z_2dz_1 & dz_2-z_2^2dz_1\\
dz_1 & z_2dz_1\\
\end{matrix}
\right)\left(
\begin{matrix}
0 & 0\\
dz_3 & 0\\
\end{matrix}
\right)\right]=
\]
\[
=dz_1dz_2+dz_2dz_3-z_2^2dz_1dz_3.
\]
Let us explicitly show that $\omega_{\beta\Delta^2}$ is symplectic form on $X(\sigma^3; w_0)$, as follows. Since 
\[
dz_3=d\left(-\frac{z_1}{1+z_1z_2}\right)=\frac{-dz_1(1+z_1z_2)+z_1(z_1dz_2+z_2dz_1)}{(1+z_1z_2)^2}=\frac{-dz_1+z_1^2dz_2}{(1+z_1z_2)^2},
\]
we can further write
\[
\omega_{\beta\Delta^2}=dz_1dz_2-\frac{dz_2dz_1}{(1+z_1z_2)^2}-\frac{z_1^2z_2^2dz_1dz_2}{(1+z_1z_2)^2}=\frac{1+2z_1z_2+z_1^2z_2^2-1-z_1^2z_2^2}{(1+z_1z_2)^2}dz_1dz_2.
\]
This simplifies to the expression
\begin{equation}
\label{eq: form Hopf}
\omega_{\beta\Delta^2}=\frac{2dz_1dz_2}{1+z_1z_2}=\frac{2dz_1dz_2}{w},\quad\mathrm{where}\quad w:=1+z_1z_2.
\end{equation}
We conclude that $\omega$ is holomorphic symplectic on the open subset $\{w\neq 0\}\subset \C^2$
which is isomorphic to $X(\sigma^3; w_0)$ as explained above.\\

Let us now find explicit exponential Darboux coordinates. The two ways of opening crossings in $\beta$ correspond to two toric charts $T_1:=\{z_1\neq 0,w\neq 0\}$ and $T_2:=\{z_2\neq 0,w\neq 0\}$ in $X(\sigma^3; w_0)$. Identity \eqref{eq: form Hopf} implies that
\[
\omega_{\beta\Delta^2}=\frac{2dz_1dw}{z_1w}=\frac{-2dz_2dw}{z_2w}.
\]
It therefore follows that $\{z_1,w\}$ are exponential Darboux coordinates in $T_1$, and $\{z_2,w\}$ are exponential Darboux coordinates in $T_2$. This is indeed in agreement with Lemma \ref{lem: form constant} above. The corresponding skew-symmetric form on the cocharacter lattice of both tori is given by the matrix
$
\left(
\begin{matrix}
0 & 2\\
-2 & 0\\
\end{matrix}
\right),
$
up to reordering coordinates. 

Finally, the surface $\cS$ in this case is an annulus. It has two boundary components and one point from $A$ and one point from $B$ on each component. The relative homology $\Lambda=H_1(\cS,A)$ has rank 2 and is generated by an absolute cycle $\gamma$ along the core of the annulus, and a relative cycle $\gamma'$ connecting the two points in $A$. With an appropriate choice of orientations, the intersection form on $\Lambda$ is given by the matrix
$
\left(
\begin{matrix}
0 & 1\\
-1 & 0\\
\end{matrix}
\right),
$
which is half the skew-symmetric form in the cocharacter lattice. Note that in this case we cannot use $H_1(\cS)$ or $H_1(\cS,\partial \cS)$, as these lattices have rank 1, while our tori are two-dimensional. This explains the need of introducing marked points $A$ and $B$, cf.~also \cite[Section 3]{CW}. From the viewpoint of cluster algebras, the variable $z_1$ is mutable and corresponds to the absolute cycle $\gamma$, and the variable $w$ is frozen and corresponds to the relative cycle $\gamma'$, up to signs. See \cite{CW,CGGLSS} for more details and \cite{Scroggin} for more examples and computations of the form $\omega_{\beta\Delta^2}$ for 2-stranded braids.
\end{ex}

\begin{remark}
In our more recent work \cite{CGGLSS}, we construct cluster structures on braid varieties and in \cite[Section 9.2]{CGGLSS} we show that the Gekhtman-Shapiro-Vainshtein cluster $2$-form for the corresponding cluster structure on $X_0(\beta\Delta; w_0)$ coincides with $\omega_{\beta\cdot\Delta^{2}}$. The variety $\Aug(\beta, \ft_{c}) = X_0(\beta\Delta;w_0)/T_c$ is an even-dimensional quotient of the cluster variety $X(\beta\Delta;w_0)$ which has really full rank by \cite[Section 8.1]{CGGLSS}, and the torus $T_c$ acts by so-called cluster automorphisms, cf. \cite[Section 5.1]{LamSpeyer}. The non-degeneracy of $\omega_{\beta\cdot\FT/T_c}$ can then also be deduced from the results of \cite[Section 5.5]{LamSpeyer}. This argument using \cite{CGGLSS} and \cite{LamSpeyer} is logically independent of the one given in this section, and \cite{CGGLSS} appeared after the present article.
\end{remark}

\begin{remark}
It is likely that the above setup can be shown to fit within the context of P.~Boalch's work \cite{Boalch01,Boalch02,Boalch20}, of which we learnt after this manuscript first appeared. In particular, the holomorphic symplectic structure constructed above might likely coincide with some of the holomorphic symplectic structures he builds on wild character varieties by using quasi-Hamiltonian $G$-spaces with $G$-valued moment maps. (Potentially, the moment map is given by the action of the marked points in $\mathfrak{t_c}$.) Moving onward, we hope to better understand their work and connect it to the results above.
\end{remark}

This subsection concludes the first part of the article, and we now move forward to discuss correspondences between braid varieties and the diagrammatic calculus we develop for their study.


\section{The Combinatorics of Weaves}\label{sec:CombinatoricsWeaves}

This section discusses weaves, based on \cite{CZ}, and connects them to braid varieties. In short, weaves are a diagrammatic calculus that can be used to study the braid varieties $X_0(\beta)$, describing toric charts, regular functions and other relevant geometric structures on them. The present section focuses on the combinatorial aspects of these diagrams; in particular, this formalizes the weave category $\mathfrak{W}_n$ discussed in Section \ref{sec:intro}. We use these  weaves in Section \ref{sec: alg weaves}, where we prove that a weave between two positive braids $\beta_1$ and $\beta_2$ yields a correspondence between the braid varieties $X_0(\beta_1)$ and $X_0(\beta_2)$ (as stated in Theorem \ref{thm:main2}). We refer the reader to \cite{CZ} for the original definition of weaves as well as the contact and symplectic geometry motivation behind them, cf.~also \cite{CW}.
	

\subsection{Weaves}

Weaves are diagrams introduced in the work of the first author and E. Zaslow \cite{CZ}. They are defined on any smooth surface $\Sigma$ but, in the present manuscript, we restrict ourselves to the diffeomorphism type of the plane $\Sigma=\R^2$. In appearance, these diagrams are similar to the planar diagrams appearing in Soergel calculus \cite{EK,EW}; there are nevertheless key distinctions. We refer to our diagrams as {\it weaves}, as they are a particular instance of the symplectic constructions in \cite{CZ}.
\begin{center}
	\begin{figure}[h!]
		\centering
		\includegraphics[scale=0.7]{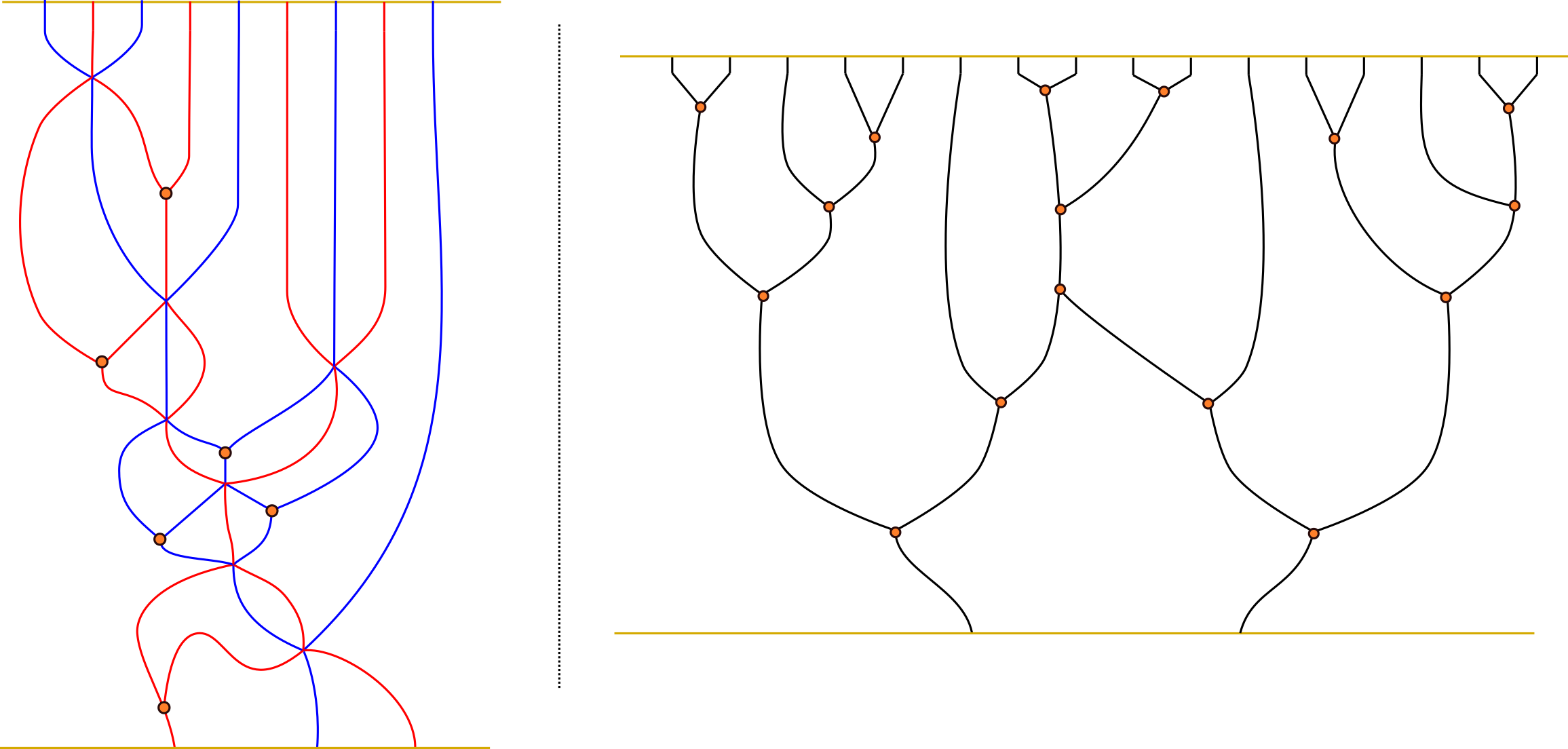}
		\caption{(Left) A 3-weave from $\beta_2=(\sigma_1\sigma_2)^4\sigma_1\in \Br^+_3$ down to $\beta_1=\sigma_2\sigma_1\sigma_2\in Br^+_3$. The blue color indicates a transposition label $s_1\in S_3$ and the red color indicates the transposition label $s_2\in S_3$. (Right) A 2-weave from $\beta_2=\sigma_1^{16}\in \Br_2^+$ to $\beta_1=\sigma_1^{2}\in \Br_2^+$, all black edges are labeled with the unique transposition $s_1\in S_2$. Trivalent vertices are emphasized in orange in both weaves.}
		\label{fig:WeaveExample}
	\end{figure}
\end{center}

\begin{definition}\label{def:weave}
Let $\beta_1,\beta_2$ be two positive $n$-braid words. By definition, a weave $\mathfrak{w}$ of degree $n$ from $\beta_2$ to $\beta_1$, denoted $\mathfrak{w}: \beta_2 \to \beta_1$, is the image of a continuous map $$\mathfrak{w}: \bigcup_{i=1}^{n-1} G_i\lr \R\times[1,2],$$

where each $G_i$, $i\in[1,n-1]$ is a trivalent graph and the following conditions are satisfied:

\begin{itemize}
\item[(i)] The restriction $\mathfrak{w}|_{G_i}:G_i\lr\R\times[1,2]$ is a topological embedding for all $i\in[1,n-1]$, which is a smooth embedding away from the trivalent vertices of the graph $G_i$.\\

\item[(ii)] The images $\mathfrak{w}(G_i)$ and $\mathfrak{w}(G_{i+1})$ are only allowed to intersect at trivalent vertices, $i\in[1,n-2]$, and the planar edges around this intersection point must alternatingly belong to $G_i$ and $G_{i+1}$. In addition, for $|i-j|\geq2$ the intersections between $\mathfrak{w}(G_i)$ and $\mathfrak{w}(G_j)$  are transverse, and these are not allowed to intersect at trivalent vertices.\\

\item[(iii)] In a neighborhood of $\R\times\{j\}\sse\R\times[1,2]$, $j=1,2$, the image $im(\mathfrak{w})$ is given by $l(\beta_j)$ vertical lines, such that the $k$th line belongs to $G_{\sigma^{(j)}_{i_k}}$, where $\sigma^{(j)}_{i_k}$ is the $k$th crossing of $\beta_j$.
\end{itemize}
\end{definition}

See Figure \ref{fig:WeaveExample} for two explicit examples with $n=2,3$. The image $im(\ww)$ of a weave $\ww$ is often referred to as a weave itself and denoted $\ww$, to ease notation. The intersection of a weave $\ww$ with a small neighborhood of $\R\times\{2\}$, resp.~of $\R\times\{1\}$, is said to be the top of the weave, resp.~its bottom.\\

\noindent Following \cite[Section 4]{CZ}, we also introduce a notion of {\em weave equivalence}, represented by the local moves in Figure \ref{fig:cz equivalence}. That is, by definition, two weaves $\ww_1,\ww_2$ are said to be (weave) equivalent if they differ by a sequence consisting of moves from Figure \ref{fig:cz equivalence}. See also \cite[Section 3.1]{CW} and \cite[Section 4.2]{CGGLSS}.

\begin{figure}[ht!]
\includegraphics[scale=0.7]{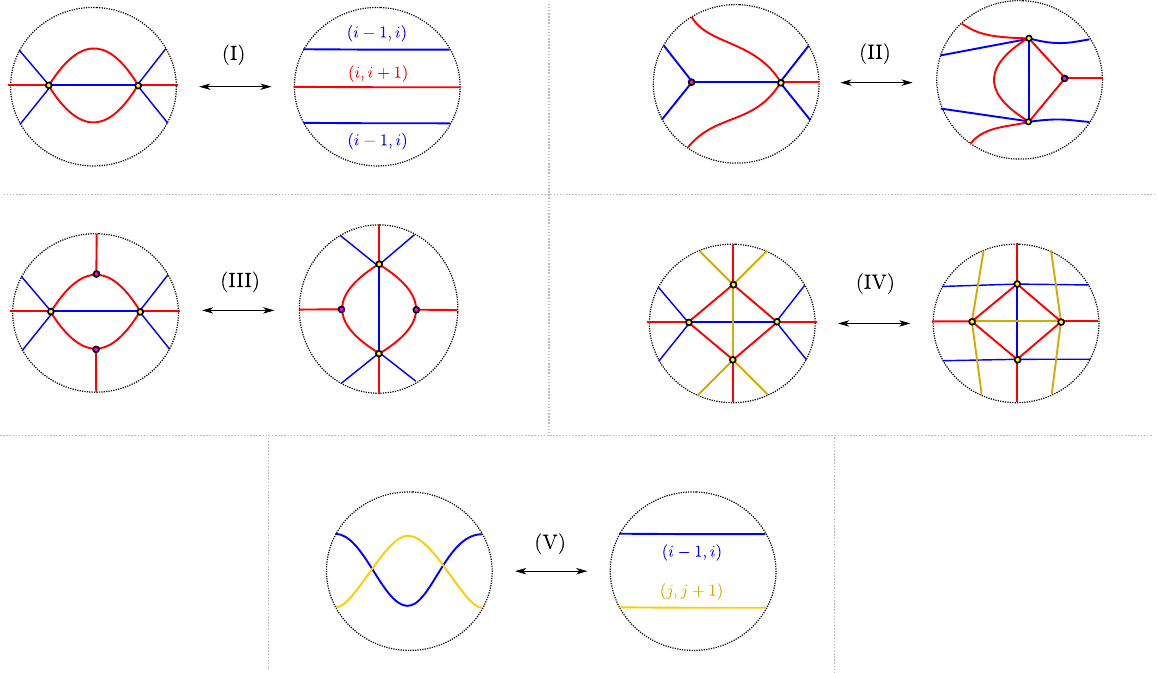}
\caption{Weave equivalences, after \cite[Theorem 1.1]{CZ}.}
\label{fig:cz equivalence}
\end{figure}

By definition, there is also an additional move called a {\em weave mutation}, after \cite[Section 4.8]{CZ}, which is {\it not} considered as an equivalence. Weave mutation is depicted in Figure \ref{fig:cz mutation}.

\begin{center}
\begin{figure}[ht!]

		\begin{tikzpicture}[baseline=0,scale=1.6]
			\draw [blue] (-0.5,0) -- (0.5,0);
			\foreach \i in {0,1}
			{
				\draw [blue] (45-\i*90:1) -- (0.5,0);
				\draw [blue] (135+\i*90:1) -- (-0.5,0);
			} 
		\end{tikzpicture} \quad\quad$\longleftrightarrow$ \quad \quad 
		\begin{tikzpicture}[baseline=0,scale=1.6]
			\draw [blue] (0,-0.5) -- (0,0.5);
			\foreach \i in {0,1}
			{
				\draw [blue] (135-\i*90:1) -- (0,0.5);
				\draw [blue] (-135+\i*90:1) -- (0,-0.5);
			} 
		\end{tikzpicture}\caption{Weave mutation, after \cite[Theorem 4.21]{CZ}. This is {\it not} an equivalence.}
\label{fig:cz mutation}
\end{figure}
	\end{center}

The definition of weaves and weave equivalence in \cite{CZ} are manifestly rotationally symmetric. In this paper we would like to break this symmetry by choosing a generic vertical direction 
and reading a weave top to bottom, allowing only certain local models to appear in such scanning. Similarly to Definition \ref{def:weave}.(iii) above, a generic  horizontal cross-section at the $j$th level of this type of weave is then a sequence of colored points in $\mathfrak{w}$ which we interpret as a braid word
$$\beta_j(\mathfrak{w})=s^{(j)}_{i_1}s^{(j)}_{i_2}\cdots s^{(j)}_{i_{\ell(\beta_j)}}\in \Br_n^+.
$$
This particular type of weave $\mathfrak{w}$ can then be understood as a ``movie" of different braid words:
$$\mathfrak{w}:=(\beta_0(\mathfrak{w})\to\beta_1(\mathfrak{w})\to\cdots\to \beta_{\ell(\mathfrak{w})}(\mathfrak{w})).$$
The initial braid word $\beta_0(\mathfrak{w})$ being read at the (top) horizontal cross-section $\R\times\{2\}$, and the last braid word $\beta_{\ell(\mathfrak{w})}(\mathfrak{w})$ is read at the (bottom) horizontal cross-section $\R\times\{1\}$. The number $\ell(\mathfrak{w})\in\N$ will be referred to as the length of the weave $\Sigma$.


\begin{definition}
Let $\beta_1,\beta_2$ be positive $n$-braid words. A weave $\mathfrak{w}$ of degree $n$ from $\beta_2$ to $\beta_1$ is said to be {\it sliced} if its cross-sections change top to bottom according to one of the following six situations, depicted in Figure \ref{fig:WeaveModels}:

\begin{center}
	\begin{figure}[h!]
		\centering
		\includegraphics[scale=0.7]{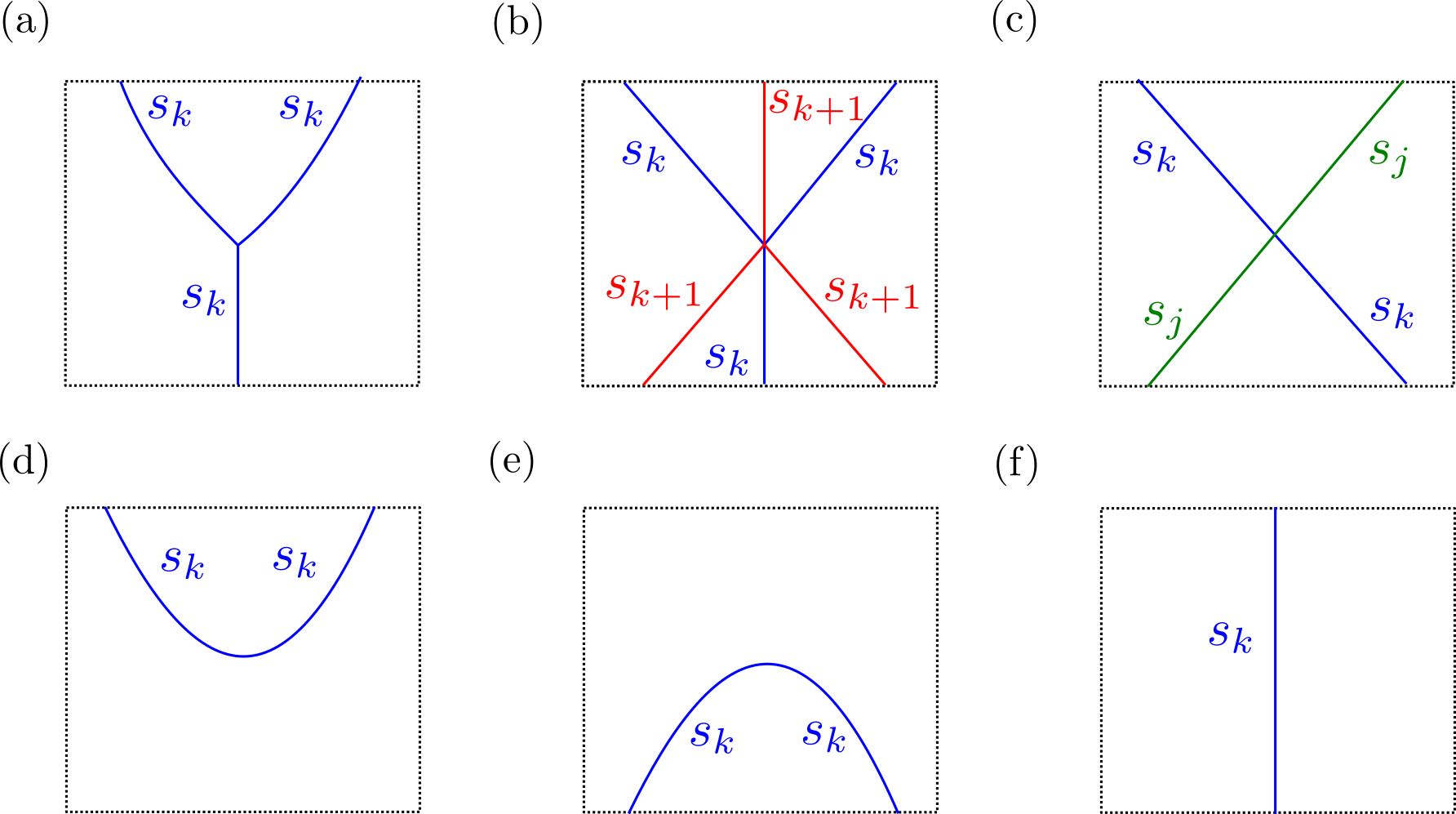}
		\caption{The six local models for sliced $n$-weave. In $(a), (c), (d), (e)$, and $(f)$, we have $j,k\in[1,n-1]$. In $(b)$, we have $k\in[1,n-2]$, and $|j-k|\geq2$. The inverse of the local model in $(b)$, with $s_{k+1}s_ks_{k+1}$ on top and $s_ks_{k+1}s_k$ on the bottom, is also allowed.}
		\label{fig:WeaveModels}
	\end{figure}
\end{center}

\begin{itemize}
	
	\item[(a)] Two consecutive edges labeled with the {\it same} transposition $s_k$ come together, and continue moving down as one unique edge, also labeled with $s_k$, $k\in[1,n-1]$. This is referred to as a {\it trivalent vertex}, and correspond to the model around (the image of) a trivalent vertex of the graph $G_k$ in Definition \ref{def:weave}. Algebraically, we represent this local model by $s_ks_k\to s_k$.\\

	\item[(b)] Three consecutive edges labeled by $s_k,s_{k+1},s_k$ come together, and continue moving down as three edges but now labeled $s_{k+1},s_{k},s_{k+1}$. This is referred to as a {\it hexavalent vertex}, and correspond to the model around an intersection point of the (images of the) graphs $\mathfrak{w}(G_k)\cap \mathfrak{w}(G_{k+1})$ in Definition \ref{def:weave}. Algebraically, we represent this local model by  $s_ks_{k+1}s_k\to s_{k+1}s_ks_{k+1}$. In addition, we also allow the same move, but reversed: $s_{k+1}s_ks_{k+1}\to s_ks_{k+1}s_k$, with $s_{k+1}s_ks_{k+1}$ on top and $s_ks_{k+1}s_k$ at the bottom.\\
	
	\item[(c)] Two consecutive edges labeled with two different transpositions $s_k,s_j$, with $|j-k|\geq2$, come together, and continue moving down as two edges, now labeled by $s_j,s_k$. This is referred to as a {\it 4-valent vertex}, and correspond to the model around a (transverse) intersection point of $\mathfrak{w}(G_k)$ and $\mathfrak{w}(G_j)$ in Definition \ref{def:weave}. Algebraically, we represent this local model by $s_ks_j\to s_js_k$.\\
	
	\item[(d)] Two consecutive edges labeled with the {\it same} transposition $s_k$ come together, merge and there is no edge continuing down. This is referred to as a {\it cup}, and we represent this local model by $s_ks_k\to 1$.\\
	
	\item[(e)] The inverse of the move in (d), where two consecutive edges are created as moving downwards from the empty set. This is referred to as a {\it cap}, and we represent this local model by $1\to s_ks_k$.\\
	
	\item[(f)] There is an edge labeled by $s_k$ and it continues moving down as the same edge labeled by $s_k$, i.e. nothing occurs. This local model is represented algebraically by $s_k\to s_k$.\\
\end{itemize}
By definition, we require that all 3-,4- and 6-vertices, cups and caps appear at different heights, and all horizontal tangencies are isolated. Note that 4-valent and hexavalent vertices represent (the Coxeter projection of the) braid relations. Finally, the following two are special types of sliced weaves that we use:
\begin{itemize}
	\item[(ii)] By definition, a {\it simplifying weave} is a sliced weave with no caps; thus the only allowed local models are (a),(b),(c),(d) and (f), not (e).\\
	
	\item[(ii)] By definition, a {\it Demazure weave} is a sliced weave with no cups nor caps; thus the only allowed local models are (a),(b),(c) and (f), not (d),(e).
\end{itemize}
\end{definition}

Note that a sliced weave is simplifying if and only if the length of a braid word is not increasing as we scan down the weave with horizontal cross-sections. The reasons behind the choice of the name Demazure weaves will be explained in Section \ref{section:Dem_prod}. Demazure weaves prominently feature in \cite{CGGLSS}. In this article, all weaves we discuss are sliced and thus from now onwards {\it weave} will refer to {\it sliced weave} unless otherwise indicated.

\begin{remark}
A cautious reader might have noticed that some local weave pictures are allowed by the general setup of \cite{CZ} but do not directly appear in our list (a)-(f) in Figure \ref{fig:WeaveModels}. The upside-down trivalent vertices, i.e. the horizontal flip of model (a), given by $s_k\to s_ks_k$, can be constructed using the above trivalent vertices and caps, see Section \ref{sec: nonstandard 3v}. Similarly, one may encounter a 6-valent vertex
with $a$ incoming and $(6-a)$ outgoing edges for any $0\le a\le 6$. All these can be modeled using the usual 6-valent vertices, cups and caps, possibly in several different ways. We declare all such weaves (fixing $a$ and the coloring of edges at the top) equivalent. The same applies to ``non-standard" 4-valent vertices, see sections \ref{sec: nonstandard 4v} and \ref{sec: nonstandard 6v} below for details.
\end{remark}

\begin{remark}
For context with \cite[Section 7.1.2]{CZ}, we note that Demazure weaves $\ww:\beta_2\to\beta_1$ are free, in that their fronts can be realized by embedded exact Lagrangian cobordisms from the Legendrian associated to $\beta_1$ to the Legendrian associated to $\beta_2$. Indeed, this is implied by the fact that the three models $(a),(b),(c)$ above are decomposable Lagrangian cobordisms. For $(b),(c)$ this follows from the fact that they are traces of Legendrian isotopies, e.g.~$(b)$ is the Lagrangian trace of the Legendrian Reidemeister III move. For $(a)$, this follows from the generic Legendrian perturbation of the $D_4^-$ front, as drawn in \cite[Figure 36]{CZ}, or by comparison to the pinching saddle cobordism, as established in \cite[Prop. 3.1]{Hughes2}. In particular, a Demazure weave $\ww:\beta\to\Delta$ yields a unique embedded exact Lagrangian filling of the Legendrian associated to $\beta\Delta$, as the Legendrian unlink, associated to $\Delta^2$, admits a unique embedded exact Lagrangian filling.
\end{remark}
 
 
\subsection{Equivalence of Demazure weaves}\label{sec: movies} Let us now introduce a series of situations, all representing an equivalence between two weaves $\ww_1,  \ww_2$ whose top and bottom ends coincide, i.e. $\beta_0(\ww_1)=\beta_0(\ww_2)$ and $\beta_{\ell(\ww_1)}(\ww_1)=\beta_{\ell(\ww_2)}(\ww_2)$. 
The majority of equivalences we describe compare two local models, and an equivalence between two different weaves $\ww_1, \ww_2$ will be obtained by applying several of the local equivalences listed here. We focus on Demazure weaves and their equivalences which only pass through Demazure weaves. In this section we translate the moves of Figure \ref{fig:cz equivalence} to our formalism.

\begin{remark}
A cautious reader may choose to call the equivalence relation in this section {\it Demazure equivalence}. In principle, it might be possible that two Demazure weaves are not equivalent through Demazure weaves, but are equivalent through the more general weave equivalences from Figure \ref{fig:cz equivalence}. We have not investigated this problem. This fine point is irrelevant for the results of this paper, and we use the same notion of equivalence to simplify the exposition.
\end{remark}

\begin{remark}\label{rmk: horizontal ref demazure}
In what follows, we also require that the horizontal reflection of each of the upcoming relations explained in \ref{sec: height}--\ref{sec: Zam} below is also a relation.
\end{remark}


\subsubsection{Changing the height of vertices.}\label{sec: height} We allow to change relative heights of any pair of crossings in a weave provided that they are not connected by an edge and there are no crossings between them. (This is commonly called the interchange law in the context of 2-categories.)

\subsubsection{Canceling pairs of 4- and 6-valent vertices}\label{sec: cancellation} The following weaves are declared to be equivalent:
\begin{center}
\includegraphics[scale=0.3]{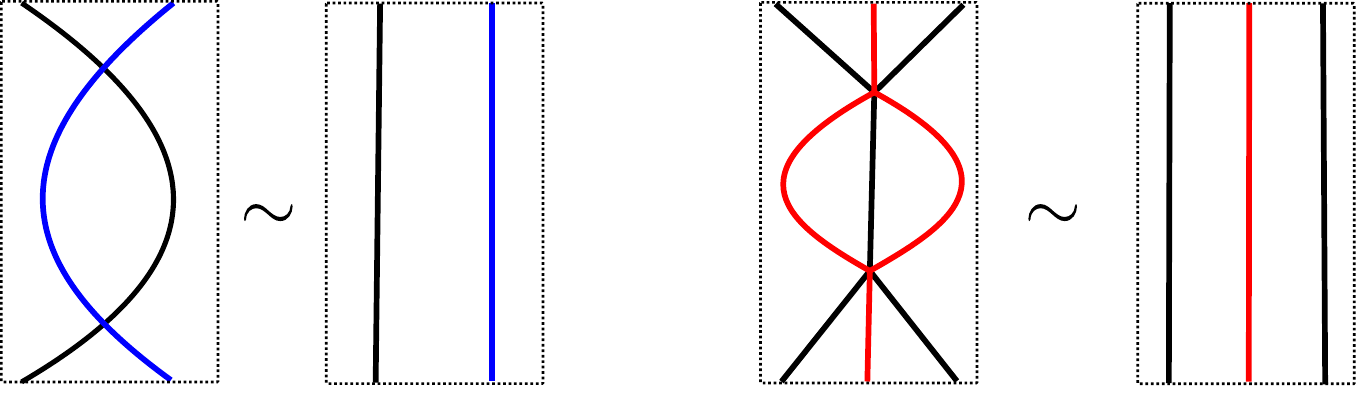}
\end{center}
This corresponds to moves (I) and (V) from Figure \ref{fig:cz equivalence}.
From the algebraic perspective, i.e. studying the braids in the horizontal cross-sections, this is the diagrammatic incarnation of the fact that the two moves $s_ks_{k+1}s_k\to s_{k+1}s_ks_{k+1}$ and $s_{k+1}s_ks_{k+1}\to s_ks_{k+1}s_k$, and the two moves $s_is_j\to s_js_i$ and $s_js_i\to s_is_j$, $|i-j|\geq2$, are inverse to each other. That is, performing a Reidemeister III move and then its inverse is considered to be (equivalent to) the trivial weave. Similarly, performing a commutation move in the braid group, and then the same move in reverse, is also considered to be (equivalent to) the trivial weave. In the notation above, we are declaring the weave $\ww_1=s_{k+1}s_ks_{k+1}\to s_ks_{k+1}s_k\to s_{k+1}s_ks_{k+1}$ to be equivalent to the constant weave $\ww_2=s_{k+1}s_ks_{k+1}$, and the weave $\ww_1=s_{i}s_j\to s_js_i\to s_is_j$ to be equivalent to the constant weave $\ww_2=s_is_j$.


\subsubsection{Commutation with distant colors}\label{sec: distant} We declare that an edge of the weave labeled with a color (i.e. a transposition) which is distant to the rest of the colors at a given vertex can be moved past this vertex. That is, we declare that the following weaves are equivalent:

\begin{center}
\includegraphics[scale=0.2]{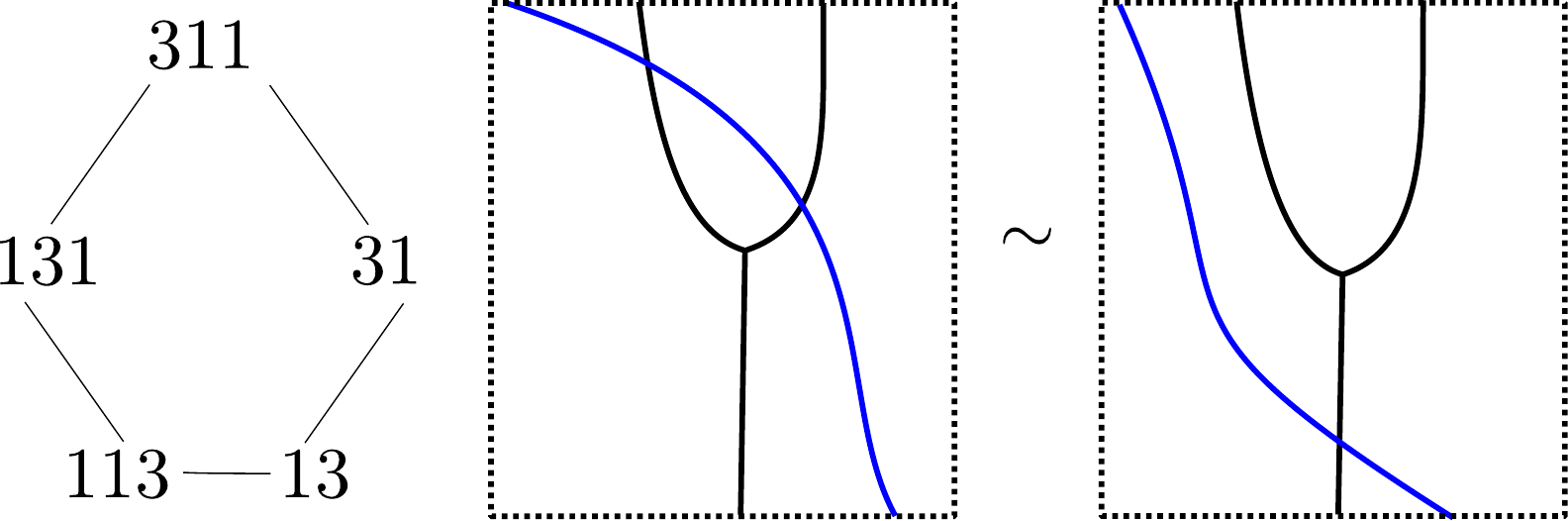}
\end{center}

\begin{center}
\includegraphics[scale=0.2]{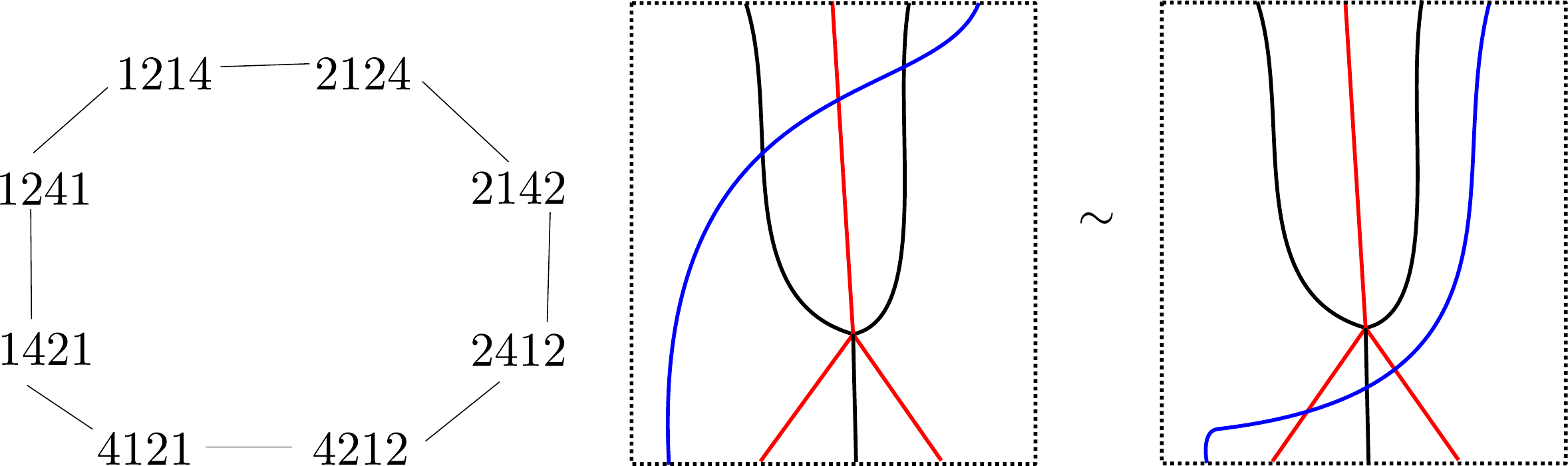}
\end{center}

Similarly, we declare that three lines with pairwise distant colors can be rearranged according to the weave equivalence depicted above. As illustrated in the equivalences above, the particular sequences of braid moves that we are declaring to be equivalent are read from taking horizontal cross-sections in the above diagrams; we will thus not necessarily indicate them any longer.


\subsubsection{1212- and 2121-relations}
\label{sec: 1212}

We require that the following two ways of getting from $\sigma_1\sigma_2\sigma_1\sigma_2$, denoted 1212 for simplicity, to $\sigma_1\sigma_2\sigma_1$, i.e. 121, are equivalent:

\begin{center}
\includegraphics[scale=0.3]{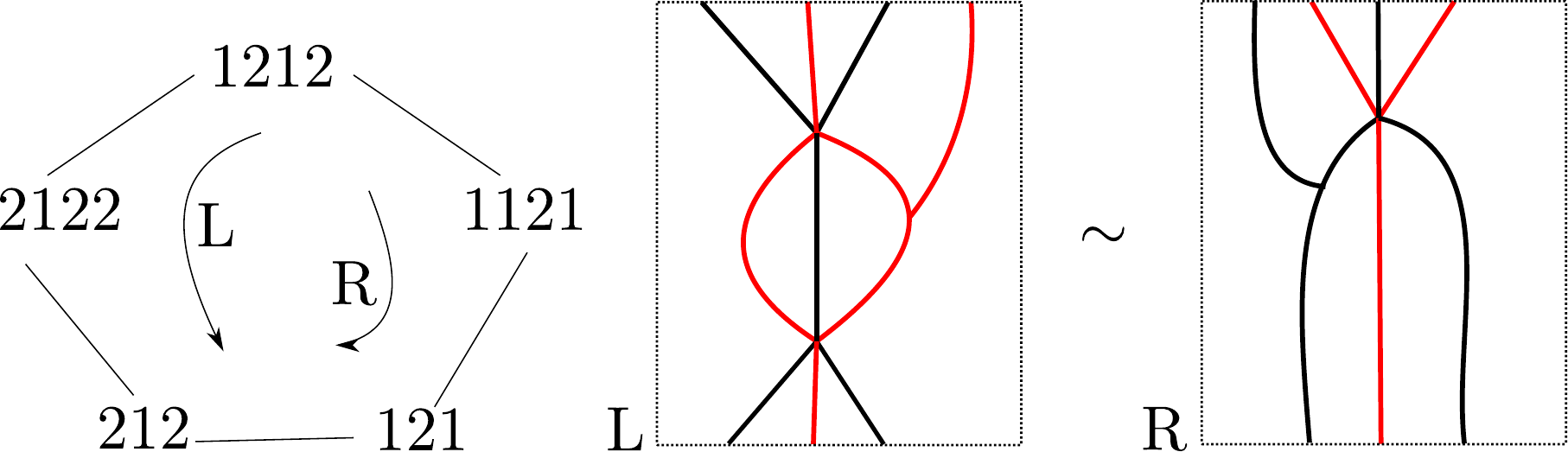}
\end{center}
This corresponds to the move (II) from Figure \ref{fig:cz equivalence}.

We also impose equivalences for other interpretations of the move (II) from Figure \ref{fig:cz equivalence} using Demazure weaves, corresponding to other paths around the pentagon on the left of the above figure. Namely, we require that the two ways of getting from 1121 to 212 are equivalent, and that the two ways of getting from 2122 to 121 are equivalent, and so forth. The equivalence of the two ways of getting from 1121 to 212 corresponds to the equivalence of the following two simplifying weaves:

\begin{center}
\includegraphics[scale=0.3]{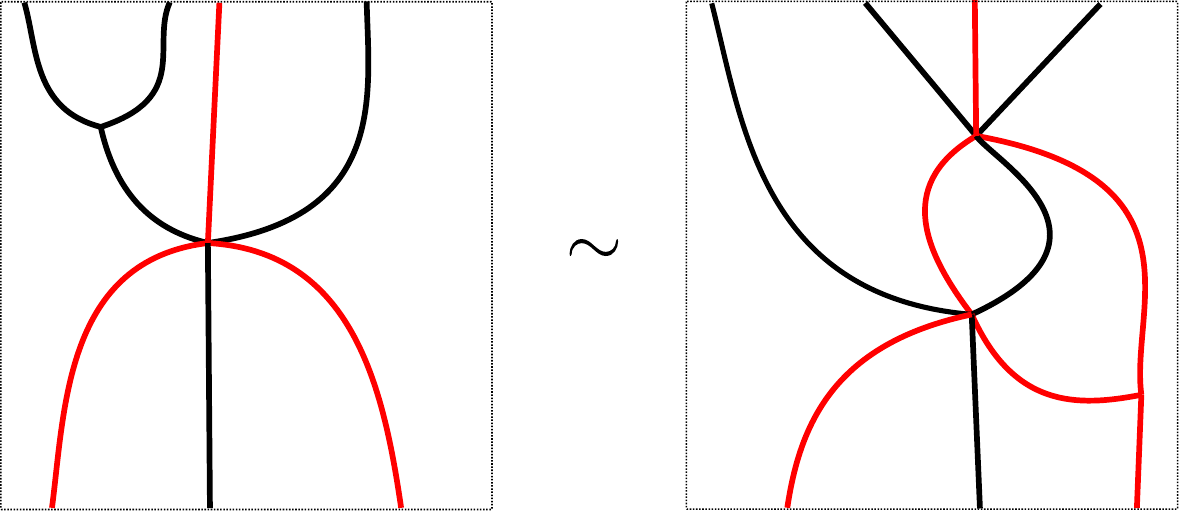}
\end{center}

We also require that the two ways of getting from 1211 to 212 are equivalent, which is the same as requiring that the two ways of getting from 2121 to 212 are equivalent and so on. The weaves are obtained from the ones above by the symmetry along the vertical line:

\begin{center}
\includegraphics[scale=0.3]{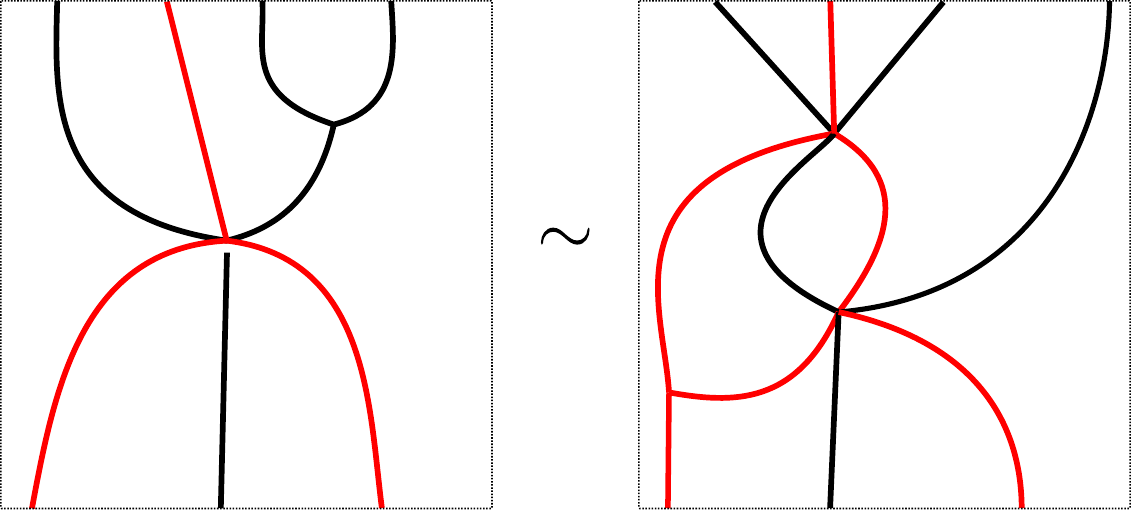}
\end{center}

There are also similar relations where adjacent colors are interchanged (e.g. black and red), and for any pair of adjacent colors, which we do not draw here.
\subsubsection{Cycles for 12121}
\label{sec: 12121}

As an example for the previous relation, we observe that there are many paths in the Demazure graph from 12121 to 212, related by consecutive application of the 1212-relation:

\begin{center}
\includegraphics[scale=0.2]{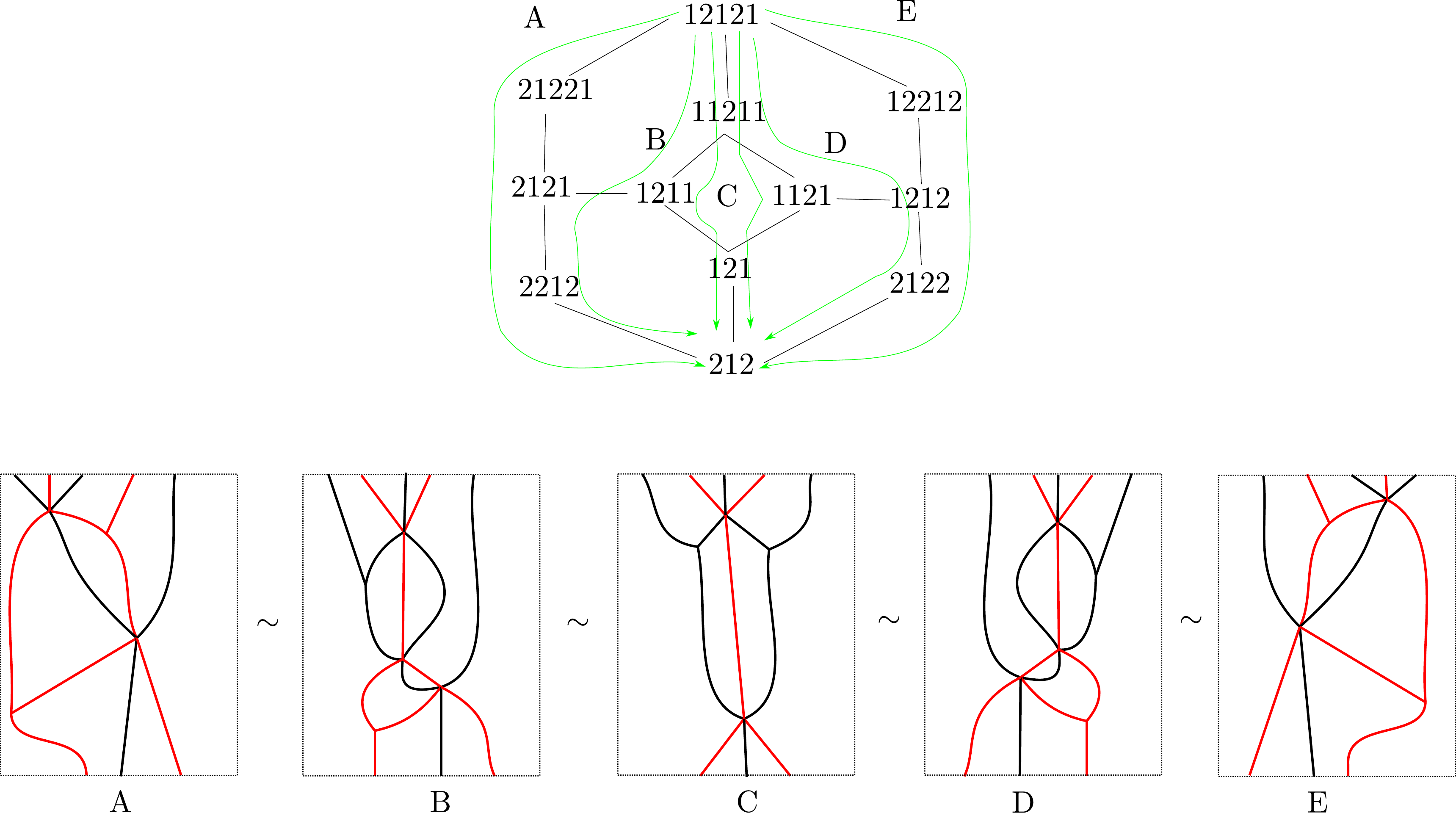}
\end{center}

$$
(A)\ 12121\to 21221\to 2121\to 2212\to 212
$$
$$
(B)\ 12121\to 11211\to 1211\to 2121\to 2112\to 212
$$
$$
(C)\ 12121\to 11211\to 1211\to 121\to 212 \sim 12121\to 11211\to 1121\to 121\to 212
$$
$$
(D)\ 12121\to 11211\to 1121\to 1212\to 2122\to 212
$$
$$
(E)\ 12121\to 12212\to 1212\to 2122\to 212
$$
Note that the equivalence between (A) and (E) corresponds to the move (III) from Figure \ref{fig:cz equivalence}.


\subsubsection{Zamolodchikov relation}
\label{sec: Zam}

Diagrammatically, the Zamolodchikov relation is the equivalence of the following diagrams, relating various braid words for the longest element $w_0\in S_4$:

\begin{center}
\includegraphics[scale=0.2]{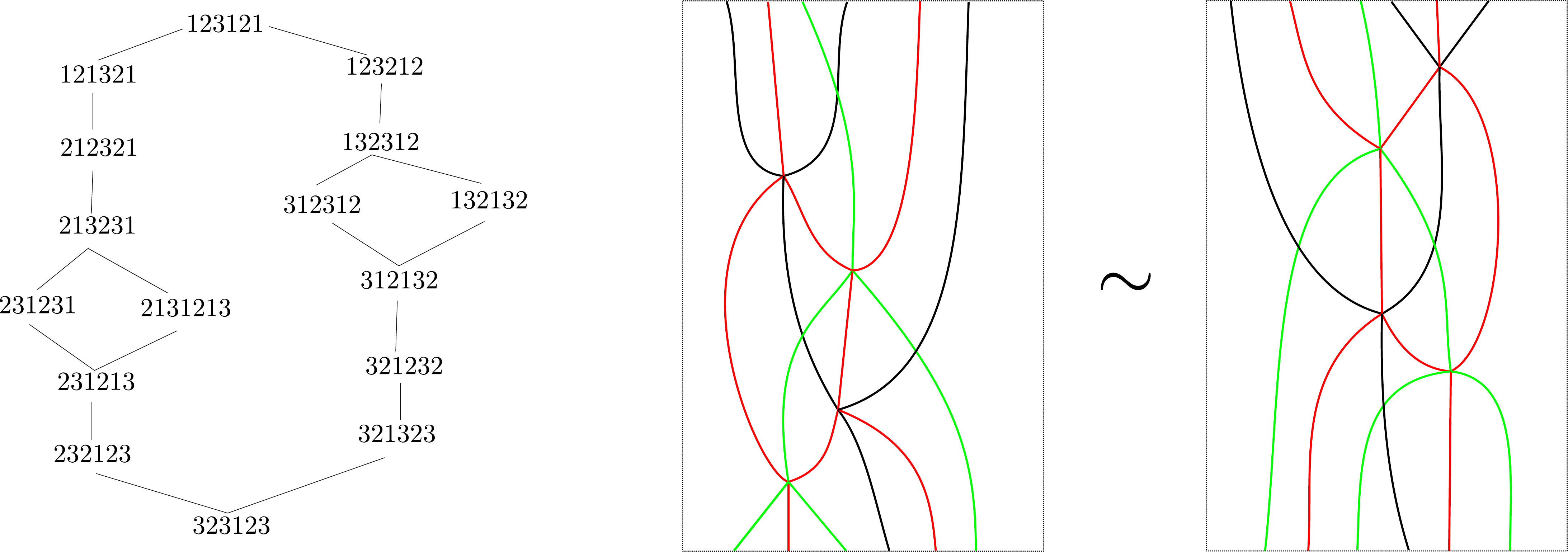}
\end{center}

This corresponds to the move (IV) from Figure \ref{fig:cz equivalence}.


\subsubsection{Mutations}
\label{sec: mutation}

In contrast with Soergel calculus, we do not declare the two ways of getting from $s_is_is_i$ to $s_i$ via $s_is_i$ to be equivalent. They are related by the following special type of move, which we call a weave {\em mutation}:

\begin{center}
\includegraphics[scale=0.2]{mutation.pdf}
\end{center}

This concludes the list of diagrammatic equivalences (\ref{sec: isotopies})-(\ref{sec: Zam}), and the mutation non-equivalence (\ref{sec: mutation}).


\subsection{Equivalence of simplifying weaves}
\label{sec: cup cap equivalence}

In this section we define an equivalence relation for simplifying weaves. The complete list of equivalences includes:
\begin{enumerate}
\item All of the equivalences for Demazure weaves from Section \ref{sec: movies}.
\item Changing the relative height of cups and vertices, see Section \ref{sec:height ii}.
\item Additional moves with cups listed in sections \ref{sec: nonstandard 3v}(a), \ref{sec: nonstandard 6v}(a) and \ref{sec: nonstandard 4v}(a). 
\end{enumerate}

\begin{remark}
One can check that the additional equivalence relations for simplifying weaves do not change the total number of cups. Therefore, two Demazure weaves are equivalent through simplifying weaves if and only if they are equivalent through Demazure weaves.
\end{remark}

By reflecting these relations along a horizontal axis, we get a similar notion of equivalence for weaves with caps, see \ref{sec: nonstandard 3v}(b), \ref{sec: nonstandard 6v}(b) and \ref{sec: nonstandard 4v}(b).  There is an additional ``zig-zag" move in section \ref{sec: isotopies} which allows one to create or delete a cup and a cap simultaneously.  

\begin{remark}
Just as in Remark \ref{rmk: horizontal ref demazure}, we will require that the horizontal reflection of the relations  \ref{sec:height ii}--\ref{sec: nonstandard 4v} are also relations.
\end{remark}

\subsubsection{Changing the height of vertices II}\label{sec:height ii} We allow to change relative heights of any  crossing with a cup or cap in a weave, provided that they are not connected by an edge and there are no crossings, cups, or caps between them.

\subsubsection{Non-standard trivalent vertices}
\label{sec: nonstandard 3v}

\begin{itemize}
\item[(a)] We can consider a trivalent vertex with 3 inputs and 0 outputs,
 defined by either of the pictures:
\begin{center}
\includegraphics[scale=0.4]{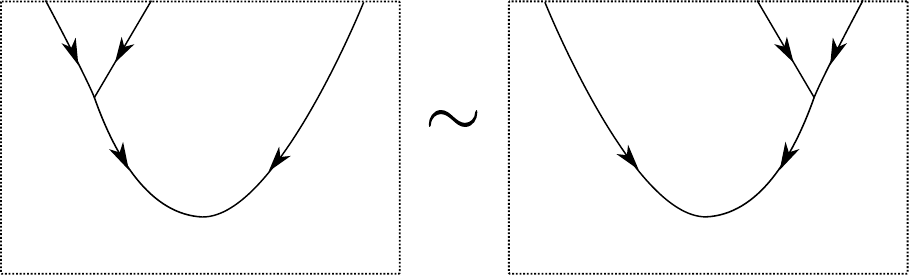}
\end{center}
We require that the two pictures are equivalent, note that both weaves are simplifying.\\

\item[(b)] We can define an upside-down trivalent vertex in the following ways which are required to be equivalent:
\begin{center}
\includegraphics[scale=0.3]{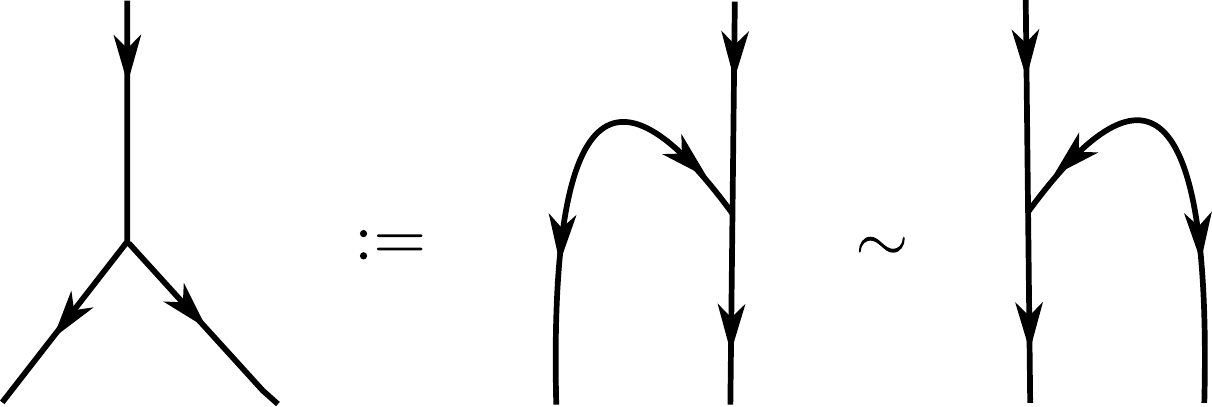}
\end{center}
Since both weaves include a cap, these are not simplifying, and we do not allow upside-down trivalent vertices in simplifying weaves.

A horizontal reflection of these relations would express a "standard" trivalent vertex using an upside-down trivalent vertex and a cup. There is a similar picture to \ref{sec: nonstandard 3v}(a) with 0 inputs and 3 outputs.  
\end{itemize}


\subsubsection{Non-standard 6-valent vertices}
\label{sec: nonstandard 6v}

The following relations correspond to different ways to a look at a single 6-valent vertex. We require that all of them are equivalent, provided that the numbers of inputs and outputs are fixed.

\begin{itemize}
    \item[(a)] We illustrate simplifying weaves with 4 inputs and 2 outputs, and 5 inputs and 1 output, defined using standard 6-valent vertex and cups. The former in fact implies the latter. 
\begin{center}
\includegraphics[scale=0.25]{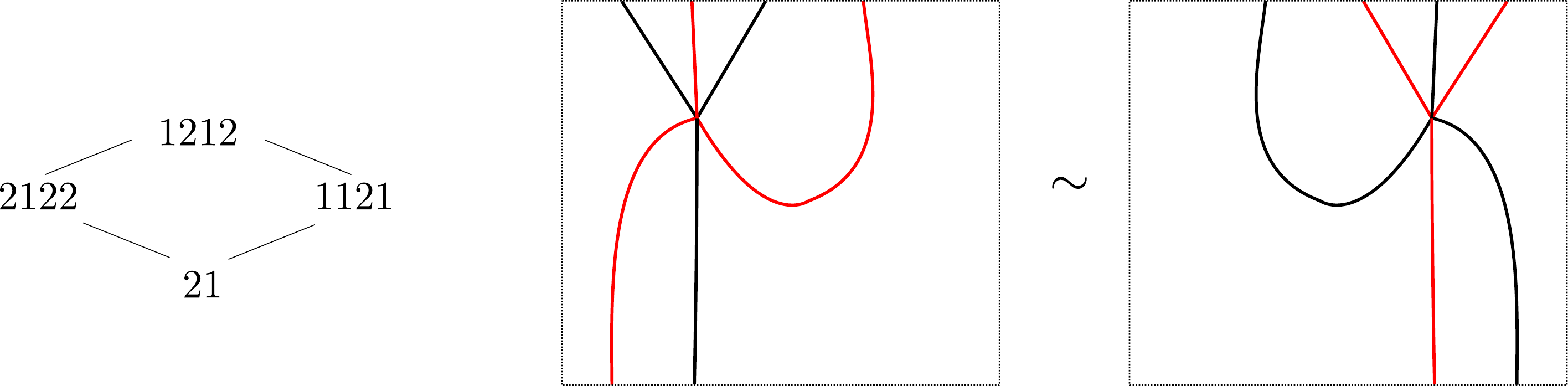}
\end{center}

\begin{center}
\includegraphics[scale=0.2]{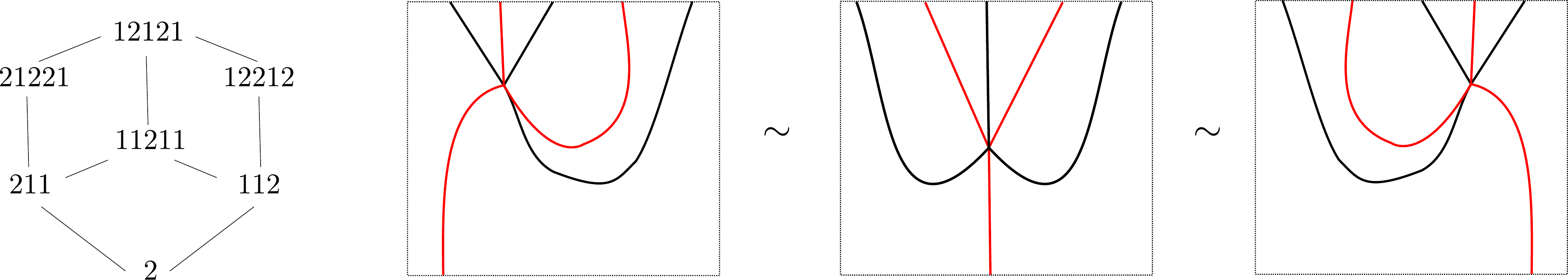}
\end{center}

Finally, we can define a 6-valent vertex with 6 inputs and 0 outputs, and the following diagram shows that all ways to do so are equivalent:
\begin{center}
\begin{tikzpicture}
\draw (0,4) node {121212};
\draw (-3,3) node {212212};
\draw (-1,3) node {112112};
\draw (1,3) node {122122};
\draw (3,3) node {121121};
\draw (-2,2) node {2112};
\draw (0,2) node {1122};
\draw (2,2) node {1221};
\draw (-1,1) node {22};
\draw (1,1) node {11};
\draw (0,0) node {$\emptyset$};
\draw (-1,0.8)--(-0.2,0.2);
\draw (1,0.8)--(0.2,0.2);
\draw (-2,1.8)--(-1.2,1.2);
\draw (-0.2,1.8)--(-0.8,1.2);
\draw (0.2,1.8)--(0.8,1.2);
\draw (2,1.8)--(1.2,1.2);
\draw (-3,2.8)--(-2.2,2.2);
\draw (-1.2,2.8)--(-1.8,2.2);
\draw (-0.8,2.8)--(-0.2,2.2);
\draw (0.8,2.8)--(0.2,2.2);
\draw (1.2,2.8)--(1.8,2.2);
\draw (3,2.8)--(2.2,2.2);
\draw (-0.2,3.8)--(-0.8,3.2);
\draw (0.2,3.8)--(0.8,3.2);
\draw (-0.5,4)--(-3,3.2);
\draw (0.5,4)--(3,3.2);

\end{tikzpicture}
\end{center}

\item[(b)] The symmetric pictures with 2 inputs and 4 outputs, 1 input and 5 outputs, and 0 inputs and 6 outputs, are obtained by reflection in the horizontal axis and using caps. These are not simplifying.
\end{itemize}

\subsubsection{Non-standard 4-valent vertices}
\label{sec: nonstandard 4v}
\begin{itemize}
    \item[(a)] Similarly, we can define  non-standard 4-valent vertices by simplifying weaves with 3 inputs and 1 output, or 4 inputs and 0 outputs.

\begin{center}
\includegraphics[scale=0.35]{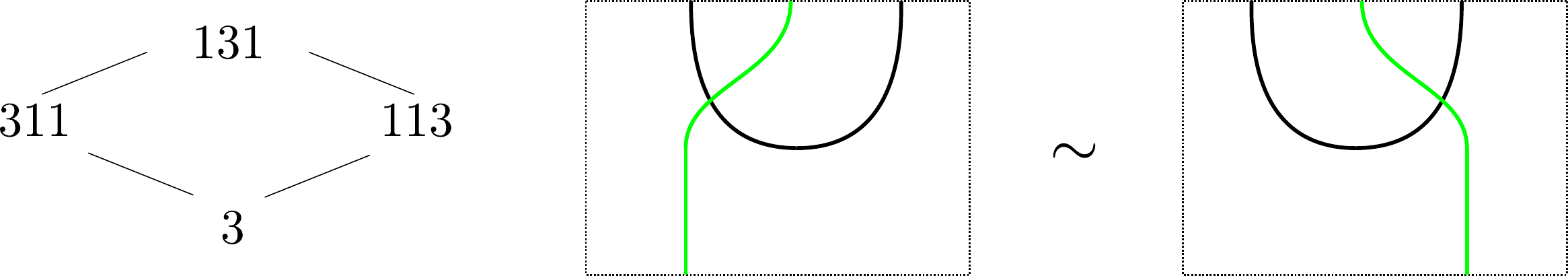}
\end{center}

    \item[(b)] By reflecting these, we get weaves  with 1 input and 3 outputs (or 0 inputs and 4 outputs) which use caps. These are not simplifying.
\end{itemize}

\subsubsection{Planar isotopies}\label{sec: isotopies} A weave in the plane is, in particular, a planar diagram. We declare planar isotopic diagrams to define equivalent weaves. In particular, we need to require the following zigzag relation with canceling pairs of caps and cups:

\begin{center}
\includegraphics[scale=0.4]{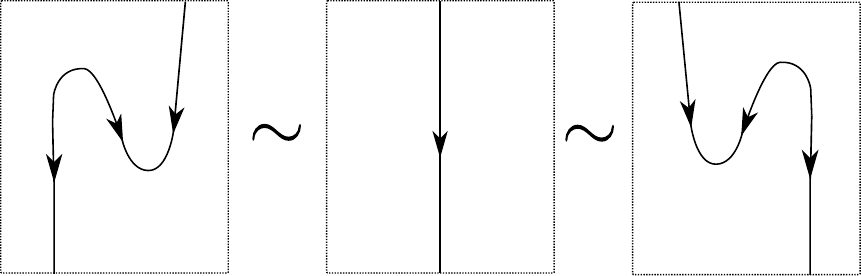}
\end{center}

Algebraically, the weave $\ww_1=s_k\to s_ks_ks_k\to s_k\cdot 1=s_k$, where first a cap creates $1\to(s_ks_k)$ to the left of the initial $s_k$, and then a cup erases the rightmost to $s_k$ via $(s_ks_k)\to1$, is equivalent to the constant weave $\ww_2=s_k$.

\begin{prop}
\label{prop: isotopy}
Assume the zigzag relation and the equivalence relations \ref{sec: nonstandard 3v}-\ref{sec: nonstandard 4v}. Then any two planar isotopic weaves are equivalent.
\end{prop}

\begin{proof}
By \cite[Proposition 3.2]{EK} it is sufficient to prove that every vertex is cyclic, that is, invariant under the 360 degree rotation. For a trivalent vertex, we use the definition of the upside down trivalent vertex and relations \ref{sec: nonstandard 3v} to show that a 60 degree rotation changes either of the trivalent vertices to another one of the same type, e.g.:
\begin{center}
\includegraphics[scale=0.7]{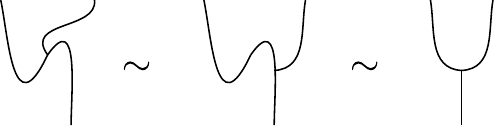}
\end{center}
This implies that a trivalent vertex is invariant under 120 degrees rotation, and hence invariant under 360 degree rotation. Similarly, we can use the relations \ref{sec: nonstandard 6v} to show that the 6-valent vertex is invariant under rotation by 60 degrees, and hence by 360 degrees:
\begin{center}
\includegraphics[scale=0.5]{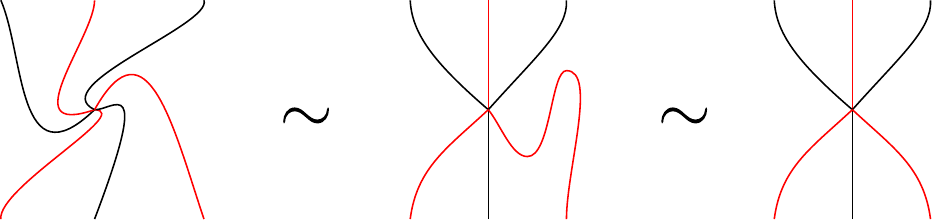}
\end{center}
The proof for a 4-valent vertex is similar. We refer to \cite[Section 3]{EK} and references therein for more details on cyclicity and isotopy invariance.
\end{proof}

\subsubsection{Rotational invariance}

We expect that, similarly to the proof of Proposition \ref{prop: isotopy} and the results of \cite{EK}, the equivalence relations above are rotationally invariant. That is, any rotation of an equivalence relation follows from the relations. We do not need and do not prove it here, but give a couple of examples which illustrate this point:


\begin{center}
\includegraphics[scale=0.25]{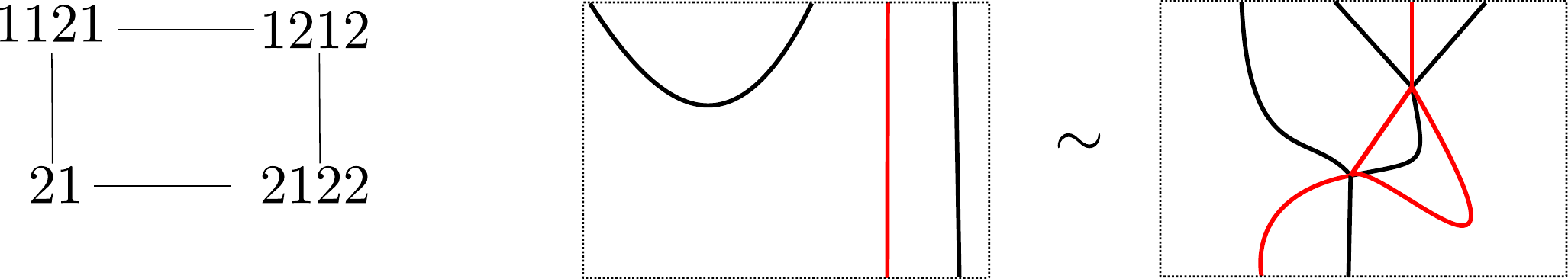}
\end{center}
\begin{center}
\includegraphics[scale=0.25]{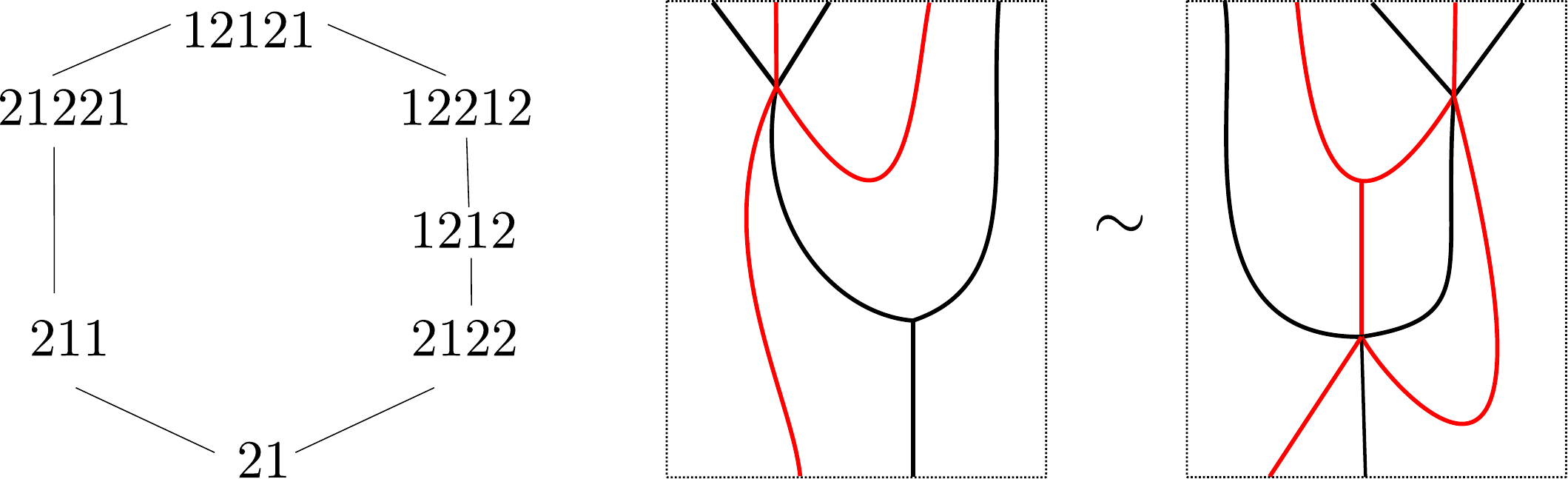}
\end{center}

Note that these two pictures, as planar graphs, are similar to the ones that we already considered (cancellation $121\to 212\to 121$ and $1212$-move), but in this case they are drawn differently. 

Both of these are actually consequences of the above relations. The former follows from the equivalence  \ref{sec: nonstandard 6v}. For the latter, we can consider the diagram:

\begin{center}
\begin{tikzpicture}
 \draw (0,6) node {12121};
 \draw (-2,4) node {21221};
 \draw (0,4) node {11211};
 \draw (2,4) node {12212};
 \draw (-2,2) node {211};
 \draw (0,2) node {1121};
 \draw (2,2) node {1212};
 \draw (0,0) node {21};
 \draw (2,0) node {2122};
 \draw (-0.2,5.8)--(-2,4.2);
 \draw (0,5.8)--(0,4.2);
 \draw (0.2,5.8)--(2,4.2);
 \draw (-2,3.8)--(-2,2.2);
 \draw (2,3.8)--(2,2.2);
 \draw (0,3.8)--(0,2.2);
 \draw (-0.2,3.8)--(-1.8,2.2);
 \draw (-2,1.8)--(-0.2,0.2);
 \draw (0,1.8)--(0,0.2);
 \draw (2,1.8)--(2,0.2);
 \draw (0.5,2)--(1.5,2);
 \draw (0.5,0)--(1.5,0);
\end{tikzpicture}
\end{center}

All cycles in this diagram are covered by the above relations.


\subsection{Demazure product and Demazure weaves} \label{section:Dem_prod}

We will use the notion of Demazure product of a word (equivalently, of an expression) in the alphabet of simple reflections $\left\{s_i\right\}$. This terminology is introduced in \cite{KM2}, but the notion goes back at least to \cite{Demazure}. We refer the reader to \cite[Section 2.2]{DHM} for a detailed discussion on this notion and its relation to 0-Hecke algebras over $\F_2$.

The \emph{Demazure product} of a word $Q = s_{i_1} s_{i_1} \ldots s_{i_l(\beta)}$, denoted by $\delta(Q)$, is the largest element of $S_n$ in the Bruhat order, such that $Q$ contains some reduced expression of this element as a subword. 
This element is well-defined. It admits an equivalent inductive definition by the folllowing rule:

\[
\delta(s_i):=s_i,\ \delta(Qs_i):=\begin{cases}
\delta(Q)s_i & \text{if}\  \ell(\delta(Q)s_i)=\ell(\delta(Q))+1\\
\delta(Q) & \text{if}\ \ell(\delta(Q)s_i)=\ell(\delta(Q))-1.\\
\end{cases}
\]

It can be verified from either definition that for any $Q, Q'$ we have
\begin{equation} \label{eq:Demazure_moves}
\begin{split}
\delta(Q s_i s_i Q') = \delta(Q s_i Q');\\
\delta(Q s_i s_{i+1} s_i Q') = \delta(Q s_{i+1} s_i s_{i+1} Q');\\
\delta(Q s_i s_j Q') = \delta(Q s_j s_i Q'), \, \ |i-j|\ge 2. 
\end{split}
\end{equation}

Given two permutations $u,v\in S_n$, we define their star product $u\star v$ as the Demazure product of the concatenation of an arbitrary reduced expression of $u$ and an arbitrary reduced expression of $v$. By construction, we have
$$
(u\star v)\star w=\delta(uvw)=u\star (v\star w).
$$

By the Demazure product of an element $\beta$ of the positive braid monoid we mean \[\delta(\beta) := \delta(s_{i_1} s_{i_2} \ldots s_{i_l(\beta)})
\] 
for a positive braid word $\sigma_{i_1} \sigma_{i_2} \ldots \sigma_{i_l(\beta)}$ for $\beta$. This is well-defined: by equations \eqref{eq:Demazure_moves}, $\delta(\beta)$ does not depend on the choice of positive braid word.

\begin{ex} \label{ex:nilHecke} The inductive definition of the Demazure product of a word, and the definition of the star product of permutations imply that
\[
w\star s_i = \begin{cases} ws_i & \mathrm{if} \; \ell(ws_i) = \ell(w) + 1, \\ w & \mathrm{if} \; \ell(ws_i) = \ell(w) - 1. \end{cases}
\]
Note that the Demazure power of a simple transposition is simply $s_i \star s_i \star \ldots \star s_i = s_i$ for any number of multiples. Note also that for any $w \in S_n$ we have the equality $w \star w_0 = w_0 \star w = w_0$. 
\end{ex}

In fact, the Demazure product is the product in a monoid known as \emph{Coxeter monoid} \cite{T}, \emph{$0-$Hecke monoid} \cite{FG, HST},  \emph{Coxeter $\ast$-monoid}, or \emph{Richardson-Springer monoid} in the literature. Richardson and Springer studied its action on the set of orbits of the flag variety under the action of the fixed point subgroup of an involution on the algebraic group \cite{RS1, RS2}. Norton \cite{Norton} constructed a bijection between the set $S_n$ and the underlying set of this monoid. As the examples above show, the multiplication in the monoid is quite different from the one in the permutation group. More generally, given a positive braid $\beta$, $\delta(\beta)$ does not coincide with the image of $\beta$ under the canonical surjection onto $S_n$. A first relation to the weaves introduced above is given in the following lemma.

\begin{lemma} \label{lemma:demweaves}
Let $\ww$ be a Demazure weave. Then the Demazure product of the associated braid words $\beta_j(\ww)$, $j\in[0,l
(\ww)]$, remains unchanged, i.e. $\delta(\beta_0(\ww))=\delta(\beta_j(\ww))$.
\end{lemma}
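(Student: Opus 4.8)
The plan is to reduce the claim to a single elementary move, since a Demazure weave is by definition a vertical concatenation of the local models (a) $s_ks_k\to s_k$, (b) the hexavalent move $s_ks_{k+1}s_k\to s_{k+1}s_ks_{k+1}$ (and its reverse), (c) the commutation $s_ks_j\to s_js_k$ for $|j-k|\ge 2$, and (f) the trivial move $s_k\to s_k$. By the multiplicativity of $\delta$ through concatenation, it suffices to check that each of these four local moves leaves the Demazure product of the horizontal cross-section word invariant; the general statement then follows by induction on the length $\ell(\Sigma)$ of the weave, the base case $\ell(\Sigma)=0$ being trivial.

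First I would handle each local move. For model (f) there is nothing to prove. For model (c), the commutation $s_ks_j=s_js_k$ holds already in $S_n$ (hence in the NilHecke monoid), so the image under $\delta$ is unchanged. For model (b), the braid relation $s_ks_{k+1}s_k=s_{k+1}s_ks_{k+1}$ likewise holds in $S_n$, and since $\delta$ factors through the Coxeter monoid relations — indeed the defining relations of $NH_n$ include $e_ke_{k+1}e_k=e_{k+1}e_ke_{k+1}$ — the Demazure product is preserved. The only genuinely ``Demazure'' point is model (a): here the word changes from $u\,s_k s_k\,v$ to $u\,s_k\,v$ (in the relevant horizontal slice), and one must use the idempotency relation $e_k^2=e_k$ in $NH_n$, equivalently $s_k\star s_k=s_k$ (Example \ref{ex:nilHecke}), to conclude $\delta(u s_k s_k v)=\delta(u)\star s_k\star s_k\star\delta(v)=\delta(u)\star s_k\star\delta(v)=\delta(u s_k v)$, using associativity of $\star$.

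Assembling this, if $\Sigma=\beta_0(\Sigma)\to\beta_1(\Sigma)\to\cdots\to\beta_{\ell(\Sigma)}(\Sigma)$, then each single step $\beta_{j-1}(\Sigma)\to\beta_j(\Sigma)$ is one of the four local models applied to a sub-word, with the rest of the word unchanged; writing the word as a product and applying $\delta$ as a homomorphism to the monoid $(S_n,\star)$, the above case analysis gives $\delta(\beta_{j-1}(\Sigma))=\delta(\beta_j(\Sigma))$ for every $j$. Chaining these equalities yields $\delta(\beta_0(\Sigma))=\delta(\beta_j(\Sigma))$ for all $j\in[0,\ell(\Sigma)]$, which is the assertion. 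I do not anticipate a real obstacle here; the only point requiring care is to make sure every admissible local model of a \emph{Demazure} weave is covered — in particular that cups and caps are correctly excluded, since a cup $s_ks_k\to 1$ would change the Demazure product only if $s_k$ were not already below the Demazure product, but in any case such moves are not permitted in a Demazure weave by definition, so they need not be considered.
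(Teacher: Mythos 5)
Your proof is correct and follows essentially the same route as the paper's: both reduce to checking the local models, observing that $4$- and $6$-valent vertices correspond to relations already holding in the Coxeter/NilHecke monoid, and that trivalent vertices preserve the Demazure product by the idempotency $e_k^2=e_k$ from Example \ref{ex:nilHecke}. Your write-up merely spells out the concatenation/associativity bookkeeping that the paper leaves implicit.
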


\begin{proof}
Equations \eqref{eq:Demazure_moves} imply that $3$-, $6$- and $4$-valent vertices preserve the Demazure product. 
\end{proof}

Lemma \ref{lemma:demweaves} shows that Demazure weaves provide a transparent diagrammatic interpretation of the Demazure product and of the $0-$Hecke monoid. This motivated our nomenclature.


\subsection{Classification of weaves}

We call two  weaves {\em equivalent} if they are related by a sequence of elementary equivalence moves from Section \ref{sec: movies} (with no mutations), and {\em mutation equivalent} if they are related by a sequence that might involve both equivalence moves and mutations.

\begin{thm}
\label{thm: ben}
$($a$)$ Let $\ww_1,\ww_2$ be two weaves such that the source braids of $\ww_1,\ww_2$ coincide and the target braids of $\ww_1,\ww_2$ coincide. If $\ww_1,\ww_2$ only have 6- and 4- valent vertices, then $\ww_1,\ww_2$ are equivalent.

$($b$)$ Let $\ww_1,\ww_2$ be two {\em Demazure}  weaves such that the source braids of $\ww_1,\ww_2$ coincide and the target braids of $\ww_1,\ww_2$ coincide. If the target is reduced, then $\ww_1,\ww_2$ are mutation equivalent.
\end{thm}

\begin{proof}
The theorem follows from the main result of \cite{Elias}, which we briefly recall. For part (a), consider the graph where vertices correspond to braid words and edges to braid moves (that is, 6- or 4-valent vertices). Then the cycles in this graph are generated by commutation with distant colors and Zamolodchikov relations, hence any two paths in this graph are equivalent.

For (b), consider the Hecke-type algebra with generators $T_i$ and relations 
$$
T_i^2=\alpha T_i+\beta,\ T_iT_{i+1}T_{i}=T_{i+1}T_{i}T_{i+1}+\text{lower order terms},\ T_iT_j=T_jT_i, (|i-j|>1).
$$
Using these relations, it can be verified that every product of $T_i$ can be written as a linear combination of reduced expressions, possibly in a non-unique way. This non-uniqueness appears from {\em ambiguities}: applying the relations in different order could yield different results.

B. Elias proved in \cite[Proposition 5.5]{Elias} that (modulo commutation with distant colors) there are exactly 5 types of potential ambiguities that one needs to consider: $iii,ii(i+1)i, i(i+1)ii, i(i+1)i(i+1)i,i(i+1)(i+2)i(i+1)i$, which are nothing but the trivial move, the 5-cycles corresponding to 1121 and 1211 from \ref{sec: 1212}, the cycle from  \ref{sec: 12121} for the word 12121, and the Zamolodchikov relation. Note that the ambiguity $iii$ corresponds to different ways of getting from $iii$ to $i$. There are two such ways without cups, and they are related by the mutation from Section  \ref{sec: mutation}.
\end{proof}

\begin{remark}
\label{rem: lol}
The assumption in (b) that the target is reduced is important. Indeed, the two trivalent vertices $(ss)s\to ss$ and $s(ss)\to ss$ are neither equivalent nor mutation equivalent.
\end{remark}

\begin{remark}
\label{rem: equivalent weaves}
Note that by Theorem \ref{thm: ben}(a), any two simplifying weaves relating two positive braid words for the same braid are equivalent. Thus, we will oftentimes not specify such a weave.
\end{remark}

Let us continue studying conditions for equivalences. 
Suppose that a positive braid word $\beta$ contains a piece $s_ius_j$. Following \cite{Hersh}, the pair of crossings $(s_i,s_j)$ is said to be a {\em deletion pair} if $s_iu=us_j$. (Note that unlike \cite{Hersh}, we do not require $u$ to be a reduced word.) Let us define a relation $\prec$ on the crossings of the braid $\beta$ according to $s_{i} \prec s_{j}$ if $(s_{i}, s_{j})$ form a deletion pair. The following two lemmas are used in the proof of the criterion Theorem \ref{thm: one 3v} below.

\begin{lemma}
\begin{itemize}
\item[(i)] The relation $\prec$ is a partial order on the set of crossings of $\beta$.
\item[(ii)] The set of crossings of $\beta$ is a disjoint union of linearly ordered sets.
\end{itemize}
\end{lemma}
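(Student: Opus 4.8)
The plan is to analyze the relation $\prec$ directly in terms of braid combinatorics. First I would establish the key reformulation: a pair $(s_i, s_j)$ at positions $p < q$ in $\beta$ with $\beta = v\, s_i\, u\, s_j\, w$ (where $u$ is the subword strictly between them) forms a deletion pair iff $s_i u = u s_j$ as braids, which by reading Coxeter projections means $s_i u = u s_j$ in $S_n$, i.e. $u^{-1} s_i u = s_j$; equivalently, the strand entering crossing $s_i$ from the top-left and the strand entering from the top-right are the \emph{same two strands} that meet at crossing $s_j$. In other words, if we draw $\beta$ as a braid diagram and track the two strands that cross at $s_i$, then $(s_i, s_j)$ is a deletion pair precisely when those same two strands next cross each other at $s_j$ — so deletion pairs correspond to ``consecutive crossings of the same pair of strands.''

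For part (i), reflexivity is vacuous or excluded (a single crossing involves a pair of strands, and $s_i \prec s_i$ would require $u$ empty with $s_i = s_i$, which is the trivial case; in any event one restricts $\prec$ to distinct crossings, and the partial-order claim is really about the strict relation). Antisymmetry follows because $s_i \prec s_j$ forces the position of $s_i$ to be strictly left of $s_j$ in $\beta$, so we cannot also have $s_j \prec s_i$. For transitivity, suppose $(s_i, s_j)$ and $(s_j, s_k)$ are deletion pairs: then the pair of strands crossing at $s_i$ is the pair crossing at $s_j$, which is the pair crossing at $s_k$; moreover if $u_1$ is the subword between $s_i, s_j$ and $u_2$ the subword between $s_j, s_k$, then the subword between $s_i$ and $s_k$ is $u_1 s_j u_2$, and $s_i (u_1 s_j u_2) = u_1 s_j u_2 s_k$ follows by composing $s_i u_1 = u_1 s_j$ with $s_j u_2 = u_2 s_k$. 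Hence $s_i \prec s_k$.

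For part (ii), the claim is that $\prec$ decomposes the crossing set into disjoint chains, i.e. each crossing has at most one immediate predecessor and at most one immediate successor under $\prec$. This is exactly the strand picture: given a crossing $s_j$, the two strands meeting there cross each other at various points along $\beta$; these crossing points are totally ordered by position, and ``the next crossing of that same strand pair after $s_j$'' — if it exists — is the unique immediate $\prec$-successor of $s_j$, while ``the previous one'' is the unique immediate predecessor. Two crossings $s_i, s_k$ are $\prec$-comparable iff they involve the same pair of strands, which is an equivalence relation on crossings, and within each equivalence class $\prec$ restricts to the total order by position. So the crossing set is the disjoint union over strand-pairs $\{a,b\}$ of the linearly ordered set of crossings between strands $a$ and $b$.

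The main obstacle, and the only genuinely delicate point, is making the ``same pair of strands'' picture rigorous enough to drive the proof — i.e. carefully defining, for a positive braid word $\beta = \sigma_{i_1}\cdots\sigma_{i_l}$, the bijection between crossings and transpositions of the associated strand-permutation trajectory, and verifying that $s_i u = u s_j$ in $S_n$ is equivalent to the two strands crossing at position $p$ being the two strands crossing at position $q$ with no intervening crossing \emph{of that pair} — together with checking that the relevant ``conjugate of a transposition is a transposition'' bookkeeping (namely $u^{-1} s_i u$ being a simple transposition versus just any transposition) is automatic here because $u$ conjugates the adjacent transposition $s_i$ to the adjacent transposition $s_j$ by hypothesis. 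Once that dictionary is set up, transitivity and the chain decomposition are immediate; I would allot most of the write-up to the dictionary and keep the rest terse.
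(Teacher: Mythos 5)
Your proof of part (i) goes through, but only because alongside the strand heuristic you also recorded the direct computation $s_i(u_1s_ju_2)=(s_iu_1)s_ju_2=u_1s_j(s_ju_2)=(u_1s_ju_2)s_k$, which is exactly the paper's argument. The genuine gap is in the ``dictionary'' on which everything else, in particular part (ii), rests: you read the deletion-pair equation $s_iu=us_j$ in the symmetric group, but it is an identity in the positive braid monoid $\Br_n^+$. The paper uses $s_i$ and $\sigma_i$ interchangeably for the \emph{braid} generators, and the entire point of deletion pairs here is that $us_j$ can be carried to $s_iu$ by $4$- and $6$-valent weave vertices, i.e.\ by genuine braid relations; the paper's proof of the close-deletion-pair lemma proceeds precisely by analyzing which braid relation touches the final letter $s_j$, an analysis that makes no sense in $S_n$. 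Passing to Coxeter projections replaces the condition by the strictly weaker ``same pair of strands'' condition $\bar u^{-1}s_i\bar u=s_j$ in $S_n$. These are not equivalent: for $\beta=\sigma_1\sigma_2\sigma_2\sigma_1\in\Br_3^+$, the first and fourth crossings involve the same two strands with no intervening crossing of that pair, yet $\sigma_1\sigma_2^2\neq\sigma_2^2\sigma_1$ in $\Br_3$ (conjugation by $\sigma_1$ changes the linking numbers of the pure braid $\sigma_2^2$), so they do \emph{not} form a deletion pair. Your characterization of comparability, and hence your proof of (ii), collapses. (A secondary slip: even in the $S_n$ reading, ``consecutive'' is wrong --- in $\sigma_1^3$ the first and third crossings form a deletion pair, so all crossings of a given strand pair would be pairwise related, not just adjacent ones.)

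Part (ii) needs an actual argument in the braid monoid, and the paper supplies one by cancellation: if $(s_i,s_j)$ and $(s_i,s_k)$ are both deletion pairs, with $\beta$ containing $s_ius_jvs_k$, then $us_js_jv=s_ius_jv=us_jvs_k$, and left-cancelling $us_j$ (the positive braid monoid is cancellative) yields $s_jv=vs_k$, so $s_j\prec s_k$; the dual statement for two crossings below a common one is analogous. Together with your transitivity this makes comparability an equivalence relation whose classes are totally ordered by position, which is (ii). If you want to retain a topological picture, the correct one is not pairs of strands but arcs: $u^{-1}\sigma_iu=\sigma_j$ asserts that transporting the standard arc joining punctures $i,i+1$ through the mapping class of $u$ yields an arc isotopic to the standard arc joining $j,j+1$, a far more rigid condition than merely matching endpoints.
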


\begin{proof}
Assume that we have a piece of a braid $s_ius_jvs_k$ and $(s_i,s_j)$ and $(s_j,s_k)$ are deletion pairs so that $s_iu=us_j, s_jv=vs_k$. Then
$$
s_i\cdot us_jv=us_js_jv=us_jv\cdot s_k,
$$
and $(s_i,s_k)$ is a deletion pair. This proves (i). To prove (ii), assume  $(s_i,s_j)$ and $(s_i,s_k)$ are deletion pairs, and assume wlog that $s_{j}$ is to the left of $s_{k}$. We must show that $(s_{j}, s_{k})$ is a deletion pair. We have 
$$
s_iu=us_j,\ s_ius_jv=us_jvs_k,
$$
and $s_ius_jv=s_iuvs_k$. Hence $s_jv=vs_k$, and $(s_j,s_k)$ is a deletion pair. The case when $(s_i,s_k)$ and $(s_j,s_k)$ are deletion pairs is analogous.
\end{proof}

We call a deletion pair $(s_i,s_j)$ {\em close} if $s_{i} \prec s_{j}$ is a cover relation, i.e. no crossing in-between $s_{i}$ and $s_{j}$ forms a deletion pair with $s_{i}$ or $s_{j}$. 

\begin{lemma}
\label{lem: deletion pairs}
Suppose that $(s_i,s_j)$ is a close deletion pair, then the following Demazure weaves are equivalent:
\begin{equation}
\label{eq: u deletion pair}
s_ius_j\to s_is_iu\to s_iu\to us_j\ \sim \ s_ius_j\to us_js_j\to us_j.
\end{equation}
\end{lemma}
Note that the condition that the deletion pair is close is necessary, see Remark \ref{rem: lol}.
\begin{proof}
First, note that by Theorem \ref{thm: ben}(a) we can choose a sequence of braid relations relating $s_iu$ and $us_j$ arbitrarily, and all such weaves would be equivalent. Furthermore, we can choose a braid word for $u$ arbitrarily. Indeed, if $u'$ is related to $u$ by braid relations then we get the diagram:
\begin{center}
\begin{tikzcd}
 & & s_ius_j \arrow{dll} \arrow{d} \arrow{drr}& & \\
s_is_iu \arrow{ddd} \arrow{dr} & & s_iu's_j \arrow{dl} \arrow{dr} & & us_js_j \arrow{ddd} \arrow{dl}\\
 & s_is_iu' \arrow{d} & & u's_js_j \arrow{d} & \\
 & s_iu' \arrow{rr} \arrow{dl} & & u's_j \arrow{dr}& \\
 s_iu \arrow{rrrr} & & & & us_j\\
\end{tikzcd}
\end{center}
The quadrilaterals on the left and on the right are isotopies, and the rest are built entirely from braid relations and hence are equivalences by Theorem \ref{thm: ben}(a). Therefore the outside pentagon is equivalent to the inside one.

We now prove the statement by induction on the length of $u$. If $u$ is empty, the statement is clear. Otherwise, by definition of deletion pair we get $s_iu=us_j$. If $u$ ends with $s_j$ then we do not have a close pair, contradiction. Otherwise we need to apply some braid relation to $us_j$ which involves $s_j$. We have the following cases:

1) If $u=vs_k$ and $|k-j|>1$ then $us_j=vs_ks_j=vs_js_k$ while $s_iu=s_ivs_k$, so $vs_j=s_iv$. We get the following diagram:
\begin{center}
\begin{tikzcd}
s_ivs_ks_j \arrow{r} \arrow{d}& vs_js_ks_j \arrow{r} \arrow{d}& \arrow{d} vs_ks_js_j\\
s_ivs_js_k \arrow{r} \arrow{d}& vs_js_js_k \arrow{dr}& vs_ks_j\\
s_is_ivs_k \arrow{r}& s_ivs_k \arrow{r}& vs_js_k \arrow{u}
\end{tikzcd}
\end{center}
The top square is an isotopy, and the pentagon on the right is commutation with distant colors.
By the assumption of induction, two Demazure weaves \eqref{eq: u deletion pair}  corresponding to $s_ivs_j$ are equivalent, which implies that the bottom pentagon is an equivalence as well. 

2) If $u=vs_js_{j+1}$ then $us_j=vs_js_{j+1}s_j=vs_{j+1}s_js_{j+1}$ while $s_iu=s_ivs_js_{j+1}$, so $s_iv=vs_{j+1}$.
We get the following diagram:
\begin{center}
\begin{tikzcd}
s_ivs_js_{j+1}s_j \arrow{r} \arrow{d}& vs_{j+1}s_js_{j+1}s_j \arrow{r} \arrow{d}& \arrow{d} vs_{j}s_{j+1}s_js_j\\
s_ivs_{j+1}s_js_{j+1} \arrow{r} \arrow{d}& vs_{j+1}s_{j+1}s_js_{j+1} \arrow{dr}& vs_js_{j+1}s_j\\
s_is_ivs_js_{j+1} \arrow{r}& s_ivs_js_{j+1} \arrow{r}& vs_{j+1}s_js_{j+1} \arrow{u}
\end{tikzcd}
\end{center}
The top square is an isotopy, and the pentagon on the right is 5-cycle from Section \ref{sec: 1212}. By the assumption of induction, two Demazure weaves \eqref{eq: u deletion pair} corresponding to $s_ivs_{j+1}$ are equivalent, and the bottom pentagon is an equivalence as well.

The case when $u=vs_js_{j-1}$ is analogous.
\end{proof}

Given a Demazure weave $\ww:\beta_2\to\beta_1$, we have an injection $\iota_{\ww}$ from the set of crossings in the bottom $\beta_1$ to the set of crossings in the top $\beta_2$. For a 6-valent vertex  it is a bijection which exchanges left and right crossings, for a 4-valent vertex it is a bijection exchanging crossings, and for a 3-valent vertex the injection sends the crossing in the target to the right crossing in the source. We refer to the crossings not in the image of $\iota_{\ww}$ as {\it missing}. Note that the number of missing crossings equals the number of trivalent vertices and equals $\ell(\beta_2)-\ell(\beta_1)$. The following result is a characterization of the equivalence between Demazure weaves in a special case.
 
\begin{thm}
\label{thm: one 3v}
Let $\beta_1,\beta_2$ be two braid words such that $\ell(\beta_1)=\ell(\beta_2)-1$. Then two Demazure weaves $\ww_1,\ww_2:\beta_2\to\beta_1$ are equivalent if and only if they have the same missing crossing in $\beta_2$.
\end{thm}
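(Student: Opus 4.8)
The plan is as follows. Since $\Sigma$ is a Demazure weave and the $3$-valent vertex is the only local model that lowers the length of the running braid word (by one), while $4$- and $6$-valent vertices preserve it, the hypothesis $\ell(\beta_1)=\ell(\beta_0)-1$ forces any Demazure weave $\Sigma\colon\beta_0\to\beta_1$ to contain \emph{exactly one} $3$-valent vertex $\tau$. Writing $\Sigma=\Sigma^{\mathrm{up}}\cdot\tau\cdot\Sigma^{\mathrm{down}}$ with $\Sigma^{\mathrm{up}}\colon\beta_0\to\gamma$ and $\Sigma^{\mathrm{down}}\colon\gamma'\to\beta_1$ built from $4$- and $6$-valent vertices only, Theorem~\ref{thm: ben}(a) (see also Remark~\ref{rem: equivalent weaves}) shows that $\Sigma^{\mathrm{up}}$ and $\Sigma^{\mathrm{down}}$ are each determined up to equivalence by their source and target words. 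Thus, up to equivalence, $\Sigma$ is encoded by the word $\gamma$ together with the position of the adjacent equal pair on which $\tau$ acts; tracing the edge feeding the left leg of $\tau$ up through $\Sigma^{\mathrm{up}}$ to $\beta_0$ picks out the missing crossing $c$, and the right leg traces to some crossing $c'$ of $\beta_0$.

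\textbf{Necessity.} I would verify that the missing crossing is unchanged under every elementary equivalence move between Demazure weaves (such moves preserve the property of being Demazure and, the endpoints being fixed, the property of having a single $3$-valent vertex). A move alters the weave only inside a bounded region; if that region avoids $\tau$, the identification of edges outside it is untouched and, since the maps $\iota$ attached to $4$- and $6$-valent vertices are bijections, the boundary crossing to which the left leg of $\tau$ traces does not change. If the region contains $\tau$, then the move is---up to planar isotopy---one of the finitely many relations of Sections~\ref{sec: 1212} and \ref{sec: 12121} involving a single $3$-valent vertex, and one checks case by case on the diagrams that the left leg of $\tau$ traces to the same boundary crossing before and after. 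Hence the missing crossing is an invariant of the equivalence class, which proves one implication.

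\textbf{Sufficiency.} Let $\Sigma_1,\Sigma_2\colon\beta_0\to\beta_1$ be Demazure weaves with the same missing crossing $c$, and let $c'_k$ be the crossing of $\beta_0$ merged with $c$ at the unique $3$-valent vertex of $\Sigma_k$. The key step is to show that the pair formed by $c$ and $c'_k$ is a deletion pair of $\beta_0$ and that $c'_k$ is the $\prec$-cover of $c$ in its chain; since covers are unique in the chain decomposition (the partial-order lemma preceding Lemma~\ref{lem: deletion pairs}), this yields $c'_1=c'_2=:c'$, and the pair $(c,c')$ is then automatically \emph{close}. Granting this, Lemma~\ref{lem: deletion pairs} identifies the two natural (``$c$ on the left'' versus ``$c'$ on the right'') weaves for the close deletion pair $(c,c')$; combined with the rigidity of simplifying weaves (Theorem~\ref{thm: ben}(a)), it follows that every Demazure weave from $\beta_0$ to $\beta_1$ whose $3$-valent vertex merges $c$ with $c'$ is equivalent to a single standard weave $\Sigma_0$---braid moves bringing $c'$ adjacent to $c$ to form a local word $s_is_i$, then the $3$-valent vertex there, then braid moves to $\beta_1$. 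Therefore $\Sigma_1\simeq\Sigma_0\simeq\Sigma_2$.

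The main obstacle is the key step of Sufficiency: extracting, from the mere existence of a Demazure weave with a single $3$-valent vertex, the intrinsic deletion-pair structure on $\beta_0$. This requires tracking the $s_i$-colored subgraph $G_i$ of $\Sigma_k$ in the portion above $\tau$ (where the $6$-valent vertices appear as trivalent vertices of $G_i$), showing that the subword of $\beta_0$ strictly between $c$ and $c'_k$ realizes the braid identity $s_i u = u s_i$ defining a deletion pair, and that having only one $3$-valent vertex forces $c'_k$ to be the immediate $\prec$-successor of $c$. Remark~\ref{rem: lol} confirms that the closeness hypothesis is genuinely needed here and cannot be weakened.
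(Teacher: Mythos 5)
Your overall architecture tracks the paper's quite closely: necessity is proved by checking that each elementary move preserves the missing crossing (the paper does exactly your case split --- moves built from $4$- and $6$-valent vertices induce the same bijection on crossings, and the $1212$ five-cycles are checked by hand on the diagrams), and sufficiency is reduced to Lemma \ref{lem: deletion pairs} via the observation that the chain decomposition of $\prec$ makes the close deletion pair with a given left element unique. Both of those ingredients are correctly identified.

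There is, however, a genuine gap in your sufficiency step. You assert that, once $(c,c')$ is known to be a close deletion pair, ``the rigidity of simplifying weaves (Theorem \ref{thm: ben}(a))'' implies that every Demazure weave whose trivalent vertex merges $c$ with $c'$ is equivalent to the standard weave $\Sigma_0$. Theorem \ref{thm: ben}(a) only compares weaves built entirely from $4$- and $6$-valent vertices \emph{with the same source and target}; here the upper parts $\Sigma_k^{\mathrm{up}}\colon\beta_0\to\gamma_k$ and the upper part of $\Sigma_0$, $\beta_0\to\delta$, have different targets, so it does not apply. To compare $\Sigma_k$ with $\Sigma_0$ one must commute the single trivalent vertex past the braid moves connecting $\gamma_k$ to $\delta$, and this commutation is the technical heart of the paper's argument: a four-case analysis of how a close deletion pair interacts with a $4$- or $6$-valent vertex. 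Three of the cases are routine, but the fourth --- a $6$-valent vertex whose middle strand is one of the two legs feeding the trivalent vertex --- destroys the deletion-pair structure at the intermediate word and is resolved only by invoking the $1212$ five-cycle relation of Section \ref{sec: 1212}; it is not a formal consequence of Theorem \ref{thm: ben}(a) together with Lemma \ref{lem: deletion pairs}. Relatedly, you locate ``the main obstacle'' in proving that the traced pair $(c,c'_k)$ is a close deletion pair of $\beta_0$; that is indeed needed (and the same case analysis delivers it), but the harder and unaddressed point is the normalization of the weave itself, i.e.\ pushing the trivalent vertex to a standard position through arbitrary braid moves.
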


\begin{proof}
Since $\ell(\beta_1)=\ell(\beta_2)-1$, any Demazure weave between $\beta_2$ and $\beta_1$ has one trivalent vertex. Let us prove that equivalent weaves have the same missing crossing. It is verified that commutations with distant colors and Zamolodchikov relations induce the same bijections between crossings, so any two weaves with the same source and target and only 6- and 4-valent vertices induce the same bijection. Finally, for the 5-cycle from Section \ref{sec: 1212} we observe that in either weave for 1121 the first crossing is missing, while in either weave for 1211 the third crossing is missing. 

Conversely, assume that the Demazure weaves $\ww_{1}, \ww_{2}: \beta_2 \to \beta_1$ have the same missing crossing. It is sufficient to prove that they can be related by a sequence of cycles from Lemma \ref{lem: deletion pairs} and equivalences. Note that a trivalent vertex corresponds to a close deletion pair. We have the following cases, where the deletion pair is underlined:

\begin{itemize}
	\item[(1)] Assume that $(s_i,s_j)$ is a close deletion pair and we apply a 4-valent vertex to $s_j$:
	$$
	\underline{s_i}u\underline{s_j}s_k=\underline{s_i}us_k\underline{s_j},\ |k-j|>1,
	$$ 
	then $(s_i,s_j)$ is a close deletion pair in the resulting braid, and Lemma \ref{lem: deletion pairs} applies.\\

	\item[(2)] Assume that we apply a 6-valent vertex with $s_j$ on the left:
	$$
	\underline{s_i}u\underline{s_j}s_{j+1}s_{j}=\underline{s_i}us_{j+1}s_j\underline{s_{j+1}}
	$$
	then $(s_i,s_{j+1})$ is a close deletion pair in the resulting braid, and Lemma \ref{lem: deletion pairs} applies.\\

	\item[(3)] Assume that we apply a 6-valent vertex with $s_j$ on the right:
	$$
	\underline{s_i}us_js_{j+1}\underline{s_{j}}=\underline{s_i}u\underline{s_{j+1}}s_js_{j+1}
	$$
	Note that $s_ius_js_{j+1}=us_{j}s_{j+1}s_j=us_{j+1}s_js_{j+1}$ implies $s_iu=us_{j+1}$, and  $(s_i,s_{j+1})$ is again a close deletion pair.\\

	\item[(4)] Finally, assume that we apply a 6-valent vertex with $s_j$ in the middle, then we no longer get a deletion pair. Instead, $u=vs_{j+1}$ and $s_ivs_{j+1}=vs_{j+1}s_{j}$. By considering possible braid moves, we can write $v=ws_j$, then
	$$
	s_iws_js_{j+1}=ws_{j}s_{j+1}s_j=ws_{j+1}s_js_{j+1},\
	$$
	hence $s_iw=ws_{j+1}$.We get the following diagram:
	\begin{center}
		\begin{tikzcd}
		\underline{s_i}ws_js_{j+1}\underline{s_j}s_{j+1} \arrow{r} \arrow{d} \arrow{dr}& \underline{s_i}ws_{j}s_{j}\underline{s_{j+1}}s_j \arrow{r} & \arrow{dl} w\underline{s_{j+1}}s_js_j\underline{s_{j+1}}s_j\\
		\underline{s_i}w\underline{s_{j+1}}s_{j}s_{j+1}s_{j+1}  \arrow{d}& w\underline{s_{j+1}}s_js_{j+1}\underline{s_j}s_{j+1} \arrow{r} \arrow{dl}& ws_js_{j+1}\underline{s_j}\underline{s_j}s_{j+1}\arrow{d}\\
		w\underline{s_{j+1}}\underline{s_{j+1}}s_js_{j+1}s_{j+1}\arrow{r}& ws_{j+1}s_js_{j+1}s_{j+1} \arrow{r}& ws_js_{j+1}s_js_{j+1} 
		\end{tikzcd}
	\end{center}
	Here the squares are isotopies and 5-cycle is an equivalence from Section \ref{sec: 1212}.
\end{itemize}

By combining all these cases (and the ones obtained by changing $j+1$ to $j-1$, or applying braid moves to $s_i$), we can find equivalent weaves $\ww_{1} \sim \ww_{1}'' \circ \ww_{1}'$ and $\ww_{2} \sim \ww_{2}'' \circ \ww_{2}'$ where 

\begin{itemize}
\item $\ww_{1}', \ww_{2}': \beta_{2} \to \beta'$ are weaves between equivalent braid words.
\item $\ww_{1}'', \ww_{2}'': \beta' \to \beta_{1}$ are weaves obtained by finding a close deletion pair in $\beta'$ and applying either weave from Lemma \ref{lem: deletion pairs}, followed by a sequence of braid moves.
\end{itemize}

Note that $\ww_{1}' \sim \ww_{2}'$, so it is enough to check that $\ww_{1}'' \sim \ww_{2}''$. Since $\ww_1, \ww_2$ have the same missing crossing in $\beta_2$, $\ww_{1}''$ and $\ww_{2}''$ have the same missing crossing in $\beta'$. Thus, $\ww_{1}''$ and $\ww_{2}''$ use the same close deletion pair in $\beta'$, so the result now follows from Lemma \ref{lem: deletion pairs}. 
\end{proof}

\begin{remark} \label{remark: injection_issue}
Although it is natural to consider the above injection and missing crossings for more general weaves, these notions are not invariant under the equivalence relation. Indeed, one can check that the two paths in the 5-cycle for 1121 yield two different injections on crossings (with the same image), and the different paths for 12121 have different missing vertices.
\end{remark}

For a positive braid word $\beta$, we define the {\em mutation graph} of $\beta$ to be a graph with vertices given by the equivalence classes of Demazure weaves $\ww:\beta\Delta\to\Delta$, from $\beta\Delta$ to $\Delta$, i.e.~$\ww\in\mbox{Hom}_{\mathfrak{W}_n}(\beta\Delta,\Delta)$, and edges corresponding to mutations. 
Note that, by Theorem \ref{thm: ben}(b), any two equivalence classes of Demazure weaves in $\mbox{Hom}_{\mathfrak{W}_{n}}(\beta\Delta, \Delta)$ are related by mutations. 

\begin{conj}
Suppose we oriented each mutation in the direction $(ss)s \to s(ss).$ For any positive braid $\beta$, this orientation descends to the mutation graph of $\beta$. With this orientation, the mutation graph has no oriented cycles.
\end{conj}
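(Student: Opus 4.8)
The plan is to work with the Demazure weaves as encoding sequences of crossing openings, using Theorem~\ref{thm:main3}(i): each equivalence class of Demazure weave $\Sigma\in\mbox{Hom}_{\mathfrak{W}_n}(\beta\Delta,\Delta)$ is $\Sigma_\rho$ for some ordering $\rho\in S_{\ell(\beta)}$ of the crossings of $\beta$, and by Theorem~\ref{thm: ben}(b) all these classes are mutation-connected. The first step is to make sense of the orientation: a single mutation replaces a local piece $(ss)s\to ss$ by $s(ss)\to ss$, and by the deletion-pair description in the remark following Theorem~\ref{thm:main3}, a mutation at a close deletion pair $(\sigma_i,\sigma_j)$ (with $\sigma_iu=u\sigma_j$, $u=u_1\sigma_ku_2$) exchanges ``open $\sigma_i$ first'' for ``open $\sigma_j$ first'', where $\sigma_i$ is to the left of $\sigma_j$. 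So orienting $(ss)s\to s(ss)$ means: the mutation moves the opened crossing to the \emph{right} within its $\prec$-chain. I would first verify this orientation is well-defined on the mutation graph, i.e. independent of which representatives of the two equivalence classes realize the local picture; this should follow from Theorem~\ref{thm: one 3v} together with the fact (Lemma on deletion pairs) that the relevant $\prec$-relation is intrinsic to $\beta$, not to the weave.

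The key combinatorial step is to attach to each equivalence class of Demazure weave a well-defined invariant that strictly increases along every oriented mutation. Since the crossings of $\beta$ form a disjoint union of $\prec$-chains (the Lemma before Theorem~\ref{thm: one 3v}), and each Demazure weave $\Sigma_\rho$ picks out, for each trivalent vertex, a ``which crossing was opened'' choice within a close deletion pair, I expect the class of $\Sigma_\rho$ to be recorded by a function assigning to each $\prec$-chain a linear ordering of its elements compatible with $\prec$ — equivalently, for a chain of length $m$, an element of the symmetric group refining the chain order — and a mutation corresponds to an adjacent transposition in exactly one chain that moves toward the $\prec$-maximal end. The natural monovariant is then the sum over all chains of the number of inversions relative to the $\prec$-order (or, dually, the number of ``correctly ordered'' adjacent pairs). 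Each oriented mutation $(ss)s\to s(ss)$ should change this statistic by exactly $\pm1$ in a fixed direction, so it is a strictly monotone function on the mutation graph, and a strictly monotone $\Z$-valued function precludes oriented cycles. Establishing that the equivalence class of a Demazure weave is faithfully captured by such chain-orderings, and that each mutation is a single covering step, is where the bulk of the work lies: it requires upgrading Theorem~\ref{thm: one 3v} (which handles the one-trivalent-vertex case) to track the full list of missing crossings through a general Demazure weave and to show two weaves are equivalent iff their full sequences of deletion-pair choices agree.

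I would carry this out in three stages: (1) prove that for a Demazure weave $\Sigma\in\mbox{Hom}_{\mathfrak{W}_n}(\beta\Delta,\Delta)$ the multiset of trivalent vertices, together with the close deletion pair each one resolves and the side chosen, is an invariant of the equivalence class, and that it determines the class — this generalizes Theorem~\ref{thm: one 3v} and should follow by induction on $\ell(\beta)$ using Lemma~\ref{lem: deletion pairs} and the case analysis in its proof; (2) translate this data into a tuple of linear extensions of the $\prec$-chains of $\beta$, and show a mutation is precisely an adjacent swap within one chain, oriented toward the $\prec$-top by the convention $(ss)s\to s(ss)$; (3) define the inversion statistic and observe it strictly increases, hence the oriented mutation graph is acyclic, which also immediately gives that the orientation descends consistently to the graph (no oriented $2$-cycles, in particular). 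The main obstacle is stage (1): controlling how the ``missing crossing'' / deletion-pair data behaves under the 12121-cycle and Zamolodchikov relations for a weave with many trivalent vertices, since Remark~\ref{remark: injection_issue} warns that the naive injection on crossings is not equivalence-invariant for general weaves — so the invariant must be phrased carefully in terms of deletion pairs and the $\prec$-order rather than raw crossing indices. Once that is in place, the acyclicity is a short monovariant argument.
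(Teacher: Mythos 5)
First, a point of order: the statement you are proving is stated in the paper as a \emph{conjecture}; the authors offer no proof and only cite \cite{BDP,BY} as motivation from the quiver/cluster setting. So there is no argument in the paper to compare against, and your proposal must be judged as a standalone attempt. As such, it has genuine gaps at each of its three stages. Stage (1) asks for exactly the invariance that Remark~\ref{remark: injection_issue} warns is false: the assignment of a ``resolved deletion pair and chosen side'' to each trivalent vertex is not preserved by the 12121-cycle equivalences (equivalent weaves can have different missing vertices), and Theorem~\ref{thm: one 3v} only controls the case of a single trivalent vertex, i.e.\ $\ell(\beta_1)=\ell(\beta_0)-1$. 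You acknowledge this obstacle but supply no mechanism for circumventing it, and this is precisely where the content of a proof would have to live; the same issue already undermines your preliminary claim that the orientation is well-defined on the quotient graph, since that too needs control of how the local picture $(ss)s\to s(ss)$ transports across equivalences in a weave with many trivalent vertices.

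Stage (2) is quantitatively wrong even in the simplest case. For $\beta=\sigma^l\in\Br_2^+$ the relation $\prec$ is a total order on the $l$ crossings, so a ``linear ordering compatible with $\prec$'' of the single chain is unique (and arbitrary orderings give $l!$), whereas by Lemma~\ref{lem:2n} the number of equivalence classes is the Catalan number $C_l$; the classes are parametrized by binary trees (equivalently $312$-avoiding permutations), not by linear extensions of $\prec$-chains, and Tamari covers are not adjacent transpositions within a chain. Stage (3) then fails as stated: a statistic that changes by exactly $\pm1$ along every oriented mutation would force the mutation graph to be ranked, but already for $\sigma^3$ and for the $(3,2)$-torus braid of Example~\ref{ex: misha} the oriented mutation graph is a pentagon, whose maximal oriented chains have lengths $2$ and $3$. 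A strictly (not unit-) increasing $\Z$-valued monovariant would still suffice for acyclicity, and in the $2$-strand case one exists because the Tamari order is a lattice, but your proposal does not produce such a statistic for general $\beta$, and constructing one is essentially equivalent to the conjecture itself.
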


The conjecture is motivated by \cite{BDP}, where a similar statement was proven for the exchange graphs for quivers and cluster algebras; see also \cite{BY}.


\subsection{Examples} Let us study two explicit examples in detail, illustrating the material and results presented above.

\begin{ex} {\bf 2-strand braids}.
\label{subseq:2strand}
{\rm A braid on two strands is an element of $\mbox{Br}_2$: we denote by $\sigma$ the unique Artin generator of this group, and by $s$ the corresponding Coxeter generator $(12) \in S_2$. Each positive braid $\beta \in \mbox{Br}_2$ has a unique braid word, which has the form $\sigma^l, \, \ l \geq 0$, and note that $\Delta = \sigma$. By abuse of notation, we will also write this word as $s^l$. We refer to the braid $\sigma^l$ as the \emph{$(2, l)$-torus braid}, since its (rainbow) closure is the $(2, l)$-torus link.

We have no braid moves in $\mbox{Br}_2,$ so each weave $\ww \in \Hom_{\mathfrak{W}_2}(\beta,\beta')$ contains only trivalent vertices, cups and caps (and no $6$- or $4$-valent vertices). 
Each Demazure weave $\ww \in \Hom_{\mathfrak{W}_2}(\beta \cdot \Delta, \Delta)$ is naturally a rooted binary tree as it contains only trivalent vertices. By construction, all such binary trees with $l(\beta) + 1$ leaves are mutualy non-equivalent, but they are all related by mutations. If we orient each mutation $(ss)s \to s(ss),$ the oriented mutation graph will coincide with the classical Hasse graph of the Tamari lattice. It is known to be the $1$-skeleton of a combinatorial polytope: the $(l(\beta) - 1)$-dimensional associahedron, see e.g. \cite{Reading}. We can summarize this discussion as follows.

\begin{lemma}\label{lem:2n}
The mutation graph of the $(2, l)$ torus braid is the $1$-skeleton of the $(l - 1)$-dimensional associahedron.
\end{lemma}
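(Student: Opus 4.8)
The plan is to identify the two combinatorial structures explicitly and then cite the classical identification of the Tamari lattice with the $1$-skeleton of the associahedron. First I would unwind the definitions already set up in Example \ref{subseq:2strand}: since $\mathrm{Br}_2$ has no braid relations, every weave in $\Hom_{\mathfrak{W}_2}(\sigma^{l}\cdot\Delta,\Delta)=\Hom_{\mathfrak{W}_2}(\sigma^{l+1},\sigma)$ consists only of trivalent vertices (no $4$- or $6$-valent vertices, and no cups/caps in the Demazure case), so as a planar graph it is a single rooted binary tree with $l+1$ leaves read off the top braid word $\sigma^{l+1}$ and a single root at the bottom $\sigma=\Delta$. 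Conversely any such rooted binary tree with $l+1$ leaves arises this way, and by Remark \ref{rem: equivalent weaves} / Theorem \ref{thm: one 3v} two such weaves are equivalent if and only if they are the same binary tree, since in the absence of $4$- and $6$-valent vertices there are no nontrivial equivalence moves to apply (isotopies and vertex-cancellations do not change the tree). Hence the vertex set of the mutation graph of $\sigma^{l}$ is exactly the set of rooted binary trees with $l+1$ leaves.

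Next I would analyze the edges. A weave mutation, as drawn in Section \ref{sec: mutation}, is precisely the local replacement of the subtree $(ss)s$ by $s(ss)$, which under the standard bijection between binary trees and parenthesizations (equivalently, triangulations of a polygon) is exactly a single Tamari covering relation / a single diagonal flip in the associahedron. So the mutation graph of $\sigma^{l}$ is the graph on rooted binary trees with $l+1$ leaves whose edges are elementary rotations, oriented (by the stated convention $(ss)s\to s(ss)$) in the direction of the Tamari order.

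Finally, I would invoke the well-known fact (see e.g. the reference \cite{Reading} already cited) that the Hasse diagram of the Tamari lattice $T_{l}$ on binary trees with $l+1$ leaves is the $1$-skeleton of the $(l-1)$-dimensional associahedron $K_{l+1}$; the number of vertices is the Catalan number $C_l$, matching the count of toric charts recorded earlier in the introduction. Combining the three steps gives the claimed isomorphism of graphs. The only genuine point requiring care — and the main obstacle — is the second step: verifying that the weave mutation corresponds to a single Tamari cover (and not, say, to something coarser) and that no other, ``hidden'' mutation-type moves exist between inequivalent Demazure $2$-weaves; this is handled by the fact that Theorem \ref{thm: ben}(b) already tells us all Demazure weaves with the same (reduced) target are connected by mutations, and Theorem \ref{thm: one 3v} pins down the equivalence classes, so one just needs to check the local picture in Section \ref{sec: mutation} matches a rotation, which is a direct inspection.
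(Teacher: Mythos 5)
Your proposal is correct and follows essentially the same route as the paper's own argument in Example \ref{subseq:2strand}: identify Demazure $2$-weaves with rooted binary trees on $l+1$ leaves (no $4$- or $6$-valent vertices exist in $\mathfrak{W}_2$, so distinct trees give inequivalent weaves), observe that a weave mutation is exactly a tree rotation, and cite the classical identification of the Tamari Hasse graph with the $1$-skeleton of the $(l-1)$-dimensional associahedron. Your extra appeal to Theorem \ref{thm: ben}(b) for connectivity under mutations is consistent with, though not strictly needed beyond, what the paper asserts directly.
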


We can also understand each Demazure weave $\ww \in \Hom_{\mathfrak{W}_2}(s^l \cdot \Delta, \Delta)$ as a sequence of openings of crossings in the braid $s^l \cdot \Delta = s^{l + 1}.$ As we understand trivalent vertices $ss \to s$ as openings of the left crossing, $\ww$ is actually a sequence of openings of crossings in $\beta$; the only crossing of $\Delta$ is the crossing of the concave end of $\ww.$ Naturally, the sequence of crossings being opened can be seen as a permutation in $S_l.$ The Tamari lattice is known to be both a sublattice and a lattice quotient of the weak order on permutations, see \cite{Reading}. Note that a permutation is the same as a maximal chain in the Boolean lattice $2^{[l]}$ of the subsets of the set of crossings of $\beta.$

Finally, another way to look at Demazure weaves $\ww \in \Hom_{\mathfrak{W}_2}(s^l \cdot \Delta, \Delta)$ is to consider them as monotone paths along the edges in the $l-$dimensional cube, with $2-$dimensional faces representing elementary moves (equivalences or mutations) between weaves. We illustrate this on the example of the $(2, 3)$-torus braid $\beta = sss$ in Figure \ref{cube}. Each edge of the cube is oriented downwards and corresponds to one trivalent vertex in a weave. Equivalently, it corresponds to opening a single crossing in $\beta$. Each vertex represents a horizontal cross-section away from the vertices of a Demazure weave $\ww \in \Hom_{\mathfrak{W}_2}(ssss, s)$; equivalently, it corresponds to a braid word obtained from $\beta$ by the opening of some crossings. The underlined letters represent crossings that have been opened. For each edge of the weave in a horizontal slice, we can trace back its parents in $ssss$; these parents are in parentheses. The cube has the unique top vertex representing the braid $ssss$, and the unique bottom vertex representing $s$. Each Demazure weave can be seen as a monotone path along the edges from the top vertex to the bottom vertex.

\begin{figure}[h!]
        \centering

\begin{tikzpicture}[scale=0.25]
 \fill[yellow!30](-8, 6) to (-8, 16) to (0, 10) to (0, 0) to cycle;
 \fill[blue!30](-2, 24) to (-8, 16) to (0, 10) to (6, 18) to cycle;
\draw (0, 0) -- (0, 10) -- (6, 18) -- (6, 8) -- (0, 0);
\draw (0, 10) -- (-8, 16) -- (-8, 6) -- (0, 0);
\draw (-8, 16) -- (-2, 24) -- (6, 18);
\draw[dashed] (-8, 6) -- (-2, 14) -- (6, 8);
\draw[dashed] (-2, 14) -- (-2, 24); 
\draw (0, 0) node[below] {(\underline{sss}s)};
\draw (0, 10) node[below left] {(\underline{s}s)(\underline{s}s)};
\draw (-8, 6) node [left] {(\underline{ss}s)s};
\draw (6, 8) node [right] {s(\underline{ss}s)};
\draw (-8, 16) node [left] {(\underline{s}s)ss};
\draw (6, 18) node[right] {ss(\underline{s}s)};
\draw (-2, 24) node[above] {ssss};
\draw (-2, 14)  node[above right] {s(\underline{s}s)s};
\draw [->] (-15, 24) -- (-15, 0) node [midway, left, sloped, rotate = 90] {\mbox{opening crossings}};
\draw [<->] (-8, -5) -- (6, -5) node [midway, above] {\mbox{mutations}} ;
\end{tikzpicture}

\caption{The Hasse graph of the Boolean lattice $2^{[3]}$.  The top vertex is the initial braid word $\beta \cdot \Delta = s^3 \cdot s = s^4$, the bottom vertex represents $\Delta = s$. Demazure weaves $\ww  \in \Hom_{\mathfrak{W}_2}(ssss, s)$ correspond to monotone paths from the top vertex to the bottom vertex.} \label{cube}
\end{figure}

The yellow face in the cube in Figure \ref{cube} illustrates the weave mutation given by
\[
\begin{xy} 0;<0.25pt,0pt>:<0pt,-0.25pt>::
(-150,100) *+{\begin{tikzpicture}[scale=0.2]
\draw[thick] (0, -1) -- (0, 1) -- (-3, 4);
\draw[thick] (0, 1) -- (3, 4);
\draw[thick] (-1, 2) -- (1, 4);
\draw[thick] (0, 3) -- (-1, 4);
\draw (-3.6, 4.7) node {\small{((s}};
\draw (-1.5, 4.7) node {\small{(s}};
\draw (1.4, 4.7)  node {\small{s))}};
\draw (3.2, 4.7) node {\small{s)}};
\end{tikzpicture}
}="1",
(250,100) *+{\begin{tikzpicture}[scale=0.2]
\draw[thick] (0, -1) -- (0, 1) -- (-3, 4);
\draw[thick] (0, 1) -- (3, 4);
\draw[thick] (1, 2) -- (-1, 4);
\draw[thick] (0, 3) -- (1, 4);
\draw (-3.3, 4.7) node {\small{(s}};
\draw (-1.5, 4.7) node {\small{((s}};
\draw (1.2, 4.7)  node {\small{s)}};
\draw (3.4, 4.7) node {\small{s))}};
\end{tikzpicture}
}="2",
"1", {\ar@{<->} "2"},
\end{xy}
\]

The blue face is the only face that does not represent a mutation. Two monotone paths related by the flip in this face correspond to two different possibilities to draw the same weave in such a way that each horizontal cross-section contains at most one trivalent vertex:

\[
\begin{xy} 0;<0.25pt,0pt>:<0pt,-0.25pt>::
(-150,100) *+{\begin{tikzpicture}[scale=0.2]
\draw[thick] (0, -1) -- (0, 1) -- (-5, 6);
\draw[thick] (0, 1) -- (5, 6);
\draw[thick] (-4, 5) -- (-3, 6);
\draw[thick] (2, 3) -- (-1, 6);
\draw (-5.5, 6.7) node {\small{((s}};
\draw (-2.8, 6.7) node {\small{s)}};
\draw (-1.3, 6.7)  node {\small{(s}};
\draw (5.4, 6.7) node {\small{s))}};
 \end{tikzpicture}
}="31",
(250,100) *+{\begin{tikzpicture}[scale=0.2]
\draw[thick] (0, -1) -- (0, 1) -- (-5, 6);
\draw[thick] (0, 1) -- (5, 6);
\draw[thick] (-2, 3) -- (1, 6);
\draw[thick] (4, 5) -- (3, 6);
\draw (-5.5, 6.7) node {\small{((s}};
\draw (1.2, 6.7) node {\small{s)}};
\draw (2.7, 6.7)  node {\small{(s}};
\draw (5.4, 6.7) node {\small{s))}};
 \end{tikzpicture}
}="32",
"31", {\ar@{=} "32"},
\end{xy}
\]

Two weaves are related by a single mutation if they are related by a polygonal flip in a non-blue face. In Figure\ref{cube}, mutations $(ss)s \to s(ss)$ correspond to replacements of two ``left'' sides of a square by its two ``right'' sides. The mutation graph is the $1$-skeleton of $2$-dimensional associahedron, that is, a pentagon. It is drawn on Figure \ref{pentagon} as the Hasse graph of the Tamari lattice of rooted binary trees with $4$ leaves. This concludes this example, focused on $2$-stranded braids. The study of $n$-stranded braids and their weaves is, in general, more intricate (and interesting as well). This is illustrated in the next example.

\begin{figure}
     \centering
       
\begin{xy} 0;<0.25pt,0pt>:<0pt,-0.25pt>::
(-550,300) *+{
\begin{tikzpicture}[scale=0.2]
\draw[thick] (0, -1) -- (0, 1) -- (-3, 4);
\draw[thick] (0, 1) -- (3, 4);
\draw[thick] (-1, 2) -- (1, 4);
\draw[thick] (-2, 3) -- (-1, 4);
\draw (-3.8, 4.7) node {\small{(((s}};
\draw (-1.2, 4.7) node {\small{s)}};
\draw (1.2, 4.7)  node {\small{s)}};
\draw (3.2, 4.7) node {\small{s)}};
\end{tikzpicture}
} ="0",
(-150,100) *+{\begin{tikzpicture}[scale=0.2]
\draw[thick] (0, -1) -- (0, 1) -- (-3, 4);
\draw[thick] (0, 1) -- (3, 4);
\draw[thick] (-1, 2) -- (1, 4);
\draw[thick] (0, 3) -- (-1, 4);
\draw (-3.6, 4.7) node {\small{((s}};
\draw (-1.5, 4.7) node {\small{(s}};
\draw (1.4, 4.7)  node {\small{s))}};
\draw (3.2, 4.7) node {\small{s)}};
\end{tikzpicture}
}="1",
(250,100) *+{\begin{tikzpicture}[scale=0.2]
\draw[thick] (0, -1) -- (0, 1) -- (-3, 4);
\draw[thick] (0, 1) -- (3, 4);
\draw[thick] (1, 2) -- (-1, 4);
\draw[thick] (0, 3) -- (1, 4);
\draw (-3.3, 4.7) node {\small{(s}};
\draw (-1.5, 4.7) node {\small{((s}};
\draw (1.2, 4.7)  node {\small{s)}};
\draw (3.4, 4.7) node {\small{s))}};
\end{tikzpicture}
}="2",
(0,500) *+{\begin{tikzpicture}[scale=0.2]
\draw[thick] (0, -1) -- (0, 1) -- (-3, 4);
\draw[thick] (0, 1) -- (3, 4);
\draw[thick] (-2, 3) -- (-1, 4);
\draw[thick] (2, 3) -- (1, 4);
\draw (-3.5, 4.7) node {\small{((s}};
\draw (-0.8, 4.7) node {\small{s)}};
\draw (0.7, 4.7)  node {\small{(s}};
\draw (3.4, 4.7) node {\small{s))}};
 \end{tikzpicture}
}="3",
(550,300) *+{\begin{tikzpicture}[scale=0.2]
\draw[thick] (0, -1) -- (0, 1) -- (-3, 4);
\draw[thick] (0, 1) -- (3, 4);
\draw[thick] (1, 2) -- (-1, 4);
\draw[thick] (2, 3) -- (1, 4);
\draw (-3.3, 4.7) node {\small{(s}};
\draw (-1.3, 4.7) node {\small{(s}};
\draw (0.7, 4.7)  node {\small{(s}};
\draw (3.6, 4.7) node {\small{s)))}};
 \end{tikzpicture}
}="4",
"0", {\ar "1"}, "0", {\ar "3"},
"1", {\ar "2"},
"2", {\ar "4"},
"3", {\ar "4"},  
\end{xy}

\caption{The mutation graph of the $(2, 3)$ torus braid $sss$. All mutations are oriented in the direction $(ss)s \to s(ss)$. It coincides with the Hasse graph of the Tamari lattice.} \label{pentagon}
\end{figure}
}
\end{ex}


\begin{ex} {\bf The $(3, 2)$ torus braid}.
\label{ex: misha}
{\rm
Consider the $(3, 2)$ torus braid $\beta = \sigma_1\sigma_2\sigma_1\sigma_2= 1212.$ Figure \ref{1212121} illustrates Demazure weaves 
 $1212\cdot \Delta=1212121 \to 212$ and relations between them. In Figure \ref{1212121}, we allow weaves with trivalent vertices $11\to 1, 22\to 2$ and $6$-valent vertices representing braid moves only in one direction: $121 \to 212$. Edges of the graph in Figure \ref{1212121} represent single moves. We assume that each weave is drawn in such a way that each horizontal cross-section contains at most one vertex. Each vertex on Figure \ref{1212121} represents a horizontal cross-section without vertices of an (a priori, not unique) weave. All edges are oriented downward. The weaves then correspond to monotone paths from the top vertex to the bottom vertex on the figure.

\begin{figure}[h!]
\begin{center}
\begin{tikzpicture}[scale = 2]
  
  \draw (10, 9.1) node (A24) {\small{1221221}};
  \draw (9.1, 7.2) node (A25) {\small{121221}};
  \draw (11, 7.2) node (A29) {\small{122121}};

  \fill[yellow!30](8.7, 6.6) to (7.8, 6) to (7.8, 5) to (9.2, 6) to cycle;
  \fill[yellow!30](11.5, 6.6) to (11.1, 6) to (12.3, 5) to (12.3, 6) to cycle;

\draw (7.8, 5) node (A28) {\small{2121}}; 
  \draw (12.3, 5) node (A23) {\small{1212}};

  \fill[yellow] 
 (10,8.2) to (9.7,7.5) to (10,7) to (10.4,7.5) to cycle;
  \fill[yellow] 
(7.3,4.6) to (8.3,3.5) to (10,3) to (9,4) to cycle;
  \fill[yellow]
(12.7,4.6) to (11,4) to (10,3) to (11.7,3.5) to cycle;

 \draw[dashed] (A25) -- (A24) -- (A29);
\draw[dashed] (A28) -- (8.3, 3.5);
  \draw[dashed] (A23) -- (11.7, 3.5);

  \draw (10,9.6) node (A1) {\small{1212121}};
  \draw (9.1,9.1) node (A2) {\small{2122121}};
  \draw (7.9,8.1) node (A3) {\small{212121}};
  \draw (6.9,7) node (A4){\small{221221}};
  \draw (6.9,6) node (A5){\small{22121}};
  \draw (7.3,4.6) node (A6) {\small{22212}};
  \draw (8.3,3.5) node (A7){\small{2212}};
  \draw (10,3) node (A8){\small{212}};
  
  \draw (10,8.2) node (A9){\small{2122212}};
\draw (9.7,7.5) node (A10) {\small{212212}};
  \draw (10,7) node (A11){\small{21212}};
  \draw (10,5) node (A12){\small{22122}};
  \draw (9,4) node (A13){\small{2212}};
  
\draw (10.4,7.5) node (A14) {\small{212212}};
  
  \draw (10.9, 9.1) node (A15) {\small{1212212}};
  \draw (12.1, 8.1) node (A16){\small{121212}};
  \draw (13.1, 7) node (A17){\small{122122}};
  \draw (13.1, 6) node (A18){\small{12122}};
  \draw (12.7, 4.6) node (A19) {\small{21222}};
  \draw (11, 4) node(A20) {\small{2122}};
  
  \draw (11.7, 3.5) node(A21) {\small{2122}};
  
  \draw (12.3, 6) node (A22) {\small{12212}};
  

  \draw (8.7, 6.6) node (A26) {\small{212221}};
  \draw (7.8, 6) node (A27) {\small{21221}};
  
  \draw (10.2, 6.5) node (A30) {\small{12121}};
 \draw (9.2, 6) node (A31) {\small{21221}};
  
  \draw (11.5, 6.6) node (A32) {\small{122212}};
  \draw (11.1, 6) node (A33) {\small{12212}};


  \draw[dashed] (A26) -- (A27) -- (A28) -- (A31) -- (A26);
  \draw[dashed]  (A32) -- (A22) -- (A23) -- (A33) -- (A32);
  
  \draw (A1) -- (A2) -- (A3) -- (A4) -- (A5) -- (A6) -- (A7) -- (A8);
  \draw (A3) -- (A10);
  \draw (A2) -- (A9) -- (A10) -- (A11) -- (A12) -- (A13) -- (A8);
  \draw (A6) -- (A13);
  \draw (A9) -- (A14) -- (A11);
  \draw (A1) -- (A15) -- (A9);
  \draw (A15) -- (A16)-- (A17) -- (A18) -- (A19) -- (A20) -- (A8);
  \draw (A16) -- (A14);
  \draw (A12) -- (A20);
  \draw (A19) -- (A21);
  \draw (A21) -- (A8);
 \draw[dashed] (A17) -- (A22);
  \draw[dashed] (A18) -- (A23);
  
 \draw[dashed] (A1) -- (A24);
 \draw[dashed] (A25) -- (A26);
  \draw[dashed] (A4) -- (A27);
  \draw[dashed] (A5) -- (A28);
  \draw[dashed] (A29) -- (A30) -- (A31);
  \draw[dashed] (A25) -- (A30);
  \draw[dashed] (A29) -- (A32);
  \draw[dashed] (A30) --(A33);

\end{tikzpicture}
\end{center}
\caption{The top vertex is the initial braid word $\beta \cdot \Delta = s_1 s_2 s_1 s_2 \cdot  s_1 s_2 s_1$, the bottom vertex represents $s_2 s_1 s_2$. Demazure weaves $\beta \cdot \Delta \to \Delta$ with only $6$-valent vertices $s_1 s_2 s_1 \to s_2 s_1 s_2$ and $3$-valent vertices allowed correspond to monotone paths from the top vertex to the bottom vertex.}
 \label{1212121}
\end{figure}

It appears that there is a way to draw the graph as a $1-$skeleton of a $3-$dimensional polytope with 21 facets, although we did not try to find an explicit polytopal realization. The 2-dimensional (polygonal) faces correspond to the elementary moves between the paths. All the 2-dimensional faces are 4- or 8-gons:

\begin{itemize}
\item[(1)] Yellow quadrilaterals correspond to mutations between the pairs of paths from $sss$ to $s$.
\item[(2)] Other quadrilaterals correspond to 
isotopies exchanging the heights of vertices in a weave.
\item[(3)] Octagons correspond to the outer octagons in Section \ref{sec: 12121}, they are formed by paths (A) and (E). Note that the inner vertices and paths do not appear since the moves $212 \to 121$ are not allowed. 
\end{itemize}

We have no words containing $1121$ or $1211$ in our example, so the pentagons from \ref{sec: 1212} do not appear. In order to cover all Demazure weaves, we should allow the moves $212 \to 121$. In Figure \ref{1212121}, we should then replace each octagonal face by 5 faces from \ref{sec: 12121}.

Two weaves are equivalent if the corresponding paths are separated by several white faces, and related by a single mutation  if they are separated by one yellow face and several white faces. If we start from some monotone path, replace its part given by some edges of a 2-dimensional face by all the other edges of this face, and repeat this procedure by modifying paths across 2-dimensional faces until we come back to the original path (making the 360 degrees turn around the vertical axis in the polytope along the way), we go by all edges of the mutation graph of equivalence classes of paths exactly once. The mutation graph is a pentagon. 

Each Demazure weave $\ww: 1212121 = \beta \cdot \Delta \to \Delta = 121$ is equivalent to a Demazure weave $\ww': \beta \cdot \Delta \to 212$ concatenated with a single $6-$valent vertex $212 \to 121.$ Indeed, if the last vertex in $\ww$ is a $6-$valent vertex $\beta_{\ell(\ww) - 1}(\ww) = 212 \to 121 = \beta_{\ell(\ww)}(\ww),$ then we define $\ww'$ to be $\ww$ with this vertex being removed. By construction, $\ww$ is then the concatenation of $\ww'$ with this vertex $\beta_{\ell(\ww) - 1}(\ww) \to \beta_{\ell(\ww)}(\ww).$ Otherwise, we define $\ww'$ to be $\ww$ concatenated with a vertex $121 \to 212.$ Then $\ww$ is equivalent to $\ww'$ concatenated with a vertex $212 \to 121$ via a cancellation move from Section \ref{sec: cancellation}. These arguments show that the mutation graph of Demazure weaves $\beta \cdot \Delta \to \Delta$ is isomorphic to the mutation graph of Demazure weaves $\beta \cdot \Delta \to 212.$ Thus, the former, i.e. the mutation graph of $\beta,$ is also a pentagon.

The appearance of the pentagon is not completely unexpected. Indeed, it coincides with the mutation graph of the torus braid $(2, 3)$.
Since the Legendrian links $\Lambda(3, 2) = \Lambda(2, 3)$ coincide, and the corresponding augmentation varieties are isomorphic. The fact that the mutations graphs of Demazure weaves of these two braids are isomorphic to each other is to be expected.
}
\end{ex}

Let us conclude this subsection on examples with two conjectures. First, inspired by the (Legendrian) equivalence between certain Legendrian $(2,n)$- and $(n,2)$-torus links, and Lemma \ref{lem:2n}, we state the following conjecture.

\begin{conj}
For the $(n, 2)$ torus braid $\beta$, the mutation graph of Demazure weaves $\beta \cdot \Delta \to \Delta$ is the $1$-skeleton of the $(n-1)$-dimensional associahedron.
\end{conj}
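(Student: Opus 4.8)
The plan is to reduce the statement to the $(2,n)$ case, where it is already known (Lemma~\ref{lem:2n}), by showing that the mutation graph of Demazure weaves $\gamma\Delta\to\Delta$ depends only on the Legendrian isotopy type of $\Lambda(\gamma)$ and not on the chosen positive braid word $\gamma$. Since $\Lambda\big((\sigma_1\cdots\sigma_{n-1})^2\big)\simeq\Lambda(\sigma_1^n)$ as Legendrian links — both being the max-$\tb$ representative of the torus link $T(2,n)$, which is the input hypothesis of the conjecture — this invariance immediately produces an isomorphism between the mutation graph of the $(n,2)$ torus braid and that of the $(2,n)$ torus braid, and the latter is the $1$-skeleton of the $(n-1)$-dimensional associahedron by Lemma~\ref{lem:2n}. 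Conceptually, the invariance is the weave shadow of the fact, from \cite{CZ}, that a Demazure weave $\gamma\Delta\to\Delta$ encodes an embedded exact Lagrangian filling of $\Lambda(\gamma)$, that equivalent weaves yield Hamiltonian-isotopic fillings, and that weave mutations correspond to Lagrangian disk surgeries; a Legendrian isotopy then induces a bijection on Hamiltonian-isotopy classes of such fillings intertwining the surgery operations, hence an isomorphism of mutation graphs.

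To make this concrete I would realize the isotopy $\Lambda\big((\sigma_1\cdots\sigma_{n-1})^2\big)\simeq\Lambda(\sigma_1^n)$ through elementary moves on positive braid words: using a braid relation one isolates a single generator $\sigma_{n-1}$ in $(\sigma_1\cdots\sigma_{n-1})^2$, cyclically rotates it to the end, destabilizes, and repeats, so that $(\sigma_1\cdots\sigma_{n-1})^2$ Markov-reduces to $\sigma_1^n$. Each such move is then lifted to the level of weaves. A cyclic rotation $\gamma_1\gamma_2\mapsto\gamma_2\gamma_1$ corresponds to a Legendrian isotopy of $\Lambda(\gamma)$, hence should induce a bijection between the moduli of weaves on the two words, realized by an explicit invertible weave; a positive stabilization $\gamma\mapsto\gamma\sigma_m$ should be lifted to a ``stabilization weave'' identifying each Demazure weave $\gamma\Delta\to\Delta$ with the Demazure weave $\gamma\sigma_m\Delta\to\Delta$ that opens the new crossing in the canonical way. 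One then verifies, using the equivalence moves of Subsection~\ref{sec: movies} and the deletion-pair criterion of Theorem~\ref{thm: one 3v}, that each lift carries weave equivalences to weave equivalences and weave mutations to weave mutations; composing all the lifts along the Markov reduction yields the desired isomorphism of mutation graphs.

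The main obstacle is precisely the assertion that the set of \emph{equivalence classes} of Demazure weaves $\gamma\Delta\to\Delta$, together with the mutation relation, is a Legendrian invariant: in the excerpt this is only available in weak forms (Theorem~\ref{thm:main3} realizes such weaves by orderings of crossing openings and joins any two toric charts by mutations; Theorem~\ref{thm: ben}(b) shows the mutation graph is connected; Theorem~\ref{thm:aug vs braid} identifies $X_0(\gamma\Delta;w_0)$ with an augmentation variety), but it is not proven there that inequivalent Demazure weaves give distinct toric charts, nor that the map from weave-equivalence classes to fillings — or to clusters — is a bijection; this is exactly what is ``expected'' in the discussion of the mutation graph. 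A clean way to close the gap would be to invoke the cluster structure on $X_0(\gamma\Delta;w_0)$ and on $\Aug(\gamma)$ of \cite{GSW}, lifted to characteristic $0$ via \cite{CN}, together with the identification of weave mutations with cluster mutations and of weave-equivalence classes with seeds: the mutation graph is then the exchange graph of this cluster algebra, which depends only on $\Lambda(\gamma)$, and for $T(2,n)$ it is the type-$A_{n-1}$ cluster algebra whose exchange graph is the $(n-1)$-dimensional associahedron. Absent that input, the alternative is the purely combinatorial route: enumerate the Demazure weaves $(\sigma_1\cdots\sigma_{n-1})^2\Delta\to\Delta$ up to equivalence directly, using the deletion-pair partial order on the crossings and the characterization of Theorem~\ref{thm: one 3v}, and build a mutation-equivariant bijection with rooted binary trees on $n+1$ leaves; here the difficulty is to control how the deletion-pair poset and the set of missing crossings evolve as the crossings of $(\sigma_1\cdots\sigma_{n-1})^2$ are opened one at a time, a bookkeeping problem already nontrivial at $n=3$ in Example~\ref{ex: misha}.
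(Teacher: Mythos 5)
The statement you are addressing is stated in the paper only as a conjecture: the authors give no proof, offering instead the single case $n=3$ by direct enumeration (Example \ref{ex: misha}, where the mutation graph of $1212$ is computed by hand to be a pentagon) together with the heuristic that the Legendrian equivalence $\Lambda(n,2)\simeq\Lambda(2,n)$ and Lemma \ref{lem:2n} should force the two mutation graphs to agree. Your proposal is precisely that heuristic, and to your credit you identify exactly why it is not yet a proof: nothing in the paper shows that the mutation graph of Demazure weaves $\gamma\Delta\to\Delta$, as a graph on \emph{equivalence classes} of weaves, is an invariant of the Legendrian link $\Lambda(\gamma)$. Theorem \ref{thm: ben}(b) gives connectivity of the mutation graph and Theorem \ref{thm:main3} realizes the charts by crossing openings, but neither shows that inequivalent Demazure weaves yield distinct toric charts, nor that Legendrian isotopies (in particular conjugation and Markov destabilization) act on equivalence classes of weaves compatibly with mutation. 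Your fallback of invoking the cluster structure of \cite{GSW} would settle the conjecture only if the dictionary (weave-equivalence classes $=$ seeds, weave mutations $=$ cluster mutations) were available, but that dictionary is exactly what the paper says is ``expected'' and does not prove; it cannot be cited as input. So the proposal is a plan with an honestly flagged hole, not a proof, and the hole is the entire content of the conjecture.

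Beyond the gap you name, the Markov-reduction route has a concrete quantitative difficulty you do not address. Destabilization changes the number of strands, and a Demazure weave $\beta\Delta\to\Delta$ has exactly $\ell(\beta)$ trivalent vertices: $2(n-1)$ for $\beta=(\sigma_1\cdots\sigma_{n-1})^2$ versus $n$ for $\sigma_1^n$. Any isomorphism of mutation graphs must therefore account for $n-2$ ``extra'' trivalent vertices on the $(n,2)$ side that never support a mutation changing the equivalence class; this is visible already in Example \ref{ex: misha}, where the weaves $1212121\to 212$ have four trivalent vertices but only a pentagon's worth of mutation directions, and where the 2-faces of the enumeration graph must be sorted by hand into mutations, isotopies, and octagons coming from the $12121$-cycles. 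Likewise, the cyclic rotation $\gamma_1\gamma_2\mapsto\gamma_2\gamma_1$ is a Legendrian isotopy of the closure but is not realized by any of the local weave models of Section \ref{sec: movies}, so the ``rotation weave'' you posit would have to be constructed and shown to be mutation-equivariant from scratch. A correct write-up would either (i) prove the invariance statement (injectivity of weave classes into toric charts plus equivariance under Markov moves), which is new mathematics, or (ii) carry out the direct combinatorial enumeration for $(\sigma_1\cdots\sigma_{n-1})^2\Delta\to\Delta$ via the deletion-pair order and Theorem \ref{thm: one 3v}, which you correctly note is already delicate at $n=3$.
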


Our conjectural 3-dimensional polytope on Figure \ref{1212121} is similar to polytopes from \cite[Figure 1]{McConville} where the vertices encode equivalence classes of reduced expressions of elements in the braid group and edges correspond to braid moves (also oriented from $s_i s_{i+1} s_i$ 
to $ s_{i+1} s_i s_{i+1}$). Reduced expressions are considered to be equivalent if they are related by a sequence of moves  $s_i s_j \to s_j s_i, |i - j| \geq 2$. This equivalence relation is trivial in our $3-$strand case. It would be interesting to construct such polytopes for other braids. 

\begin{remark} The polytopes in \cite{McConville} are the Hasse graphs of \emph{second higher Bruhat orders} introduced by Manin and Schechtman \cite{MS1, MS2}, see also 
\cite{Ziegler}. Given an arbitrary braid $\beta,$ we can consider a similar oriented graph $D_{\beta}$. First, we associate a vertex to the braid $\beta$. We draw edges corresponding to moves 
 $ss \to s,  \, s_i s_{i+1} s_i \to s_{i+1} s_i s_{i+1},$ and $s_i s_j \to s_j s_i, |i - j| \geq 2$. We then contract all edges corresponding to moves $s_i s_j \to s_j s_i, |i - j| \geq 2$. This defines a poset with covering relations defined by edges. An element of the poset is an equivalence class of  positive braid words with  Demazure product $\Delta$, with a certain extra decoration. Words are considered to be equivalent if they are related by a sequence of moves  $s_i s_j \to s_j s_i, |i - j| \geq 2$. The decoration can be understood in terms of subsets of the set of crossings of the braid $\beta$; however, it is nontrivial to give a precise definition because of the issue discussed in Remark \ref{remark: injection_issue}. We can also define the decoration in a non-combinatorial way by using variables from Section \ref{subsec:weqrat}. 

If we forget the decoration, this poset becomes a poset on the set of words with Demazure product $\Delta$. Its analogue for all expressions of $\Delta$ and covering relations $ss \to s$ replaced by $ss \to e$ was defined by Elias \cite{Elias} as an extension of the second higher Bruhat order to necessarily reduced words. It was used in the proof of the main result of the work \cite{Elias}, which we translated to our language as Theorem \ref{thm: ben}. Our weaves thus resemble saturated chains in the second higher Bruhat order, which in turn can be seen as elements of the third higher Bruhat order. However, our equivalence relations differ from the one considered by Manin and Schechtman. Note also that Thomas \cite{Thomas} defined the $0$th Bruhat order to be the Boolean lattice. As we discussed in Example \ref{subseq:2strand}, Demazure weaves in $\mathfrak{W}_2$ can be seen as maximal chains in the $0$th Bruhat order. In the present article, we will not explore the link between weaves and the theory of higher Bruhat orders further.

The graph $D_{\beta}$ is not always a $1$-skeleton of a polytope: e.g. $D_{12122}$ is only a $1$-skeleton of a union of two quadrilaterals. However, we have the following expectation.

\begin{conj}
For an arbitrary positive braid word $\beta,$ the poset complex of the oriented graph $D_{\beta}$  is either a sphere or a ball.
If it is a sphere, it admits a polytopal realization.
\end{conj}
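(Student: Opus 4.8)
\emph{Proof proposal (a program rather than a complete argument, since the statement is conjectural).}

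The plan is to realize the graph $D_\beta$ as the $1$-skeleton of an explicit regular CW complex $\|D_\beta\|$ and then analyze that complex directly. The first step is to identify all the cells. After contracting commutations with distant colors, the proof of Theorem \ref{thm: ben} shows (via Elias' classification of ambiguities) that every relation among the moves $ss\to s$ and $s_is_{i+1}s_i\to s_{i+1}s_is_{i+1}$ is generated by five local patterns: the $iii$-square, which is the mutation face of Section \ref{sec: mutation}; the two $5$-cycles on $ii(i+1)i$ and $i(i+1)ii$ from Section \ref{sec: 1212}; the $12121$-cycle from Section \ref{sec: 12121}; and the Zamolodchikov octagon from Section \ref{sec: Zam}. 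One attaches a $2$-cell along each occurrence of these patterns, a $3$-cell along each next-order syzygy, and so on. For this attaching procedure to be unambiguous, I would first prove that $D_\beta$, with the partial order generated by the moves, is a bounded poset with unique maximum $\beta$ (and, when $\delta(\beta)=w_0$, unique minimum $\Delta$); the local structure near the facets is controlled by Theorem \ref{thm: one 3v}, which pins down exactly which trivalent vertices (missing crossings) can occur at a fixed step.

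The second step is the ball/sphere dichotomy. I expect it to follow from showing that $\|D_\beta\|$ is shellable and is locally a PL manifold: a codimension-one cell should lie in exactly one maximal cell (boundary of a ball) or exactly two (a closed manifold), and the shelling, together with simple connectivity and free homology concentrated in top degree, forces a PL ball or PL sphere. The natural engine is an induction on $\ell(\beta)$ mirroring the crossing-opening calculus of Section \ref{sec: opening}: deleting the first crossing of $\beta$ exhibits $\|D_{\beta'}\|$ as a facet-closed subcomplex, and the weaves whose first trivalent vertex does not involve that crossing should assemble the complementary region, so that $\|D_\beta\|$ is built from $\|D_{\beta'}\|$ by a controlled sequence of stellar subdivisions and of flips corresponding to the yellow mutation faces. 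The $2$-strand family of Example \ref{subseq:2strand}, where $\|D_{s^{l}}\|$ is the solid $(l-1)$-cube, and the small-length cases (the polygons of Sections \ref{sec: 1212} and \ref{sec: 12121}, and the $21$-facet polytope behind Figure \ref{1212121}) serve both as base cases and as consistency checks.

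The third step is polytopality in the spherical case. Here I would try to present $\|D_\beta\|$ as a fiber polytope, or as a cell in an extension space: the closing Remark identifies $D_\beta$ as a cousin of the Manin--Schechtman higher Bruhat orders \cite{MS1,MS2,Ziegler}, for which sphericity is known, and several subfamilies are already polytopal (the $(n-1)$-associahedron for $(2,n)$-torus braids by Lemma \ref{lem:2n}, and the generalized associahedra attached to the cluster structure of \cite{GSW}). Concretely, one would match the facet fan of $\|D_\beta\|$ with the normal fan of a secondary-polytope-type object associated to $\beta$, or realize the mutations as bistellar flips in a point configuration whose chamber complex is polytopal.

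The main obstacle, I expect, is exactly the presence of the contractions $ss\to s$, i.e. the non-reducedness that produces weave mutations. In the reduced world ($ss\to e$) the relevant spaces are governed by the well-understood single-element extension spaces of alternating oriented matroids, but allowing $ss\to s$ breaks the oriented-matroid symmetry precisely at the point where Elias needed a separate argument, so there is no formal reason the resulting chamber complex stays spherical rather than acquiring Mnev-type pathologies. Consequently the polytopal-realization half is the genuinely hard part: even granting that $\|D_\beta\|$ is a PL sphere, producing an honest polytope (rather than one of the known non-polytopal spheres) will require either an explicit coordinatization of the facets — feasible for $2$- and $3$-strand braids — or an abstract argument via the generalized Baues problem, which fails in related settings and would have to be specialized here, plausibly by using the holomorphic symplectic structure and toric charts of Theorem \ref{thm:main1} to rigidify the combinatorics.
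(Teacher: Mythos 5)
This statement is a conjecture: the paper offers no proof of it, so there is no argument of the authors to compare yours against, and you are right to present a program rather than a purported proof. As a program it is sensible and tracks the evidence the paper actually assembles: the five relation patterns you propose as $2$-cells are exactly the ambiguities from Elias' diamond lemma \cite{Elias} that drive Theorem \ref{thm: ben}; the $2$-strand cubes and the $21$-facet complex of Figure \ref{1212121} are the right base cases; and the framing via higher Bruhat orders, extension spaces and the generalized Baues problem is the one the paper itself gestures at through \cite{MS1,Ziegler,McConville}. Your diagnosis that the non-reduced move $ss\to s$ is precisely where the oriented-matroid machinery breaks, and that polytopality is the genuinely hard half, is accurate.

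That said, two steps of the program conceal real gaps. First, the conjecture concerns the poset (order) complex of $D_\beta$, which is canonically defined, whereas your plan replaces it by a regular CW complex $\|D_\beta\|$ obtained by attaching ``a $3$-cell along each next-order syzygy, and so on.'' Elias' result classifies only the two-dimensional ambiguities; no classification of higher syzygies is available, so $\|D_\beta\|$ is not actually defined beyond its $2$-skeleton, and proving that $D_\beta$ is the face poset of a regular CW complex (so that the order complex is its barycentric subdivision) is essentially equivalent to the conjecture itself rather than a preliminary step. It would be safer to attack the order complex directly, e.g.\ via lexicographic shellability or the interval-by-interval homotopy analysis of \cite{McConville}. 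Second, two of your structural claims need repair: the unique minimum of $D_\beta$ when $\delta(\beta)=w_0$ is the sink of the oriented braid-move graph on reduced words --- for $n=3$ this is the class of $212$, not the word $\Delta=121$ fixed in Example \ref{ex: half twist}, cf.\ Example \ref{ex: misha} --- and for $\delta(\beta)\neq w_0$ boundedness below is not obvious; moreover the induction ``delete the first crossing'' does not split $\|D_\beta\|$ into $\|D_{\beta'}\|$ plus a complementary region, because by Theorem \ref{thm: one 3v} the first trivalent vertex of a Demazure weave may be located at any of several crossings of $\beta$, so one obtains many overlapping subcomplexes indexed by the possible missing crossings, and controlling their nerve is where the actual work would lie.
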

\end{remark}
\color{black}


\subsection{Triangulations and weaves}\label{sec: admissible}


This subsection provides two types of constructions for weaves, by using certain labeled triangulations, and a relation between them. Specifically, we present the following constructions:

\begin{enumerate}
    \item From an admissible triangulation $\tau$, as in Definition \ref{def:admissible_triang} below, we construct a weave $\ww(\tau)$. There are choices in the construction of $\ww(\tau)$, but any two sets of choices lead to equivalent weaves.\\

    \item From a Demazure triangulation $\underline{\tau}$, as in Definition \ref{def:Demazure_triang} below, we construct a weave $\ww(\underline{\tau})$. There are choices in the construction of $\ww(\underline{\tau})$ and, in contrast to (1) above, different choices might lead to non-equivalent weaves. Nevertheless, any two weaves constructed from the same Demazure triangulation $\underline{\tau}$ are mutation equivalent.\\

    \item In Proposition \ref{prop:subdivide} below we show that any Demazure triangulation can be subdivided to
    an admissible triangulation, and explain how the resulting weaves -- via (1) and (2) above -- are related.
\end{enumerate}

\noindent In this subsection, weaves are not {\it a priori} sliced. The use of the word {\it weave} in this subsection will refer to Definition \ref{def:weave} unless otherwise specified.

\subsubsection{Admissible triangulations and weaves}\label{sssec:admissible_triang} Given a positive braid word $\beta$, let us write the letters (crossings) of $\beta\cdot \Delta$ on the sides of a $(\ell(\beta) + \ell(w_0))-$gon.

\begin{definition}\label{def:admissible_triang}
Let $P\sse\R^2$ be a regular $n$-gon. Consider 
triangulation $\tau$ of $P$ such that the vertices and edges of $P$ are vertices and edges of the triangulation $\tau$, though $\tau$ may contain vertices inside of $P$. By definition, $\tau$ is said to be admissible if each of the edges is oriented and labeled by a permutation $u$ such that every triangle is one of the following two types:
\begin{center}
\begin{tikzpicture}
\begin{scope}[decoration={
    markings,
    mark=at position 0.5 with {\arrow{stealth}}}
    ] 
\draw[postaction={decorate}] (0,0)--(1,2);
\draw[postaction={decorate}] (1,2)--(2,0);
\draw[postaction={decorate}] (0,0)--(2,0);
\draw[postaction={decorate}] (3,0)--(4,2);
\draw[postaction={decorate}] (4,2)--(5,0);
\draw[postaction={decorate}] (3,0)--(5,0);
\end{scope}
\draw (0.3,1.1) node {$s_i$};
\draw (1.7,1.1) node {$s_i$};
\draw (1,-0.2) node {$s_i$};
\draw (3.3,1.1) node {$u$};
\draw (4.7,1.1) node {$v$};
\draw (4,-0.2) node {$uv$};
\end{tikzpicture}
\end{center}
That is, either all three sides of one triangle of $\tau$ are labeled by the same simple reflection $s_i$ and the edges do not form a 3-cycle, or the sides are labeled by permutations $u,v$ and $u\cdot v$ such that $\ell(u\cdot v)=\ell(u)+\ell(v)$ as depicted above (and edge orientations also are as in the figure). By definition, we call such triangles \emph{admissible}.
\end{definition}

\begin{remark}
Note that the edges in $\tau$ are oriented, but the triangles are \emph{not} oriented, and so neither is the triangulation $\tau$ itself. The orientation on edges is used below only to illustrate the way we read and concatenate edge labels: if we follow the edge labeled by $u$ in the direction opposite to its orientation, then we read the label as $u^{-1}$.
\end{remark}

Now, given an admissible triangulation $\tau$ as in Definition \ref{def:admissible_triang}, we can algorithmically construct a weave $\ww(\tau)$ associated to it. For this, we make some additional choices. This is done as follows:

\begin{enumerate}
    \item Choose a reduced expression for the permutation on every edge. For triangles of the second type, we then concatenate the reduced expressions for $u$ and $v$ and get a reduced expression for $uv$. This can be represented by a (piece of a) weave with no vertices at all. Now, this resulting reduced expression for $uv$ is possibly different from the one initially assigned to $uv$. Since two reduced expressions are related by a sequence of braid moves, which are translated to 6- and 4-valent vertices for weaves, we can draw a weave (just with 4- and 6-valent vertices) representing that sequence and 
    connecting these two reduced expressions for $uv$.
    Note that the resulting weave on such a triangle of the second type depends on this choice of a sequence of braid moves. That said, any two such weaves are related by weave equivalences, by Theorem~\ref{thm: ben}.(a).\\

    \item For triangles of the first type, with edges labeled by $s_i$, we associate a weave consisting of a
    single trivalent vertex of the corresponding color $s_i$.

\end{enumerate}

\noindent In summary, the possible choices in $(1)$ give equivalent weaves and there are no choices in $(2)$, for triangles of the first type. Therefore, this assignment of a weave for each triangle in $\tau$ glues up to a weave $\ww(\tau)$ on the entire polygon $P$, well-defined up to weave equivalence. (Cf. Remark \ref{rem: equivalent weaves}.)

We can encode some of the moves between weaves in terms of admissible triangulations, as follows:

\begin{itemize}
    \item[(i)] Given three permutations $u$, $v$ and $w$ such that $\ell(uvw)=\ell(u)+\ell(v)+\ell(w)$, we can make the following moves, which clearly do not change the weave, up to equivalence:

\begin{center}
\begin{tikzpicture}
\begin{scope}[decoration={
    markings,
    mark=at position 0.5 with {\arrow{stealth}}}
    ] 
\draw[postaction={decorate}] (0,0)--(0,2);
\draw[postaction={decorate}] (0,2)--(2,2);
\draw[postaction={decorate}] (0,0)--(2,0);
\draw[postaction={decorate}] (2,2)--(2,0);
\draw[postaction={decorate}] (0,0)--(2,2);
\draw (-0.2,1) node {$u$};
\draw (1,2.2) node {$v$};
\draw (2.2,1) node {$w$};
\draw (1,-0.2) node {$uvw$};
\draw (0.8,1.2) node {$uv$};

\draw (2.5,1) node {$\sim$};

\draw[postaction={decorate}] (3,0)--(3,2);
\draw[postaction={decorate}] (3,2)--(5,2);
\draw[postaction={decorate}] (3,0)--(5,0);
\draw[postaction={decorate}] (5,2)--(5,0);
\draw[postaction={decorate}] (3,2)--(5,0);
\draw (2.8,1) node {$u$};
\draw (4,2.2) node {$v$};
\draw (5.2,1) node {$w$};
\draw (4,-0.2) node {$uvw$};
\draw (4.2,1.2) node {$vw$};
\end{scope}
\end{tikzpicture}
\end{center}

\begin{center}
\begin{tikzpicture}
\begin{scope}[decoration={
    markings,
    mark=at position 0.5 with {\arrow{stealth}}}
    ] 
\draw[postaction={decorate}] (0,0)--(1,2);
\draw[postaction={decorate}] (0,0)--(2,0);
\draw[postaction={decorate}] (1,2)--(2,0);
\draw[postaction={decorate}] (0,0)--(1,2/3); 
\draw[postaction={decorate}] (1,2)--(1,2/3);
\draw[postaction={decorate}] (1,2/3)--(2,0);
\draw (0.5,1.2) node {$u$};
\draw (1.7,1.2) node {$vw$};
\draw (1,-0.2) node {$uvw$};
\draw (1.1,1.2) node {\scriptsize $v$};
\draw (0.5,0.5) node {\scriptsize $uv$};
\draw (1.5,0.5) node {\scriptsize $w$};

\draw (2.5,1) node {$\sim$};

\draw[postaction={decorate}] (3,0)--(4,2);
\draw[postaction={decorate}] (3,0)--(5,0);
\draw[postaction={decorate}] (4,2)--(5,0);
\draw (3.5,1.2) node {$u$};
\draw (4.7,1.2) node {$vw$};
\draw (4,-0.2) node {$uvw$};
\end{scope}
\end{tikzpicture}
\end{center}

\noindent For unlabeled triangulations, these are precisely the \emph{Pachner moves} (also known as \emph{bistellar flips}) in dimension $2$. The result of Pachner \cite{Pachner} states that all triangulations of a polygon are related by such moves (the general version of this result holds for piecewise linear manifolds and bistellar flips in higher dimensions).\\

\item[(ii)] If we have four permutations $u,v,w,t$ such that $uv=tw$, then 
$u^{-1}t=vw^{-1}$. Assuming that all these products are reduced, we have a move

\begin{center}
\begin{tikzpicture}
\begin{scope}[decoration={
    markings,
    mark=at position 0.5 with {\arrow{stealth}}}
    ] 
\draw[postaction={decorate}] (0,0)--(0,2);
\draw[postaction={decorate}] (0,2)--(2,2);
\draw[postaction={decorate}] (0,0)--(2,0);
\draw[postaction={decorate}] (2,0)--(2,2);
\draw[postaction={decorate}] (0,0)--(2,2);
\draw (-0.2,1) node {$u$};
\draw (1,2.2) node {$v$};
\draw (2.2,1) node {$w$};
\draw (1,-0.2) node {$t$};
\draw (1,1.2) node {\scriptsize $uv=tw$};

\draw (2.5,1) node {$\sim$};

\draw[postaction={decorate}] (3,0)--(3,2);
\draw[postaction={decorate}] (3,2)--(5,2);
\draw[postaction={decorate}] (3,0)--(5,0);
\draw[postaction={decorate}] (5,0)--(5,2);
\draw[postaction={decorate}] (3,2)--(5,0);
\draw (2.8,1) node {$u$};
\draw (4,2.2) node {$v$};
\draw (5.2,1) node {$w$};
\draw (4,-0.2) node {$t$};
\draw (4,1.3) node {\scriptsize $u^{-1}t=vw^{-1}$};
\end{scope}
\end{tikzpicture}
\end{center}

\noindent Note that in this case we get the equations 
$$
\ell(u)+\ell(v)=\ell(t)+\ell(w),\ \ell(u)+\ell(t)=\ell(v)+\ell(w)
$$
which imply
$$
\ell(u)=\ell(w),\ \ell(v)=\ell(t).
$$

\item[(iii)] We can encode the 1212-move from Section \ref{sec: 1212} as the following move between triangulations:

\begin{center}
\includegraphics[scale=0.9]{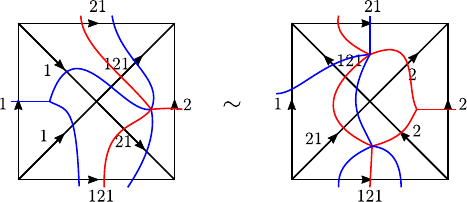}
\end{center}
\end{itemize}

\begin{ex}
\label{ex: braid move triangulation}
The weave corresponding to either of the following two diagrams is a 6-valent vertex: 

\begin{center}
\includegraphics[scale=0.8]{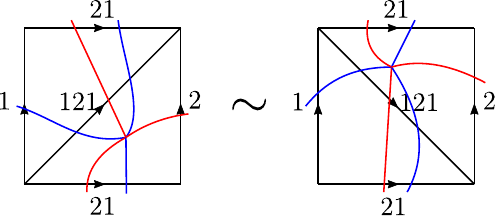}
\end{center}
The choice of a reduced expression ($121$ or $212$) on the diagonal determines the triangle containing this 6-valent vertex. This illustrates $(ii)$ above.
\end{ex}

\begin{remark}
Conversely to the construction above, given a weave we can consider the dual planar graph. It has triangular regions corresponding to 3-valent vertices in the weave, hexagonal regions corresponding to 6-valent vertices, and quadrilateral regions corresponding to 4-valent vertices. By choosing any admissible triangulation of each hexagon and quadrilateral, we get a triangulation of the entire polygon. The choice of the triangulation does not matter - for example, for the hexagon with sides labeled 1,2,1,2,1,2 there are 14 triangulations and 12 of them (those that do not contain triangles formed by three diagonals) are admissible. From Example \ref{ex: braid move triangulation}, any two of them can be related by a sequence of the above moves, and correspond to equivalent weaves. In conclusion, we have a construction starting with a weave and resulting in a triangulation and vice versa. These constructions depend on choices and neither of them is a bijection. For future work, it would be interesting to find a complete set of moves between triangulations such that the corresponding equivalence classes are in bijection with the equivalence classes of weaves.
\end{remark}


\subsubsection{Demazure triangulations}
\label{sec: demazure}
The correspondence between weaves and admissible triangulations in Subsection \ref{sssec:admissible_triang} above is clear combinatorially. Nevertheless, it has a disadvantage: given a triangulation, it is unclear if the corresponding weave is simplifying or Demazure (more precisely, it is unclear whether the underlying colored graph can be drawn as a sliced weave which is simplifying or Demazure, respectively) or, geometrically,  if the corresponding Lagrangian surface is embedded in $\R^4$ (instead of merely immersed). In order to resolve this issue, we now introduce a special class of triangulations -- with different labeling rules -- which we refer to as {\it Demazure} triangulations. We stress that Demazure triangulations are not necessarily admissible triangulations, as defined in Subsection \ref{sssec:admissible_triang} above. That said, Proposition~\ref{prop:subdivide} below explains how to produce admissible triangulations from Demazure triangulations by a subdivision process.\\

Let $\beta$ be a positive $n$-braid word with Demazure product $\delta(\beta)=w_0$ and set $N:=\ell(\beta)+1$.
Consider an $N$-gon $P$ whose vertices are labeled clockwise with integers from $0$ to $N-1$. Label the first $N-1$ sides clockwise by the letters of $\beta$, and label the last (or {\it bottom}) side connecting 
$N-1$ 
and $0$
by $w_0$. By definition, such labeled polygon $P$ is said to be labeled according to $\beta$.

\begin{definition}\label{def:Demazure_triang}
Let $\beta$ be a positive $n$-braid word with Demazure product $\delta(\beta)=w_0$ and $P$ a polygon labeled according to $\beta$. By definition, a {\it Demazure triangulation} $\underline{\tau}(P,\beta)$ is a non-oriented triangulation of $P$ with edges labeled by permutations such that:

\begin{enumerate}
    \item The only vertices of $\underline{\tau}(P,\beta)$ are the vertices of $P$.\\

    \item Every edge is labeled by a permutation as follows. An edge $e$ in $\underline{\tau}(P,\beta)$ divides the boundary $\dd P$ into two connected components, and the labels in the connected component of $\dd P$ that does not contain the $w_0$-label spell a subword $\beta'\sse\beta$. Then the permutation assigned to $e$ is the Demazure product $\delta(\beta')$ of such subword $\beta'$.
\end{enumerate}
\end{definition}

\begin{center}
	\begin{figure}[h!]
		\centering
		\includegraphics[scale=0.65]{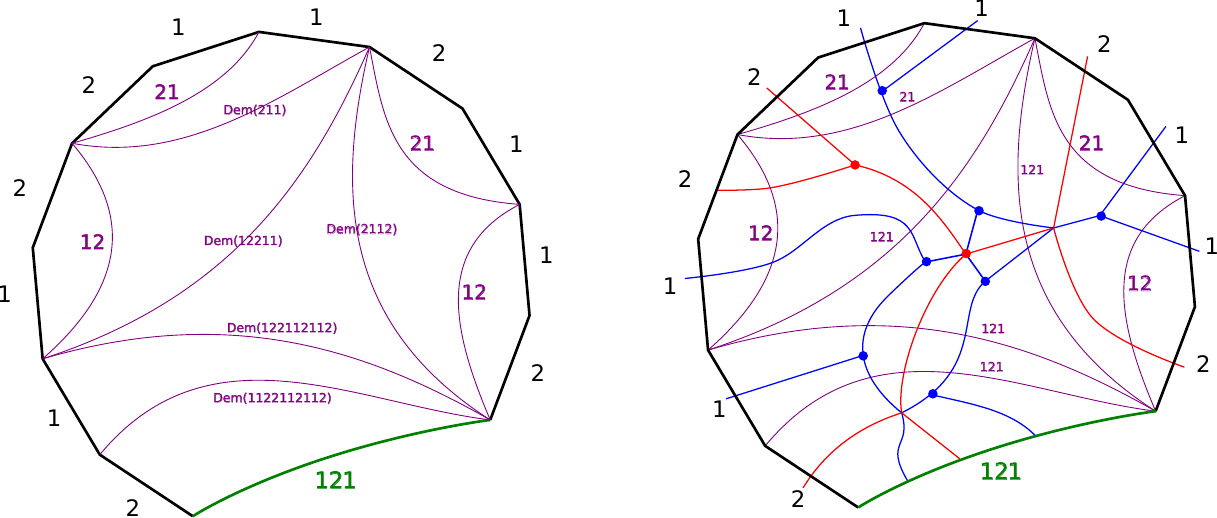}
		\caption{(Left) A Demazure triangulation for the braid word $\beta=\sigma_2\sigma_1^2\sigma_2^2\sigma_1^2\sigma_2\sigma_1^2\sigma_2$. The Demazure product $w_0=s_1s_2s_1$ and its associated edge are depicted in green for visual emphasis.  (Right) A possible weave associated to this Demazure triangulation.}
		\label{fig:Triangulation3graphEx_2}
	\end{figure}
\end{center}

\noindent First, a Demazure triangulation, as in Definition \ref{def:Demazure_triang}, is not necessarily admissible, as in Definition \ref{def:admissible_triang}. Note also that the edges of a Demazure triangulation are diagonals of the polygon $P$ and we sometimes refer to them as diagonals. Second, there are no choices in the definition of a Demazure triangulation beyond the triangulation of the polygon $P$ itself and the braid word for $\beta$. These two uniquely specify the labeling of the edges. Additional choices will be needed when we try to associate a weave to a Demazure triangulation. By definition, any Demazure triangulation of a polygon labeled according to $\beta$ is said to be a Demazure triangulation of $\beta$, and we often simply write $\underline{\tau}(\beta)$ for such a Demazure triangulation.

\begin{ex}
Figure \ref{fig:Triangulation3graphEx_2} (left) illustrates an instance of a Demazure triangulation. The positive 3-braid word is $\beta=\sigma_2\sigma_1^2\sigma_2^2\sigma_1^2\sigma_2\sigma_1^2\sigma_2$, whose Demazure product is indeed $w_0=s_1s_2s_1$.
\end{ex}

\begin{definition}
Given a Demazure triangulation, we define the height of the bottom $w_0$-side to be 0. Given any other side or edge, we define its \emph{height} as the number of edges separating it from the bottom side plus one. 
\end{definition}

\begin{ex}
The following picture illustrates edges in a triangulation labeled by height, where the bottom side is labeled with $w_0$:

\begin{center}
\begin{tikzpicture}
\draw (-1,0)--(1,0)--(2,1)--(1.5,2)--(0,3)--(-1.5,2)--(-2,1)--(-1,0);
\draw (1,0)--(1.5,2)--(-1,0)--(0,3)--(-2,1);
\draw (0,-0.2) node {0};
\draw (0,0.2) node {$w_0$};
\draw (1.7,0.5) node {2};
\draw (-1.7,0.5) node {3};
\draw (1.9,1.5) node {2};
\draw (-1.9,1.5) node {4};
\draw (1,2.5) node {2};
\draw (-1,2.5) node {4};
\draw (1.1,1) node {1};
\draw (0,1) node {1};
\draw (-0.3,1.5) node {2};
\draw (-1.2,1.6) node {3};
\end{tikzpicture}
\end{center}
\end{ex}

By definition, in any triangle in a Demazure triangulation we have two sides of equal height $h$ labeled by some permutations $u,v$ and the third side of height $h-1$ labeled by $u\star v$.

\begin{definition}
Let $\triangle$ be a triangle with sides $u,v$ and $u\star v$. The {\em defect} of $\triangle$ is 
$$\dft(\triangle)=\ell(u)+\ell(v)-\ell(u\star v).$$
\end{definition} 

The following fact relates defects to the length of the boundary braid.

\begin{lemma}
Let $\beta$ be a positive braid word and $\triangle$ a triangle in a Demazure triangulation of $\beta$. Then
$$\sum_{\triangle}\dft(\triangle)=\ell(\beta).$$
\end{lemma}

\begin{proof}
Set $r:=\ell(\beta)$ to ease notation and let us prove the following more general statement: ``Suppose that a diagonal, or the side with vertices $0$ and $N$, encloses a braid $\beta'$ with $k$ crossings, and 
carries the label $u=\delta(\beta')$. Then the sum of defects of the triangles above this diagonal equals $k-\ell(u)$.'' The required statement in the lemma follows from this by setting $u=w_0$, so that we have $r+\ell(w_0)-\ell(w_0)=r$.\\

\noindent To prove this general statement, we use induction in $k\in\N$. Consider the triangle adjacent to the diagonal with $u$, its other sides are labeled $v$ and $w$ such that $v\star w=u$. By the assumption of induction, sum of defects above $v$ equals $k_1-\ell(v)$ and the sum of defects above $w$ equals $k_2-\ell(w)$, with $k_1 + k_2 = k$, so the total sum of defects equals $
k_1-\ell(v)+k_2-\ell(w)+\ell(v)+\ell(w)-\ell(u)=k-\ell(u).
$
\end{proof}

The next result justifies the chosen nomenclature for a {\it Demazure} triangulation. 

\begin{prop}
\label{prop:Demazure triangulation}
Let $\beta$ be a positive braid word with Demazure product $\delta(\beta)=w_0$, $\Delta$ a reduced word for $w_0$, and $\underline{\tau}(\beta)$ a Demazure triangulation associated to $\beta$. Then there exists a non-deterministic algorithm that constructs a Demazure weave $\ww(\underline{\tau}(\beta))\in\Hom_{\mathfrak{W}_n}(\beta,\Delta)$ from the Demazure triangulation $\underline{\tau}(\beta)$.
\end{prop}

Before its proof, we emphasize that there are choices in the construction of a Demazure weave $\ww(\underline{\tau}(\beta))$ from the Demazure triangulation $\underline{\tau}(\beta)$. Different choices for the same $\underline{\tau}(\beta)$ lead to mutation-equivalent, but {\it not} necessarily weave-equivalent, weaves.

\begin{proof}
First, observe that for any two permutations $u,v$ there exists a (non-unique) Demazure weave from the concatenation of any reduced braid words for $u$ and $v$ at the top to any reduced word for $u\star v$ at the bottom. This is immediate by the definition of Demazure product: we can go from $uv$ to $u\star v$ by a sequence of braid relations and moves $s_is_i\to s_i$, which correspond to 6-,4- and 3-valent vertices. Therefore, for each triangle of the Demazure triangulation, we have a Demazure weave from the concatenation of the reduced words for the labels of the sides of height $h$  to the reduced word for the label of the height $h-1$. Note that such Demazure weaves associated to a triangle might not be unique, but at least one exists. Given a Demazure triangulation, we now construct a Demazure weave as follows.

For each possible height $h$ there is a unique polygonal chain $L_h$ inside the Demazure triangulation, consisting of diagonals of the triangulation of height $h$ and (some) sides of the polygon of height at most $h$, and satisfying the following conditions:

\begin{itemize}
    \item[(i)] Its two endpoints coincide with the endpoints of the bottom side.\\

    \item[(ii)] It contains all diagonals of height $h$ precisely once, and each side of the polygon at most once.
\end{itemize}

For each diagonal or side appearing in $L_h$, we choose a reduced expression of its label; for the bottom side, we choose $\Delta$ as a reduced word for $w_0$, and  for each other side 
its label 
is a letter of 
$\beta$. The clockwise orientation of the boundary of the polygon induces an orientation on the bottom side.
We define $\beta_h$ to be the concatenation of all the words appearing as labels of the line segments appearing in $L_h$, where we orient $L_h$ from the target of the oriented bottom side to its source. Note that we have 
$\beta_0=\Delta$
and 
$\beta_{N-3}=\beta$. By the discussion above, for each height $h$, there exists a (non-unique) Demazure weave from  $\beta_h$ to $\beta_{h-1}$. By choosing such a weave for each height $h$ and 
concatenating them
we obtain a Demazure weave from $\beta$ to $w_0$.
\end{proof}

\begin{figure}[ht!]
\includegraphics[scale=0.8]{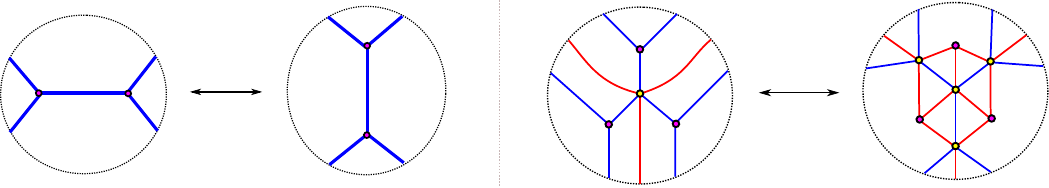}
\caption{(Left) A weave mutation. (Right) A move obtained from composing a sequence of weave equivalences, one weave mutation and another sequence of weave equivalences.}
\label{fig: triangle mutation}
\end{figure}

\begin{remark}\label{rmk:choices_Dem_triang_to_weave}
As stated in the proof above, note that there are several ways to fill a triangle of a Demazure triangulation with a (piece of a) weave. For instance, if all sides are labeled with the same permutation $121$, we have the two options in Figure \ref{fig: triangle mutation} (right), which are mutation equivalent but {\it not} weave-equivalent. In Figure \ref{fig:CorrectRules} we depict (some) possible pieces of weaves that can appear in triangles of a Demazure triangulation with only 2 colors, i.e. for permutations in $s_1,s_2\in S_3$.
\end{remark}

\begin{center}
\begin{figure}[h!]
\includegraphics[scale=0.5]{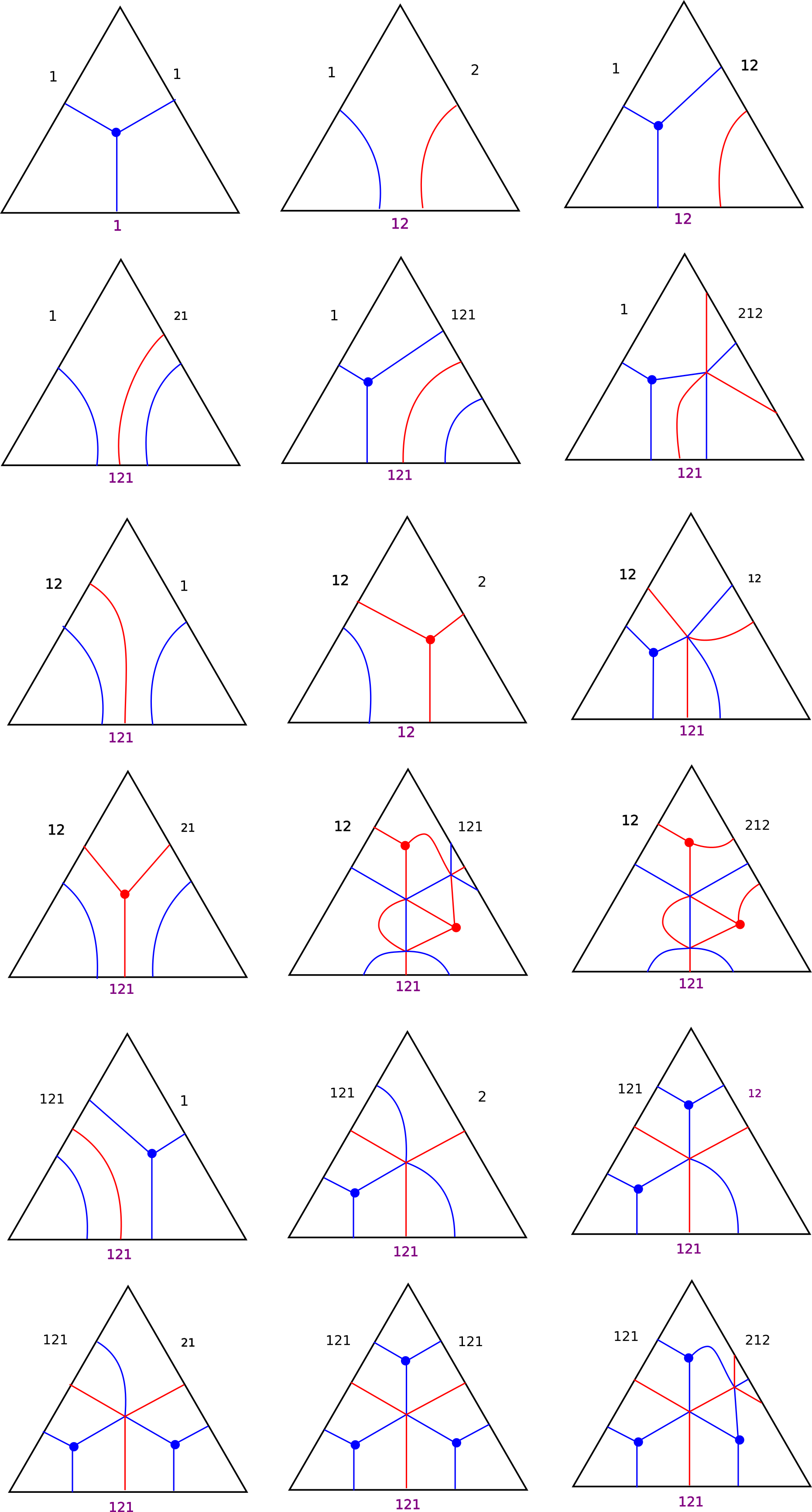}
\caption{Possible weaves associated to triangles of a Demazure triangulation as in the proof of Proposition \ref{prop:Demazure triangulation}, using only two colors $s_1,s_2\in S_3$. Note that fourth (i.e. the first in the second row) and seventh diagrams coincide as (pieces of) weaves; and the same is true for the sixth, ninth and fourteenth diagrams.}
\label{fig:CorrectRules}
\end{figure}
\end{center}

\subsubsection{Relation between Demazure triangulations and admissible triangulations}\label{sssec:triang_relations}
Demazure triangulations, discussed in Subsection \ref{sec: demazure}, relate to admissible triangulations, discussed in \ref{sssec:admissible_triang}, as follows.

To ease notation, we say that a weave is compatible with a Demazure triangulation $\underline{\tau}$ if it can be obtained from $\underline{\tau}$ using the construction in the proof of Proposition \ref{prop:Demazure triangulation}. Note that there are typically different (non-equivalent) weaves compatible with the same Demazure triangulation $\underline{\tau}$, cf.~Remark \ref{rmk:choices_Dem_triang_to_weave}. Similarly, a weave is said to compatible with an admissible triangulation $\tau$ if it can be obtained from $\tau$ using the construction in Subsection \ref{sssec:admissible_triang}. In this case, any two weaves compatible with the same admissible triangulation $\tau$ are weave equivalent.

Let $\beta$ a positive braid word with Demazure product $w_0$ and fix a reduced expression $\Delta$ for $w_0$ as in Proposition~\ref{prop:Demazure triangulation}. 
Consider a Demazure triangulation $\underline{\tau}$ of $\beta$. This is a triangulation $\underline{\tau}$ of a polygon with $l(\beta)+1$ sides, where the summand $1$ accounts for the edge that is labeled with $w_0$. In order to relate it to an admissible triangulation for $\beta$, which triangulates a polygon with $l(\beta)+l(w_0)$ sides, we need to account for this difference in the number of sides. This is achieved as follows:

\begin{definition}\label{def:w0_expansion}
Let $\underline{\tau}(\beta)$ be Demazure triangulation of $\beta$, $\Delta$ a reduced expression for $w_0$ and $\underline{\tau}(\Delta^{op})$ a Demazure triangulation for $\Delta^{op}$. By definition, a $\Delta$-expansion of $\underline{\tau}(\beta)$ is the triangulation of a polygon with $l(\beta)+l(w_0)$ sides obtained by gluing $\underline{\tau}(\beta)$ and $\underline{\tau}(\Delta^{op})$ along their (correspondingly unique) $w_0$-edges. By definition, a $w_0$-expansion is a $\Delta$-expansion, for some (unspecified) reduced expression $\Delta$ of $w_0$.
\end{definition}
We refer to any such triangulation, as in Definition \ref{def:w0_expansion} for some choice of reduced expression for $\Delta$, as a $w_0$-expansion of $\underline{\tau}(\beta)$.

\begin{prop}\label{prop:subdivide}
Let $\beta$ be a positive braid word. Any $w_0$-expansion of a Demazure triangulation $\underline{\tau}(\beta)$ for $\beta$ can be subdivided and oriented to obtain an admissible triangulation $\tau(\beta)$ for $\beta$. In addition, any weave compatible with $\tau(\beta)$ is compatible with $\underline{\tau}(\beta)$.
\end{prop}

\begin{proof}
Consider a triangle in 
a Demazure triangulation with sides $u,v$ and $u\star v=\delta(uv)$. If $u\star v=uv$, then this triangle is admissible. Otherwise, we use induction in $\ell(v)$. Let $v_1$ be the longest prefix of $v$ such that $uv_1$ is reduced. Then we can write $v=v_1sv_2$ such that $\ell(uv_1)=\ell(u)+\ell(v_1)$ but $\ell(uv_1s)<\ell(u)+\ell(v_1)+1$. Therefore we can find a reduced expression $w$ such that $uv_1=ws$, and draw the following diagram:

\begin{center}
\begin{tikzpicture}
\filldraw [lightgray] (0,0)--(3,2/3)--(4,0)--(0,0);
\draw (0,0)--(2,4)--(4,0)--(0,0);
\draw (1,2/3)--(2,8/3)--(3,2/3)--(1,2/3);
\draw (0,0)--(1,2/3);
\draw (0,0)--(2,8/3)--(2,4);
\draw (0,0)--(3,2/3)--(4,0)--(2,8/3);
\draw (1,2.2) node {$u$};
\draw (3,2.2) node {$v$};
\draw (0.5,1/3) node {$w$};
\draw (2,0.75) node {$s$};
\draw (1.5,1.7) node {$s$};
\draw (2.5,1.7) node {$s$};
\draw (2.2,3) node {$v_1$};
\draw (3.5,1/3) node {$v_2$};
\draw (2,-0.2) node {$u\star v$};
\end{tikzpicture}
\end{center}

The unmarked edges are labeled by $ws=uv_1, ws$ and $sv_2$. Now
$$
u\star v=u\star v_1\star s\star v_2=w\star s\star s\star v_2=ws\star v_2,
$$
and by the assumption of induction we can subdivide the marked triangle with sides $ws$ and $v_2$ into admissible ones. In this manner, we subdivide each triangle in an arbitrary Demazure triangulation into admissible triangles. Therefore, we can subdivide each $w_0$-expansion of $\underline{\tau}(\beta)$ (recall that it is glued out of two Demazure triangulations) into a triangulation consisting of admissible triangles, which is then admissible by definition. Let us denote this admissible triangulation by $\tau(\beta)$.

For the statement about the weave, let us describe an arbitrary weave compatible with $\tau(\beta)$ as a sequence of braid words. First, given a subdivided triangle as above, we choose some word for $u, v_1,v_2$ and use a sequence of braid relations 
$$
uv\to uv_1sv_2\to wssv_2
$$
Next, we insert a trivalent vertex in the central triangle and get $wsv_2$, and proceed by induction. This yields one possible sequence of moves computing the Demazure product $u\star v$. This gives a Demazure weave for each triangle in $\underline{\tau}(\beta)$. Gluing them together, we get a Demazure weave $\ww_1$ compatible with $\underline{\tau}(\beta)$, as in the proof of Proposition~\ref{prop:Demazure triangulation}. Doing the same for each triangle in our chosen Demazure triangulation for $\Delta^{op}$, where we fixed the word 
$\Delta^{op}$
on the bottom side, we obtain a Demazure weave $\ww_2$ from $\Delta^{op}$ to 
itself. Now we can glue these two weaves $\ww_1,\ww_2$ together along their $w_0$-edges  and declare that all the consecutive edges that spell $\Delta^{op}$, i.e.~all the edges from $\ww_2$ except its $w_0$-edge, are to be considered as one edge (indeed, since the polygons for $\beta$ and for $\Delta^{op}$ have opposite orientations, the gluing can in fact be interpreted as the concatenation of $\ww_1$ with the half-turn of $\ww_2$, the latter being a Demazure weave from $\Delta$ to itself). By labeling this particular edge with $w_0$, the result of gluing $\ww_1$ and $\ww_2$ and performing this identification gives a weave compatible with $\underline{\tau}(\beta)$. Indeed, it is compatible with $\underline{\tau}(\beta)$ because $\ww_2$ consists only of 4- and 6-valent vertices. In fact, this resulting weave is equivalent to $\ww_1$ by Theorem~\ref{thm: ben}.(a).
\end{proof}


\section{Algebraic Weaves, Morphisms, and Correspondences}
\label{sec: alg weaves}

This section develops the relative geometry of braid varieties, studying morphisms and correspondences between them. These correspondences are defined using weaves, and provide a functor from the category of algebraic weaves to the category of algebraic varieties and their correspondences.  Here and below, a correspondence between $X$ and $Y$ is an algebraic variety $Z$ with two regular maps $Z\to X$ and $Z\to Y$. In general, we do not require that $Z$ is a subset of $X\times Y$. That said, this stronger condition does hold for correspondences associated with simplifying weaves, see Remark~\ref{rem:correspondences}.


\subsection{Correspondences}

In this section, we use horizontal {\it yellow segments} in order to keep track of certain variables, corresponding to the $z_i$-variables in the braid variety. By definition, a horizontal segment inside the domain $\R\times[1,2]$, where a weave is drawn, is any connected segment contained in a line of the form $\R\times\{r\}$, for some real value $r\in[1,2]$. In addition, we also consider particular types of weaves. Altogether, this leads to the following definition.

\begin{definition}\label{def:algebraic_weave} An \emph{algebraic weave} of degree $n$ is a sliced weave $\ww\sse \R\times[1,2]$ of degree $n$ such that:
	\begin{itemize}
		\item[(i)] The edges have been oriented downwards, with the models according to Figure \ref{fig:WeaveModels2} for cups and caps. By convention, diagrams are oriented from top to bottom, from $\R\times\{2\}$ down to $\R\times\{1\}$.\\
		
		\item[(ii)] The weave $\ww$ is decorated with horizontal yellow rays, as follows. By definition, \emph{yellow rays} are horizontal rays of the form $(-\infty,b]\times\{r\}\sse\R\times[1,2]$, for some $b\in\R$ and $r\in(1,2)$, such that the yellow ray starts at a trivalent vertex, or at the bottom of a cup, or at the top of a cap. The first three diagrams in Figure \ref{fig:WeaveModels2}, excluding the rightmost picture, depict the three possible starts of a yellow ray. In other words, the starting point $(b,r)\in\R\times[1,2]$ of a yellow ray must either be a trivalent vertex, the lowest point of a cup or the highest point of a cap.\\

        \item[(iii)] The weave $\ww$ is such that any horizontal line $\R\times\{h\}$, for some $h\in[1,2]$, contains at most one of the following types of points: a vertex of $\ww$, the lowest point of a cup or the highest point of a cap.\\

        \noindent In particular, all vertices, cups and caps of $\ww$ have different heights, and yellow rays never pass through another vertex, cup or cap (in addition to the starting point), yellow rays are all parallel to each other and are all transverse to the edges of $\ww$. Therefore, the only local models involving an intersection between a yellow ray and $\ww$ are as depicted in Figure \ref{fig:WeaveModels2}.
  
\end{itemize}      

By definition, a (transverse) intersection point of a yellow ray with a weave edge, distinct from the starting point of the yellow ray, will be referred to as a {\it virtual vertex}. A virtual vertex is drawn in the rightmost diagram of Figure \ref{fig:WeaveModels2}.
\end{definition}

\begin{remark}
Throughout this section, we refer to the valency of a vertex in the original weave $\ww$, without accounting for any additional valency due to yellow rays. In particular, trivalent vertices will be still called trivalent despite an additional edge starting at them.
\end{remark}

\begin{center}
	\begin{figure}[h!]
		\centering
		\includegraphics[scale=0.7]{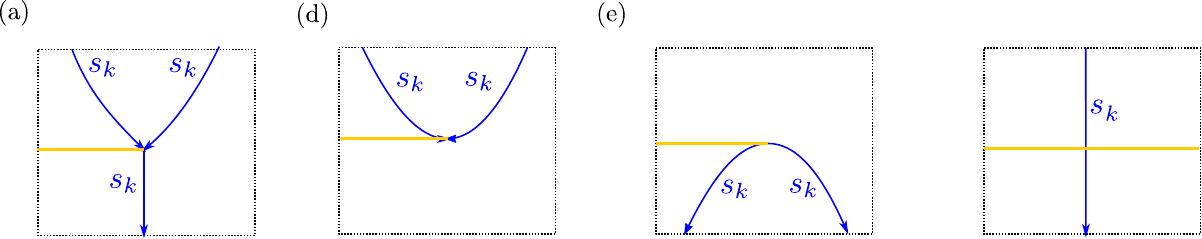}
		\caption{Local models for algebraic weaves, compare with Figure \ref{fig:WeaveModels}. The starting point of a yellow ray must be at a trivalent vertex, a cup or a cap. These cases are respectively labeled with $(a)$, $(d)$ and $(e)$. A virtual vertex is shown on the right.}
		\label{fig:WeaveModels2}
	\end{figure}
\end{center}

By Definition \ref{def:algebraic_weave}, the new local models for algebraic weaves involving yellow rays are those in Figure \ref{fig:WeaveModels2}. The weave edges of the original weave $\ww$ are subdivided by the virtual vertices into smaller segments, and yellow rays are subdivided into intervals, which we often refer to as {\it yellow segments}.\\

\begin{center}
\begin{figure}
    \centering
    \includegraphics{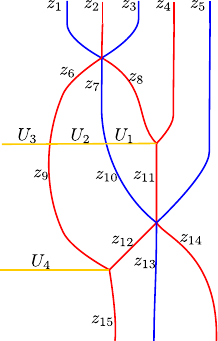}
    \caption{The space $\mathbb{V}^{\ww}$ for this algebraic weave $\ww$ is $\C^{15} \times \left(\C^{\binom{3}{2}} \times (\C^{\ast})^{3}\right)^{4}$. Indeed, there are 15 weave segments, each labeled with a variable $z_i\in\C$, and 4 yellow segments, each labeled with an invertible triangular matrix $U_j$. The correspondence $\mathcal{M}(\ww)$ associated to this algebraic weave is a closed subvariety of $\mathbb{V}^{\ww}$.}
    \label{fig:affine space weave}
\end{figure}
\end{center}

Let 
$\ww:\beta_2\to\beta_1$
be an algebraic weave from 
$\beta_2$, at the top, to 
$\beta_1$
, at the bottom. We now construct a correspondence between the two braid varieties $X_0(\beta_1)$ and $X_0(\beta_2)$.
To each segment of an edge labeled by $i$ we associate a variable $z$ and the braid matrix $B_i(z)$. Segments of a weave edge separated by a virtual vertex carry different variables. In addition, each yellow segment (a segment of a yellow ray) is labeled by an invertible upper triangular matrix whose entries are considered variables as well. All these variables and matrices can be considered as coordinates in the space
$$\mathbb{V}^{\ww}:=\C^{\mathrm{weave\ segments}}\times \left(\C^{\binom{n}{2}}  \times (\C^{\ast})^{n}\right)^{\mathrm{yellow\ segments}},$$
where we are identifying the space of invertible upper triangular $n \times n$-matrices with $\C^{\binom{n}{2}} \times (\C^{\ast})^{n}$. See Figure \ref{fig:affine space weave} for an example. The correspondence associated to an algebraic weave $\ww$ is a closed subvariety of $\mathbb{V}^{\ww}$. This correspondence is defined using the following notion of monodromy.

\begin{definition}\label{def:monodromy}
Let $\ww$ be an algebraic weave and $\tau:[0,1]\to \R\times[1,2]$ a regular parametrization of an oriented embedded path transverse to both $\ww$ and its yellow rays. By definition, the {\em monodromy} of the weave $\ww$ along $\tau$, also referred to as the monodromy of $\tau$, is the ordered product 
of the following matrices:
\begin{itemize}
	\item[(i)] $B_i(z)$, if the path $\tau$ crosses an edge labeled by $i$ with variable $z$ {\em from left to right},
	\item[(ii)] $B_i(z)^{-1}$, if the path $\tau$ crosses an edge labeled by $i$ with variable $z$ {\em from right to left},
	\item[(iii)] $U$, if the path $\tau$ crosses a yellow segment colored by $U$ {\em from top to bottom},
	\item[(iv)] $U^{-1}$, if the path $\tau$ crosses a yellow segment colored by $U$ {\em from bottom to top}.
\end{itemize}
In detail, let $\{t_1,\ldots,t_f\}\in[0,1]$ with $t_1<\ldots<t_f$ be the times such that $\tau(t_i)$ intersects either the weave or a yellow ray, and let $M(t_i)$ be the matrix associated to that intersection point and its intersection sign, according to $(i)$ through $(iv)$ above. Then the monodromy of $\tau$ is the product $M(t_1)\cdot\ldots\cdot M(t_f)$.
\end{definition}

\begin{definition}\label{def:corrweave}
Let $\ww$ be an algebraic weave. By definition, the correspondence variety $\SM(\ww)$ associated to the weave $\ww$ is the affine algebraic subvariety of $\mathbb{V}^{\ww}$
cut out by the following two conditions:
\begin{enumerate}
    \item The monodromy of a closed loop around a neighborhood of every vertex of $\ww$ is the identity.

    \item The monodromy of a closed loop around a neighborhood of every virtual vertex is the identity.
\end{enumerate}
\end{definition}

\noindent The two conditions $(1)$ and $(2)$ in Definition \ref{def:corrweave} can be written in terms of polynomial equations on the $z$-variables and the coefficients of the invertible upper-triangular matrices. Therefore $\SM(\ww)\sse\mathbb{V}^{\ww}$ is a closed affine algebraic subvariety.

\subsection{Properties of correspondences} From the ambient space $\mathbb{V}^{\ww}$ associated to $\ww:\beta_2\to\beta_1$
, we have two natural projections

\[
\C^{\ell(\beta_1)}\leftarrow \mathbb{V}^{\ww}\rightarrow \C^{\ell(\beta_2)}.
\]

The projection 
$\mathbb{V}^{\ww}\rightarrow\C^{\ell(\beta_2)}$
is given by reading the labels $z_j$ associated to the weave segments at the top boundary of $\ww$, corresponding to crossings of 
$\beta_2$.
Similarly, the projection 
$\mathbb{V}^{\ww}\rightarrow\C^{\ell(\beta_1)}$
is given by reading the labels $z_j$ associated to the weave segments at the bottom boundary of $\ww$, which correspond to crossings of 
$\beta_1$.
By considering the braid matrices associated to 
$\beta_1$ and $\beta_2$,
we obtain the corresponding maps
\[
\GL(n)\xleftarrow{B_{\beta_1}} \mathbb{V}^{\ww}\xrightarrow{B_{\beta_2}}  \GL(n).
\]
These two maps can be thought of as monodromies along the left to right horizontal paths near the top boundary of $\ww$, in the case of 
$B_{\beta_2}$,
and near the bottom boundary of $\ww$, for 
$B_{\beta_1}$.
More generally, if a horizontal slice of $\ww$ spells out a braid word $\beta$, the corresponding braid matrix defines a map $B_{\beta}:\mathbb{V}^{\ww}\to \GL(n)$.

\begin{definition}
Let 
$\ww:\beta_2\to\beta_1$
be an algebraic weave of degree $n$ and $\pi \in S_n$ a permutation. 
The closed subvariety $
\SM(\ww, \pi) \subseteq \SM(\ww)\subseteq \mathbb{V}^{\ww}$ is given by the condition that 
$B_{\beta_1}\pi$
is upper triangular. 
There exists a natural map 
$\SM(\ww, \pi) \to X_0(\beta_1, \pi)$
given by projecting to the labels associated to the bottom boundary segments of $\ww$, which spell 
$\beta_1$.
\end{definition}

\begin{prop}\label{prop:stacking-correspondences} Let $\ww_1:\beta_1\to\beta_0$ and $\ww_2:\beta_2\to\beta_1$ be algebraic weaves and 
$\ww:=\ww_1\circ\ww_2$
their composition. Then the following hold:

\begin{itemize}
    \item[(a)] Let $\pi\in S_n$ be a permutation, which we represent by a homonymous permutation matrix $\pi\in\GL_n(\C)$. Suppose that the matrix $B_{\beta_1}\cdot\pi$ is upper-triangular. Then $B_{\beta_2}\cdot\pi$ is upper-triangular and $\SM(\ww_2,\pi)$ is a correspondence between
$X_0(\beta_1; \pi)$ and $X_0(\beta_2; \pi)$.\\

    \item[(b)] The composition of weaves corresponds to the following diagram:
$$ 
\begin{tikzcd}
 & & \SM(\ww,\pi) \arrow{dl} \arrow{dr} & & \\
 & \SM(\ww_1, \pi) \arrow{dl} \arrow{dr}  & & \SM(\ww_2, \pi) \arrow{dl} \arrow{dr}  & \\
X(\beta_0; \pi) &  & X(\beta_1; \pi) & & X(\beta_2; \pi),
\end{tikzcd} 
$$
In addition, the middle square is Cartesian. In other words, $\SM(\ww, \pi)$ is a convolution of correspondences $\SM(\ww_1,\pi)$ and $\SM(\ww_2, \pi)$.\\
\end{itemize}
\end{prop}

\begin{proof}
For Part (a), Definition \ref{def:corrweave} implies that the monodromy around any closed loop is the identity. The monodromy around the closed loop encircling the whole weave with $\beta_2$ on the top and $\beta_1$ on the bottom must then be the identity. The monodromy around this particular loop equals $B_{\beta_2}B_{\beta_1}^{-1}\widetilde{U}^{-1}$, where $\widetilde{U}$ is the product of the upper-triangular matrices assigned to the yellow segments to the left of $\ww$. Therefore we have the equality $B_{\beta_2}=\widetilde{U}B_{\beta_1}$. By the discussion above, projecting to the labels associated to the top boundary of $\ww$ defines a map $\SM(\ww, \pi) \to X_0(\beta_2, \pi)$.\\

\noindent For (b), recall that the composition 
$\ww=\ww_1\circ\ww_2$
of weaves is defined by vertical stacking, with $\ww_2$ on top and $\ww_1$ at the bottom. 
Therefore, we can concatenate the labels in $\mathbb{V}^{\ww_1}$ and $\mathbb{V}^{\ww_2}$ if they agree along $\beta_1$. In that case, there are natural maps $\SM(\ww)\lr\SM(\ww_i)$, $i=1,2$, given by restriction, because the monodromy conditions in $\ww_1$ and $\ww_2$ are independent. The $z$-variables for labels along $\beta_1$ form a space $\C^{l(\beta_1)}$ and restriction to the top, resp.~bottom, gives a map $\SM(\ww_1)\lr\C^{l(\beta_1)}$, resp.~$\SM(\ww_2)\lr\C^{l(\beta_1)}$. It follows from above that we obtain a Cartesian square:

\begin{center}
\begin{tikzcd}
 & \SM(\ww) \arrow{dl} \arrow{dr} & \\
\SM(\ww_1) \arrow{dr}& & \SM(\ww_2) \arrow{dl}.\\
 & \C^{\ell(\beta_1)} & \\
\end{tikzcd}
\end{center}

By Part (a), the condition that $B_{\beta_0}\pi$ is upper-triangular implies that both 
$B_{\beta_1}\pi$ and $B_{\beta_2}\pi$ are upper-triangular, and thus we also have the same Cartesian diagram now incorporating $\pi$.
\end{proof}

\begin{remark}
Note that flipping an algebraic weave $\ww:\beta_2\to\beta_1$ upside down and reversing orientations on the edges corresponds to switching $\beta_1$ and $\beta_2$ and transposing the associated correspondence.
\end{remark}

Proposition \ref{prop:stacking-correspondences}.(a) for the case where $\pi = 1$ is the identity gives a correspondence $\SM(\ww, 1)$ between the braid varieties $X_{0}(\beta_2)$ and $X_{0}(\beta_1)$. Note that $\SM(\ww, 1)$ is a closed subvariety of $\SM(\ww)$ and, in general, $\SM(\ww, 1) \neq \SM(\ww)$. By Proposition \ref{prop:stacking-correspondences}.(b), it suffices to describe these correspondences for elementary weaves in order to understand them for general algebraic weaves. These correspondences, in the case of elementary weaves, are described as follows:
\begin{itemize}
\item[(1)] For a trivalent vertex colored by $i$, the correspondence $\SM(\ww,\pi)$ embeds into $X(\beta_2;\pi)$ as the open locus $\{z_{1} \neq 0\}$
and projects onto $X(\beta_1; \pi)$ with fibers $\P^1\setminus\{0,\infty\}=\C^*$. In terms of matrices,
we have the identity
$$
B_i(z_1)B_i(z_2)=\left(\begin{matrix}-z_1^{-1} & 1 \\ 0 & z_1\end{matrix}\right)B_i(z_2+z_1^{-1}).
$$

\item[(2)] For 6-valent and 4-valent vertices, the corresponding braid varieties $X(\beta_2; \pi)$ and $X(\beta_1; \pi)$ are isomorphic, and $\SM(\ww, \pi)$ realizes this isomorphism. In terms of matrices, this corresponds to the identities
$$
 B_i(z_1)B_{i+1}(z_2)B_i(z_3)=B_{i+1}(z_3)B_i(z_2-z_1z_3)B_{i+1}(z_1),
$$
$$
B_i(z_1)B_j(z_2)=B_j(z_2)B_i(z_1)\ (|i-j|>1).
$$

\item[(3)] For a cup colored by $i$, the correspondence $\SM(\ww, \pi)$ embeds into $X(\beta_2; \pi)$ as the closed locus $\{z_{1} = 0\}$ 
and projects onto $X(\beta_1; \pi)$ with fibers $\P^1\setminus\{\infty\}=\C$. In terms of matrices,
we have the identity
$$
B_i(0)B_i(z)=\left(\begin{matrix} 1 & z\\ 0 & 1\\ \end{matrix}\right).
$$
For a cap, we just use the transposed correspondence.

\item[(4)] A virtual vertex corresponds to the identity $$B_i(z)U=\widetilde{U}B_i(z')$$ from Lemma \ref{lem:slide triangular}. In particular, we have $z' =\frac{u_{i+1,i+1}z+u_{i,i+1}}{u_{i,i}}$. Here $U$ and $\widetilde{U}$ are the labels for the segments of the yellow ray to the right and to the left of the virtual vertex, respectively:
\begin{center}
\begin{tikzpicture}[scale=0.8]
\draw  (1,3)--(1,0);
\draw [Dandelion,line width=0.9] (-1,1.5)--(3,1.5);
\draw (1.2,2.5) node {\scriptsize $z$};
\draw (1.2,0.5) node {\scriptsize $z'$};
\draw (0,1.75) node {\scriptsize $\widetilde{U}$};
\draw (2.5,1.7) node {\scriptsize $U$};
\end{tikzpicture}
\end{center}
\end{itemize}

These four rules are justified by the following result.

\begin{prop}\label{prop:correspondence} In the construction of the correspondence variety $\SM(\ww)$:
\begin{itemize}
	\item[(a)] The invertible triangular matrices labeling yellow segments are uniquely determined by the variables on the edges.
	
	\item[(b)] The output variables of each 3-, 6-, or 4-valent vertex are determined by the input variables.
\end{itemize} 
\end{prop}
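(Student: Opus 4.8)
The plan is to prove both assertions by analyzing the local pieces of the weave one vertex at a time, propagating a scanning line from the top (convex end) to the bottom (concave end). The key structural observation is that a weave, read through horizontal cross-sections, is a finite sequence of elementary moves, and near each vertex or each yellow-line intersection the relevant monodromy constraint is a single matrix equation relating the incoming edge-variables (and the incoming yellow matrix) to the outgoing ones. So it suffices to check, for each of the elementary local models $(1)$--$(4)$ listed just before the statement, that the equation uniquely determines the outgoing data from the incoming data. I would then combine these local statements by induction on the number of vertices.

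First I would set up the bookkeeping: fix a generic scanning line sweeping downward, and at each height record the tuple of edge-variables crossed together with the yellow matrices crossed. Each time the line passes a vertex (or a yellow/edge intersection), this tuple changes only in the entries local to that vertex. For part (b): a trivalent vertex $s_is_i\to s_i$ imposes $B_i(z_1)B_i(z_2) = U\,B_i(z_3)$ for the upper-triangular $U$ on the yellow segment attached to the vertex; the identity $B_i(z_1)B_i(z_2)=\left(\begin{smallmatrix}-z_1^{-1} & 1\\ 0 & z_1\end{smallmatrix}\right)B_i(z_2+z_1^{-1})$ shows that, given $z_1\neq 0$ and $z_2$, both $z_3 = z_2 + z_1^{-1}$ and $U$ are determined (and $z_1=z_2=0$ is excluded by the convention that the trivalent vertex opens a nonzero crossing). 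For $6$- and $4$-valent vertices, Equation \eqref{equation: reidemeister3 matrices} and $B_i(z_1)B_j(z_2)=B_j(z_2)B_i(z_1)$ for $|i-j|>1$ express the outgoing variables as polynomials in the incoming ones, so uniqueness is immediate. For a cup $s_is_i\to 1$ the constraint is $B_i(0)B_i(z)=\left(\begin{smallmatrix}1&z\\0&1\end{smallmatrix}\right)$, determining the attached yellow matrix and leaving $z$ as the single free incoming variable; a cap is the transpose and determines nothing new on its outgoing side beyond the fresh variable. This handles (b) for every local model, and hence for every vertex of $\Sigma$ by induction.

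For part (a) I would argue that every yellow segment lies on a connected yellow line which, by the conventions in Section \ref{sec: movies}, begins at some trivalent vertex, cup, or cap (its leftmost origin) and is routed so that it crosses, near each subsequent vertex, either all incoming or all outgoing edges. At its origin the attached upper-triangular matrix is pinned down by the local equation analyzed above (for a trivalent vertex it is $\left(\begin{smallmatrix}-z_1^{-1}&1\\0&z_1\end{smallmatrix}\right)$, for a cup it is $\left(\begin{smallmatrix}1&z\\0&1\end{smallmatrix}\right)$, etc.), and as the yellow line is carried past each further vertex the identity $B_i(z)U=\widetilde U B_i(z')$ of Lemma \ref{lem:slide triangular} (together with the analogous sliding past $6$- and $4$-valent vertices) computes the new matrix $\widetilde U$ from the old one $U$ and the surrounding edge-variables. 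So propagating along the yellow line from its origin, every yellow matrix becomes an explicit (rational) function of the edge-variables, proving (a).

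I would carry out the steps in this order: (i) reduce to the elementary local models via the cross-section description; (ii) verify (b) for each of the four model types by inspecting the matrix identities $(1)$--$(4)$; (iii) verify (a) by tracing each yellow line from its origin, using Lemma \ref{lem:slide triangular}; (iv) assemble by induction on the number of vertices of $\Sigma$. The main obstacle I anticipate is purely combinatorial rather than algebraic: making precise that the "flow" of determination is acyclic — i.e. that one can order the vertices so that the incoming data of each vertex has already been determined — given that yellow lines can be long and can wind past many vertices. I would resolve this by observing that the edge-variables are always determined top-to-bottom (so a single downward sweep suffices for them), and that each yellow matrix is determined by following its own yellow line from its unique origin, so there is no circular dependency; the two determinations never feed back into one another because the edge-variable recursion $(1)$--$(3)$ never refers to a yellow matrix except at a vertex where that matrix is itself freshly produced.
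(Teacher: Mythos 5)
Your proposal is correct and follows essentially the same route as the paper: verify uniqueness at each local model via the explicit matrix identities (the trivalent-vertex factorization $B_i(z_1)B_i(z_2)=\bigl(\begin{smallmatrix}-z_1^{-1} & 1\\ 0 & z_1\end{smallmatrix}\bigr)B_i(z_2+z_1^{-1})$, the braid relations for $6$- and $4$-valent vertices, and the cup/cap equations), and use Lemma \ref{lem:slide triangular} to propagate the yellow matrices, so that it suffices to pin them down at their origins. Your added remark on the acyclicity of the determination order is a harmless elaboration of what the paper leaves implicit; the only small slip is that the excluded locus at a trivalent vertex is $z_1=0$, not $z_1=z_2=0$.
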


\begin{proof}
It follows from the proof of Lemma \ref{lem:slide triangular} that the equation $B_i(z)U=\widetilde{U}B_i(z')$ uniquely determines $\widetilde{U}$ and $z'$ for given $z$ and $U$. This establishes Part (a) near a virtual vertex. It remains to consider the yellow segments near trivalent vertices, cups, and caps. We verify both Part (a) and Part (b) in the necessary cases, as follows:

\begin{enumerate}
    \item For a 6-valent vertex, we have that $B_i(z_1)B_{i+1}(z_2)B_i(z_3)=B_{i+1}(w_1)B_i(w_2)B_{i+1}(w_3)$ implies $w_1=z_3, w_2=z_2-z_1z_3, w_3=z_1$, so the output variables are determined by the input ones. The proof for a 4-valent vertex is similar. Note that there are no yellow segments in this case of 4- and 6-valent vertices, so it is only to do with Part (b).\\

    \item For a 3-valent vertex, we have an equation $B_i(z_1)B_i(z_2)=UB_i(w)$ which can be written as
$$
\left(\begin{matrix}1 & z_2\\ z_1 & 1+z_1z_2 \end{matrix} \right)=\left(\begin{matrix}0 & 1\\ 1 & z_1 \end{matrix} \right)\left(\begin{matrix}0 & 1\\ 1 & z_2 \end{matrix} \right)=\left(\begin{matrix}a & b\\ 0 & c \end{matrix} \right)\left(\begin{matrix}0 & 1\\ 1 & w \end{matrix} \right)=\left(\begin{matrix}b & a+bw\\ c & cw \end{matrix} \right).
$$ 
This equality implies $b=1,c=z_1,w=(1+z_1z_2)/c=z_2+z_1^{-1}$ and $a=z_2-bw=-z_1^{-1}$. In particular, $z_1$ must be nonzero.\\

    \item For a cup, we have $B_i(z_1)B_i(z_2)=U$ and similarly $z_1=0$ and $U$ is determined by $z_2$. The case of a cap follows analogously.
\end{enumerate}
\end{proof}

By combining these facts, we obtain the following result:

\begin{thm}\label{def:correspondence_simplifying_weave}
Let $\ww:\beta_2\to\beta_1$ be a simplifying algebraic weave with $m$ cups and $r$ trivalent vertices. Then:
\begin{enumerate}
    \item There exists an isomorphism $$\SM(\ww,\pi)\cong\C^{m}\times (\C^{*})^r\times X_0(\beta_1; \pi)$$
    such that the map $\SM(\ww,\pi)\to X_0(\beta_1,\pi)$ is given by the projection to the third factor.\\

    \item The map $\SM(\ww,\pi)\to X_0(\beta_2,\pi)$ is injective.
\end{enumerate}
\end{thm}

\begin{proof}
The map to $X_0(\beta_2;\pi)$ is injective by Proposition \ref{prop:correspondence}. This proves Part $(2)$. For Part $(1)$ we read our weave inductively from bottom to top. At the bottom, the bottom edges of $\ww$ spell the braid word $\beta_1$, and the corresponding $z$-variables parametrize a point in $X_0(\beta_1; \pi)$.
As we move up, we encounter the following cases:
\begin{itemize}
\item[(i)] If we cross a 6-valent vertex, similarly to Proposition \ref{prop:correspondence} the variables $z_1,z_2,z_3$ above the vertex are determined by the variables $w_1,w_2,w_3$ below it.\\

\item[(ii)] If we cross a 3-valent vertex $v$, we get an identity $B_i(z_1)B_i(z_2)=UB_i(z_3)$. We can choose $z_1\in \C^*$ arbitrarily, then by Proposition \ref{prop:correspondence} we have $z_2=z_3-z_1^{-1}$ and $U$ is determined by $z_1$ and $z_3$. The $z$-variables right below the yellow ray starting at $v$ and the matrix $U$ uniquely determine the $z$-variables above the yellow ray and the upper-triangular matrices on the yellow ray.\\

\item[(iii)] If we cross a cup, we get an identity $B_i(0)B_i(z_2)=U$. The choice of $z_2$ in $\C$ is arbitrary and, similarly to the previous case, $U$ propagates to the left in a unique way.\\

\item[(iv)] The case of a 4-valent vertex is immediate.
\end{itemize}

\noindent Therefore, each trivalent vertex contributes with a $\C^*$-factor, each cup contributes with a $\C$-factor and neither 4-valent nor 6-valent vertices contribute additional factors. This gives an isomorphism as in Part $(1)$ and, by construction, it satisfies that the map $\SM(\ww,\pi)\to X_0(\beta_1,\pi)$ is given by the projection to the third factor, projecting away the $\C$ and $\C^*$-factors from the cups and trivalents.
\end{proof}

\begin{remark}
\label{rem:correspondences}
If $\ww: \beta_2 \to \beta_1$ is a simplifying algebraic weave, then the maps $\SM(\ww, \pi) \to X_0(\beta_1; \pi)$ and $\SM(\ww, \pi) \to X_0(\beta_2; \pi)$ identify $\SM(\ww, \pi)$ with a subvariety of the product $X(\beta_1; \pi) \times X(\beta_2; \pi)$ and we obtain a correspondence in the sense of \cite{Manin68}.
\end{remark}

\begin{cor}
\label{cor: open}
Let $\ww:\beta_2\to\beta_1$ be a Demazure weave with $r$ trivalent vertices. Then $$\SM(\ww,\pi)=(\C^{*})^r\times X(\beta_1; \pi),$$ and the map $\SM(\ww,\pi)\to X(\beta_2; \pi)$ is an open embedding.
\end{cor}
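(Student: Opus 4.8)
The plan is to derive Corollary \ref{cor: open} as the special case of the preceding Theorem in which the weave $\Sigma$ has no cups. A Demazure weave, by definition, has no cups and no caps, hence in particular it is a simplifying weave; applying the Theorem with $m=0$ cups immediately gives $\SM(\Sigma,\pi)\cong \C^{0}\times(\C^{*})^{r}\times X_0(\beta_0;\pi)=(\C^{*})^{r}\times X(\beta_0;\pi)$, with the projection to $X(\beta_0;\pi)$ realized by the third-factor projection and the map to $X(\beta_1;\pi)$ injective. So the only genuine content left is upgrading ``injective'' to ``open embedding.''

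The key step is therefore to analyze the map $\SM(\Sigma,\pi)\to X(\beta_1;\pi)$ locally. First I would reduce to the elementary weaves via the composition/convolution structure: by the Cartesian-square description of composition of correspondences recalled just before Proposition \ref{prop:correspondence}, a composite of open embeddings is an open embedding, so it suffices to check the claim for a single trivalent vertex (the only nontrivial building block of a Demazure weave; the $6$- and $4$-valent vertices give isomorphisms, and $s_k\to s_k$ is the identity). For the trivalent vertex colored by $i$, rule (1) together with Proposition \ref{prop:correspondence}(b) shows that $\SM(\Sigma,\pi)$ maps to $X(\beta_1;\pi)$ precisely onto the open locus $\{z_1\neq 0\}$, via $(z_1,z_2,\ldots)\mapsto (z_1,z_2,\ldots)$ after the substitution coming from the identity $B_i(z_1)B_i(z_2)=\bigl(\begin{smallmatrix}-z_1^{-1}&1\\0&z_1\end{smallmatrix}\bigr)B_i(z_2+z_1^{-1})$; concretely one records the single output variable $w=z_2+z_1^{-1}$ together with the $\C^*$-coordinate $z_1$, and the inverse map $(w,z_1)\mapsto(z_1,z_2=w-z_1^{-1})$ is visibly regular on $\{z_1\neq 0\}$. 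Thus the trivalent vertex yields an isomorphism onto the Zariski-open subvariety $\{z_1\neq 0\}\subseteq X(\beta_1;\pi)$, which is an open embedding; one also checks this open locus is nonempty (it is, by the smoothness and irreducibility in Theorem \ref{thm: smooth}) so the image is dense.

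I do not expect a serious obstacle here — everything has effectively been prepared by Proposition \ref{prop:correspondence} and the matrix identities. The one point requiring a little care is verifying that the open loci coming from the individual trivalent vertices glue correctly under composition: after stacking, the ``input variable'' $z_1$ of a trivalent vertex is itself an output variable of the weave above it, so one must check that the open condition pulled back along the already-constructed open embedding is still the open condition defining the next piece. But since the composite is computed by the Cartesian square $\SM(\Sigma,\pi)=\SM(\Sigma_1,\pi)\times_{X(\beta_1;\pi)}\SM(\Sigma_2,\pi)$, and base change of an open embedding along any morphism is an open embedding, this is automatic: the composite $\SM(\Sigma,\pi)\to\SM(\Sigma_1,\pi)\to X(\beta_0;\pi)$ is an open embedding followed by — wait, rather one argues $\SM(\Sigma_2,\pi)\hookrightarrow X(\beta_1;\pi)$ open implies its base change $\SM(\Sigma,\pi)\hookrightarrow\SM(\Sigma_1,\pi)$ is open, and composing with the open embedding $\SM(\Sigma_1,\pi)\hookrightarrow X(\beta_0;\pi)$ gives the result. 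Finally the identification $\SM(\Sigma,\pi)\cong(\C^*)^r\times X(\beta_0;\pi)$ follows by tracking the $r$ many $\C^*$-coordinates $z_1$ contributed by the $r$ trivalent vertices, exactly as in the proof of the Theorem, completing the argument.
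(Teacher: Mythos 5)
Your proof is correct, but it takes a different route from the one the paper actually records. The paper deduces the corollary from the preceding theorem in one line via a dimension count: since each trivalent vertex drops the braid length by one, $\ell(\beta_0)+r=\ell(\beta_1)$, so the injective map $\SM(\Sigma,\pi)\to X_0(\beta_1;\pi)$ is an injection between varieties of the same dimension, which is then asserted to be an open embedding. You instead build the open embedding directly: each trivalent vertex is, by the elementary description and the explicit inverse $(w,z_1)\mapsto(z_1,\,z_2=w-z_1^{-1})$, an isomorphism onto the Zariski-open locus $\{z_1\neq 0\}$, and the composite correspondence is assembled from these by base change through the Cartesian squares, so the full map is a composition of open embeddings. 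Your version is arguably the safer one: ``injective plus equidimensional implies open immersion'' requires some regularity of the target (e.g.\ normality, via Zariski's main theorem -- think of the normalization of a cuspidal curve, which is bijective but not an open immersion), and at this point in the paper neither normality nor even irreducibility of $X_0(\beta_1;\pi)$ for general $\pi$ has been established; your composition argument needs no global input about the braid variety at all. Two minor remarks: the aside about non-emptiness and density of the image is not needed for the statement (and Theorem \ref{thm: smooth} only covers the case $\pi=w_0$ with a half-twist appended, so it does not apply to general $\pi$ anyway); and you should be aware that the paper's labelling of which of $\beta_0,\beta_1$ sits at the long end is inconsistent between the opening of the correspondence subsection and the composition diagram -- your base-change bookkeeping is consistent with the composition diagram, which is the one that matters here.
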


Corollary \ref{cor: open} follows from \ref{def:correspondence_simplifying_weave} because we have $\ell(\beta_1)+r=\ell(\beta_2)$, and thus $\SM(\ww,\pi)$ and $X_0(\beta_2,\pi)$ have the same dimension. 

We now state the invariance of the correspondences $\SM(\ww)$ under weave equivalence, which will be proven in Section \ref{section:proof_thm_equivalent_weaves}:

 \begin{thm}
\label{thm: equivalent weaves}
Let $\ww_1,\ww_2$ be equivalent Demazure weaves between $\beta_2$ and $\beta_1$, i.e. $\ww_1,\ww_2$ are related by a sequence of elementary moves $($not mutations$)$. Then, their associated correspondences $\SM(\ww_1)$ and $\SM(\ww_2)$ are isomorphic. Furthermore, there exists such an isomorphism that induces an isomorphism between $\SM(\ww_1,\pi)$ and $\SM(\ww_2,\pi)$ for all permutations $\pi\in S_n$.
\end{thm}

\begin{remark}
It is shown in \cite{CZ} that two Legendrian weaves related by an elementary move (or compositions of thereof) yield Hamiltonian isotopic Lagrangian projections, and also yield the same maps between the corresponding Legendrian Contact DGAs. Theorem \ref{thm: equivalent weaves} is an algebraic analogue of this statement.
\end{remark}

\subsubsection{An aside on flag moduli} We could have followed \cite[Section 5]{CZ} and have also defined the following correspondence $\SM_{\OBS}(\ww)$, called the {\it flag moduli space} of $\ww$ in \cite[Section 5]{CZ}. This flag moduli is defined as follows. To each region of $(\R\times[1,2])\setminus\ww$ we associate a flag in $\C^n$, if $\ww$ goes between $n$-braids, and two regions separated by a line colored by $i$ have flags in relative position $s_i$. The flags separated by a yellow segment are required to coincide. Recall the definition of the open Bott-Samelson variety from Section \ref{sec: obs}, cf. Definition \ref{def:OBS}. There are two natural projections $\SM_{\OBS}(\ww)\to \OBS(\beta_0), \SM_{\OBS}(\ww)\to \OBS(\beta_1)$, so that $\SM_{\OBS}(\ww)$ is a correspondence between $\OBS(\beta_0)$ and $\OBS(\beta_1)$. We can also define $\SM_{\OBS'}(\ww) \subseteq \SM_{\OBS}(\ww)$ as the closed subvariety given by the additional condition that the flag corresponding to the unbounded region on the far left of the weave coincides with the flag corresponding to the unbounded region on the  far right. The variety $\SM_{\OBS'}(\ww)$ is a correspondence between $\OBS'(\beta_0)$ and $\OBS'(\beta_1)$. In this setting, in line with Theorem \ref{thm:OBS}, we can conclude the following.

\begin{prop}
\label{prop: MOBS}
Let $G = \GL(n)$ and $\SB \subseteq G$ the Borel subgroup of upper-triangular matrices. There is a free action of $\SB$ on $G \times \SM(\ww)$ that preserves $G \times \SM(\ww, 1)$ and we have isomorphisms

$$\SM_{\OBS}(\ww)\cong(G\times \SM(\ww))/\SB, \qquad \SM_{\OBS'}(\ww) \cong (G \times \SM(\ww, 1))/\SB.$$
\end{prop}

\begin{proof}
An element in $G=\GL(n,\C)$ corresponds to the choice of a basis in one of the regions on the plane. Given a point in $\SM(\ww)$, we can define a basis in every other region, and the trivial monodromy condition ensures that this assignment is well-defined. The flags in regions are induced by these bases. The action of $\SB$ changes the basis in the rightmost region, but does not affect the flag in it. Similarly to the proof of Theorem \ref{thm:OBS}, we can propagate this action to the left and obtain the required isomorphism. 
\end{proof}

\subsection{Opening crossings}

Let us now shift the focus to studying the relation between these correspondences and \emph{opening} crossings of a positive braid; the latter having been a crucial ingredient in Sections \ref{sec: braid varieties} and \ref{sec:symplecticform}. 

\begin{definition}
\label{def: open weave}
Let $\beta$ be a positive braid word on $n$ strands and $\sigma=\sigma_i$ a letter in $\beta$, and let $\beta'$ be the result of removing $\sigma$ from $\beta$. We define an equivalence class of Demazure weaves from $\beta\Delta$ to $\beta'\Delta$ as
the composition of the following three weaves:
\begin{itemize}
\item[(a)] Move $\Delta$ next to $\sigma_i$ and change the braid word for $w_0$
to one which starts from $\sigma_i$. This only uses braid relations, or, equivalently, 6- and 4-valent vertices,
\item[(b)] Apply the trivalent vertex $\sigma_i\sigma_i\to \sigma_i$,
\item[(c)] Move  
$\Delta$ 
back to the end of the word.
\end{itemize}
We will call any such weave an {\it opening weave} for $(\beta,\sigma)$. Any choice of braid relations in (a) and (c) yields equivalent weaves.
\end{definition}

Let us remark that the element $\Delta$ is {\it not} central in the braid group, and care is needed in Steps (a)  and (c) of Definition \ref{def: open weave}: if $\beta = \gamma_1 \sigma_i \gamma_2$ then the procedure in Definition \ref{def: open weave} is

$$
\gamma_1\sigma_i\gamma_2\Delta \to \gamma_1\sigma_i\Delta\gamma_2' \to \gamma_1\sigma_i\Delta'\gamma_2' \to \gamma_1\Delta'\gamma_2' \to \gamma_1\Delta\gamma_2' \to \gamma_1\gamma_2\Delta
$$

\noindent where $\Delta'$ is a minimal braid lift of a reduced expression of $w_0$ that starts with $\sigma_i$ (and is related to $\Delta$ by a sequence of braid moves), the opening of the crossing $\sigma_i$ is performed in the third step, and all other arrows only involve braid moves or, equivalently, $4$- and $6$-valent vertices. Let us now give a concrete example of this procedure.

\begin{ex}
(a) Suppose that $\beta=1212$ and we want to open the second crossing in $\beta\cdot \Delta=1212\underline{121}$. The above moves have the following form, where we have underlined $\Delta$ and $\Delta'$:
$$
1212\underline{121}=121\underline{212}1=12\underline{121}21\to 12\underline{212}21\to 1\underline{212}21\to 
1\underline{121}21=11\underline{212}1=112\underline{121}.
$$ 
The corresponding weave has the form
\begin{center}
\includegraphics[scale=0.2]{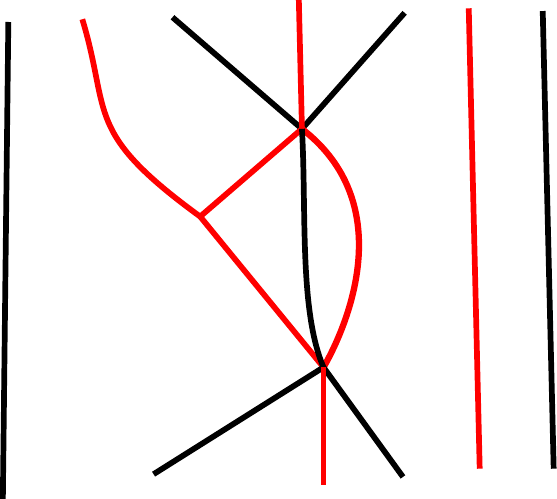}
\end{center}
(b) For another example, suppose that $\beta = 12112$ and we want to open the second crossing in $\beta\cdot\Delta = 12112\underline{121}$. The above moves have the following form, where we have underlined $\Delta$ and $\Delta'$:
\[
\begin{array}{l}
12112\underline{121} = 1211\underline{212}1 = 121\underline{121}21 \rightarrow 121\underline{212}21 = 12\underline{121}221 \rightarrow 12\underline{212}221 \rightarrow 1\underline{212}221 \rightarrow \\
\rightarrow 1\underline{121}221 = 11\underline{212}21 \rightarrow 11\underline{121}21 = 111\underline{212}1 = 1112\underline{121}.
\end{array}
\]
The corresponding weave has the form
\begin{center}
\includegraphics[scale=0.7]{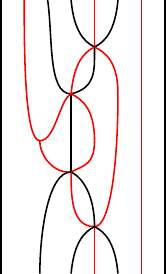}
\end{center}
From these examples, we can see that some steps required to move $\Delta$ next to $\sigma_i$ in Definition \ref{def: open weave} simply require us to use associativity of braid words, without using braid relations, and can be interpreted as the identity. These moves are marked with an equality sign in both examples above.
\end{ex}

\begin{lemma}\label{lem: open weave}  
Let $\sigma_i$ be a letter in $\beta$, and let $\ww$ be an opening weave for $(\beta,\sigma_i)$. Then the correspondence $\SM(\ww)$ agrees with the graph of the rational map $\Omega_{\sigma_i}$ from Definition \ref{def:openingcross}.
\end{lemma}

\begin{proof}
Observe that the trivalent vertex $\sigma_i\sigma_i\to \sigma_i$ corresponds to opening the {\em left} crossing $\sigma_i$. Indeed, applying Lemma \ref{lem: open one crossing matrices} yields a sequence of matrix identities
$$
B_i(z_1)B_i(z_2)=\left(\begin{matrix}-z_1^{-1} & 1 \\ 0 & z_1\end{matrix}\right)\left(\begin{matrix}1 & 0 \\ z_1^{-1} & 1\end{matrix}\right)B_i(z_2)=
\left(\begin{matrix}-z_1^{-1} & 1 \\ 0 & z_1\end{matrix}\right)B_i(z_2+z_1^{-1})
$$
followed by pushing the upper-triangular matrix to the left. This agrees with the correspondence associated to the trivalent vertex (see also the proof of Proposition \ref{prop:correspondence}). It is a direct verification that opening a crossing commutes with braid relations in (a) and (c) not involving this crossing, and the result follows. 
\end{proof}

As a result,  opening all crossings in a braid $\beta$, in some order, corresponds to a Demazure weave. Interestingly, the converse is also true, up to equivalence relation on weaves. 

\begin{thm}
\label{th: weave opening}
Let $\ww:\beta\Delta\to\Delta$ be a Demazure weave. Then $\ww$ is equivalent to a weave obtained by opening crossings in some order.
\end{thm}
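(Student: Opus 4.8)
\textbf{Proof proposal for Theorem \ref{th: weave opening}.}

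The plan is to induct on the number of trivalent vertices in the Demazure weave $\Sigma \in \Hom_{\mathfrak{W}_n}(\beta\Delta, \Delta)$, reducing to the single-trivalent-vertex situation where Theorem \ref{thm: one 3v} and Lemma \ref{lem: deletion pairs} apply directly. First I would record that a Demazure weave between $\beta\Delta$ and $\Delta$ must have exactly $r = \ell(\beta)$ trivalent vertices, since each trivalent vertex drops the braid word length by one, and each 6- or 4-valent vertex preserves it; also $\delta(\Delta) = w_0$ is fixed along $\Sigma$ by Lemma \ref{lemma:demweaves}, which is what forces the target $\Delta$ to be reached by precisely $r$ length-drops. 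If $r = 0$ there is nothing to prove, as the empty (identity-on-$\Delta$) weave is the empty opening sequence. If $r \geq 1$, pick the \emph{first} trivalent vertex encountered scanning $\Sigma$ from the top. The weave up to that point consists only of 6- and 4-valent vertices, so it realizes a sequence of braid moves carrying $\beta\Delta$ to some equivalent braid word $\gamma$; by Theorem \ref{thm: ben}(a) this prefix is, up to equivalence, immaterial — any sequence of braid moves realizing $\beta\Delta \simeq \gamma$ gives an equivalent weave. The trivalent vertex then applies $\sigma_i\sigma_i \to \sigma_i$ to a specific adjacent pair in $\gamma$; I would use the structure of $\gamma$ (it being braid-equivalent to $\beta\Delta$, hence having a half-twist factor) together with Remark \ref{rmk: find k} / the deletion-pair language of Lemma \ref{lem: deletion pairs} to identify this trivalent vertex with the opening of some crossing $\sigma_j$ of $\beta$ — never a crossing of $\Delta$, exactly as in the proof of Theorem \ref{thm:mellit chart}, because opening a crossing of $\Delta$ would destroy the Demazure product $w_0$.

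The key step is then to commute this opening to the very top: I claim $\Sigma$ is equivalent to $\Sigma_{\mathrm{open}} \circ \Sigma'$, where $\Sigma_{\mathrm{open}}$ is the three-step weave of Lemma \ref{lem: open weave} opening the crossing $\sigma_j$ of $\beta$ (move $\Delta$ next to $\sigma_j$, apply the trivalent vertex, move $\Delta$ back), and $\Sigma'$ is a Demazure weave from $\beta'\Delta$ to $\Delta$ with $r - 1$ trivalent vertices, where $\beta'$ is $\beta$ with the crossing $\sigma_j$ removed. To produce $\Sigma'$ and the equivalence, I would invoke Theorem \ref{thm: one 3v} and Lemma \ref{lem: deletion pairs}: since the trivalent vertex corresponds to a close deletion pair, the two Demazure weaves across that deletion pair are equivalent, and this lets me slide the one trivalent vertex past the preceding braid moves (each such commutation being one of the cases 1)–4) in the proof of Theorem \ref{thm: one 3v}) until it sits at the top, at which point the top portion is precisely $\Sigma_{\mathrm{open}}$. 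Deleting this top portion leaves a Demazure weave $\Sigma'\in\Hom_{\mathfrak{W}_n}(\beta'\Delta,\Delta)$ with one fewer trivalent vertex; the target $\Delta$ is reduced, so the hypotheses needed for these equivalence results are satisfied.

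Applying the inductive hypothesis to $\Sigma'$ expresses it, up to equivalence, as opening the crossings of $\beta'$ in some order $\rho'$; prepending the opening of $\sigma_j$ gives an ordering $\rho$ of all crossings of $\beta$ such that $\Sigma \simeq \Sigma_\rho$, completing the induction. The main obstacle I anticipate is the second step — making rigorous that the first trivalent vertex can always be commuted to the top through an arbitrary preceding block of 6- and 4-valent vertices while staying within the equivalence relation. This requires checking that every way a braid move can precede a trivalent vertex falls into one of the configurations handled by Lemma \ref{lem: deletion pairs} and the case analysis of Theorem \ref{thm: one 3v} (including the subtler case 4) where a 6-valent vertex in the middle does not immediately give a deletion pair and one must restructure via a 1212-relation), and that the resulting "moved" trivalent vertex indeed corresponds to the \emph{same} crossing of $\beta$ — i.e. that the injection $\iota_\Sigma$ on crossings is respected. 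I expect this bookkeeping, rather than any new idea, to be the bulk of the work, and it is exactly the content that Theorem \ref{thm: one 3v} was set up to supply.
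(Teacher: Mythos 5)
Your proposal is correct and follows essentially the same route as the paper: induct on the number of trivalent vertices, isolate the portion of the weave through the first trivalent vertex, and use the one-trivalent-vertex analysis of Theorem \ref{thm: one 3v} and Lemma \ref{lem: deletion pairs} to replace that portion, up to equivalence, by the crossing-opening weave of Lemma \ref{lem: open weave} followed by braid moves, then recurse. The only quibble is your justification that the opened crossing lies in $\beta$ rather than $\Delta$ --- trivalent vertices always preserve the Demazure product by Lemma \ref{lemma:demweaves}, so the relevant point is instead that $\Delta$ is reduced and hence no close deletion pair lies entirely inside it --- but the paper's own proof leaves this detail implicit as well.
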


\begin{proof}
Similarly to the proof of Theorem \ref{thm: one 3v}, any Demazure weave between braids $\beta$ and $\beta'$ such that $\ell(\beta)=\ell(\beta')+1$ is equivalent to a weave corresponding to opening a crossing in $\beta$ followed by some braid moves. Let us prove the statement of the theorem by induction on the length of $\beta$. When $\ell(\beta) = 0$, we have a weave from $\Delta$ to $\Delta$; since $\Delta$ is reduced, the cancellation relation in \ref{sec: cancellation} and the Zamolodchikov relation in Section \ref{sec: Zam} guarantee that all weaves $\Delta \to \Delta$ are equivalent to the identity weave. Given a weave from $\beta\Delta$ to $\Delta$, choose a slice 
$\beta'$ right below the first trivalent vertex. By the above argument the weave is equivalent to opening a crossing in $\beta$ (which results in a braid $\beta''\Delta$) followed by some braid moves to $\beta'$, and followed by the rest of the weave. By the assumption of induction, the weave from $\beta''\Delta$ to $\Delta$ is equivalent to opening crossings in $\beta''$ in some order.
\end{proof}



\begin{cor}
Let $\ww$ be a Demazure weave between $\beta\Delta$ and $\Delta$. Then the open chart $\SM(\ww, w_0)\hookrightarrow X_0(\beta\Delta,w_0)$ coincides with one of the toric charts from Section \ref{sec: opening}.
\end{cor}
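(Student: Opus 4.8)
The plan is to combine two facts already established in the excerpt: first, that every Demazure weave $\Sigma \in \Hom_{\mathfrak{W}_n}(\beta\Delta,\Delta)$ is, up to equivalence, the weave associated to opening the crossings of $\beta$ in some order $\rho \in S_{\ell(\beta)}$ (Theorem \ref{th: weave opening}); second, that equivalent Demazure weaves yield isomorphic correspondences (Theorem \ref{thm: equivalent weaves}). So the only content left to check is that the correspondence $\SM(\Sigma_\rho, w_0)$ built from the ``opening'' weave $\Sigma_\rho$ of Lemma \ref{lem: open weave} — realized through the three-step sequence (a) slide $\Delta$ next to $\sigma_i$ via $4$- and $6$-valent vertices, (b) apply the trivalent vertex $\sigma_i\sigma_i \to \sigma_i$, (c) slide $\Delta$ back — coincides, as an open subvariety of $X_0(\beta\Delta;w_0)$, with the toric chart $T_{\tau(\beta)}$ of Corollary \ref{cor:open crossings}, where $\tau(\beta) = \rho$ is the same ordering of crossings.

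First I would unwind the identifications on both sides. On the weave side, Corollary \ref{cor: open} describes $\SM(\Sigma_\rho, w_0) \cong (\C^*)^{\ell(\beta)} \times X_0(\Delta;w_0) = (\C^*)^{\ell(\beta)}$ (since $X_0(\Delta;w_0)$ is a point), with the $(\C^*)^{\ell(\beta)}$-coordinates being the variables $z_1$ at the $\ell(\beta)$ trivalent vertices, and the map to $X_0(\beta\Delta;w_0)$ being an open embedding whose image is cut out by the non-vanishing of those trivalent-vertex variables. On the chart side, $T_{\tau(\beta)} \subseteq X_0(\beta\Delta;w_0)$ is constructed in Subsection \ref{sec: opening} by iterating the rational maps $\Omega_{\sigma_i}$, and Proposition \ref{prop:open crossings}(ii) identifies the $(\C^*)^{\ell(\beta)}$-factor with the sequence of variables $z_{r_1}, z'_{r_2}, \dots, z^{(\ell-1)}_{r_\ell}$ obtained by successively opening crossings. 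The key matrix-level observation is that the single elementary ``opening'' move in Lemma \ref{lem: open weave}, the trivalent vertex $\sigma_i\sigma_i \to \sigma_i$, is governed by exactly the same matrix identity $B_i(z)B_i(z_2) = \left(\begin{smallmatrix} -z^{-1} & 1 \\ 0 & z \end{smallmatrix}\right) B_i(z_2 + z^{-1})$ used in item (1) following Proposition \ref{prop:correspondence}, and that the factorization $B_i(z) = U_i(z)D_i(z)L_i(z)$ of Equation \eqref{eq: factorization}, which defines $\Omega_{\sigma_i}$, is the same data up to bookkeeping: the $U_i(z)D_i(z)$ part is precisely the upper-triangular prefactor absorbed into the yellow line of the weave, and $L_i(z)$ is the lower-triangular factor slid to the right. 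The braid moves (a) and (c) — moving $\Delta$ past a crossing via $4$- and $6$-valent vertices — induce, by item (2) after Proposition \ref{prop:correspondence}, the identity isomorphism between the relevant braid varieties, and these are exactly the braid relations \eqref{equation: reidemeister3 matrices} that the construction of $\Omega_{\sigma_i}$ in Subsection \ref{sec: opening} uses to ``slide the middle matrices to the sides.'' Hence, step by step, the composite correspondence of $\Sigma_\rho$ equals the composite of the $\Omega_{\sigma_i}$'s, and therefore its image equals $T_{\tau(\beta)}$ with $\tau(\beta) = \rho$; the $\C^*$-coordinates match because in both constructions they are the inverses $z_r^{-1}$ of the opened crossing variables (cf. Definition \ref{def:openingcross} and item (1)).

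The main obstacle I anticipate is not conceptual but bookkeeping: making the dictionary between ``yellow-line upper-triangular matrices / trivalent-vertex variables'' on the weave side and ``$U', D', L'$ matrices / primed variables'' on the opening side fully precise, including the convention about where marked points (and hence the permutation $w_0$ versus $\Delta$) enter, and verifying that the recursive application in Theorem \ref{th: weave opening}'s induction genuinely matches the recursive application in Theorem \ref{thm: codim 2}'s (or Proposition \ref{prop:open crossings}'s) induction on the same ordering. I would handle this by an induction on $\ell(\beta)$ paralleling those two proofs: the base case $\beta$ trivial is immediate ($X_0(\Delta;w_0)$ is a point and the empty weave gives the whole point), and for the inductive step I would peel off the first trivalent vertex of $\Sigma$ (equivalently, the first opened crossing in the order $\rho$), use Theorem \ref{thm: equivalent weaves} to replace $\Sigma$ by the explicit three-step opening weave of Lemma \ref{lem: open weave} for that crossing followed by a Demazure weave on the shorter braid $\beta''\Delta$, observe that the matrix computation above shows this first step realizes exactly $\Omega_{\sigma_i}$ (an isomorphism $\{z_r \neq 0\} \xrightarrow{\sim} X_0(\beta''\Delta;w_0)\times\C^*$), and then invoke the inductive hypothesis on $\beta''$. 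Since the equivariance and $T$-stability statements have already been proven for both the weave correspondences and the charts $T_{\tau(\beta)}$, no further work is needed there, and the proof concludes.
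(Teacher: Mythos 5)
Your proposal is correct and follows essentially the same route as the paper: the paper's proof is exactly the two-step argument of invoking Theorem \ref{th: weave opening} to replace $\Sigma$ by an equivalent crossing-opening weave and then Theorem \ref{thm: equivalent weaves} to conclude the charts coincide. The additional matrix-level verification you supply (matching the trivalent-vertex identity with the factorization $B_i(z)=U_i(z)D_i(z)L_i(z)$ defining $\Omega_{\sigma_i}$) is detail the paper leaves implicit in Lemma \ref{lem: open weave}, and is a sound, if not strictly necessary, elaboration.
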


\begin{proof}
By Theorem \ref{th: weave opening} the weave $\ww$ is equivalent to the weave $\ww'$ obtained by opening crossings in some order. By Theorem \ref{thm: equivalent weaves} the open charts in $X_0(\beta\Delta, w_0)$ corresponding to $\ww$ and $\ww'$ coincide.
\end{proof}


\subsection{Proof of Theorem \ref{thm: equivalent weaves}} \label{section:proof_thm_equivalent_weaves}

Let us prove Theorem \ref{thm: equivalent weaves}. In order to do so, we directly check each elementary move from Section \ref{sec: movies} separately. Cancellation of 4- and 6-valent vertices and commuting with distant colors are clear, and we do not include them in the list. Similarly, all the  ways to resolve 12121 are related to each other by a sequence of 1212-moves, as explained in Section \ref{sec: 12121}, so it is sufficient to check the latter. Below are the  remaining verifications needed for proving Theorem \ref{thm: equivalent weaves}.


\subsubsection{Changing the heights of vertices}
Changing the height of vertices does not change the the graph, but can change the yellow segments. 
Specifically, we need to understand how to slide yellow segments past 3-, 4- and 6-valent vertices, cups and caps. Here are the cases:

\begin{center}
\begin{tikzpicture}[scale=0.8]
\draw (0,4)--(1,2)--(2,4);
\draw  (1,2)--(1,0);
\draw [Dandelion,line width=0.8] (-1,3)--(3,3);
\draw [Dandelion,line width=0.8] (-1,2)--(1,2);
\draw (-0.2,4) node {\scriptsize $z_1$};
\draw (0.3,2.5) node {\scriptsize $s^{-1}z_1$};
\draw (2,2.5) node {\scriptsize $sz_2+k$};
\draw (2.2,4) node {\scriptsize $z_2$};
\draw (2.5,1) node {\scriptsize $s(z_2+z_1^{-1})+k$};
\draw (2.2, 3.2) node {\scriptsize $U$};
\draw (-0.2, 3.2) node {\scriptsize $U''$};
\draw (-0.5, 2.2) node {\scriptsize $A$};
\end{tikzpicture}
\qquad
\begin{tikzpicture}[scale=0.8]
\draw (0,4)--(1,2)--(2,4);
\draw  (1,2)--(1,0);
\draw [Dandelion,line width=0.8] (-1,1)--(3,1);
\draw [Dandelion,line width=0.8] (-1,2)--(1,2);
\draw (-0.2,4) node {\scriptsize $z_1$};
\draw (2.2,4) node {\scriptsize $z_2$};
\draw (2,1.5) node {\scriptsize $z_2+z_1^{-1}$};
\draw (2.5,0.5) node {\scriptsize $s(z_2+z_1^{-1})+k$};
\draw (-0.5, 2.2) node {\scriptsize $B$};
\draw (-0.5, 1.2) node {\scriptsize $U'$};
\draw (2.7, 1.2) node {\scriptsize $U$};
\end{tikzpicture}
\end{center}

The most interesting case is sliding through a 3-valent vertex. In this case we have identity
\begin{equation}
\label{eq: yellow 3v}
\left(\begin{matrix}
0 & 1\\
1 & z_1\\
\end{matrix}\right)
\left(\begin{matrix}
0 & 1\\
1 & z_2\\
\end{matrix}\right)
\left(\begin{matrix}
a & b\\
0 & c\\
\end{matrix}\right)=
\left(\begin{matrix}
0 & 1\\
1 & z_1\\
\end{matrix}\right)
\left(\begin{matrix}
c & 0\\
0 & a\\
\end{matrix}\right)
\left(\begin{matrix}
0 & 1\\
1 & \frac{b+cz_2}{a}\\
\end{matrix}\right)=
\left(\begin{matrix}
a & 0\\
0 & c\\
\end{matrix}\right)
\left(\begin{matrix}
0 & 1\\
1 & \frac{az_1}{c}\\
\end{matrix}\right)
\left(\begin{matrix}
0 & 1\\
1 & \frac{b+cz_2}{a}\\
\end{matrix}\right).
\end{equation}
Therefore we have a transformation $(z_1,z_2)\to (s^{-1}z_1,sz_2+k)$ where $s=\frac{c}{a}$ and $k=\frac{b}{a}$. Note that $z_1\neq 0$ is equivalent to $s^{-1}z_1\neq 0$. 
The transformation corresponding to the same yellow segment on the right figure sends 
$z_2+z_1^{-1}\to s(z_2+z_1^{-1})+k$, see the first of the identities \eqref{eq: yellow 3v} (or Lemma~\ref{lem:slide triangular}). On the left figure, we also obtain $s(z_2+z_1^{-1})+k$, now as the result of going down through the trivalent vertex: $(sz_2 + k) +  (s^{-1}z_1)^{-1} = s(z_2 + z_1^{-1}) + k$.

Let us now analyze the labels of the yellow segments on the far left, that we have indicated by $U''$ and $A$ on the left weave, and by $B$ and $U'$ on the right weave. In the left-hand side weave we have:
\[
B_i(z_1)B_i(z_2)U = U'' B_i(s^{-1}z_1)B_i(sz_2 + k) = U''AB_i(s(z_2 + z_1^{-1}) + k).
\]
In the right-hand side weave we have:
\[
B_i(z_1)B_2(z_2)U = BB_i(z_2 + z_1^{-1})U = BU'B_i(s(z_2 + z_1^{-1}) + k).
\]
Comparing, we obtain the equality $U''A = BU'$. It follows that, should there be more edges on the left of the weave, the labels of these edges remain constant below both yellow lines, as needed.

For a 6-valent vertex we have
$$
B_i(z_1)B_{i+1}(z_2)B_i(z_3)\left(
\begin{matrix}
a & b & c\\
0 & d & e\\
0 & 0 & f\\
\end{matrix}
\right)=\left(
\begin{matrix}
f & 0 & 0\\
0 & d & 0\\
0 & 0 & a\\
\end{matrix}
\right)B_i(\widetilde{z_1})B_{i+1}(\widetilde{z_2})B_i(\widetilde{z_3}),
$$
where 
$$
\widetilde{z_1}=\frac{1}{d}(e+z_1f),\ \widetilde{z_2}=\frac{1}{a}(c+z_3e+z_2f),\ \widetilde{z_3}=\frac{1}{a}(b+z_3d).
$$
Similarly,
$$
B_{i+1}(z'_1)B_{i}(z'_2)B_{i+1}(z'_3)\left(
\begin{matrix}
a & b & c\\
0 & d & e\\
0 & 0 & f\\
\end{matrix}
\right)=\left(
\begin{matrix}
f & 0 & 0\\
0 & d & 0\\
0 & 0 & a\\
\end{matrix}
\right)B_{i+1}(\widetilde{z_1}')B_{i}(\widetilde{z_2}')B_{i+1}(\widetilde{z_3}'),
$$
where 
$$
\widetilde{z_1}'=\frac{1}{a}(b+z'_1d),\ \widetilde{z_2}'
=\frac{1}{ad}(cd-be-z'_3bf+z'_2df),\ \widetilde{z_3}'=\frac{1}{d}(e+z'_3f).
$$
Now for $(z_1',z_2',z_3')=(z_3,z_2-z_1z_3,z_1)$ we get
$
(\widetilde{z_1}',\widetilde{z_2}',\widetilde{z_3}')=(\widetilde{z_3},\widetilde{z_2}-\widetilde{z_1}\widetilde{z_3},\widetilde{z_1}).
$ We show all these changes of variables in the following figure:
\begin{center}
\includegraphics[scale=1.35]{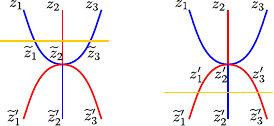}
\end{center}
We leave the check for 4-valent vertices to the reader.

For a cup, we can apply \eqref{eq: yellow 3v} to write $(z_1,z_2)\to (s^{-1}z_1,sz_2+k)$. Note that the cup is defined whenever $z_1=0$, which is equivalent to $s^{-1}z_1=0$, so we can still apply a cup below the yellow segment. We can also check the compatibility of the labels on the yellow segments left of the cup, as follows. If 
$$
B_i(0)B_i(z_2)=A,\ B_i(0)B_i(sz_2+k)=B
$$
for upper-triangular matrices $A,B$, then
we have 
$$
AU=B_i(0)B_i(z_2)U=U''B_i(0)B_i(sz_2+k)=U''B.
$$
The computation for a cap is similar.


\subsubsection{The 1212-relation}

We refer to the notations in Section \ref{sec: 1212}. Two weaves declared to be equivalent have one trivalent vertex each, so the corresponding algebraic weaves have one yellow segment each:

\begin{center}
\includegraphics[scale=0.3]{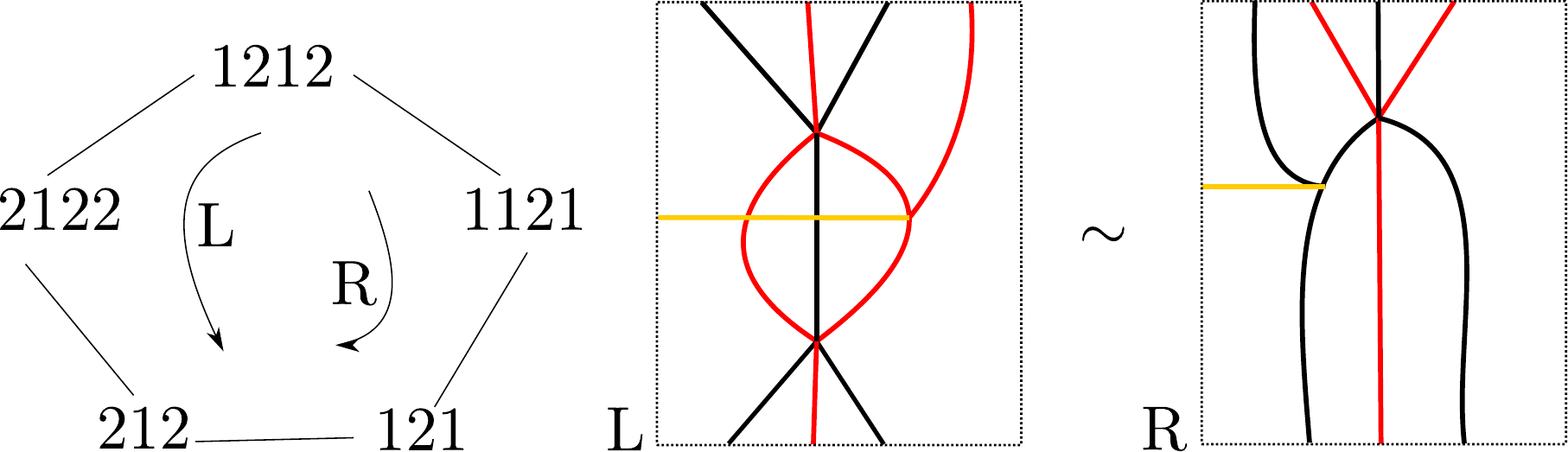}
\end{center}

The path $1212\to 2122\to 212\to 121$ on the left corresponds to  changes of variables
$$
(z_1,z_2,z_3,z_4)\to (z_3,z_2-z_1z_3,z_1,z_4)\to (z_2,-z_2z_1^{-1}+z_3,z_4+z_1^{-1})\to (z_4+z_1^{-1},z_3+z_2z_4,z_2).
$$
Note that the second step corresponds to opening third crossing, which affects all other crossings:
$$
B_2(z_3)B_1(z_2-z_1z_3)B_2(z_1)B_2(z_4)=B_2(z_3)B_1(z_2-z_1z_3)\left(\begin{matrix}
1 & 0 & 0\\
0 & -z_1^{-1} & 1\\
0 & 0 & z_1\\
\end{matrix}
\right)
\left(\begin{matrix}
1 & 0 & 0\\
0 & 1 & 0\\
0 & z_1^{-1} & 1\\
\end{matrix}
\right)B_2(z_4)=
$$
$$B_2(z_3)B_1(z_2-z_1z_3)\left(\begin{matrix}
1 & 0 & 0\\
0 & -z_1^{-1} & 1\\
0 & 0 & z_1\\
\end{matrix}
\right)
B_2({z_1}^{-1} + z_4)=
$$
$$
B_2(z_3)\left(\begin{matrix}
-z_1^{-1} & 0 & 1\\
0 & 1 & z_2-z_1z_3\\
0 & 0 & z_1\\
\end{matrix}
\right)B_1(-z_2z_1^{-1}+z_3)B_2(z_1^{-1}+z_4)=
$$
$$
\left(\begin{matrix}
-z_1^{-1} & 1 & 0\\
0 & z_1 & 0\\
0 & 0 & 1\\
\end{matrix}
\right)B_2(z_2)B_1(-z_2z_1^{-1}+z_3)B_2(z_1^{-1}+z_4).
$$

The yellow segment on the left weave is divided by edges of the weave into three segments corresponding to the upper triangular matrices appearing in this sequence of matrix identities. Namely, if $U, \widetilde{U}, \widetilde{\widetilde{U}}$ are the matrices corresponding to these segments, from right to left, then we have
\[
U = \left(\begin{matrix}
1 & 0 & 0\\
0 & -z_1^{-1} & 1\\
0 & 0 & z_1\\
\end{matrix}
\right), \, 
\widetilde{U} = \left(\begin{matrix}
-z_1^{-1} & 0 & 1\\
0 & 1 & z_2-z_1z_3\\
0 & 0 & z_1\\
\end{matrix}
\right), \,  \widetilde{\widetilde{U}} = \left(\begin{matrix}
-z_1^{-1} & 1 & 0\\
0 & z_1 & 0\\
0 & 0 & 1\\
\end{matrix}
\right).
\]

The right weave $1212\to 1121\to 121$ corresponds to changes of variables
$$
(z_1,z_2,z_3,z_4)\to (z_1,z_4,z_3+z_2z_4,z_2)\to (z_4+z_1^{-1},z_3+z_2z_4,z_2),
$$
so the end result is the same as for the sequence of transformations for the left weave.

For completeness, we also include the computation for some of the other diagrams in Section \ref{sec: 1212}. For 1121 we get two paths
$$
(z_1,z_2,z_3,z_4)\to (z_2+z_1^{-1},z_3,z_4)\to (z_4,z_3-z_4(z_2+z_1^{-1}),z_2+z_1^{-1})
$$
and
$$
(z_1,z_2,z_3,z_4)\to (z_1,z_4,z_3-z_2z_4,z_2)\to (z_3-z_2z_4,z_4-z_1(z_3-z_2z_4),z_1,z_2)\to
$$
$$
 (z_4,-z_1^{-1}(z_4-z_1(z_3-z_2z_4)),z_2+z_1^{-1})
$$
Note that 
$$
-z_1^{-1}(z_4-z_1(z_3-z_2z_4))=z_3-z_4(z_2+z_1^{-1}).
$$
For 1211 we get two paths
$$
(z_1,z_2,z_3,z_4)\to (z_2-z_1z_3,z_3^{-1}z_2,z_4+z_3^{-1})\to 
$$
$$
(z_4+z_3^{-1},z_3^{-1}z_2-(z_4+z_3^{-1})(z_2-z_1z_3),z_2-z_1z_3)
$$
and
$$
(z_1,z_2,z_3,z_4)\to (z_3,z_2-z_1z_3,z_1,z_4)\to (z_3,z_4,z_1-z_4(z_2-z_1z_3),z_2-z_1z_3)\to 
$$
$$(z_4+z_3^{-1},z_1-z_4(z_2-z_1z_3),z_2-z_1z_3).
$$
Note that
$$
z_3^{-1}z_2-(z_4+z_3^{-1})(z_2-z_1z_3)=z_3^{-1}z_2-z_4z_2+z_1z_3z_4-z_3^{-1}z_2+z_1=z_1-z_4(z_2-z_1z_3).
$$
The proof for other pair of adjacent colors is similar.


\subsubsection{The Zamolodchikov relation}

The left diagram in Section \ref{sec: Zam} represents the following path:
$$
123121\to 121321\to 212321\to 213231\to 231213\to 232123\to 323123
$$
which induces the following change of variables:
$$
(z_1,z_2,z_3,z_4,z_5,z_6)\to (z_1,z_2,z_4,z_3,z_5,z_6)\to (z_4,z_2-z_1z_4,z_1,z_3,z_5,z_6)\to $$
$$
 (z_4,z_2-z_1z_4,z_5,z_3-z_1z_5,z_1,z_6)\to
(z_4,z_5,z_2-z_1z_4,z_3-z_1z_5,z_6,z_1)\to 
$$
$$
(z_4,z_5,z_6,\widetilde{z_3},z_2-z_1z_4,z_1)\to  (z_6,z_5-z_4z_6,z_4,\widetilde{z_3},z_2-z_1z_4,z_1).
$$
Here $\widetilde{z_3}=z_3-z_1z_5-z_2z_6+z_1z_4z_6.$
The right diagram represents the following path:
$$
123121\to 123212\to 132312\to 312132\to 321232\to 321323\to 323123
$$
which induces the following change of variables:
$$
(z_1,z_2,z_3,z_4,z_5,z_6)\to (z_1,z_2,z_3,z_6,z_5-z_4z_6,z_4)\to (z_1,z_6,z_3-z_2z_6,z_2,z_5-z_4z_6,z_4)\to 
$$
$$
 (z_6,z_1,z_3-z_2z_6,z_5-z_4z_6,z_2,z_4)\to (z_6,z_5-z_4z_6,\widetilde{z_3},z_1,z_2,z_4)\to
$$
$$
(z_6,z_5-z_4z_6,\widetilde{z_3},z_4,z_2-z_1z_4,z_1)\to (z_6,z_5-z_4z_6,z_4,\widetilde{z_3},z_2-z_1z_4,z_1).
$$

This concludes the proof of Theorem \ref{thm: equivalent weaves}, as required. Hence, we have established invariance of the correspondences $\SM(\ww)$ under equivalence of Demazure weaves.



\subsection{Correspondences for simplifying weaves}

\begin{prop}
\label{prop: cups caps}
Let $\ww_1,\ww_2$ be two equivalent simplifying weaves. Then the associated correspondences $\mathcal{M}(\ww_1)$ and $\mathcal{M}(\ww_2)$ are isomorphic. Furthermore, there exists such an isomorphism that induces an isomorphism between $\SM(\ww_1,\pi)$ and $\SM(\ww_2,\pi)$ for all permutations $\pi\in S_n$.
\end{prop}

Proposition \ref{prop: cups caps} is proved in Sections \ref{sec: non-st_3v_alg}--\ref{sec: non-st_4v_alg} below by verifying that
the non-standard vertices yield well defined correspondences using the diagrams in Sections \ref{sec: nonstandard 3v}.(a), \ref{sec: nonstandard 6v}.(a), and \ref{sec: nonstandard 4v}.(a). That is, that equivalent weaves in these Sections give isomorphic correspondences.
Indeed, by definition of the equivalence for simplifying weaves, Proposition~\ref{prop: cups caps} then follows from Theorem~\ref{thm: equivalent weaves}.

\begin{remark}
Proposition \ref{prop: cups caps} can be also deduced from Proposition \ref{prop: MOBS}  as follows. The flag moduli space $\SM_{\OBS}(\ww)$ can be defined for any weave $\ww$ and is invariant under rotations. The general equivalence moves from Figure \ref{fig:cz equivalence} can be obtained by rotations of Demazure equivalence moves, and hence define isomorphic correspondences by Theorem \ref{thm: equivalent weaves}.
\end{remark}

\subsubsection{Non-standard trivalent vertex}
\label{sec: non-st_3v_alg}

Let us check that the two ways to define an upside down trivalent vertex in Section \ref{sec: nonstandard 3v}.(b) are equivalent. Indeed, the left picture corresponds to the changes of variables 
$$
(z)\to (0,u,z)\to (-u,z+u^{-1})
$$
while the right picture corresponds to
$$
(z)\to (z-w,0,w)\to ((z-w)^{-1},w).
$$
Here the cap on the left produces variables $(0,u)$ while the cap on the right produces variables $(0,w)$. We can identify the two diagrams by setting $w=z+u^{-1}$, $u=-(z-w)^{-1}$. 

Next, we compare two ways in Section \ref{sec: nonstandard 3v}.(a) corresponding to the paths $111\to 11\to \emptyset$. The left one corresponds to a sequence of changes of variables
$$
(z_1,z_2,z_3)\to (z_2+z_1^{-1},z_3)\to \emptyset,
$$
and the cup is well defined if $z_2+z_1^{-1}=0,$ that is, $1+z_1z_2=0$. The right diagram corresponds to 
$$
(z_1,z_2,z_3)\to (z_1+z_2^{-1},z_3+z_2^{-1})\to \emptyset,
$$
and the cup is well defined if $z_1+z_2^{-1}=0$, which leads to the same equation. Note that  $1+z_1z_2=0$ implies that both $z_1$ and $z_2$ are invertible, so that both trivalent vertices are well defined.

\subsubsection{Non-standard 6-valent vertex}
\label{sec: non-st_6v_alg}

Let us check the vertex with 5 inputs and one output from Section \ref{sec: nonstandard 6v}.(a).

One can check that in all weaves we require $z_1=z_2=0.$ Now the movie $12121\to 21221\to 211\to 2$  results in a sequence of changes of variables:
$$
(0,0,z_3,z_4,z_5)\to (z_3,0,0,z_4,z_5)\to (z_3,0,z_5)\to z_3,
$$
the movie $12121\to 11211\to 2$ yields
$$
(0,0,z_3,z_4,z_5)\to (0,z_4,z_3,0,z_5)\to z_3,
$$
and the movie $12121\to 12212\to 112\to 2$ yields
$$
(0,0,z_3,z_4,z_5)\to (0,0,z_5,z_4-z_3z_5,z_3)\to (0,z_4-z_3z_5,z_3)\to z_3.
$$

Now consider 4 inputs and 2 outputs, in both cases, we require $z_1=0$. For the movie
$1212\to 2122\to 21$ we get
$$
(0,z_2,z_3,z_4)\to (z_3,z_2,0,z_4)\to (z_3+z_2z_4,z_2),
$$
while the movie $1212\to 1121\to 21$ yields
$$
(0,z_2,z_3,z_4)\to (0,z_4,z_3+z_2z_4,z_2)\to (z_3+z_2z_4,z_2).
$$

\subsubsection{Non-standard 4-valent vertex}
\label{sec: non-st_4v_alg}

In both cases from Section \ref{sec: nonstandard 4v}.(a), we have $z_1=0$, and 
$131\to 311\to 3$ corresponds to $(0,z_2,z_3)\to (z_2,0,z_3)\to z_2$, while
$131\to 113\to 3$ corresponds to $(0,z_2,z_3)\to (0,z_3,z_2)\to z_2$.

This completes the proof of Proposition~\ref{prop: cups caps}. \hfill$\Box$

\subsubsection{Isotopies}

Finally, let us check the zigzag relation. On the left we have
$(z)\to (0,0,z)\to (z)$ while on the right we have 
$$
(z)\to (z-u,0,u)\to (u)
$$
which is well defined if $z-u=0$, so that $z=u$.

Since the weaves (and the associated correspondences) and the equivalence relations for non-standard 6- and 4-valent vertices from Sections \ref{sec: nonstandard 6v}.(b) and \ref{sec: nonstandard 4v}.(b) are reflections across the horizontal axis of those from Sections \ref{sec: nonstandard 6v}.(a) and \ref{sec: nonstandard 4v}.(a),  the calculations in Sections \ref{sec: non-st_6v_alg} and \ref{sec: non-st_4v_alg} show that such equivalent weaves also give isomorphic correspondences. Proposition \ref{prop: isotopy} then implies the following.

\begin{cor}
Let $\ww_1$ and $\ww_2$ be two planar isotopic weaves. Then the associated correspondences $\SM(\ww_1)$ and $\SM(\ww_2)$ are isomorphic.
\end{cor}



\subsection{Mutation equivalence and rational maps} \label{subsec:weqrat}

The previous subsections have discussed weave equivalence thoroughly. In this subsection, we address weave mutations. First, note that any Demazure weave $\ww$ from $\beta_2$ to $\beta_1$ defines a rational map $\Phi_{\ww}$ from $X_0(\beta_2,\pi)$ to $X_0(\beta_1,\pi)$, that is, the variables associated to crossings in $\beta_1$ can be expressed as rational functions in variables associated to crossings in $\beta_2$. This rational map  $\Phi_{\ww}$  is defined on the image of $\SM(\ww, \pi)$, but we can extend it to its maximal domain; we denote such extension by $\widehat{\Phi}_{\ww}$.

\begin{ex}
\label{ex: not weak eq}
The weave $(ss)s\to ss$ corresponds to the rational map $(z_1,z_2,z_3)\mapsto (z_2+z_1^{-1},z_3)$, while the weave $s(ss)\to ss$ corresponds to the rational map $(z_1,z_2,z_3)\mapsto (-z_2-z_1z_2^2,z_3+z_2^{-1})$.\hfill$\Box$
\end{ex}

Recall that two weaves are mutation equivalent if they are related by a sequence of equivalences and mutations. We now explain the natural relation between the maps associated to mutation equivalent weaves.

\begin{thm}
Let $\ww,\ww'$ be two weaves which are mutation equivalent. Then, the corresponding maximal extensions of rational functions  
$\widehat{\Phi}_{\ww}$,  $\widehat{\Phi}_{\ww'}$ coincide.
\end{thm}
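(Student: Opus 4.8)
The plan is to reduce the statement to the single-mutation case, since mutation equivalence is by definition a finite composition of elementary moves (already handled by Theorem \ref{thm: equivalent weaves}) and single mutations; if $\widehat{\Phi}$ is unchanged under each elementary move and under each single mutation, it is unchanged under any mutation equivalence. For the elementary moves there is nothing to do: Theorem \ref{thm: equivalent weaves} gives an isomorphism $\SM(\Sigma_1)\cong\SM(\Sigma_2)$ compatible with both projections, hence $\Phi_{\Sigma_1}=\Phi_{\Sigma_2}$ on the common image, and two rational maps agreeing on a dense open set have the same maximal domain of definition and agree there, so $\widehat{\Phi}_{\Sigma_1}=\widehat{\Phi}_{\Sigma_2}$. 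Thus the whole content is the behavior under a single mutation.

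So suppose $\Sigma$ and $\Sigma'$ differ by one mutation, i.e.\ locally they are the two weaves $(ss)s\to s$ and $s(ss)\to s$ sitting inside a larger common weave. First I would check the purely local statement: the two length-two Demazure weaves $\sigma_i\sigma_i\sigma_i\to\sigma_i\sigma_i\to\sigma_i$ realizing the two parenthesizations induce the \emph{same} maximal rational map $\C^3\dashrightarrow\C$. This is a direct matrix computation with the $2\times 2$ braid matrices: from Example \ref{ex: not weak eq} (and the rule in Proposition \ref{prop:correspondence}(b) for a trivalent vertex) one composes the two indicated trivalent vertices in each order and checks that both compositions send $(z_1,z_2,z_3)$ to the same rational function of $(z_1,z_2,z_3)$, namely the one obtained by demanding that $B_i(z_1)B_i(z_2)B_i(z_3)=U\cdot B_i(w)$ for an upper-triangular $U$ — which forces $w$ uniquely. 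Since $w$ is determined by the product $B_i(z_1)B_i(z_2)B_i(z_3)$ alone, independently of how we bracket the two consecutive trivalent vertices, the two rational maps literally coincide as rational functions, and therefore have the same maximal extension. (The two open charts $\SM$ inside differ — one is $\{z_1\neq 0\}$-type, the other involves $z_2\neq 0$ — but the rational map they define does not.)

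Next I would propagate this local identity to the global weave. The key point is that the monodromy description of $\SM(\Sigma,\pi)$ in Definition \ref{def:corrweave}, together with Proposition \ref{prop:correspondence}, shows that the variables on $\beta_1$ are obtained from those on $\beta_0$ by reading off, along any transverse path, an ordered product of braid matrices and triangular matrices, and then extracting the unique decomposition; outside the mutated region $\Sigma$ and $\Sigma'$ are identical, so the induced substitutions there agree verbatim, while inside the mutated region the two give the same substitution by the local computation just done. Concretely: factor $\widehat{\Phi}_\Sigma$ (resp.\ $\widehat{\Phi}_{\Sigma'}$) as the composition of rational maps along the horizontal slices of the weave; all slices outside the mutation contribute identical factors, and the block of slices across the mutation contributes the same rational map by the local statement. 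Hence $\widehat{\Phi}_\Sigma=\widehat{\Phi}_{\Sigma'}$ on a dense open set, and, being rational maps between the same varieties, they have the same maximal domain and agree there.

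The main obstacle I anticipate is not the local matrix identity (that is a short, if slightly fiddly, $2\times 2$ computation) but making the ``propagation'' argument clean: one must argue that the maximal extension $\widehat{\Phi}_\Sigma$ really is the composition of the slice-wise maximal extensions, i.e.\ that passing to maximal domains commutes with the composition arising from stacking slices. This is where care is needed — a priori, composing partially-defined maps can shrink the domain — but it should follow from the fact (implicit in Corollary \ref{cor: open} and the $LU$-type uniqueness statements underlying Lemma \ref{lem:slide triangular} and Proposition \ref{prop:correspondence}) that at each slice the relevant map is an \emph{open immersion} onto its image, so the composite rational map is determined by a single explicit system of polynomial/Laurent equations whose solution set does not depend on the order of bracketing. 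I would phrase the final step using exactly that: both $\widehat\Phi_\Sigma$ and $\widehat\Phi_{\Sigma'}$ are the maximal extension of the map ``solve $B(\beta_1)=\widetilde U\,B(\beta_0)$ for the $\beta_1$-variables,'' which manifestly does not see the mutation.
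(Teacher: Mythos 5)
Your proposal is correct and follows essentially the same route as the paper: reduce to a single mutation using Theorem \ref{thm: equivalent weaves}, then verify by the explicit $2\times 2$ computation of Example \ref{ex: not weak eq} that the two bracketings $(ss)s\to s$ and $s(ss)\to s$ yield the same rational function $z_3+\frac{z_1}{1+z_1z_2}$, defined on $\{z_1\neq 0,\,1+z_1z_2\neq 0\}$ and $\{z_2\neq 0,\,1+z_1z_2\neq 0\}$ respectively but with a common extension to $\{1+z_1z_2\neq 0\}$. Your observation that the output is forced by the unique decomposition $B_i(z_1)B_i(z_2)B_i(z_3)=U\,B_i(w)$ is a clean way to see the local identity, and your worry about propagation dissolves since two rational maps on an irreducible variety agreeing on a dense open set coincide and hence share the same maximal extension — exactly the point of the paper's remark following the proof.
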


\begin{proof}
By Theorem \ref{thm: equivalent weaves} the maps $\Phi_{\ww}$ and $\Phi_{\ww'}$ coincide for equivalent weaves even before mutations. Therefore it is sufficient to check mutations, using Example \ref{ex: not weak eq}. One of the trivalent graphs involved in a mutation corresponds to the rational map 
$$
(z_1,z_2,z_3)\mapsto z_3+(z_2+z_1^{-1})^{-1}=z_3+\frac{z_1}{1+z_1z_2}
$$
while the other corresponds to the rational map
$$
(z_1,z_2,z_3)\mapsto z_3+z_2^{-1}+(-z_2-z_1z_2^2)^{-1}=z_3+\frac{1}{z_2}-\frac{1}{z_2(1+z_1z_2)}=z_3+\frac{z_1}{1+z_1z_2}.
$$
Note that in the first case the map $\Phi_{\ww}$ is defined on the toric chart $\{z_1\neq 0,1+z_1z_2\neq 0\}$ while in the second case it is defined on the chart $\{z_2\neq 0,1+z_1z_2\neq 0\}$, but in both cases it extends to the locus $\{1+z_1z_2\neq 0\}$ and the extensions agree.
\end{proof}

\begin{remark}
Alternatively, we may state that the rational maps  $\Phi_{\ww}$ and $\Phi_{\ww'}$ agree on the intersection of their corresponding domains, hence their maximal extensions must agree too.
\end{remark}


\subsection{Torus actions and augmentation varieties} 
\label{seq: equivatiant weaves}
In this subsection, given a simplifying weave $\ww$ from $\beta_2$ to $\beta_1$, we will construct an action of the torus $T = (\C^*)^n/\C^*$ on the correspondence variety $\SM(\ww)$ so that for every $\pi \in S_n$ both projections $\SM(\ww, \pi) \to X_{0}(\beta_i; \pi)$, $i=1,2$, are $T$-equivariant. In particular, this allows us to define a 
correspondence between augmentation varieties by Theorem \ref{thm:aug vs braid}.

First, we modify the action of $T$ on $X_{0}(\beta;\pi)$ defined in Section \ref{sect:torus action} as follows. Take $\beta = \sigma_{i_{1}}\cdots \sigma_{i_{\ell}} \in \mbox{Br}^{+}_{n}$ and let $w \in S_{n}$ be its corresponding permutation. We define an action of $T$ on $\C^{\ell}$ by
\begin{equation}\label{eq:right torus action}
\bt\cdot_{\beta}(z_{1}, \dots, z_{\ell}) = (d_{1}z_{1}, \dots, d_{\ell}z_{\ell})
\end{equation}

\noindent where $d_{k} = t_{w_{k}^{\rho}(i_{k})}t^{-1}_{w_{k}^{\rho}(i_{k}+1)}$. Here, $w^{\rho}_{\ell - k} = s_{i_{\ell}}\cdots s_{i_{\ell-k+1}} =  (s_{i_{\ell - k+1}}\cdots s_{i_{\ell}})^{-1}$ 
(the superscript $\rho$ stands for \emph{right}, as we read the braid word $\beta$ right-to-left, as opposed to Section \ref{sect:torus action} above). Thanks to \eqref{eq: slide diagonal} we have that $B_{\beta}(\bt\cdot_{\beta}z) = D^{-1}_{w^{-1}(\bt)}B_{\beta}(z)D_{\bt}$, so for every permutation $\pi \in S_{n}$ we have an induced action on $X_{0}(\beta; \pi)$. 

\begin{ex}\label{ex:right torus action}
Let us take the braid word $\beta = \sigma_1\sigma_2\sigma_2\sigma_1\sigma_2$. If $\bt = (t_1, t_2, t_3)$ we have
\[
\bt\cdot_{\beta}(z_1, z_2, z_3, z_4, z_5) = \left(\frac{t_3}{t_1}z_1, \frac{t_2}{t_1}z_2, \frac{t_1}{t_2}z_3, \frac{t_1}{t_3}z_4, \frac{t_2}{t_3}z_5 \right).
\]
Comparing with Example \ref{ex:torus action} above, we see that the action we define here and that we define in Section \ref{sect:torus action} are different, in general. Note, however, that the two actions coincide up to the transposition $t_{2} \leftrightarrow t_{3}$.
\end{ex}
\begin{remark}
More generally, this torus action on $X_{0}(\beta;\pi)$ differs from the the action in Section \ref{sect:torus action} by conjugation by the permutation matrix $w$. 
The action used in Section \ref{sect:torus action} coincides with that considered in \cite{Mellit}, while the action used in this section behaves better under morphisms given by weaves, as we will see below.
\end{remark}

\begin{remark}
    Similarly to Remark \ref{rmk:z weights}, one can read the weight of the $z$-variables from the braid diagram $\beta$. Indeed, to find the weight of $z_{k}$ look at the strands that are incident to the $k$-th crossing of $\beta$ \emph{on the right} and follow them all the way \emph{to the right}. For example, the next figure computes that the weight of $z_3$ in Example \ref{ex:right torus action} above is  $\color{teal}{t_{1}}\color{black}{/}\color{red}{t_2}$.
    \begin{center}
        \includegraphics[scale=1]{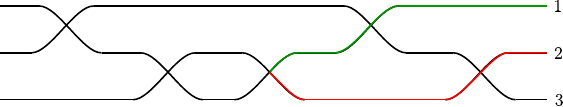}
    \end{center}
\end{remark}


\begin{lemma}\label{lemma:4-6 vertices}
Let $\gamma_{1}, \gamma_{2} \in \Br^{+}_{n}$ and denote $r := \ell(\gamma_{1})$. 
\begin{enumerate}
\item[(1)] Let $\beta_{2} =  \gamma_{1}\sigma_{i}\sigma_{i+1}\sigma_{i}\gamma_{2}$ and $\beta_{1} =\gamma_{1}\sigma_{i+1}\sigma_{i}\sigma_{i+1}\gamma_{2}$. Then, the map $$f: \C^{\ell(\beta_{2})} \to \C^{\ell(\beta_{1})},\quad f(z) = (z_{1}, \dots, z_{r}, z_{r+3}, z_{r+2}-z_{r+1}z_{r+3}, z_{r+1}, z_{r+4}, \dots, z_{\ell})$$ satisfies $f(\bt\cdot_{\beta_{2}}z) = \bt\cdot_{\beta_{1}}f(z)$.\\ 

\item[(2)] Let $\beta_{2} = \gamma_{1}\sigma_{i}\sigma_{j}\gamma_{2}$ and $\beta_{1} = \gamma_{1}\sigma_{j}\sigma_{i}\gamma_{2}$, where $|i - j| > 1$. Then, the map $$f: \C^{\ell(\beta_{2})} \to \C^{\ell(\beta_{1})},\quad f(z) = (z_{1}, \dots, z_{r}, z_{r+2}, z_{r+1}, z_{r+3}, \cdots, z_{\ell})$$ satisfies $f(\bt\cdot_{\beta_{2}}z) = \bt\cdot_{\beta_{1}}f(z)$. 
\end{enumerate}
\end{lemma}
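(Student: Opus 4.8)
The plan is to verify both parts by a direct computation, checking that the claimed maps $f$ intertwine the two torus actions $\cdot_{\beta_1}$ and $\cdot_{\beta_0}$. The key point is that $f$ only changes the coordinates in the ``window'' $r+1, \dots, r+3$ (part (1)) or $r+1, r+2$ (part (2)), leaving all other coordinates untouched, while the torus action formula \eqref{eq:right torus action} expresses the scaling factor $d_k$ purely in terms of the permutations $w^\rho_k = s_{i_\ell}\cdots s_{i_{\ell-k+1}}$, which are read off from the \emph{suffix} of the braid word starting at position $k+1$. So I would first record how $w^\rho_k$ changes — or rather fails to change — when we pass from $\beta_1$ to $\beta_0$. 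For positions $k \le r$ the suffix $\sigma_{i_{k+1}}\cdots$ contains the modified block, but what matters for $d_k$ is only the \emph{permutation} $w^\rho_k$, and since $\sigma_i\sigma_{i+1}\sigma_i$ and $\sigma_{i+1}\sigma_i\sigma_{i+1}$ (resp. $\sigma_i\sigma_j$ and $\sigma_j\sigma_i$ with $|i-j|>1$) represent the same element of $S_n$, the permutation $w^\rho_k$ is literally unchanged for all $k \le r$ and for all $k \ge r+4$ (resp. $k \ge r+3$). Hence $d_k$ is unchanged there, and $f$ trivially commutes with the action on those coordinates.

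So the whole content is the local check inside the window. For part (2): writing $a = w^\rho_{r+2} = (s_{i_{r+1}}s_{i_{r+2}}\cdots)^{-1}$ and $b = w^\rho_{r+1} = (s_{i_{r+2}}s_{i_{r+3}}\cdots)^{-1}$ for $\beta_1$, I would compute the corresponding permutations $a', b'$ for $\beta_0 = \gamma_1\sigma_j\sigma_i\gamma_2$ and observe, using $s_is_j = s_js_i$, that the scaling factor attached to the coordinate that ends up in slot $r+1$ of $\beta_0$ (namely $z_{r+2}$ of $\beta_1$) matches, and likewise for slot $r+2$. Concretely, one checks $d^{\beta_1}_{r+1} = t_{b(i)}t^{-1}_{b(i+1)}$, $d^{\beta_1}_{r+2} = t_{a(j)}t^{-1}_{a(j+1)}$, and for $\beta_0$ the factors are $d^{\beta_0}_{r+1} = t_{b'(j)}t^{-1}_{b'(j+1)}$, $d^{\beta_0}_{r+2} = t_{a'(i)}t^{-1}_{a'(i+1)}$; then $f(z)_{r+1} = z_{r+2}$ must be scaled by $d^{\beta_0}_{r+1}$, and one verifies $d^{\beta_0}_{r+1} = d^{\beta_1}_{r+2}$ and $d^{\beta_0}_{r+2} = d^{\beta_1}_{r+1}$ by unwinding the definitions and using that $s_i, s_j$ commute and fix the relevant indices of each other. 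For part (1) the analogous bookkeeping is slightly more involved because the middle coordinate transforms as $z_{r+2} - z_{r+1}z_{r+3}$ rather than just permuting, but this is exactly the content of relation \eqref{equation: reidemeister3 matrices} / Equation \eqref{eq: slide diagonal} iterated three times: one slides a diagonal matrix $D_{\bt}$ through the block $B_i(z_{r+1})B_{i+1}(z_{r+2})B_i(z_{r+3})$ in two ways and compares, exactly as in the derivation of \eqref{eq:slide diagonal 2}. The three scaling factors $d_{r+1}, d_{r+2}, d_{r+3}$ for $\beta_1$ and $\beta_0$ are governed by the three intermediate permutations $s_{i_{r+1}}$-conjugates, and the bilinear term $z_{r+1}z_{r+3}$ scales by $d_{r+1}d_{r+3}$, which one checks equals the factor $d'_{r+2}$ attached to the middle slot of $\beta_0$; this is forced because $B_\beta(\bt.z) = D^{-1}_{w^{-1}(\bt)}B_\beta(z)D_{\bt}$ holds for both words and the two words define the same matrix identity.

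Rather than grinding the index arithmetic by hand, the cleanest route — and the one I would actually write up — is to avoid coordinates entirely: use the identity $B_{\beta}(\bt\cdot_\beta z) = D^{-1}_{w^{-1}(\bt)} B_\beta(z) D_{\bt}$ recorded just above \eqref{eq:right torus action}, together with the braid-matrix relations \eqref{equation: reidemeister3 matrices}. The map $f$ is, by construction (cf. the statement of Theorem \ref{thm:main2}(b)–(c) and the elementary-weave rules (2) in Section \ref{sec: alg weaves}), precisely the isomorphism induced by applying relation \eqref{equation: reidemeister3 matrices} (resp. the far-commutation relation) to the block in positions $r+1,\dots,r+3$; i.e. $B_{\beta_0}(f(z)) = B_{\beta_1}(z)$ for all $z$. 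Applying this to $\bt.z$ and invoking the conjugation identity for both $\beta_0$ and $\beta_1$ gives
\[
B_{\beta_0}(f(\bt\cdot_{\beta_1}z)) = B_{\beta_1}(\bt\cdot_{\beta_1}z) = D^{-1}_{w^{-1}(\bt)}B_{\beta_1}(z)D_{\bt} = D^{-1}_{w^{-1}(\bt)}B_{\beta_0}(f(z))D_{\bt} = B_{\beta_0}(\bt\cdot_{\beta_0}f(z)),
\]
using that $\beta_0$ and $\beta_1$ have the same underlying permutation $w$. Since the map $z \mapsto B_{\beta_0}(z)$ is generically injective on each $T$-orbit direction (the $z_k$ are recovered from entries of the partial products, exactly as in Lemma \ref{lem: full twist} and the proof of Theorem \ref{thm: smooth}), this forces $f(\bt\cdot_{\beta_1}z) = \bt\cdot_{\beta_0}f(z)$ as rational — hence polynomial — identities, which is the claim. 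The main obstacle is purely cosmetic: making sure the injectivity statement used in the last step is phrased correctly (one really wants: if $B_{\beta_0}(z') = B_{\beta_0}(z'')$ then $z' = z''$, which holds because the entries of $B_{\gamma_1}(z_1,\dots,z_r)B_{i_{r+1}}(z_{r+1})$ already determine $z_{r+1}$, and so on inductively), so I would state that as a one-line lemma or simply cite the argument from the proof of Theorem \ref{thm: smooth}.
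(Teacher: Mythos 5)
The direct computation you sketch in your first two paragraphs is correct and is essentially the verification the paper intends (the lemma carries no written proof and is treated as a routine check): outside the modified block the suffix permutations $w^{\rho}_{k}$ agree because the two words have the same Coxeter projection, and inside the block the whole content is the identity $d_{r+1}d_{r+3}=d_{r+2}$, which is exactly what makes the bilinear term $z_{r+1}z_{r+3}$ scale by the factor attached to the middle slot of $\beta_{0}$. (Note in passing that the middle coordinate of $f$ should read $z_{r+2}-z_{r+1}z_{r+3}$, consistent with \eqref{equation: reidemeister3 matrices}; the statement's $z_{r+2}-z_{r+1}z_{r+2}$ is a typo, and equivariance would in fact fail for the literal formula since $z_{r+1}z_{r+2}$ scales by $d_{r+1}d_{r+2}\neq d_{r+2}$. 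Your computation implicitly uses the corrected formula, which is the right one.)

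However, the argument you say you would actually write up --- deducing $f(\bt\cdot_{\beta_{1}}z)=\bt\cdot_{\beta_{0}}f(z)$ from $B_{\beta_{0}}(f(\bt\cdot_{\beta_{1}}z))=B_{\beta_{0}}(\bt\cdot_{\beta_{0}}f(z))$ via injectivity of $z\mapsto B_{\beta_{0}}(z)$ --- has a genuine gap: that map is not injective. Already for $\beta_{0}=\sigma_{1}^{3}$ in $\Br_{2}$ one has $B_{1}(z_{1})B_{1}(0)B_{1}(z_{3})=\left(\begin{smallmatrix}0&1\\1&z_{1}+z_{3}\end{smallmatrix}\right)$, so the product only determines $z_{1}+z_{3}$; and for long words injectivity fails even generically by dimension count, since $B_{\beta_{0}}$ takes values in $\GL(n,\C)$, of dimension $n^{2}$, while $\ell(\beta_{0})$ is unbounded. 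The inductive recovery of $z_{r+1}$ from ``the entries of $B_{\gamma_{1}}(z_{1},\dots,z_{r})B_{i_{r+1}}(z_{r+1})$'' requires knowing that \emph{partial} product, which the full product does not determine. The correct way to make your conceptual argument rigorous is to run it one factor at a time --- slide $D_{\bt}$ through a single braid matrix, where $z\mapsto B_{i}(z)$ \emph{is} injective, exactly as in the derivation of \eqref{eq:slide diagonal 2} --- but that is precisely the coordinate computation of your first two paragraphs. So the ``purely cosmetic obstacle'' you defer at the end is in fact the entire content of the proof, and the version you propose to write up does not close it.
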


\begin{proof}
This is verified by direct computation.
\end{proof}

If $D = \diag(a_{1}, \dots, a_{n})$ is a diagonal matrix and $w \in S_{n}$, we write $^{w}\!D:= \diag(a_{w^{-1}(1)}, \dots, a_{w^{-1}(n)})$. The following lemma will be used to construct a (well-defined) torus action.

\begin{lemma}\label{lemma:3-valent}
Let $R$ be a $\C$-algebra with a rational $\mathbb{T} = (\C^{*})^{n}$-action by algebra automorphisms. Let $w \in S_n$ be a permutation and $z \in R$ an element of weight $\be_{w(j)} - \be_{w(j+1)}$ for some $j = 1, \dots, n-1$. Let $z_{0} \in R$ be an invertible element of weight $\be_{i+1} - \be_{i}$ and define $z'$ by the equation

$$
B_{j}(z)\,^{w}\!D_{i}(z_{0}) = \, ^{ws_{j}}\!D_{i}(z_{0})B_{j}(z'),
$$

see \eqref{eq: slide diagonal}. Then, the weight of $z'$ is $\we{z'} = \be_{s_{i}w(j)} - \be_{s_{i}w(j+1)}$.
\end{lemma}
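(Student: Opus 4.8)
The plan is to unwind Equation~\eqref{eq: slide diagonal} at the level of the single nontrivial $2\times 2$ block and simply track where the weight of $z'$ lands. Recall from Section~\ref{sect:torus action} that $D_{\bt}B_i(z) = B_i\left(\tfrac{t_{i+1}}{t_i}z\right)D_{s_i(\bt)}$, which is exactly the content of \eqref{eq: slide diagonal} written multiplicatively. Conjugating this relation: since $B_j(z)\,{}^{w}\!D_i(z_0) = \,{}^{ws_j}\!D_i(z_0)B_j(z')$, I read off $z' = c^{-1} z$ where $c$ is the ratio of the $(j{+}1,j{+}1)$-entry to the $(j,j)$-entry of ${}^{w}\!D_i(z_0)$ (equivalently, the multiplicative scalar by which $B_j$'s variable is rescaled when a diagonal matrix slides past it). So the first step is to identify this scalar $c$ precisely in terms of the entries of $D_i(z_0)$ and the permutation $w$.

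First I would make explicit that $D_i(z_0) := \diag(1,\dots,1,z_0^{-1},z_0,1,\dots,1)$ — wait, more carefully, in this section $D_i(z_0)$ denotes the diagonal block $\diag(z_0^{-1},z_0)$ (or whatever normalization is fixed by the equation $B_i(z) = U_i(z)D_i(z)L_i(z)$ in \eqref{eq: factorization}); the only thing that matters is that the $i$-th diagonal entry has some weight $-\be_i + $ (constant) and the $(i{+}1)$-st has weight $+\be_{i+1} + $ (constant), consistent with $\we{z_0} = \be_{i+1} - \be_i$ as hypothesized. Then ${}^{w}\!D_i(z_0) = \diag(a_{w^{-1}(1)}, \dots, a_{w^{-1}(n)})$ has its $k$-th diagonal entry of weight $\be_{?}$ only at $k = w(i)$ and $k = w(i{+}1)$: the entry at position $w(i)$ carries the weight piece $-\be_{w(i)}$ and the entry at position $w(i{+}1)$ carries $+\be_{w(i{+}1)}$ (matching $\we{z_0} = \be_{i+1}-\be_i$ transported by $w$). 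Therefore the scalar $c = \we{(\text{entry }(j{+}1,j{+}1))} - \we{(\text{entry }(j,j))}$ contributes to $\we{z'} = \we{z} - c$.

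The key step is then the case analysis on the relative position of $\{j, j{+}1\}$ and $\{w(i), w(i{+}1)\}$ — but in fact the computation collapses: regardless of whether $j$ or $j{+}1$ equals $w(i)$ or $w(i{+}1)$ (or neither), one gets $\we{z'} = \we{z} - (\text{weight shift from }{}^{w}\!D_i) $, and plugging in $\we{z} = \be_{w(j)} - \be_{w(j+1)}$ together with the identity $s_i w(j) = w(j)$ unless $w(j) \in \{i, i{+}1\}$, one sees that $\be_{w(j)} - \be_{w(j+1)}$ gets transformed into $\be_{s_i w(j)} - \be_{s_i w(j+1)}$. Concretely: the scalar by which $B_j$'s argument is rescaled is $a_{w^{-1}\text{-transported}}$, and composing the character-lattice action, $\we{z'}$ is the image of $\we{z}$ under the automorphism of $\mathfrak{X}(T)$ induced by $s_i$ acting on indices, i.e. $\be_k \mapsto \be_{s_i(k)}$. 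Hence $\we{z'} = \be_{s_i w(j)} - \be_{s_i w(j+1)}$, as claimed.

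I expect the main obstacle to be purely bookkeeping: getting the normalization of ${}^{w}\!D$ versus the direction of the sliding relation consistent with the conventions of \eqref{eq: slide diagonal} and Definition~\ref{def:torusaction}, so that signs and which-index-goes-where come out right. There is no genuine difficulty — the statement is really just ``sliding a diagonal matrix past a braid matrix conjugates the weight by the corresponding simple transposition'' — so the proof will be short: state $D_i(z_0)$ and ${}^{w}\!D_i(z_0)$ explicitly, invoke \eqref{eq: slide diagonal} to solve for $z'$, compute its weight, and simplify using $s_i \cdot \be_k = \be_{s_i(k)}$. The analogue for lower-triangular matrices, needed elsewhere, is identical with left/right swapped, so I would note it follows "analogously" without repeating the computation.
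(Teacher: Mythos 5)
Your overall strategy --- solve for $z'$ explicitly as a rescaling of $z$ by a ratio of diagonal entries of ${}^{w}\!D_i(z_0)$ and then compute the weight of that ratio --- is viable, and it is really just the entry-wise shadow of what the paper does. But there is a genuine gap exactly at the step you declare to ``collapse''. Writing $z' = (d_{j+1}/d_j)\,z$ with $d_k = ({}^{w}\!D_i(z_0))_{kk}$, one gets $\we{z'} = \we{z} + \we{d_{j+1}} - \we{d_j}$, and the correction term is nonzero precisely when $\{j,j+1\}$ meets the support of ${}^{w}\!D_i(z_0)$, i.e.\ the two positions $\{w(i),w(i+1)\}$ under the convention ${}^{w}\!D=\diag(a_{w^{-1}(1)},\dots,a_{w^{-1}(n)})$. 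On the other hand, the claimed answer $\be_{s_iw(j)}-\be_{s_iw(j+1)}$ differs from $\be_{w(j)}-\be_{w(j+1)}$ precisely when $\{w(j),w(j+1)\}$ meets $\{i,i+1\}$. One of these conditions is phrased through $w$ and the other through $w^{-1}$, so the assertion that the two corrections agree ``regardless of'' the relative positions of the indices is not automatic: reconciling them is the entire content of the lemma, and it hinges on pinning down the convention for ${}^{w}\!D$ consistently with the direction of the sliding relation. You flag this as ``bookkeeping,'' but without actually carrying out the case analysis (or verifying at least one nontrivial case), the argument is incomplete --- an inconsistent convention choice here produces a genuinely different weight for $z'$, not merely a sign discrepancy.

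For comparison, the paper's proof sidesteps the case analysis entirely: it records the conjugation identities $\bt.B_j(z) = D_{ws_j(\bt)}B_j(z)D_{w(\bt)}^{-1}$ (equivalent, via the admissibility formalism of Remark \ref{rmk:admissible}, to the hypothesis on $\we{z}$) and $\bt.\,{}^{w}\!D_i(z_0) = D_{w(\bt)}\,{}^{w}\!D_i(z_0)D_{s_iw(\bt)}^{-1}$, together with the analogue for ${}^{ws_j}\!D_i(z_0)^{-1}$, and multiplies them; the inner diagonal factors cancel telescopically, yielding $\bt.B_j(z') = D_{s_iws_j(\bt)}B_j(z')D_{s_iw(\bt)}^{-1}$, which by the first equivalence (with $s_iw$ in place of $w$) is exactly the claimed weight. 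If you wish to keep your hands-on route, the fix is to state the two supporting conjugation identities explicitly and verify them on the single nontrivial $2\times 2$ block --- that is where your ``case analysis'' actually lives. Finally, your closing remark about the lower-triangular analogue does not apply to this lemma: it concerns the diagonal factor $D_i$, not the triangular sliding of Lemma \ref{lem:slide triangular}.
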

We remark that this Lemma, just as  Remark \ref{rmk:admissible}, is valid for an arbitrary rational $\mathbb{T}$-action on a $\C$-algebra $R$, and not just for the action considered in this section or Section \ref{sect:torus action}.  

\begin{proof}
First, note that $z$ having weight $\be_{w(j)} - \be_{w(j+1)}$ is equivalent to saying that for any $\bt \in \mathbb{T}$:

$$
\bt. B_{j}(z) = D_{ws_{j}(\bt)}B_{j}(z)D_{w(t)}^{-1},
$$

cf. Remark \ref{rmk:admissible}. Also, since $D_{i}(z_{0}) = \diag(1, \dots, -z_{0}^{-1}, z_{0}, 1, \dots, 1)$, where $-z_{0}^{-1}$ is in the $i$-th place, we have that $\bt. ^{w}\!D_{i}(z_{0}) = D_{w(\bt)}\,^{w}\!D_{i}(z_{0})D_{s_{i}w(\bt)}^{-1}$. Now we compute

$$
\begin{array}{rl}
\bt.B_{j}(z') & = (\bt.\,^{ws_{j}}\!D_{i}(z_{0})^{-1})(\bt.B_{j}(z))(\bt.\,^{w}\!D_{i}(z_{0})) \\
 & =  (D_{s_{i}ws_{j}(\bt)}\,^{ws_{j}}\!D_{i}(z_{0})^{-1}D_{ws_{j}(\bt)}^{-1})(D_{ws_{j}(\bt)}B_{j}(z)D_{w(\bt)}^{-1})(D_{w(\bt)}\,^{w}\!D_{i}(z_{0}) D_{s_{i}w(\bt)}^{-1}) \\
& = D_{s_{i}ws_{j}(\bt)}B_{j}(z')D_{s_{i}w(\bt)}^{-1}
\end{array}
$$

\noindent and the result follows.
\end{proof}

Finally, the desired statement regarding torus actions on our correspondences reads as follows.

\begin{prop}\label{prop:equivariant weaves}
Let $\ww$ be a simplifying algebraic weave from $\beta_2$ to $\beta_1$. Then, there is an action of the algebraic torus $T = (\C^{*})^{n}/\C^{*}$ on $\SM(\ww)$ such that for every permutation $\pi \in S_{n}$:

\begin{itemize}
\item[(1)] $T$ preserves the correspondence variety $\SM(\ww, \pi)$. 
\item[(2)] The projections $\SM(\ww, \pi) \to X_{0}(\beta_2; \pi)$, $\SM(\ww, \pi) \to X_{0}(\beta_1; \pi)$ are equivariant.
\end{itemize} 
\end{prop}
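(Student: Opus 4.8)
The plan is to construct the torus action directly on the affine space $\mathbb{A}^{\Sigma}$ attached to $\Sigma$ and then check it descends to $\SM(\Sigma)$ and is compatible with both projections. Concretely, I would assign to each edge-segment of $\Sigma$ a weight in the character lattice $\mathfrak{X}(T)$ and to each yellow segment a weight in such a way that the braid matrix $B_i(z)$ attached to a segment of weight $\we{z}$ transforms as in Remark \ref{rmk:admissible}, and so that an upper uni-triangular matrix $U$ on a yellow segment becomes $w$-admissible for the appropriate permutation $w$. The assignment is forced: we read the weights off from the right end $\beta_0$ of the weave using the formula \eqref{eq:right torus action}, and then propagate upward through the weave. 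The key point is that each local model propagates weights consistently: for a $4$-valent vertex this is the commutation $B_i(z_1)B_j(z_2)=B_j(z_2)B_i(z_1)$ (weights just get swapped), for a $6$-valent vertex it is Lemma \ref{lemma:4-6 vertices}(1) together with the observation that $z_2 - z_1 z_3$ is homogeneous of the correct weight, and for a $3$-valent vertex (cup) it is the identity $B_i(0)B_i(z) = \begin{pmatrix}1 & z\\ 0 & 1\end{pmatrix}$, where the variable $z$ on the outgoing edge simply inherits its weight. For yellow lines crossing edges, the relevant fact is Lemma \ref{lemma:3-valent} (and its lower-triangular analogue), which is precisely what guarantees that sliding a diagonal past $B_j(z)$ carries admissible weights to admissible weights.

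First I would set up notation: fix the weight of the rightmost region/edge-data to be zero, and define recursively, scanning the weave from right to left and bottom to top, the weight of every segment so that the monodromy matrices behave equivariantly in the sense of Remark \ref{rmk:admissible}. Then I would verify the \emph{consistency at each vertex}: that the monodromy-triviality relations defining $\SM(\Sigma) \subseteq \mathbb{A}^{\Sigma}$ are preserved, i.e. if $(z\text{'s}, U\text{'s})$ satisfy the local relations then so does $\bt.(z\text{'s},U\text{'s})$. This is the technical heart and reduces to the three computations: (i) for a $3$-valent vertex, the identity in Proposition \ref{prop:correspondence} (the proof there already shows $a=-z_1^{-1}, b=1, c=z_1, w=z_2+z_1^{-1}$), so one checks that $\we{a},\we{b},\we{c},\we{w}$ come out right given $\we{z_1},\we{z_2}$; (ii) for a $6$-valent vertex, that $w_1=z_3, w_2 = z_2-z_1z_3, w_3=z_1$ are homogeneous of the expected weights, which follows since $\we{z_1}+\we{z_3} = \we{z_2}$ at such a vertex by the reduced-word condition; (iii) for an edge–yellow-line crossing, Lemma \ref{lemma:3-valent} and Lemma \ref{lemma: moving admissible diagonal}. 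Once $T$ acts on $\SM(\Sigma)$, the statements (1) and (2) are essentially definitional: $\SM(\Sigma,\pi)$ is cut out inside $\SM(\Sigma)$ by requiring $B(\beta_0)\pi$ (equivalently $B(\beta_1)\pi$) upper triangular, and the relation $B_{\beta}(\bt.z) = D^{-1}_{w^{-1}(\bt)}B_{\beta}(z)D_{\bt}$ shows this condition is $T$-invariant; and the two projections are literally the restriction of coordinates $(z\text{'s on }\beta_0)$ and $(z\text{'s on }\beta_1)$, which by construction carry the weights dictated by \eqref{eq:right torus action} on each end, so equivariance is immediate.

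The main obstacle I anticipate is \textbf{bookkeeping the weights through yellow lines}, since a weave may have a yellow line entering a vertex and cutting either all incoming or all outgoing edges, and one must check that the weight assignment is independent of how one chooses to route the yellow lines (i.e. that isotopies sliding yellow lines past vertices, as in the ``Isotopies I'' subsection, preserve the $T$-weights). For this I would lean on the fact already established in the proof of Theorem \ref{thm: equivalent weaves} that these slides induce specific changes of variables $(z_1,z_2)\mapsto(s^{-1}z_1, sz_2+k)$, combined with the observation that the multipliers $s,k$ there are built from the diagonal entries of the triangular matrix and hence are themselves homogeneous of the appropriate weights; this makes the consistency a formal consequence rather than a new computation. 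A secondary, milder issue is that the weave is ``simplifying'' but may still contain cups, so one must check the cup local model separately — but this is exactly rule (3) above and poses no difficulty. I would conclude by noting that, combined with Theorem \ref{thm:aug vs braid}, this equivariance descends to a correspondence between the augmentation varieties $\Aug(\beta_1,\ft_c)$ and $\Aug(\beta_0,\ft_c)$, completing the proof of the equivariance claim in Theorem \ref{thm:main2}.
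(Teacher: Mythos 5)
Your proposal is correct and follows essentially the same route as the paper: the action is defined by the weight formula \eqref{eq:right torus action} on the braid variables, equivariance is reduced to the elementary local models ($3$-, $4$-, $6$-valent vertices, cups, and the sliding of triangular and diagonal matrices along yellow lines), and the verification rests on Remark \ref{rmk:admissible} together with Lemmas \ref{lemma:4-6 vertices}, \ref{lemma: moving admissible matrices} and \ref{lemma:3-valent}. The only cosmetic differences are that the paper anchors the action on $\C^{\ell(\beta_1)}$ (the top word) via the embedding of Proposition \ref{prop:correspondence} rather than propagating weights upward from $\beta_0$ --- a safer choice, since a cup leaves no trace below --- and that at the trivalent vertex it explicitly factors the triangular matrix into its uni-triangular and diagonal parts before sliding, which is the step your yellow-line bookkeeping would have to reproduce.
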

\begin{proof}
Thanks to Proposition \ref{prop:correspondence} we have $\SM(\ww) \subseteq \C^{\ell(\beta_2)}$, and we have an action of $T$ on $\C^{\ell(\beta_2)}$ given by \eqref{eq:right torus action}. Again by Proposition \ref{prop:correspondence}, this induces an action on $\SM(\ww)$.

Note that, more generally, we have projections $\SM(\ww) \to \C^{\ell(\beta_2)}$, $\SM(\ww) \to \C^{\ell(\beta_1)}$. We will show that both of these maps are $T$-equivariant. This implies (1) and (2) above. By the definition of the $T$-action, the map $\SM(\ww) \to \C^{\ell(\beta_2)}$ is $T$-equivariant. To show that the map $\SM(\ww) \to \C^{\ell(\beta_1)}$ is $T$-equivariant, it suffices to do it for elementary weaves. For four and six-valent vertices, the result follows from Lemma \ref{lemma:4-6 vertices} and Proposition \ref{prop:correspondence}.

Now we move on to three-valent vertices; we have $\beta_{2} = \gamma_{1}\sigma_{i}\sigma_{i}\gamma_{2}$ and $\beta_{1} = \gamma_{1}\sigma_{i}\gamma_{2}$. By Proposition \ref{prop:correspondence} the map $\SM(\ww) \to \C^{\ell(\beta_{1})}$ is given by $z \mapsto (z'_{1},\dots, z'_{r}, z_{r+1} + z_{r}^{-1}, z_{r+2}, \dots, z_{\ell})$, where $z'_{1}, \dots, z'_{r}$ are determined by the equations 
$$
B_{i_{r-d}}(z_{r-d})U^{d} = U^{d+1}B_{i_{r-d}}(z'_{r-d}), \; \; \; U^{0} = U_{i}(z_{r})D_{i}(z_{r}).
$$

Note that the weights of $z_{r+2}, \dots, z_{\ell}$ are clearly preserved under the projection, so for simplicity we may assume that $\gamma_{2} = 1$. We split this into a two-step process, first `sliding $U_i$ to the left' and then `sliding $D_{i}(z_r)$ to the left'. To slide $U_{i}$ to the left, we define $\wt{z}_{1}, \dots, \wt{z}_{r}$ via 
$$
B_{i_{r-d}}(z_{r-d})\wt{U}^{d} = \wt{U}^{d+1}B_{i_{r-d}}(\wt{z}_{r-d}), \; \; \;\wt{U}^{0} = U_{i}(z_{r}).
$$

\noindent And to now slide $D_{i}(z_r)$ to the left, we define $z'_1, \dots, z'_r$ via:
$$
B_{i_{r-d}}(\wt{z}_{r-d})\bar{U}^{d} = \bar{U}^{d+1}B_{i_{r-d}}(z'_{r-d}), \; \; \; \bar{U}^{0} = D_{i}(z_{r}).
$$

Since $U_{i}(z_{r})$ is unitriangular, it follows from Lemma \ref{lemma: moving admissible matrices} that the $T$-weight of $\wt{z}_{r-d}$ coincides with that of $z_{r-d}$ for $d = 0, \dots, r-1$. Now the result follows from Lemma \ref{lemma:3-valent}.

Finally, we check cups: we have $\beta_{2} = \gamma_{1}\sigma_{i}\sigma_{i}\gamma_{2}$ and $\beta_{1} = \gamma_1\gamma_{2}$. The map $\SM(\ww) \to \C^{\ell(\beta_1)}$ is given by $z \mapsto (z_{1}, \dots, z_{r}, z_{r+3}, \dots, z_{\ell})$. Now, since $s_{i}s_{i} = 1$, the result follows.
\end{proof}

Thanks to Proposition \ref{prop:equivariant weaves}, we are able to define correspondences between certain augmentation varieties. Let $\beta_{1}, \beta_{2}$ be braid words, and let $\ft$ be a set of marked points on the strands $1, \dots, n$ satisfying the following conditions:

\begin{itemize}
\item[(i)] There is at most one marked point per strand and, by convention, it is placed to the right of all crossings in both $\beta_{1}$ and $\beta_{2}$ (see Figure \ref{fig:marked points}),
\item[(ii)] Each component of both $\beta_{1}$ and $\beta_{2}$ contains at least one marked point.
\end{itemize}

For example, we can choose $\ft = \ft_{s}$ or $\ft = \ft_{c}$ as in Section \ref{sect:augmentations}. We can then form the augmentation varieties $\Aug(\beta_{1}, \ft)$ and $\Aug(\beta_{2}, \ft)$. Now let $T_{\ft} \subseteq T$ be the torus defined by the equations $t_{i} = 1$ if the $i$-th strand has a marked point. Thanks to (a straightforward generalization of) Theorem \ref{thm:aug vs braid}, we have $\Aug(\beta_{1}, \ft) \cong X_{0}(\beta_{1}\cdot\Delta; w_{0})/T_{\ft}$ and $\Aug(\beta_{2}, \ft) \cong X_{0}(\beta_{2}\cdot\Delta; w_{0})/T_{\ft}$.  In combination with the correspondences above, we then obtain the following result.

\begin{cor}\label{cor:torus aug weave}
Let $\ww$ be a simplifying algebraic weave from $\beta_{2}\cdot \Delta$ to $\beta_{1}\cdot \Delta$. Then, $T_{\ft}$ acts freely on $\SM(\ww)$ and $\SM(\ww, w_{0})/T_{\ft}$ defines a correspondence between $\Aug(\beta_{2}, \ft)$ and $\Aug(\beta_{1}, \ft)$.
\end{cor}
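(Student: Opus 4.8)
The plan is to obtain the statement by transporting the torus action of Proposition \ref{prop:equivariant weaves} to the subtorus $T_{\ft}\sse T$ and then passing to geometric quotients, using Theorem \ref{thm:aug vs braid} to recognize those quotients as augmentation varieties. Recall from the discussion preceding the statement that, under hypotheses (i)--(ii) on $\ft$, the generalized form of Theorem \ref{thm:aug vs braid} gives identifications $\Aug(\beta_{1},\ft)\cong X_{0}(\beta_{1}\cdot\Delta;w_{0})/T_{\ft}$ and $\Aug(\beta_{0},\ft)\cong X_{0}(\beta_{0}\cdot\Delta;w_{0})/T_{\ft}$, and in particular that $T_{\ft}$ acts freely on $X_{0}(\beta_{1}\cdot\Delta;w_{0})$ and on $X_{0}(\beta_{0}\cdot\Delta;w_{0})$, this being the instance of Remark \ref{rmk: free subtori} in which each orbit of the relevant permutation is pinned by a marked strand. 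Applying Proposition \ref{prop:equivariant weaves} to the simplifying weave $\Sigma$ from $\beta_{1}\cdot\Delta$ to $\beta_{0}\cdot\Delta$, the torus $T$ acts on $\SM(\Sigma)$, preserves $\SM(\Sigma, w_{0})$, and is such that both projections $p_{i}\colon\SM(\Sigma, w_{0})\to X_{0}(\beta_{i}\cdot\Delta;w_{0})$ are $T$-equivariant; restricting to $T_{\ft}\sse T$ keeps all of these properties.

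Next I would check freeness. If $\bt\in T_{\ft}$ fixes a point of $\SM(\Sigma, w_{0})$, then by $T_{\ft}$-equivariance of $p_{1}$ it fixes its image in $X_{0}(\beta_{1}\cdot\Delta;w_{0})$, and since $T_{\ft}$ acts freely there we conclude $\bt=e$; the same reduction, applied to the $T_{\ft}$-equivariant embedding of $\SM(\Sigma)$ into the affine space of top-braid coordinates supplied by Proposition \ref{prop:correspondence} (together with the relations at trivalent vertices, which force the pertinent coordinates to lie in $\C^{*}$) and the structure isomorphism $\SM(\Sigma,\pi)\cong\C^{m}\times(\C^{*})^{r}\times X_{0}(\beta_{0};\pi)$, yields freeness on all of $\SM(\Sigma)$. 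Since $T_{\ft}$ is then a torus acting freely on the affine variety $\SM(\Sigma, w_{0})$, the geometric quotient $\SM(\Sigma, w_{0})/T_{\ft}$ exists as an algebraic variety and the quotient map is a principal $T_{\ft}$-bundle, exactly as in the proof of Theorem \ref{thm:aug vs braid}.

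Finally I would descend the correspondence: the $T_{\ft}$-equivariant maps $p_{0}, p_{1}$ induce morphisms from $\SM(\Sigma, w_{0})/T_{\ft}$ to $X_{0}(\beta_{i}\cdot\Delta;w_{0})/T_{\ft}\cong\Aug(\beta_{i},\ft)$, i.e. a correspondence between $\Aug(\beta_{1},\ft)$ and $\Aug(\beta_{0},\ft)$; moreover, since the Cartesian squares governing concatenation of weaves (Theorem \ref{thm:main2}.(c)) are assembled from $T_{\ft}$-equivariant maps, this construction is compatible with composition of simplifying weaves. The one point that demands care, and which I expect to be the main (though essentially bookkeeping) obstacle, is to verify that the single subtorus $T_{\ft}$ simultaneously satisfies the hypotheses of Remark \ref{rmk: free subtori} for both $\beta_{0}\cdot\Delta$ and $\beta_{1}\cdot\Delta$ — that conditions (i)--(ii) on $\ft$ really do select a representative of every orbit on $\{1,\dots,n\}$ for each of the two permutations involved — since once this is established everything downstream is a routine descent argument.
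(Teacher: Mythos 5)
Your argument for the part of the statement that actually gets used downstream is correct and is essentially the paper's (implicit) route: Proposition \ref{prop:equivariant weaves} gives the $T$-action on $\SM(\Sigma)$ preserving $\SM(\Sigma,w_0)$ with both projections equivariant; freeness of $T_{\ft}$ on $\SM(\Sigma,w_0)$ follows by pushing a fixed point through the equivariant projection to $X_0(\beta_1\cdot\Delta;w_0)$, where the action is free; and the correspondence then descends to the quotients, which Theorem \ref{thm:aug vs braid} identifies with the augmentation varieties. Your closing worry is also resolved exactly as you suspect: condition (ii) on $\ft$ (at least one marked point per component of \emph{both} braids) is precisely the hypothesis of Remark \ref{rmk: free subtori} for each of $\beta_0$ and $\beta_1$ simultaneously, so there is nothing further to check there.

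The one step that does not go through as written is your claim of freeness on all of $\SM(\Sigma)$ (as opposed to $\SM(\Sigma,w_0)$). The reduction you invoke — equivariantly embedding $\SM(\Sigma)$ into the affine space of top-braid coordinates — proves nothing, because $T_{\ft}$ does not act freely on that ambient affine space (the origin is fixed), and the nonvanishing of the trivalent-vertex variables only helps if the $T_{\ft}$-weights of those particular coordinates already separate the elements of $T_{\ft}$, which need not happen. Concretely, for a simplifying weave with no trivalent vertices (e.g.\ the identity weave, or one built only from $4$- and $6$-valent vertices), $\SM(\Sigma)$ is an affine space containing the origin, which is fixed by all of $T_{\ft}$; so the unrestricted freeness claim is not available by your argument (and indeed fails in that generality). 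The fix is simply to restrict the freeness assertion to $\SM(\Sigma,w_0)$, where your projection argument is airtight and which is all that is needed to form the quotient correspondence $\SM(\Sigma,w_0)/T_{\ft}$.
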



\subsection{Weaves and decompositions}\label{sec:weaves stratifications}

In this subsection, we explain how algebraic weaves can be used to decompose braid varieties; augmentation varieties can be similarly decomposed. For that, recall that a simplifying weave $\ww$ with a braid $\beta_2$ on the top and $\beta_1$ on the bottom defines an injective map 
$$
\SM(\ww, \pi):X_0(\beta_1;\pi)\times \C^{a}\times (\C^*)^b\hookrightarrow X_0(\beta_2;\pi),
$$
where $a$ is the number of cups and $b$ is the number of trivalent vertices. Since each cup decreases the length by 2, and each trivalent vertex by 1, we get the equation $2a+b=\ell(\beta_2)-\ell(\beta_1)$.

We will be interested in simplifying weaves $\ww$  with some braid $\gamma$ on the top and the half twist $\Delta$ on the bottom. Since $X_0(\Delta; w_0)$ is a point, see Example \ref{ex: braid varieties}, we obtain an injective map 
$$
\SM(\ww, w_0):\C^{a}\times (\C^*)^b\hookrightarrow X_0(\gamma;w_0),\ 2a+b=\ell(\gamma)-\binom{n}{2}.
$$

\begin{definition} \label{defn:stratification_by_weaves}
 We say that a collection of simplifying weaves $(\ww_1,\ldots,\ww_k)$ \emph{decomposes} the braid variety $X_0(\gamma,w_0)$ if the images of $\SM(\ww_i)$ do not intersect each other and their union is $X_0(\gamma,w_0)$.
\end{definition}

\begin{remark}
The reason why use the term \emph{decomposition} (as opposed to \emph{stratification}) is that, in some parts of the literature, a condition on a stratification is that the closure of a stratum is a union of strata. This is not the case in, for example, the Deodhar decomposition (cf. \cite{Dudas} or \cite[Section 4.3]{Speyer}), which is a special case of the decompositions we discuss here. 
\end{remark}

\begin{thm}
\label{thm:stratify}
\begin{itemize}
	\item[(a)] 
    Let $\gamma$ be a positive braid word. 
    Then there exists a finite collection of simplifying weaves $(\ww_1,\ldots,\ww_k)$, where each $\ww_i$ has $\gamma$ on the top and the half twist $\Delta$ on the bottom, which decomposes  $X_0(\gamma,w_0)$ in the sense of Definition \ref{defn:stratification_by_weaves}.\\
    
	\item[(b)] Furthermore, given any Demazure weave $\ww$ from $\gamma$ to $\Delta$, there is a decomposition of $X_0(\gamma,w_0)$  by a collection of simplifying weaves $(\ww_1 = \ww, \ww_2, \ldots,\ww_k)$ as in (a), where the correspondence 
	$$\SM(\ww)\cong(\C^*)^{\ell(\gamma)-\binom{n}{2}}$$ is the unique piece of maximal dimension.
\end{itemize}
\end{thm}

\begin{proof} Let us first prove (a) by induction on $\ell(\gamma)\in\N$. If $\gamma$ is reduced, then the matrix $B_{\gamma}(z_1,\ldots,z_{\ell(\gamma)})$ contains 1's corresponding to the permutation matrix for $\gamma$ and independent variables elsewhere. Then $B_{\gamma}(z_1,\ldots,z_{\ell(\gamma)})w_0$ contains $1's$ corresponding to the permutation matrix for $\gamma w_0$, so it is never upper-triangular unless $\gamma w_0=1$. We conclude that $X_0(\gamma;w_0)$ is empty for $\gamma\neq \Delta$ and it is a point for $\gamma=\Delta$. In both cases the variety can be obviously decomposed.

If $\gamma$ is not reduced, then after applying some braid moves we get a braid with two crossings $\sigma_i$ next to each other. Let $z_1$ and $z_2$ be the  variables corresponding to these crossings. If $z_1\neq 0$, we can apply a trivalent vertex and get a braid $\gamma'$, and if $z_1=0$, we can apply a cup and get a braid $\gamma''$. By the assumption of induction, we can decompose $X_0(\gamma';w_0)$ and $X_0(\gamma'';w_0)$ by simplifying weaves. 

For (b), let us decompose $\ww$ into elementary weaves: $\ww^{(1)}$ between $\gamma=\gamma^{(0)}$ and $\gamma^{(1)}$,  $\ww^{(2)}$ between $\gamma^{(1)}$ and $\gamma^{(2)}$ etc. Clearly, we can decompose $X_0(\gamma; w_0)$ as follows:
$$
X_0(\gamma;w_0)=\SM(\ww)\sqcup \left(X_0(\gamma;w_0)\setminus \mathrm{Im\ }\SM(\ww^{(1)})\right)\sqcup \left(\mathrm{Im}\ \SM(\ww^{(1)})\setminus \mathrm{Im}\ \SM(\ww^{(1)}\ww^{(2)})\right)\sqcup \ldots
$$
Let us prove that all these pieces can be further decomposed by simplifying weaves. Indeed, if $\ww^{(i)}$ is a 6- or 4-valent vertex, then $\SM(\ww^{(i)})$ is an isomorphism and 
$$
	\mathrm{Im}\ \SM(\ww^{(1)}\cdots \ww^{(i-1)})=\mathrm{Im}\ \SM(\ww^{(1)}\cdots \ww^{(i)}).
$$
If $\ww^{(i)}$ is a trivalent vertex with variables $z_1$ and $z_2$ then 
$$
\mathrm{Im}\ \SM(\ww^{(1)}\cdots \ww^{(i-1)})\setminus \mathrm{Im}\ \SM(\ww^{(1)}\cdots \ww^{(i)})=\SM(\ww^{(1)}\cdots \ww^{(i-1)})(W_i)
$$
where $W_i$ is the locus  $\{z_1=0\}\subset X_0(\gamma^{(i-1)};w_0)$. In this case we can apply a cup to $\gamma^{(i-1)}$ and obtain 
a new braid $\widetilde{\gamma^{(i)}}$. Then $W_i$ as an image of the correspondence for this cup, and by (a) we can decompose 
$X_0(\widetilde{\gamma^{(i)}};w_0)$ by simplifying weaves. 
\end{proof}

Finally, we obtain the following consequence.

\begin{cor}
\label{cor: non empty}
The braid variety $X_0(\gamma;w_0)$ is not empty if and only if $\gamma$ contains some reduced expression for $w_0$ as a subword, or, equivalently, the Demazure product of $\gamma$ equals $w_0$. In this case, $X_0(\gamma;w_0)$ is an irreducible complete intersection of dimension $\ell(\gamma)-\binom{n}{2}$. 
\end{cor}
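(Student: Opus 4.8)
The plan is to deduce everything from Theorem~\ref{thm:stratify}, together with elementary facts about the Demazure product and the $0$-Hecke monoid from Section~\ref{section:Dem_prod}. I would begin with the combinatorial equivalence ``$\delta(\gamma)=w_0$ iff $\gamma$ contains a reduced word for $w_0$ as a subword''. If $\gamma_0$ is any subword of $\gamma$, then $\delta(\gamma_0)\le\delta(\gamma)$ in the Bruhat order --- because $x\mapsto x\star s_i$ never decreases the element and $\star$ is order-preserving in each variable --- so $\delta(\gamma_0)=w_0$ forces $\delta(\gamma)=w_0$ by maximality of $w_0$; the converse is the subword property of the Demazure product and will also fall out of the induction below. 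I would also record that $\delta(\gamma)=\pi(\gamma)$ when $\gamma$ is reduced, and that $\delta(\gamma)=w_0$ implies the existence of a Demazure weave $\gamma\to\Delta$: shorten $\gamma$ by a trivalent vertex after a braid move whenever it is not reduced (using that trivalent vertices preserve $\delta$ by Lemma~\ref{lemma:demweaves} and Example~\ref{ex:nilHecke}), then reach $\Delta$ by braid moves.

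Next I would prove the non-emptiness criterion by induction on $\ell(\gamma)$, reusing the recursive decomposition from the proof of Theorem~\ref{thm:stratify}(a): if $\gamma$ is not reduced, braid moves (which alter neither $X_0(\gamma;w_0)$ up to isomorphism, nor $\ell(\gamma)$, nor $\delta(\gamma)$) expose an adjacent pair $\sigma_i\sigma_i$ with variables $z_1,z_2$, and $X_0(\gamma;w_0)=\{z_1\neq 0\}\sqcup\{z_1=0\}$ with $\{z_1\neq 0\}\cong\C^*\times X_0(\gamma';w_0)$ ($\gamma'$ from a trivalent vertex, $\ell(\gamma')=\ell(\gamma)-1$, $\delta(\gamma')=\delta(\gamma)$) and $\{z_1=0\}\cong\C\times X_0(\gamma'';w_0)$ ($\gamma''$ from a cup, a subword of $\gamma$). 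In the base case $\gamma$ reduced, that same proof shows $B_\gamma(z)w_0$ is never upper triangular unless $\pi(\gamma)=w_0$, i.e.\ unless $\gamma$ is a reduced word for $w_0$, in which case $X_0(\gamma;w_0)$ is a point and $\delta(\gamma)=\pi(\gamma)=w_0$. For ``$\delta(\gamma)=w_0\Rightarrow X_0(\gamma;w_0)\neq\varnothing$'': apply induction to $\gamma'$ (which has $\delta(\gamma')=w_0$). For the converse, when $X_0(\gamma;w_0)\neq\varnothing$ one of the two pieces is non-empty; if $\{z_1\neq 0\}\neq\varnothing$ then $X_0(\gamma';w_0)\neq\varnothing$ and $\delta(\gamma)=\delta(\gamma')=w_0$ by induction; if only $\{z_1=0\}\neq\varnothing$ then $X_0(\gamma'';w_0)\neq\varnothing$, so $\delta(\gamma'')=w_0$ by induction, and since $\gamma''$ is a subword of $\gamma$, $w_0=\delta(\gamma'')\le\delta(\gamma)\le w_0$.

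For the geometric assertions, assume $\delta(\gamma)=w_0$. Since $X_0(\gamma;w_0)$ is defined in $\C^{\ell(\gamma)}$ by the $\binom{n}{2}$ equations stating that the entries below the diagonal of $B_\gamma(z)w_0$ vanish, every irreducible component has dimension at least $\ell(\gamma)-\binom{n}{2}$. Conversely, Theorem~\ref{thm:stratify}(a) presents $X_0(\gamma;w_0)$ as a finite disjoint union of locally closed strata, each isomorphic to $\C^a\times(\C^*)^b$ with $2a+b=\ell(\gamma)-\binom{n}{2}$, hence of dimension $a+b\le\ell(\gamma)-\binom{n}{2}$; therefore $\dim X_0(\gamma;w_0)=\ell(\gamma)-\binom{n}{2}$ and the scheme is pure of that dimension, i.e.\ a complete intersection. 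For irreducibility I would invoke Theorem~\ref{thm:stratify}(b) for a Demazure weave $\Sigma:\gamma\to\Delta$: in the resulting stratification, $\SM(\Sigma)\cong(\C^*)^{\ell(\gamma)-\binom{n}{2}}$ is the unique stratum of maximal dimension and all others are strictly smaller. If $X_0(\gamma;w_0)$ had an irreducible component $Z\neq\overline{\SM(\Sigma)}$, then by purity $\dim Z=\ell(\gamma)-\binom{n}{2}$ and the non-empty open subset $Z\setminus\overline{\SM(\Sigma)}$ would be dense in $Z$, of dimension $\ell(\gamma)-\binom{n}{2}$, while being covered by strata of strictly smaller dimension --- impossible. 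Hence $X_0(\gamma;w_0)=\overline{\SM(\Sigma)}$, which is irreducible.

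I expect the only real work to be bookkeeping around the $0$-Hecke monoid --- monotonicity of $\delta$ under subwords, its agreement with the Coxeter projection on reduced words, and the existence of a Demazure weave $\gamma\to\Delta$ from $\delta(\gamma)=w_0$ --- together with quoting the recursion in the proof of Theorem~\ref{thm:stratify}(a) with the correct isomorphism types ($\C^*$-factor for a trivalent vertex, $\C$-factor for a cup). No new geometric idea seems to be required beyond the stratification already in hand.
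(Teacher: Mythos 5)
Your proposal is correct and follows essentially the same route as the paper: both directions of the non-emptiness criterion come from the weave stratification of Theorem~\ref{thm:stratify}, the lower bound on dimension comes from $X_0(\gamma;w_0)$ being cut out by $\binom{n}{2}$ equations in $\C^{\ell(\gamma)}$, and irreducibility comes from the unique top-dimensional stratum in Theorem~\ref{thm:stratify}(b). The only difference is cosmetic: you re-derive the subword/Demazure-product equivalence and the non-emptiness recursion by hand, where the paper simply cites \cite[Lemma 3.4]{KM2} and the existence of Demazure/simplifying weaves, and you spell out the purity argument for irreducibility that the paper leaves implicit.
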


\begin{proof}
By \cite[Lemma 3.4]{KM2} a braid word $\gamma$ contains some reduced expression for $w_0$ as a subword if and only if 
$\delta(\gamma)=w_0$.  If $\delta(\gamma)=w_0$ then there is a Demazure weave from $\gamma$ to $w_0$, so $X_0(\gamma;w_0)$ is not empty. By Theorem \ref{thm:stratify}, if $X_0(\gamma;w_0)$ is not empty then there is a simplifying weave from $\gamma$ to $\Delta$, and $\gamma$ contains some reduced expression for $w_0$ as a subword. 

Since $X_0(\gamma;w_0)$ is cut out by $\binom{n}{2}$ equations in the affine space of dimension $\ell(\gamma)$, all its components have dimension  at least $\ell(\gamma)-\binom{n}{2}$. On the other hand, if $\delta(\gamma)=w_0$ then by Theorem \ref{thm:stratify}(b) the braid variety $X_0(\gamma;w_0)$ has unique piece of dimension $\ell(\gamma)-\binom{n}{2}$ and all other pieces have smaller dimension, therefore this variety is an irreducible complete intersection.
\end{proof}

\begin{remark}
In \cite{Mellit} it is proven that the complement to the toric chart in $X_0(\beta\Delta;w_0)$ from Section \ref{sec:mellit chart} can be decomposed into pieces of the form $\C^a\times (\C^*)^b$ with $2a+b=\ell(\beta)$. Similarly to  the proof of Theorem \ref{thm:mellit chart}, one can check that these strata (originally defined in terms of Bruhat cells) can be realized by simplifying weaves.
\end{remark}

The decompositions we presented in Theorem \ref{thm:stratify} are far from unique. However, the number of pieces of given dimension $a+b=\ell(\gamma)-\binom{n}{2}-a$ does not depend of  the decomposition. The topological significance of these numbers is given by the following result.

\begin{lemma}
Suppose that there are $n_a$ pieces of the form  $\C^a\times (\C^*)^b,\ 2a+b=\ell(\gamma)-\binom{n}{2}$ in the decomposition from Theorem \ref{thm:stratify}.
Consider the polynomial
$$
f_{\gamma}(q)=\sum_{a}n_aq^{a}(q-1)^{b}=\sum_{a}n_aq^{a}(q-1)^{\ell(\gamma)-\binom{n}{2}-2a}.
$$
a) The number of points in the variety $X_0(\gamma;w_0)$ over a finite field $\mathbb{F}_q$ equals $f_{\gamma}(q)$.

b) The coefficient in the HOMFLY-PT polynomial of the closure of $\gamma\Delta^{-1}$ of lowest $a$-degree is proportional to $f_{\gamma}(q)$.
\end{lemma}

See \cite{KalmanHOMFLY} for the definition of HOMFLY-PT polynomial, a related computation and more details.

\begin{proof}
Part (a) is clear as Theorem \ref{thm:stratify} can be proved verbatim over any field. Over $\mathbb{F}_q$, the strata $\mathbb{A}^a\times (\mathbb{A}^1\setminus \{0\})^b$ have $q^a(q-1)^b$ points, and the result follows.

For (b), we prove it by induction in $\ell(\gamma)$. If $\gamma$ is reduced then we have two cases:
\begin{itemize}
\item[(i)] If $\gamma=\Delta$, then $\gamma\Delta^{-1}=1$ and the closure of $\gamma\Delta^{-1}$ is the $n$-component unlink. At the same time, $X_0(\Delta;w_0)$ is a point and $f_{\Delta}(q)=1$.

\item[(ii)] If $\gamma\neq \Delta$, then $\gamma\Delta^{-1}$ is the closure of a nontrivial negative permutation braid and the coefficient in the HOMFLY-PT polynomial of lowest $a$-degree vanishes \cite{KalmanHOMFLY}.
At the same time, $X_0(\gamma;w_0)$ is empty and $f_{\gamma}(q)=0$.
\end{itemize}

Now suppose that $\gamma$ is not reduced. It follows from (a) that $f_{\gamma}(q)$ is invariant under braid relations, since so is $X_0(\gamma;w_0)$. Finally, 
if $\gamma=\gamma_1\sigma_i\sigma_i\gamma_2,\gamma'=\gamma_1\sigma_i\gamma_2$ and $\gamma''=\gamma_1\gamma_2$. Then $f_{\gamma}(q)=(q-1)f_{\gamma'}(q)+f_{\gamma''}(q)$ which matches the skein relation for HOMFLY polynomials of the braids $\gamma\Delta^{-1}$, $\gamma'\Delta^{-1}$  and $\gamma''\Delta^{-1}$. By the induction hypothesis, the statement of (b) holds for $\gamma'$ and $\gamma''$. Thus, it also holds for $\gamma$.
\end{proof}


\end{document}